\def\graphScale{0.29}
\def\largeGraphScale{0.5}
\def\specialVertexColor{white}
\def\diagramScale{0.8}
\def\sqrt3{1.732}
\def\vertexWidth{0.1}
\newcommand{\maxEdgeDiagram}{
\begin{tikzpicture}[scale=\diagramScale]
    \draw (1, 0) -- (0, 1);
    \draw (1.5, 0.5) -- (1, 0);
    \draw (1.5, 0.5) -- (1, 1);
    \draw (0,1) -- (1,1);
    \draw (0,0) -- (1,0);
    \draw (0,0) -- (0,1);
    \draw (0,0) -- (1,1);
    \draw (1,0) -- (1,1);
    \draw (1.5 + \sqrt3, 0.5 + 1) -- (1.5, 0.5);
    \draw (1.5 + \sqrt3, 0.5 + 1) -- (1.5 + 1.5 * \sqrt3, 0.5 + 0.5);
    \draw (1.5 + \sqrt3, 0.5 + 1) -- (1.5 + \sqrt3, 0.5 - 2);
    \draw (1.5, 0.5 - 1) -- (1.5, 0.5);
    \draw (1.5, 0.5 - 1) -- (1.5 + \sqrt3, 0.5 - 2);
    \draw (1.5, 0.5 - 1) -- (1.5 + 1.5 * \sqrt3, 0.5 + 0.5);
    \draw (1.5 + 1.5 * \sqrt3, 0.5 - 1.5) -- (1.5, 0.5);
    \draw (1.5 + 1.5 * \sqrt3, 0.5 - 1.5) -- (1.5 + \sqrt3, 0.5 - 2);
    \draw (1.5 + 1.5 * \sqrt3, 0.5 - 1.5) -- (1.5 + 1.5 * \sqrt3, 0.5 + 0.5);
    \draw[fill=black] (0, 0) circle (\vertexWidth);
    \draw[fill=black] (0, 1) circle (\vertexWidth);
    \draw[fill=black] (1, 0) circle (\vertexWidth);
    \draw[fill=black] (1, 1) circle (\vertexWidth);
    \draw[fill=\specialVertexColor] (1.5, 0.5) circle (\vertexWidth);
    \draw[fill=black] (1.5 + \sqrt3, 0.5 + 1) circle(\vertexWidth);
    \draw[fill=black] (1.5 + 1.5 * \sqrt3, 0.5 + 0.5) circle(\vertexWidth);
    \draw[fill=black] (1.5 + 1.5 * \sqrt3, 0.5 - 1.5) circle(\vertexWidth);
    \draw[fill=black] (1.5 + \sqrt3, 0.5 - 2) circle(\vertexWidth);
    \draw[fill=black] (1.5, 0.5 - 1) circle(\vertexWidth);
\end{tikzpicture}
}
\newcommand{\fourVertexCompleteGraph}{
\begin{tikzpicture}[scale=\graphScale]
    \draw (0, 0) -- (1, 0);
    \draw (0, 0) -- (1, 1);
    \draw (0, 0) -- (0, 1);
    \draw (1, 0) -- (1, 1);
    \draw (1, 0) -- (0, 1);
    \draw (0, 1) -- (1, 1);
    \draw[fill=black] (0, 0) circle (0.125);
    \draw[fill=black] (0, 1) circle (0.125);
    \draw[fill=black] (1, 0) circle (0.125);
    \draw[fill=black] (1, 1) circle (0.125);
\end{tikzpicture}
}
\newcommand{\hamSandwich}{
\begin{tikzpicture}[scale=\graphScale]
    \draw (0, 0) -- (1, 0);
    \draw (0, 0) -- (0, 1);
    \draw (1, 0) -- (1, 1);
    \draw (1, 0) -- (0, 1);
    \draw (0, 1) -- (1, 1);
    \draw[fill=black] (0, 0) circle (0.125);
    \draw[fill=black] (0, 1) circle (0.125);
    \draw[fill=black] (1, 0) circle (0.125);
    \draw[fill=black] (1, 1) circle (0.125);
\end{tikzpicture}
}
\newcommand{\trianglePlusIsolated}{
\begin{tikzpicture}[scale=\graphScale]
    \draw (0, 0) -- (1, 0);
    \draw (0, 0) -- (1, 1);
    \draw (1, 0) -- (1, 1);
    \draw[fill=black] (0, 0) circle (0.125);
    \draw[fill=black] (0, 1) circle (0.125);
    \draw[fill=black] (1, 0) circle (0.125);
    \draw[fill=black] (1, 1) circle (0.125);
\end{tikzpicture}
}
\newcommand{\SuzukiThirtyTwoElusiveGraph}{
\begin{tikzpicture}[scale=\graphScale]
    \draw (0, 0) -- (1, 0);
    \draw (0, 0) -- (0, 1);
    \draw (0, 0) -- (1, 1);
    \draw (1, 0) -- (1, 1);
    \draw[fill=black] (0, 0) circle (0.125);
    \draw[fill=black] (0, 1) circle (0.125);
    \draw[fill=black] (1, 0) circle (0.125);
    \draw[fill=black] (1, 1) circle (0.125);
\end{tikzpicture}
}
\newcommand{\house}{
\begin{tikzpicture}[scale=\largeGraphScale]
    \draw (0, 0) -- (1, 0);
    \draw (0, 0) -- (1, 1);
    \draw (0, 0) -- (0, 1);
    \draw (1, 0) -- (1, 1);
    \draw (1, 0) -- (0, 1);
    \draw (0, 1) -- (1, 1);
    \draw (1.5, 0.5) -- (1, 0);
    \draw (1.5, 0.5) -- (1, 1);
    \draw[fill=black] (0, 0) circle (0.125);
    \draw[fill=black] (0, 1) circle (0.125);
    \draw[fill=black] (1, 0) circle (0.125);
    \draw[fill=black] (1, 1) circle (0.125);
    \draw[fill=\specialVertexColor] (1.5, 0.5) circle (0.125);
\end{tikzpicture}
}
\newcommand{\balloon}{
\begin{tikzpicture}[scale=\largeGraphScale]
    \draw (0, 0) -- (1, 1);
    \draw (0, 0) -- (0, 1);
    \draw (0, 1) -- (1, 1);
    \draw (1.5, 0.5) -- (1, 0);
    \draw (1.5, 0.5) -- (1, 1);
    \draw[fill=black] (0, 0) circle (0.125);
    \draw[fill=black] (0, 1) circle (0.125);
    \draw[fill=black] (1, 0) circle (0.125);
    \draw[fill=black] (1, 1) circle (0.125);
    \draw[fill=\specialVertexColor] (1.5, 0.5) circle (0.125);
\end{tikzpicture}
}
\newcommand{\codart}{
\begin{tikzpicture}[scale=\largeGraphScale]
    \draw (0, 0) -- (1, 1);
    \draw (0, 0) -- (0, 1);
    \draw (0, 1) -- (1, 1);
    \draw (1.5, 0.5) -- (1, 1);
    \draw[fill=black] (0, 0) circle (0.125);
    \draw[fill=black] (0, 1) circle (0.125);
    \draw[fill=black] (1, 0) circle (0.125);
    \draw[fill=black] (1, 1) circle (0.125);
    \draw[fill=\specialVertexColor] (1.5, 0.5) circle (0.125);
\end{tikzpicture}
}
\newcommand{\northStar}{
\begin{tikzpicture}[scale=\largeGraphScale]
    \draw (0, 0) -- (1, 0);
    \draw (0, 0) -- (1, 1);
    \draw (1, 0) -- (1, 1);
    \draw (1.5, 0.5) -- (1, 0);
    \draw (1.5, 0.5) -- (1, 1);
    \draw[fill=black] (0, 0) circle (0.125);
    \draw[fill=black] (0, 1) circle (0.125);
    \draw[fill=black] (1, 0) circle (0.125);
    \draw[fill=black] (1, 1) circle (0.125);
    \draw[fill=\specialVertexColor] (1.5, 0.5) circle (0.125);
\end{tikzpicture}
}
\newcommand{\dart}{
\begin{tikzpicture}[scale=\largeGraphScale]
    \draw (0, 0) -- (1, 0);
    \draw (0, 0) -- (1, 1);
    \draw (1, 0) -- (1, 1);
    \draw (0, 1) -- (1, 1);
    \draw (1.5, 0.5) -- (1, 0);
    \draw (1.5, 0.5) -- (1, 1);
    \draw[fill=black] (0, 0) circle (0.125);
    \draw[fill=black] (0, 1) circle (0.125);
    \draw[fill=black] (1, 0) circle (0.125);
    \draw[fill=black] (1, 1) circle (0.125);
    \draw[fill=\specialVertexColor] (1.5, 0.5) circle (0.125);
\end{tikzpicture}
}
\newcommand{\happyFace}{
\begin{tikzpicture}[scale=\largeGraphScale]
    \draw (1, 0) -- (1, 1);
    \draw (1.5, 0.5) -- (1, 0);
    \draw (1.5, 0.5) -- (1, 1);
    \draw[fill=black] (0, 0) circle (0.125);
    \draw[fill=black] (0, 1) circle (0.125);
    \draw[fill=black] (1, 0) circle (0.125);
    \draw[fill=black] (1, 1) circle (0.125);
    \draw[fill=\specialVertexColor] (1.5, 0.5) circle (0.125);
\end{tikzpicture}
}
\newcommand{\misshapenhouse}{
\begin{tikzpicture}[scale=\largeGraphScale]
    \draw (1, 0) -- (0, 1);
    \draw (1.5, 0.5) -- (1, 0);
    \draw (1.5, 0.5) -- (1, 1);
    \draw (0,1) -- (1,1);
    \draw (0,0) -- (1,0);
    \draw (0,0) -- (0,1);
    \draw[fill=black] (0, 0) circle (0.125);
    \draw[fill=black] (0, 1) circle (0.125);
    \draw[fill=black] (1, 0) circle (0.125);
    \draw[fill=black] (1, 1) circle (0.125);
    \draw[fill=\specialVertexColor] (1.5, 0.5) circle (0.125);
\end{tikzpicture}
}
\newcommand{\trianglestick}{
\begin{tikzpicture}[scale=\largeGraphScale]
    \draw (1, 0) -- (1, 1);
    \draw (1.5, 0.5) -- (1, 0);
    \draw (1.5, 0.5) -- (1, 1);
    \draw (0,1) -- (0,0);
    \draw[fill=black] (0, 0) circle (0.125);
    \draw[fill=black] (0, 1) circle (0.125);
    \draw[fill=black] (1, 0) circle (0.125);
    \draw[fill=black] (1, 1) circle (0.125);
    \draw[fill=\specialVertexColor] (1.5, 0.5) circle (0.125);
\end{tikzpicture}
}
\newcommand{\bull}{
\begin{tikzpicture}[scale=\largeGraphScale]
    \draw (1, 0) -- (1, 1);
    \draw (1.5, 0.5) -- (1, 0);
    \draw (1.5, 0.5) -- (1, 1);
    \draw (1,0) -- (0,0);
    \draw (0,1) -- (1,1);
    \draw[fill=black] (0, 0) circle (0.125);
    \draw[fill=black] (0, 1) circle (0.125);
    \draw[fill=black] (1, 0) circle (0.125);
    \draw[fill=black] (1, 1) circle (0.125);
    \draw[fill=\specialVertexColor] (1.5, 0.5) circle (0.125);
\end{tikzpicture}
}
\newcommand{\bowtiebruh}{
\begin{tikzpicture}[scale=\largeGraphScale]
    \draw (1, 0) -- (1, 1);
    \draw (1.5, 0.5) -- (1, 0);
    \draw (1.5, 0.5) -- (1, 1);
    \draw (1,1) -- (0,0);
    \draw (0,1) -- (1,1);
    \draw (0,0) -- (0,1);
    \draw[fill=black] (0, 0) circle (0.125);
    \draw[fill=black] (0, 1) circle (0.125);
    \draw[fill=black] (1, 0) circle (0.125);
    \draw[fill=black] (1, 1) circle (0.125);
    \draw[fill=\specialVertexColor] (1.5, 0.5) circle (0.125);
\end{tikzpicture}
}
\newcommand{\cricket}{
\begin{tikzpicture}[scale=\largeGraphScale]
    \draw (1, 0) -- (1, 1);
    \draw (1.5, 0.5) -- (1, 0);
    \draw (1.5, 0.5) -- (1, 1);
    \draw (1,0) -- (0,0);
    \draw (0,1) -- (1,0);
    \draw[fill=black] (0, 0) circle (0.125);
    \draw[fill=black] (0, 1) circle (0.125);
    \draw[fill=black] (1, 0) circle (0.125);
    \draw[fill=black] (1, 1) circle (0.125);
    \draw[fill=\specialVertexColor] (1.5, 0.5) circle (0.125);
\end{tikzpicture}
}
\newcommand{\bullcaseone}{
\begin{tikzpicture}[scale=\largeGraphScale]
    \draw (0, 0) -- (0, 1);
    \draw (0, 1) -- (1, 0);
    \draw (1, 0) -- (1, 1);
    \draw (0, 1) -- (1, 1);
    \draw (1, 1) -- (1.5, 0.5);
    \draw[fill=black] (0, 0) circle (0.125);
    \draw[fill=black] (0, 1) circle (0.125);
    \draw[fill=black] (1, 0) circle (0.125);
    \draw[fill=black] (1, 1) circle (0.125);
    \draw[fill=\specialVertexColor] (1.5, 0.5) circle (0.125);
\end{tikzpicture}
}
\newcommand{\housenobrim}{
\begin{tikzpicture}[scale=\largeGraphScale]
    \draw (0, 0) -- (1, 1);
    \draw (0, 0) -- (1, 0);
    \draw (0, 0) -- (0, 1);
    \draw (0, 1) -- (1, 1);
    \draw (0, 1) -- (1, 0);
    \draw (1, 1) -- (1.5, 0.5);
    \draw (1, 0) -- (1.5, 0.5);
    \draw[fill=black] (0, 0) circle (0.125);
    \draw[fill=black] (0, 1) circle (0.125);
    \draw[fill=black] (1, 0) circle (0.125);
    \draw[fill=black] (1, 1) circle (0.125);
    \draw[fill=\specialVertexColor] (1.5, 0.5) circle (0.125);
\end{tikzpicture}
}
\newcommand{\nobutt}{
\begin{tikzpicture}[scale=\largeGraphScale]
    \draw (0, 0) -- (1, 1);
    \draw (0, 0) -- (1, 0);
    \draw (0, 1) -- (1, 1);
    \draw (0, 1) -- (1, 0);
    \draw (1,1) -- (1,0);
    \draw (1, 1) -- (1.5, 0.5);
    \draw (1, 0) -- (1.5, 0.5);
    \draw[fill=black] (0, 0) circle (0.125);
    \draw[fill=black] (0, 1) circle (0.125);
    \draw[fill=black] (1, 0) circle (0.125);
    \draw[fill=black] (1, 1) circle (0.125);
    \draw[fill=\specialVertexColor] (1.5, 0.5) circle (0.125);
\end{tikzpicture}
}
\newcommand{\fan}{
\begin{tikzpicture}[scale=\largeGraphScale]
    \draw (0, 0) -- (1, 0);
    \draw (0, 0) -- (0, 1);
    \draw (0, 1) -- (1, 1);
    \draw (0, 1) -- (1, 0);
    \draw (1,0) -- (1,1);
    \draw (1, 1) -- (1.5, 0.5);
    \draw (1, 0) -- (1.5, 0.5);
    \draw[fill=black] (0, 0) circle (0.125);
    \draw[fill=black] (0, 1) circle (0.125);
    \draw[fill=black] (1, 0) circle (0.125);
    \draw[fill=black] (1, 1) circle (0.125);
    \draw[fill=\specialVertexColor] (1.5, 0.5) circle (0.125);
\end{tikzpicture}
}
\newcommand{\hamsandwichwitharm}{
\begin{tikzpicture}[scale=\largeGraphScale]
    \draw (0, 0) -- (1, 0);
    \draw (0, 0) -- (0, 1);
    \draw (0, 1) -- (1, 1);
    \draw (1,0) -- (1,1);
    \draw (0, 1) -- (1, 0);
    \draw (1, 1) -- (1.5, 0.5);
    \draw[fill=black] (0, 0) circle (0.125);
    \draw[fill=black] (0, 1) circle (0.125);
    \draw[fill=black] (1, 0) circle (0.125);
    \draw[fill=black] (1, 1) circle (0.125);
    \draw[fill=\specialVertexColor] (1.5, 0.5) circle (0.125);
\end{tikzpicture}
}
\newcommand{\hamSandwichPlusIsolated}{
\begin{tikzpicture}[scale=\largeGraphScale]
    \draw (0, 0) -- (1, 0);
    \draw (0, 0) -- (0, 1);
    \draw (0, 1) -- (1, 1);
    \draw (1,0) -- (1,1);
    \draw (0, 1) -- (1, 0);
    \draw[fill=black] (0, 0) circle (0.125);
    \draw[fill=black] (0, 1) circle (0.125);
    \draw[fill=black] (1, 0) circle (0.125);
    \draw[fill=black] (1, 1) circle (0.125);
    \draw[fill=\specialVertexColor] (1.5, 0.5) circle (0.125);
\end{tikzpicture}
}
\newcommand{\hamSandwichWithOtherArm}{
\begin{tikzpicture}[scale=\largeGraphScale]
    \draw (0, 0) -- (1, 0);
    \draw (0, 0) -- (0, 1);
    \draw (0, 1) -- (1, 1);
    \draw (1,0) -- (1,1);
    \draw (0, 1) -- (1, 0);
    \draw (1, 0) -- (1.5, 0.5);
    \draw[fill=black] (0, 0) circle (0.125);
    \draw[fill=black] (0, 1) circle (0.125);
    \draw[fill=black] (1, 0) circle (0.125);
    \draw[fill=black] (1, 1) circle (0.125);
    \draw[fill=\specialVertexColor] (1.5, 0.5) circle (0.125);
\end{tikzpicture}
}
\newcommand{\smallHamSandwichPlusIsolated}{
\begin{tikzpicture}[scale=\graphScale]
    \draw (0, 0) -- (1, 0);
    \draw (0, 0) -- (0, 1);
    \draw (0, 1) -- (1, 1);
    \draw (1,0) -- (1,1);
    \draw (0, 1) -- (1, 0);
    \draw[fill=black] (0, 0) circle (0.125);
    \draw[fill=black] (0, 1) circle (0.125);
    \draw[fill=black] (1, 0) circle (0.125);
    \draw[fill=black] (1, 1) circle (0.125);
    \draw[fill=\specialVertexColor] (1.5, 0.5) circle (0.125);
\end{tikzpicture}
}
\newcommand{\emptyhouse}{
\begin{tikzpicture}[scale=\largeGraphScale]
    \draw (0, 0) -- (1, 0);
    \draw (0, 0) -- (0, 1);
    \draw (0, 1) -- (1, 1);
    \draw (1,0) -- (1,1);
    \draw (1, 1) -- (1.5, 0.5);
    \draw (1, 0) -- (1.5, 0.5);
    \draw[fill=black] (0, 0) circle (0.125);
    \draw[fill=black] (0, 1) circle (0.125);
    \draw[fill=black] (1, 0) circle (0.125);
    \draw[fill=black] (1, 1) circle (0.125);
    \draw[fill=\specialVertexColor] (1.5, 0.5) circle (0.125);
\end{tikzpicture}
}
\newcommand{\completeGraphWithArm}{
\begin{tikzpicture}[scale=\largeGraphScale]
    \draw (0, 0) -- (1, 0);
    \draw (0, 0) -- (1, 1);
    \draw (0, 0) -- (0, 1);
    \draw (1, 0) -- (1, 1);
    \draw (1, 0) -- (0, 1);
    \draw (0, 1) -- (1, 1);
    \draw (1.5, 0.5) -- (1, 1);
    \draw[fill=black] (0, 0) circle (0.125);
    \draw[fill=black] (0, 1) circle (0.125);
    \draw[fill=black] (1, 0) circle (0.125);
    \draw[fill=black] (1, 1) circle (0.125);
    \draw[fill=\specialVertexColor] (1.5, 0.5) circle (0.125);
\end{tikzpicture}
}
\newcommand{\completeGraphIsolatedVertex}{
\begin{tikzpicture}[scale=\largeGraphScale]
    \draw (0, 0) -- (1, 0);
    \draw (0, 0) -- (1, 1);
    \draw (0, 0) -- (0, 1);
    \draw (1, 0) -- (1, 1);
    \draw (1, 0) -- (0, 1);
    \draw (0, 1) -- (1, 1);
    \draw[fill=black] (0, 0) circle (0.125);
    \draw[fill=black] (0, 1) circle (0.125);
    \draw[fill=black] (1, 0) circle (0.125);
    \draw[fill=black] (1, 1) circle (0.125);
    \draw[fill=\specialVertexColor] (1.5, 0.5) circle (0.125);
\end{tikzpicture}
}
\newcommand{\smallCompleteGraphIsolatedVertex}{
\begin{tikzpicture}[scale=\graphScale]
    \draw (0, 0) -- (1, 0);
    \draw (0, 0) -- (1, 1);
    \draw (0, 0) -- (0, 1);
    \draw (1, 0) -- (1, 1);
    \draw (1, 0) -- (0, 1);
    \draw (0, 1) -- (1, 1);
    \draw[fill=black] (0, 0) circle (0.125);
    \draw[fill=black] (0, 1) circle (0.125);
    \draw[fill=black] (1, 0) circle (0.125);
    \draw[fill=black] (1, 1) circle (0.125);
    \draw[fill=\specialVertexColor] (1.5, 0.5) circle (0.125);
\end{tikzpicture}
}
\newcommand{\triangleIsolatedEdge}{
\begin{tikzpicture}[scale=\largeGraphScale]
    \draw (0, 0) -- (1, 0);
    \draw (0, 0) -- (0, 1);
    \draw (1, 0) -- (0, 1);
    \draw (1.5, 0.5) -- (1, 1);
    \draw[fill=black] (0, 0) circle (0.125);
    \draw[fill=black] (0, 1) circle (0.125);
    \draw[fill=black] (1, 0) circle (0.125);
    \draw[fill=black] (1, 1) circle (0.125);
    \draw[fill=\specialVertexColor] (1.5, 0.5) circle (0.125);
\end{tikzpicture}
}
\newcommand{\triangleWithTail}{
\begin{tikzpicture}[scale=\largeGraphScale]
    \draw (0, 0) -- (0, 1);
    \draw (0, 1) -- (1, 1);
    \draw (1,0) -- (1,1);
    \draw (1, 1) -- (1.5, 0.5);
    \draw (1, 0) -- (1.5, 0.5);
    \draw[fill=black] (0, 0) circle (0.125);
    \draw[fill=black] (0, 1) circle (0.125);
    \draw[fill=black] (1, 0) circle (0.125);
    \draw[fill=black] (1, 1) circle (0.125);
    \draw[fill=\specialVertexColor] (1.5, 0.5) circle (0.125);
\end{tikzpicture}
}
\newcommand{\triangleWithShortTail}{
\begin{tikzpicture}[scale=\largeGraphScale]
    \draw (0, 1) -- (1, 1);
    \draw (1,0) -- (1,1);
    \draw (1, 1) -- (1.5, 0.5);
    \draw (1, 0) -- (1.5, 0.5);
    \draw[fill=black] (0, 0) circle (0.125);
    \draw[fill=black] (0, 1) circle (0.125);
    \draw[fill=black] (1, 0) circle (0.125);
    \draw[fill=black] (1, 1) circle (0.125);
    \draw[fill=\specialVertexColor] (1.5, 0.5) circle (0.125);
\end{tikzpicture}
}
\newcommand{\zigZag}{
\begin{tikzpicture}[scale=\largeGraphScale]
    \draw (1, 0) -- (1, 1);
    \draw (1.5, 0.5) -- (1, 0);
    \draw (1,1) -- (0,0);
    \draw (0,1) -- (1,1);
    \draw (0,0) -- (0,1);
    \draw[fill=black] (0, 0) circle (0.125);
    \draw[fill=black] (0, 1) circle (0.125);
    \draw[fill=black] (1, 0) circle (0.125);
    \draw[fill=black] (1, 1) circle (0.125);
    \draw[fill=\specialVertexColor] (1.5, 0.5) circle (0.125);
\end{tikzpicture}
}
\newcommand{\triangleWithTwoTails}{
\begin{tikzpicture}[scale=\largeGraphScale]
    \draw (1, 0) -- (1, 1);
    \draw (1.5, 0.5) -- (1, 1);
    \draw (1,1) -- (0,0);
    \draw (0,1) -- (1,1);
    \draw (0,0) -- (0,1);
    \draw[fill=black] (0, 0) circle (0.125);
    \draw[fill=black] (0, 1) circle (0.125);
    \draw[fill=black] (1, 0) circle (0.125);
    \draw[fill=black] (1, 1) circle (0.125);
    \draw[fill=\specialVertexColor] (1.5, 0.5) circle (0.125);
\end{tikzpicture}
}
\newcommand{\badDart}{
\begin{tikzpicture}[scale=\largeGraphScale]
    \draw (0, 0) -- (0, 1);
    \draw (1, 0) -- (1, 1);
    \draw (1, 0) -- (0, 1);
    \draw (0, 1) -- (1, 1);
    \draw (1.5, 0.5) -- (1, 0);
    \draw (1.5, 0.5) -- (1, 1);
    \draw[fill=black] (0, 0) circle (0.125);
    \draw[fill=black] (0, 1) circle (0.125);
    \draw[fill=black] (1, 0) circle (0.125);
    \draw[fill=black] (1, 1) circle (0.125);
    \draw[fill=\specialVertexColor] (1.5, 0.5) circle (0.125);
\end{tikzpicture}
}
\newcommand{\fletching}{
\begin{tikzpicture}[scale=\graphScale]
    \draw (0, 0) -- (1, 0);
    \draw (1, 0) -- (1, 1);
    \draw (1, 0) -- (0, 1);
    \draw[fill=black] (0, 0) circle (0.125);
    \draw[fill=black] (0, 1) circle (0.125);
    \draw[fill=black] (1, 0) circle (0.125);
    \draw[fill=black] (1, 1) circle (0.125);
\end{tikzpicture}
}
\newcommand{\brokenFrame}{
\begin{tikzpicture}[scale=\graphScale]
    \draw (0, 0) -- (1, 0);
    \draw (1, 0) -- (1, 1);
    \draw[fill=black] (0, 0) circle (0.125);
    \draw[fill=black] (0, 1) circle (0.125);
    \draw[fill=black] (1, 0) circle (0.125);
    \draw[fill=black] (1, 1) circle (0.125);
\end{tikzpicture}
}
\let\@fnsymbol\@arabic
\DeclareMathOperator{\Aut}{Aut}
\DeclareMathOperator{\pg}{\Gamma}
\DeclareMathOperator{\pgc}{\overline{\Gamma}}
\DeclareMathOperator{\GL}{GL}
\DeclareMathOperator{\Sz}{Sz}
\DeclareMathOperator{\PSL}{PSL}
\title{Classification of the Prime Graphs of $\Sz(8)$-, $\Sz(32)$-, and $\PSL(2, 2^5)$-Solvable Groups}
\author{Thomas Michael Keller\thanks{Thomas Michael Keller: keller@txstate.edu; Department of Mathematics, Texas State University, 601 University Drive, San Marcos, TX, 78666, USA.}, Zachary Martin\footnote{Zachary Martin: zvmartin@willamette.edu; Department of Mathematics, Willamette University, 3406 North 26th Street, Tacoma, WA, 98407, USA.}, Alexa Renner\footnote{Corresponding Author: Alexa Renner: renneram@rose-hulman.edu; Department of Mathematics, Rose-Hulman Institute of Technology, 5500 Wabash Ave., Terre Haute, IN 47803, USA.}, Gabriel Roca\footnote{Gabriel Roca: gabriel.roca@ucf.edu; Department of Mathematics, University of Central Florida, 4000 Central Florida Blvd, Orlando, FL 32816, USA.}, Eric Yu\footnote{Eric Yu: ericyu25@sas.upenn.edu; Department of Mathematics, University of Pennsylvania, 209 S 33rd St, Philadelphia, PA 19104, USA}}
\newtheorem{counter}{counter}[section]
\theoremstyle{definition}
\newtheorem{definition}[counter]{Definition}
\newtheorem*{definition*}{Definition}
\newtheorem{fact}[counter]{Fact}
\newtheorem{conjecture}[counter]{Conjecture}
\theoremstyle{plain}
\newtheorem{theorem}[counter]{Theorem}
\newtheorem{lemma}[counter]{Lemma}
\newtheorem{corollary}[counter]{Corollary}
\newtheorem{proposition}[counter]{Proposition}
\theoremstyle{remark}
\newtheorem{remark}[counter]{Remark}
\begin{document}
\maketitle
\begin{abstract}
    For a finite group $G$, the vertices of the prime graph $\Gamma(G)$ are the primes that divide $|G|$, and two vertices $p$ and $q$ are connected by an edge if there is an element of order $pq$ in $G$. Prime graphs of solvable groups have been classified, and prime graphs of groups whose noncyclic composition factors are isomorphic to a single nonabelian simple group $T$ have been classified in the case where $T$ has order divisible by exactly three or four distinct primes, except for the cases $T = \Sz(8)$, $T = \Sz(32)$, and $T = \operatorname{PSL}(2,q)$,
    which in some sense are the hardest cases.
    In this paper, we complete the classification for $T = \Sz(32)$, $T = \Sz(8)$, and $T = \PSL(2,2^5)$, with the latter two being the first cases ever studied where $|\text{Out}(T)|$ has prime factors which do not divide $|T|$. The groups studied in this paper are also the first ones requiring knowledge of their Brauer character tables to complete the classification task.
\end{abstract}

\hfill\break
\textbf{Keywords:} Gruenberg-Kegel graph, prime graph, $K_4$-group, fixed points, Brauer character, modular character

\hfill\break
\textbf{Mathematics Subject Classification:} 20D60 and 05C25
\section*{Introduction}

The object of interest in this paper is the prime graph of a finite group, also known as the Gruenberg-Kegel graph of the group. This graph has the primes dividing the group order as its vertices, and two vertices $p$ and $q$ are linked by an edge if and only if there exists an element of order $pq$ in $G$. Prime graphs of finite groups have been studied quite intensely since the 1970s, and this paper is a contribution to completely classifying prime graphs of certain families of finite groups.\\

Given a finite group $G$, we will write $\pg(G)$ to denote its prime graph, $\pgc(G)$ to denote the complement of its prime graph, and $\pi(G)$ to denote the prime divisors of $|G|$. We say that $G$ is $K_n$ if $G$ is simple and $|\pi(G)| = n$.

In 2015, the authors of \cite{2015REU} gave a complete classification of the prime graphs of solvable groups:
\begin{theorem} \thlabel{solvclass}
    \cite[Theorem 2.10]{2015REU} An unlabeled simple graph $\Xi$ is isomorphic to the prime graph complement of a solvable group if and only if it is 3-colorable and triangle-free.
\end{theorem}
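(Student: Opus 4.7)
The proof requires both directions of the biconditional. For necessity, suppose $G$ is solvable; we must show $\pgc(G)$ is triangle-free and 3-colorable. Triangle-freeness proceeds by contradiction: if $p, q, r \in \pi(G)$ form a triangle in $\pgc(G)$, then $G$ contains no element of any of the orders $pq$, $pr$, or $qr$. Since $G$ is solvable, Hall's theorem produces a Hall $\{p,q,r\}$-subgroup $H$; replacing $G$ by $H$ we may assume $|\pi(G)|=3$. A structural analysis in the spirit of Gruenberg--Kegel, using the Fitting subgroup $F(G)$ and the coprime action of a Sylow subgroup on chief factors of $F(G)$, then forces two of the three Sylow subgroups of $G$ to centralize one another, contradicting the assumption.

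For 3-colorability, the goal is to partition $\pi(G)$ into three cliques of $\pg(G)$. I would proceed by induction on $|G|$ combined with an analysis of the Fitting series of $G$: each prime $p \in \pi(G)$ is classified by the lowest Fitting layer in which $p$ appears. Using the triangle-freeness established above, together with structural constraints on how primes from different Fitting layers can be joined in $\pg(G)$, one produces a partition into three cliques. This is the most intricate piece of necessity, and carrying it out rigorously requires care with how quotients act on normal subgroups of coprime order, possibly invoking results about Frobenius and $2$-Frobenius structure in cases where $\pg(G)$ is disconnected.

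For sufficiency, given a triangle-free 3-colorable graph $\Xi$ with coloring $V = V_1 \sqcup V_2 \sqcup V_3$, assign distinct primes $p_v$ to the vertices $v \in V$. I would construct $G$ as an iterated semidirect product of cyclic Hall subgroups, one per color class, of order $\prod_{v \in V_i} p_v$; this automatically yields three cliques in $\pg(G)$ corresponding to the three color classes. The interactions between color classes are then controlled by carefully chosen Frobenius-type actions: for each non-edge $\{u,v\}$ of $\Xi$ between different classes, the construction arranges for an element of order $p_u p_v$, while for each edge of $\Xi$ the relevant action is faithful enough that no such element can exist. The main obstacle is choosing primes and modules so that all these edge and non-edge constraints are simultaneously satisfied; this is typically handled by invoking Dirichlet's theorem to produce primes with prescribed multiplicative orders modulo others, and by building the extensions inductively one color class at a time so that each new layer acts correctly on all the preceding ones.
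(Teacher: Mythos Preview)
The paper does not prove this statement at all: it is quoted verbatim as \cite[Theorem 2.10]{2015REU} and used throughout as a black box, so there is no ``paper's own proof'' to compare against. Your task here was simply to recognize that this is a cited result and that no argument is required in the present paper.

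That said, since you have sketched a proof, a brief comment on how it lines up with the actual argument in \cite{2015REU}: your sufficiency outline (iterated semidirect products of cyclic groups over the three color classes, Frobenius actions to kill the desired edges, Dirichlet to pick compatible primes) is essentially what that paper does. Your necessity outline is less accurate. Triangle-freeness is indeed reduced via a Hall $\{p,q,r\}$-subgroup, but the contradiction comes from Lucido's three-primes lemma rather than an ad hoc Fitting analysis. More importantly, 3-colorability in \cite{2015REU} is not obtained by sorting primes into Fitting layers; instead the authors put an orientation on the edges of $\pgc(G)$ (an edge $p\to q$ whenever a suitable Hall $\{p,q\}$-subgroup is Frobenius with $q$-kernel), show this orientation is well-defined and contains no directed path of length~$3$, and then invoke the Gallai--Roy theorem to conclude $\chi(\pgc(G))\le 3$. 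Your Fitting-layer idea is plausible heuristically but, as written, does not give a mechanism for producing exactly three cliques, and it is not the route taken in the source.
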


Since then, further research has been done on what happens when a single nonabelian simple group is allowed in as a factor in the composition series.
\begin{definition}
    Let $T$ be a nonabelian simple group, and let $G$ be a group. We say that $G$ is $T$-solvable if all of its composition factors are either cyclic or isomorphic to $T$. We say that $G$ is strictly $T$-solvable if it is $T$-solvable and at least one of its composition factors is isomorphic to $T$.
\end{definition}
Efforts to classify the prime graphs of $T$-solvable groups began in the 2024 paper \cite{Huang}, where the case of $T = A_5$ was completed. In 2023, the authors of \cite{2022REU} classified the prime graphs of $T$-solvable groups for all remaining $K_3$ groups $T$. In 2024, the authors of \cite{2023REU} did the same for all $K_4$ groups, with the exception of $\Sz(8)$, $\Sz(32)$, and $\PSL(2,q)$ where $q$ is a prime power such that $|\PSL(2,q)| = \frac{q(q^2-1)}{(2,(q-1))}$ has exactly four prime divisors. 
$\Sz(32)$ is a challenge simply due to its size. It has order 32537600, which is too large for many computational techniques. The difficulty with $\Sz(8)$ and $\PSL(2,q)$ is more theoretical. Many of the algebraic techniques used in previous papers relied on the assumption that $\pi(T) = \pi(\Aut(T))$, which does not hold for these particular groups:
\begin{remark} \cite[Lemma 1.5]{2023REU}
    The only $K_4$-groups $T$ such that $\pi(T)\neq\pi(\Aut(T))$ are the Suzuki group $\Sz(8)$ and certain groups $\PSL(2, p^f)$ where $p$ is prime.
\end{remark}

In this paper we classify the prime graphs of $\Sz(8)$-, $\Sz(32)$-, and $\PSL(2, 2^5)$-solvable groups using new methods from modular character theory. Namely, we will use fixed point information extracted from the Brauer character tables of these groups. We use \cite[Lemma 6.2]{BrauerTableFP}, which we restate below for the reader's convenience. For a proof, see \cite[Lemma 4]{C55Groups}.
\begin{lemma}\thlabel{MaslovaPaperLemma}
    Let $G$ be a finite simple group, $F$ be a field of characteristic $p > 0$, $V$ be an absolutely irreducible $GF$-module, and $\beta$ be a Brauer character of $V$. If $g\in G$ is an element of prime order distinct distinct from $p$, then:
    \begin{align*}
        \operatorname{dim} C_V(g) = (\beta_{\langle g\rangle}, 1_{\langle g\rangle}) = \frac{1}{|g|}\sum_{x\in\langle g\rangle} \beta(x).
    \end{align*}
\end{lemma}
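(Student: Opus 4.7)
The plan is to reduce both sides of the identity to an explicit count of eigenvalues of $g$, exploiting the fact that $\langle g \rangle$ is a $p'$-group and hence its modular representation theory mirrors the characteristic-$0$ setting.

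First, I would extend scalars to the algebraic closure $\overline F$ by setting $\overline V = V \otimes_F \overline F$. Both $\dim C_V(g)$ and the values of $\beta$ are preserved under this extension, so it suffices to work over $\overline F$. Since $|g| = n$ is a prime different from $p$, the integer $n$ is invertible in $\overline F$, so Maschke's theorem applies to the group algebra $\overline F \langle g \rangle$ and $\overline V$ decomposes as a direct sum of one-dimensional $\langle g \rangle$-submodules. Writing $\lambda_1,\dots,\lambda_d \in \overline F$ for the eigenvalues of $g$ on $\overline V$ (each an $n$-th root of unity), we immediately obtain
$$\dim C_V(g) = \#\{i : \lambda_i = 1\}.$$

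Next, I would unwind the definition of the Brauer character. Let $\varphi$ denote the fixed group isomorphism between the $n$-th roots of unity in $\overline F^\times$ and the complex $n$-th roots of unity used to define $\beta$, and set $\mu_i = \varphi(\lambda_i) \in \mathbb{C}$. Because $\varphi$ is a homomorphism, $\beta(g^k) = \sum_{i=1}^d \mu_i^k$ for every integer $k$. Since $\varphi$ is injective, $\lambda_i = 1$ if and only if $\mu_i = 1$, and therefore $\dim C_V(g) = \#\{i : \mu_i = 1\}$.

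Finally, I would apply the orthogonality of characters of the cyclic group of order $n$: for any $n$-th root of unity $\mu \in \mathbb{C}$, the average $\frac{1}{n}\sum_{k=0}^{n-1} \mu^k$ equals $1$ if $\mu = 1$ and $0$ otherwise. Swapping the order of summation gives
$$\frac{1}{|g|}\sum_{x \in \langle g\rangle} \beta(x) = \sum_{i=1}^d \frac{1}{n}\sum_{k=0}^{n-1} \mu_i^k = \#\{i : \mu_i = 1\} = \dim C_V(g),$$
which proves the desired right-hand equality. The middle expression $(\beta_{\langle g\rangle}, 1_{\langle g\rangle})$ is by definition this same averaged sum, so the full chain of equalities follows. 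The only conceptually delicate point is the passage via $\varphi$ from eigenvalues in characteristic $p$ to roots of unity in $\mathbb{C}$; once that identification is in place, the argument reduces to the familiar orthogonality relations for roots of unity and no further subtlety from modular representation theory is needed.
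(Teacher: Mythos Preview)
Your proof is correct. The paper itself does not prove this lemma at all: it is quoted from \cite[Lemma 6.2]{BrauerTableFP} and the reader is referred to \cite[Lemma 4]{C55Groups} for a proof, so there is no in-paper argument to compare against. Your argument is the standard one---diagonalize $g$ over $\overline F$ (possible since $|g|$ is coprime to $p$), identify $\dim C_V(g)$ with the multiplicity of the eigenvalue $1$, lift eigenvalues to complex roots of unity via the isomorphism defining the Brauer character, and then apply orthogonality of roots of unity---and it is complete as written.
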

We remark that more work on the case
of the groups $\PSL(2, q)$ will be forthcoming, but we have decided to 
include the smallest such $K_4$-group, $\PSL(2, 32)$, in this paper because
the proof of the corresponding classification result is very similar to
the case of $\Sz(8)$.\\

The main classification results of this paper are Theorems \ref{Sz32Classification}, \ref{Sz8Classification}, and \ref{PSL232Classification} for $\Sz(32)$, $\Sz(8)$, and $\PSL(2, 2^5)$, respectively. Because they are technical, we do not state them here. To give the reader an idea of the flavor of these classifications, we include below the prime graph complement of an $\Sz(8)$-solvable group on 10 vertices with a maximal number of edges:
\begin{center}
    \maxEdgeDiagram
\end{center}
The subgraph induced by the white vertex and all vertices to the left of it is $\pgc(\Sz(8))$, and the subgraph induced by the white vertex and all vertices to the right of it is a triangle-free and 3-colorable graph on 6 vertices.

The paper is organized as follows: Section \ref{sec:PGAE} proves some foundational results on prime graphs of abelian extensions. Section \ref{sec:Suzuki32} classifies the prime graphs of $\Sz(32)$-solvable groups. 
In Section \ref{sec:RIR} we prove a result which allows us to extract the prime graph complements of extensions of a nonabelian simple group $T$ by a $p$-group for $p\in\pi(T)$ from irreducible Brauer characters of $T$. Section \ref{sec:subgraphsInduced} proves results that allow us to restrict our attention to specific subgroups of a $T$-solvable group when considering certain subgraphs of $\pgc(G)$. Section \ref{sec:UsefulLemmas} provides several results that we use frequently in the classifications of prime graph complements of $\Sz(8)$- and $\PSL(2, 2^5)$-solvable groups. In Sections \ref{Sz8} and \ref{sec:PSL} we finally classify the prime graph complements of $\Sz(8)$- and $\PSL(2, 2^5)$-solvable groups, respectively. Some avenues for future work are presented in Section \ref{sec:Outlook}. 

Appendix \ref{AppendixA} lists relevant graphs on 5 vertices, Appendix \ref{AppendixB} elaborates on some of the computations needed to produce the previous sections, and Appendix \ref{AppendixC} provides a link to a repository of computer programs that we used. \\

We will now introduce some notation and conventions that will be used throughout this paper. 
\begin{itemize}
    \item When we refer to a Brauer table mod $p$ of a finite group $G$, we mean a table of irreducible Brauer characters mod $p$ of $G$ evaluated at conjugacy classes of $G$ with elements of order not divisible by $p$.
    \item For a graph $\Xi$, we will denote the vertex set of $\Xi$ by $V(\Xi)$ and edge set of $\Xi$ by $E(\Xi)$. We will often write $p-q \in \Xi$ as shorthand for $p-q \in E(\Xi)$.
    \item Let $G$ be a group, and let $\pi_0$ be a set of primes. We say that $G$ is a $\pi_0$-group if $\pi(G) \subseteq \pi_0$. We say that $G$ is a strict $\pi_0$-group if $\pi(G) = \pi_0$. 
    \item For a graph $\Xi$ and $X\subseteq V(\Xi)$, we will write $\Xi[X]$ to denote the subgraph of $\Xi$ induced by $X$. 
    \item If $X$ is a set, $Y$ is a group, and if  $x\in X, y \in Y$ and there is a function $\varphi: X \to \Aut(Y)$, then we will write $\varphi_x$ in place of $\varphi(x)$ so that $\varphi_x(y) \in Y$. 
    \item We will often write normal series in ATLAS notation: $G = X_1.X_2. \cdots .X_k$ if there exists a normal series $G = N_k \trianglerighteq N_{k-1} \trianglerighteq \cdots \trianglerighteq N_1 \trianglerighteq N_0 = \{1\}$ such that $X_i \cong N_i/N_{i-1}$ for $i \in \{1, \cdots, k\}$. 
    \item Given labeled graphs $\Xi$ and $\Lambda$, we define their {\it labeled graph product} $\Xi \sqcap \Lambda$ as follows: Let $K$ be the complete graph on $V(\Xi)\cup V(\Lambda)$ (vertices with the same label are considered identical). Then $V(\Xi \sqcap \Lambda) = V(K)$, and $E(\Xi \sqcap \Lambda) = E(K) \setminus (E(\overline{\Xi})\cup E(\overline{\Lambda}))$. This operation is associative and commutative.

    Note that in the case where $V(\Xi) = V(\Lambda)$, we have $E(\Xi \sqcap \Lambda) = E(\Xi) \cap E(\Lambda)$.
\end{itemize}
We state an important definition:

\begin{definition} \thlabel{frobcrit}
    \cite[Definition 2.2.1]{2023REU} Let $P$ be a $p$-group. $P$ is said to satisfy the Frobenius Criterion if $P$ is cyclic, dihedral, Klein-4, or generalized quaternion. Note that every quotient of a generalized quaternion or cyclic group satisfies the Frobenius Criterion.
\end{definition}

Finally, we point out a change in our style from the previous papers in this series. We now place the focus on classifying the prime graph complements of $T$-solvable groups, rather than the prime graphs. Thus our statements will be written as: ``$\Xi$ is the prime graph complement of a $T$ solvable group if and only if..."

\section{Prime Graphs of Abelian Extensions}\label{sec:PGAE}
For convenience, we state the following elementary lemmas:
\begin{lemma}\thlabel{orderNotDividing}
    Let $G$ be a finite group, let $N \trianglelefteq G$ be a normal subgroup, and let $g\in G$ be an element. Then $o(g)$ divides $o(gN)|N|$.
\end{lemma}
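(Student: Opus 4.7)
The plan is to chase the definitions. Let $k = o(gN)$, the order of $gN$ in the quotient group $G/N$. By definition of $k$, we have $(gN)^k = N$, i.e. $g^k N = N$, which means $g^k \in N$. Since $N$ is a finite group of order $|N|$, Lagrange's theorem applied to the cyclic subgroup $\langle g^k\rangle \le N$ yields $(g^k)^{|N|} = 1$, that is $g^{k|N|} = 1$. But the set of integers $m$ with $g^m = 1$ is precisely $o(g)\mathbb{Z}$, so $o(g)$ divides $k|N| = o(gN)|N|$, as desired.

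There is essentially no obstacle here: the argument is a direct two-step computation using (i) the defining property of the order of a coset in the quotient group and (ii) the fact that any element of $N$ has order dividing $|N|$. No additional machinery or prior result from the paper is required, and the conclusion follows in a single displayed line if one so desires.
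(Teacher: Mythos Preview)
Your proof is correct and follows exactly the same approach as the paper: observe $g^{o(gN)} \in N$, raise to the power $|N|$ to obtain the identity, and conclude that $o(g)$ divides $o(gN)|N|$.
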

\begin{proof}
    We see that $g^{o(gN)} \in N$, meaning $(g^{o(gN)})^{|N|} = g^{o(gN)|N|} = 1$. Thus, $o(g)$ divides $o(gN)|N|$.
\end{proof}

\begin{lemma}\thlabel{EdgeAdd}\thlabel{divisors}
    Let $G$, $N$, and $T$ be finite groups such that $G \cong N.T$, and let $r,s$ be primes dividing $|T|$. If $r-s \in \pgc(T)$ and neither $r$ nor $s$ divide $|N|$, then $r-s \in \pgc(G)$.
\end{lemma}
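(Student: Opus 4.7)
The plan is to argue by contradiction: suppose $G$ contains an element $g$ of order $rs$, and produce an element of order $rs$ in $T \cong G/N$, contradicting the hypothesis that $r-s \in \pgc(T)$.

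First, I would pass to the quotient and consider $gN \in G/N \cong T$. By \thref{orderNotDividing}, $o(g) = rs$ divides $o(gN)\,|N|$. Since $r$ and $s$ are distinct primes and neither divides $|N|$ by hypothesis, it follows that both $r$ and $s$ must divide $o(gN)$, and therefore $rs \mid o(gN)$.

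Next, from an element of order divisible by $rs$ in $T$ I would extract one of order exactly $rs$: the element $(gN)^{o(gN)/(rs)} \in T$ has order precisely $rs$. This contradicts the assumption $r-s \in \pgc(T)$, which asserts that $T$ has no element of order $rs$. Hence no element of order $rs$ exists in $G$, i.e., $r-s \in \pgc(G)$.

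There is no real obstacle here; the argument is essentially a one-line application of \thref{orderNotDividing} combined with the fact that a cyclic group of order divisible by $rs$ contains an element of order exactly $rs$. The only thing to be careful about is using the coprimality of $r$ and $s$ with $|N|$ at the right moment to force $rs \mid o(gN)$ rather than just $rs \mid o(gN)|N|$.
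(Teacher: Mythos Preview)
Your proof is correct and follows essentially the same approach as the paper's: both argue by contradiction, pass to the quotient, invoke \thref{orderNotDividing} to get $rs \mid o(gN)|N|$, and use coprimality to conclude $rs \mid o(gN)$. Your version is slightly more explicit in extracting an element of order exactly $rs$ via a power, whereas the paper leaves that step implicit.
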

\begin{proof}
    Suppose that $r-s \notin \pgc(G)$. Then let $g \in G$ be an element of order $rs$. By \thref{orderNotDividing} we have that $rs$ divides $o(gN)|N|$. Since neither $r$ nor $s$ divides $|N|$, we have that $rs$ divides $o(gN)$, a contradiction since $G/N \cong T$ has no elements of order $rs$.
\end{proof}

\begin{lemma}\thlabel{reverseDivisors}
    Let $G$, $N$, and $T$ be finite groups such that $G \cong N.T$, and let $r,s$ be primes dividing $|N|$. If $r-s \in \pgc(N)$ and neither $r$ nor $s$ divide $|T|$, then $r-s \in \pgc(G)$.
\end{lemma}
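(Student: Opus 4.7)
The plan is to mirror the proof of \thref{EdgeAdd}, interchanging the roles of $N$ and $T$. First I would suppose toward contradiction that some element $g \in G$ has $o(g) = rs$, and examine its image $gN$ in $G/N \cong T$. Since $(gN)^{rs} = g^{rs}N = N$, the order $o(gN)$ divides $rs$; simultaneously, by Lagrange's theorem applied inside $T$, $o(gN)$ divides $|T|$. Because neither $r$ nor $s$ divides $|T|$ by hypothesis, we have $\gcd(rs, |T|) = 1$, so $o(gN) = 1$ and therefore $g \in N$. But then $N$ contains an element of order $rs$, directly contradicting the assumption that $r-s \in \pgc(N)$.

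The main obstacle is essentially nonexistent: this is the natural dual of \thref{EdgeAdd} and follows from a single divisibility observation, so I do not anticipate any technical complications. Indeed, one could package the two results into a single lemma: in any extension $G \cong N.T$, a non-edge $r-s$ in the prime graph complement of one factor lifts to a non-edge in $\pgc(G)$ whenever neither $r$ nor $s$ divides the order of the other factor, with \thref{EdgeAdd} handling the quotient side and this lemma handling the kernel side. The only step worth being careful about is invoking \thref{orderNotDividing} in the right direction, or equivalently noting that $o(gN)$ is constrained to divide both $rs$ and $|T|$.
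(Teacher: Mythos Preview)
Your argument is correct and is essentially the paper's own proof: both assume an element $g$ of order $rs$, observe that $gN$ must be trivial in $G/N\cong T$ (since its order divides both $rs$ and $|T|$, which are coprime), conclude $g\in N$, and derive the contradiction with $r-s\in\pgc(N)$. You simply spell out the divisibility step more explicitly than the paper does.
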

\begin{proof}
    Suppose that $r-s \notin \pgc(G)$. Then let $g\in G$ be an element of order $rs$. We must have that $g \in N$ or else $o(g)$ would share divisors with $|T|$. So $g$ is an element of order $rs$ in $N$, a contradiction.
\end{proof}

\begin{theorem}\thlabel{primeGraphOfAbelianExtension}
    Let $G$ be a finite group, and let $A$ be an abelian normal subgroup of $G$. Consider the action $\varphi: G/A \to \Aut(A)$ defined by $\varphi_{gA}(a) = g^{-1}ag$. Then the prime graph $\pg(G)$ can be characterized as follows:
    \begin{enumerate}
        \item If $p,r \in \pi(A)$, then $p-r \in \pg(G)$. 
        \item If $p\in \pi(G)$ and $r \in \pi(G) \setminus \pi(A)$, then $p-r \in \pg(G)$ if and only if one of the following holds:
        \begin{enumerate}
            \item $p-r\in \pg(G/A)$.
            \item There exists $gA \in G/A$ of order $r$ such that $\varphi_{gA}$ fixes an element $a \in A$ of order $p$.
        \end{enumerate}
    \end{enumerate}
\end{theorem}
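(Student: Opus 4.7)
The plan is to split the proof according to the items in the statement, and within item~(2) further along the two directions of the biconditional. Throughout I use that a prime-graph edge requires $p \neq r$, and that commuting elements of coprime orders $m$ and $n$ have product of order $mn$.

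For item~(1), apply Cauchy's theorem inside the abelian group $A$ to obtain $a, b \in A$ with $o(a) = p$ and $o(b) = r$. Since $A$ is abelian, $a$ and $b$ commute, so $ab$ has order $pr$, and thus $p - r \in \pg(G)$.

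For the ``if'' direction of item~(2): if (a) holds, pick $g \in G$ with $o(gA) = pr$; since $o(gA)$ divides $o(g)$, a suitable power of $g$ has order exactly $pr$. If (b) holds, then $g$ commutes with $a$; writing $o(g) = r^k m$ with $\gcd(r, m) = 1$ and $k \geq 1$, the element $g' := g^{m r^{k-1}}$ has order $r$ and still commutes with $a$. Since $p \in \pi(A)$ and $r \notin \pi(A)$ force $p \neq r$, the product $a g'$ has order $pr$.

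For the ``only if'' direction, fix $h \in G$ of order $pr$. If $p \notin \pi(A)$, then $\gcd(pr, |A|) = 1$, so \thref{orderNotDividing} together with $o(hA) \mid o(h)$ forces $o(hA) = pr$, which is~(a). The main obstacle is the remaining case $p \in \pi(A)$, which I would handle by splitting on whether $h^r \in A$. If $h^r \in A$, set $a := h^r$ (order $p$) and $z := h^p$ (order $r$); one checks that $z \notin A$, since otherwise $h^p, h^r \in A$ would give $h \in A$ by B\'ezout applied to $\gcd(p, r) = 1$, contradicting $r \notin \pi(A)$. Hence $o(zA) = r$, and $z$ commutes with $a$ (both are powers of $h$), so~(b) holds. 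If instead $h^r \notin A$, then $o(hA) \in \{1, p, r, pr\}$ and the first three values are all excluded: $o(hA) \in \{1, r\}$ would place $h^r$ in $A$, while $o(hA) = p$ would place $h^p$ in $A$, producing an element of order $r$ inside $A$ against $r \notin \pi(A)$. Hence $o(hA) = pr$, giving~(a).
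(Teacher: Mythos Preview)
Your proof is correct and follows essentially the same route as the paper's. The only notable difference is organizational: in the ``only if'' direction the paper assumes $\neg$(a) and uses \thref{orderNotDividing} to force $o(gA)=r$ in one stroke (then takes $a=g^r$), whereas you case-split first on whether $p\in\pi(A)$ and then on whether $h^r\in A$; similarly, for the ``if'' direction under (b) the paper observes $\langle a,g\rangle$ is abelian of order divisible by $pr$, while you explicitly extract a power $g'$ of order $r$---but these are cosmetic variations of the same argument.
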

\begin{proof}
    Since $A$ is abelian, (1) is obvious. For the forward direction of (2), given that $p-r \in \pg(G)$, we will assume $p-r \notin \pg(G/A)$ and show that (2b) holds. Let $g\in G$ be an element of order $pr$. By \thref{orderNotDividing}, $o(gA)$ must be a multiple of $r$. Furthermore, $o(gA) \neq pr$ since $p-r \notin \pg(G/A)$. The only remaining possibility is that $o(gA) = r$, which tells us $g^r \in A$. Letting $a = g^r$, we see that $a$ is fixed under $\varphi_{gA}$, and $a \neq 1$ since $g$ has order $pr$. Thus, (2b) is satisfied.

    For the backward direction of (2), $p-r \in \pg(G/A) \implies p-r \in \pg(G)$ is clear. It remains to show that (2b) implies $p-r \in \pg(G)$. Let $gA \in G/A$ and $a \in A$ be elements satisfying (2b). Then $g$ has order a multiple of $r$. Because $a = \varphi_{gA}(a) = g^{-1}ag$, we know that $a$ commutes with $g$. So $\langle a, g \rangle$ is an abelian subgroup of $G$ with order a multiple of $pr$. Thus it has an element of order $pr$, and $p-r\in \pg(G)$.
\end{proof}
It will be useful to restate \thref{primeGraphOfAbelianExtension} in terms of the prime graph complement:
\begin{corollary} \thlabel{representationSemidirectProduct}
    Let $G$ be a finite group, and let $A$ be an abelian normal subgroup of $G$. Consider the action $\varphi: G/A \to \Aut(A)$ defined by $\varphi_{gA}(a) = g^{-1}ag$. Then the prime graph complement $\pgc(G)$ can be characterized as follows:
    \begin{enumerate}
        \item If $p,r \in \pi(A)$, then $p-r \notin \pgc(G)$. 
        \item If $p\in \pi(G)$ and $r \in \pi(G) \setminus \pi(A)$, then $p-r \notin \pgc(G)$ if and only if one of the following hold:
        \begin{enumerate}
            \item $p \in \pi(G/A)$ and $p-r \notin \pgc(G/A)$.
            \item There exists $gA \in G/A$ of order $r$ such that $\varphi_{gA}$ fixes an element $a \in A$ of order $p$.
        \end{enumerate}
    \end{enumerate}
\end{corollary}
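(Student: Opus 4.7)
The plan is to deduce this corollary directly from \thref{primeGraphOfAbelianExtension} by dualizing each assertion. Since for primes $p,r \in \pi(G)$ the edge $p-r$ lies in $\pgc(G)$ if and only if it does not lie in $\pg(G)$, each "if and only if" in the corollary should arise from negating the corresponding "if and only if" in the theorem. Part (1) is immediate: since $A$ is abelian, if $p,r \in \pi(A)$ then $A$ contains an element of order $pr$, giving $p-r \in \pg(G)$ and hence $p-r \notin \pgc(G)$ by part (1) of the theorem.

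For part (2), I would start from the theorem's characterization of when $p-r \in \pg(G)$, namely that (2a) or (2b) of the theorem holds. Condition (2b) of the corollary is identical to (2b) of the theorem and requires no translation. The remaining task is to show that "$p-r \in \pg(G/A)$" is equivalent to the corollary's condition "$p \in \pi(G/A)$ and $p-r \notin \pgc(G/A)$." Since $r \in \pi(G)\setminus\pi(A)$, the primes dividing $|G|/|A| = |G/A|$ must include $r$, so $r \in \pi(G/A)$ automatically. Thus $p-r$ is a potential edge of $\pg(G/A)$ precisely when the additional requirement $p \in \pi(G/A)$ is met, and in that case lying in $\pg(G/A)$ is the same as not lying in $\pgc(G/A)$.

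The only subtle point, which is also the reason the corollary has to explicitly add the clause $p \in \pi(G/A)$, is the boundary case $p \notin \pi(G/A)$: there $\pgc(G/A)$ has no vertex $p$, so "$p-r \notin \pgc(G/A)$" would hold vacuously and wrongly force $p-r \notin \pgc(G)$. Requiring $p \in \pi(G/A)$ sidesteps this gap cleanly, after which the translation is a routine rewording. I do not anticipate any other obstacles; the entire proof amounts to inserting the word "complement" and verifying this single vertex-existence issue.
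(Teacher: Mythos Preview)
Your proposal is correct and matches the paper's approach: the paper presents this corollary as an immediate restatement of \thref{primeGraphOfAbelianExtension} in terms of the complement and gives no explicit proof, so your careful dualization (including the observation about why the clause $p \in \pi(G/A)$ must be added in (2a)) is exactly the intended argument, just spelled out in more detail than the paper bothers with.
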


\begin{corollary} \thlabel{split}
    Since we are able to recover $\pgc(G)$ using just $A$, $G/A$, and $\varphi$, we conclude that any two extensions of $G/A$ by $A$ with the same action have the same prime graph. In particular, $\pgc(G) = \pgc(\frac{G}{A} \ltimes_\varphi A)$.
\end{corollary}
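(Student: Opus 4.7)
The plan is to deduce this directly from Corollary \thref{representationSemidirectProduct} by observing that the characterization of $\pgc(G)$ given there depends only on three pieces of data: the abelian normal subgroup $A$, the quotient $G/A$, and the conjugation action $\varphi\colon G/A \to \Aut(A)$. Since no other information about $G$ enters into the criteria (1), (2a), or (2b), any other group built from the same data must produce the same prime graph complement.

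First I would set $H = (G/A) \ltimes_\varphi A$ and verify that $H$ satisfies the hypotheses of Corollary \thref{representationSemidirectProduct} with exactly the same parameters. Identifying $A$ with $\{1\} \times A \trianglelefteq H$, it is immediate that $A$ is an abelian normal subgroup of $H$, that $H/A \cong G/A$, and that the conjugation action of $H/A$ on $A$ inside $H$ is, by the very construction of the semidirect product, the homomorphism $\varphi$. Thus the triples $(A, G/A, \varphi)$ extracted from $G$ and from $H$ are literally the same.

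Next I would apply Corollary \thref{representationSemidirectProduct} to both $G$ and $H$ and compare the resulting edge sets. Condition (1) produces the same set of non-edges among vertices in $\pi(A)$ because it only depends on $A$. Condition (2) for a pair $p \in \pi(G) = \pi(H)$ and $r \in \pi(G) \setminus \pi(A) = \pi(H) \setminus \pi(A)$ is phrased entirely in terms of $G/A$ and $\varphi$ (via (2a) and (2b)), so it yields the same non-edges for both groups. Hence $\pgc(G) = \pgc(H)$, which gives the second assertion. The first assertion then follows immediately: if $G_1$ and $G_2$ are any two extensions of $G/A$ by $A$ realizing the same action $\varphi$, applying what we just proved to each of them yields $\pgc(G_1) = \pgc(H) = \pgc(G_2)$.

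There is no real obstacle here; the only minor care needed is to make sure that the ``action'' associated with the semidirect product $(G/A) \ltimes_\varphi A$ in the sense of Corollary \thref{representationSemidirectProduct} (namely conjugation of the normal copy of $A$ by coset representatives) coincides with the prescribed $\varphi$, which is built into the definition of the semidirect product.
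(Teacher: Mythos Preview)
Your proposal is correct and follows exactly the reasoning the paper intends: the corollary is stated without a separate proof because the justification is already embedded in its wording, namely that the characterization in Corollary~\thref{representationSemidirectProduct} depends only on $A$, $G/A$, and $\varphi$. Your write-up simply makes this explicit, and the only point of care you identify---that the conjugation action in $(G/A)\ltimes_\varphi A$ recovers $\varphi$---is indeed the one thing to check.
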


\begin{definition} \thlabel{semidirectProductWithVectorSpace}
    Let $G$ be a group, let $\mathbb{F}$ be a field, and let $\varphi: G \to \GL(n, \F)$ be a representation. Let $V = \F^n$ be the vector space upon which $\varphi(G)$ acts naturally. We will write $V^+$ to denote the additive group of $V$. Since $\GL(n, \F) = \GL(V) \subseteq \Aut(V^+)$, we have that $\varphi$ induces an action of $G$ on $V^+$. We define $G \ltimes_\varphi V$ to equal $G \ltimes V^+$ under this action.
\end{definition}
\begin{remark}
    Since $G\ltimes_\varphi V$ is an abelian extension of $G$ by $V^+$ with conjugation action $\varphi$, \thref{representationSemidirectProduct} applies.
\end{remark}
\begin{proposition} \thlabel{elementaryAbelianExtensionsToRepresentations}
    Let $T$ be a finite group, let $q$ be a prime number, and let $A\cong (C_q)^n$ be an elementary abelian $q$-group. Then for every extension $G \cong A.T$, there is a representation $\varphi: T \to \GL(n,q)$ such that $\pgc(G) = \pgc(T \ltimes_\varphi (\F_q)^n)$.
\end{proposition}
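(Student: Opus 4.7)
The plan is to observe that this proposition is essentially a bookkeeping consequence of Corollary \ref{split} once we translate conjugation actions on an elementary abelian $q$-group into the language of $\GL(n,q)$-representations. First, I would identify $A \cong (C_q)^n$ with the additive group $(\F_q)^n$, which automatically identifies $\Aut(A)$ with $\GL(n,q)$. Under this identification, any group homomorphism into $\Aut(A)$ is the same data as a representation into $\GL(n,q)$.

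Next, I would define $\varphi$ by starting from the conjugation action $\psi: G \to \Aut(A)$, $\psi_g(a) = g^{-1} a g$, and noting that since $A$ is abelian, inner conjugation by elements of $A$ acts trivially on $A$. Hence $\psi$ factors through $G/A \cong T$, yielding a well-defined homomorphism $\varphi: T \to \Aut(A) \cong \GL(n,q)$. This $\varphi$ is, by construction, exactly the conjugation action of the extension $G \cong A.T$ on its normal subgroup $A$.

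Finally, I would invoke Corollary \ref{split}: since both $G$ and the split extension $T \ltimes_\varphi A$ are extensions of $T$ by $A$ with the same conjugation action $\varphi$, they have the same prime graph complement. After the identification of $A$ with $(\F_q)^n$, the split extension $T \ltimes_\varphi A$ is exactly $T \ltimes_\varphi (\F_q)^n$ in the sense of Definition \ref{semidirectProductWithVectorSpace}, yielding $\pgc(G) = \pgc(T \ltimes_\varphi (\F_q)^n)$.

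There is no real obstacle here; the only point that requires a second of care is verifying that the conjugation action descends to $T = G/A$, which follows immediately from abelianness of $A$. Everything else is a direct translation of Corollary \ref{split} into the language of linear representations.
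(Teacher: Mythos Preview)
Your proposal is correct and follows essentially the same approach as the paper: identify $A$ with $(\F_q)^n$ so that $\Aut(A) = \GL(n,q)$, take $\varphi$ to be the conjugation action of $G/A \cong T$ on $A$, and apply Corollary~\ref{split} to conclude that $\pgc(G) = \pgc(T \ltimes_\varphi (\F_q)^n)$. The only cosmetic difference is order of presentation---the paper invokes Corollary~\ref{split} first to reduce to the split case and then reads off $\varphi$, whereas you define $\varphi$ directly from the extension and then apply Corollary~\ref{split}; the content is the same.
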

\begin{proof}
    By \thref{split}, we can assume that $G \cong T \ltimes A$. Identify $A$ with the additive group of $(\F_q)^n$. Let $\varphi: T \to \Aut(A)$ be the action in $G$. It remains to show that $\varphi$ is a representation. This is true because $\Aut(A) = \Aut(((\F_q)^n)^+) = \GL(n,q)$.
\end{proof}

\section{The Suzuki Group Sz(32)}\label{sec:Suzuki32}
\begin{figure}[H]
    \centering
    \begin{minipage}{.5\textwidth}
        \centering
        \begin{tikzpicture}
            % Define coordinates for the vertices
            \coordinate (A) at (0, 1);   % Vertex 2
            \coordinate (B) at (1, 1);   % Vertex 7
            \coordinate (C) at (1, 0);   % Vertex 13
            \coordinate (D) at (0, 0);   % Vertex 5
            % Draw the edges of the complete graph
            \draw (B) -- (C);
            \draw (B) -- (D);
            \draw (C) -- (D);
            \draw (A) -- (D);
            \draw (A) -- (B);
            \draw (A) -- (C);
            % Draw the vertices (circles only)
            \foreach \point in {A, B, C, D}
                \draw[draw, fill=white] (\point) circle [radius=0.2cm];
            % Draw the text on top of the circles
            \foreach \point/\name in {(A)/2, (B)/31, (C)/5, (D)/41}
                \node at \point {\scriptsize\name};
        \end{tikzpicture}
        \caption*{$\pgc(\Sz(32))$}
    \end{minipage}%
    \begin{minipage}{.5\textwidth}
        \centering
        \begin{tikzpicture}
            % Define coordinates for the vertices
            \coordinate (A) at (0, 1);   % Vertex 2
            \coordinate (B) at (1, 1);   % Vertex 7
            \coordinate (C) at (1, 0);   % Vertex 13
            \coordinate (D) at (0, 0);   % Vertex 5
            % Draw the edges of the complete graph
            \draw (B) -- (C);
            \draw (B) -- (D);
            \draw (C) -- (D);
            \draw (A) -- (D);
            \draw (A) -- (B);
            % Draw the vertices (circles only)
            \foreach \point in {A, B, C, D}
                \draw[draw, fill=white] (\point) circle [radius=0.2cm];
            % Draw the text on top of the circles
            \foreach \point/\name in {(A)/2, (B)/31, (C)/5, (D)/41}
                \node at \point {\scriptsize\name};
        \end{tikzpicture}
        \caption*{$\pgc(\Aut(\Sz(32)))$}
    \end{minipage}
\end{figure}
In this section, we prove the results necessary for a complete classification of the prime graph complements of $\Sz(32)$-solvable groups and present their
classification in Theorem \ref{Sz32Classification}. It is a fact that $\pi(\Sz(32)) = \pi(\Aut(\Sz(32)))$ \cite{2023REU}. It is known that $\operatorname{Out(Sz(32))} \cong C_5$ and the Schur multiplier of Sz(32) is trivial \cite{2023REU}. The order of Sz(32) is large ($2^{10}\cdot5^2\cdot31\cdot41$), which made relying on GAP computations to realize certain prime graph complements of Sz(32)-solvable groups infeasible. To escape this difficulty, we turned to modular character theory. Additionally, using GAP to calculate the fixed points for \thref{totaldisconnect} was also infeasible due to the size of the group, so we used a bounding argument. 
\begin{theorem}\thlabel{totaldisconnect}
    Let $G$ be an $\Sz(32)$-solvable group. Then $\pgc(G)$ has no edges of the form $r - p$ for $r\in\pi(\Sz(32))$ and $p\in\pi(G) \setminus \pi(\Sz(32))$.
\end{theorem}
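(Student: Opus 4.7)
The plan is to translate the nonexistence of edges $r-p$ in $\pgc(G)$ into the existence of fixed vectors for actions of elements of order $r$ on suitable elementary abelian $p$-subquotients, then apply modular character theory together with a bounding argument on character values. By \thref{representationSemidirectProduct}, finding an element of order $rp$ in $G$ reduces to finding a nontrivial fixed vector in some $\F_p H$-module on which $H \leq \Aut(\Sz(32))$ acts with an element of order $r$.

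First, I would perform the structural reduction. Since $\pi(\Sz(32)) = \pi(\Aut(\Sz(32)))$, no prime $p \notin \pi(\Sz(32))$ divides the order of any $\Sz(32)$-type composition factor or its outer automorphism group, so the Sylow $p$-subgroups of $G$ lie in the solvable radical. Passing to an appropriate chief factor and using \thref{split} and \thref{elementaryAbelianExtensionsToRepresentations}, we may restrict to a subquotient of the form $V \rtimes H$, where $V$ is an elementary abelian $p$-group, $H \leq \Aut(\Sz(32))$ contains an element $g$ of prime order $r$, and $V$ is an $\F_p H$-module. It then suffices to show $C_V(g) \neq 0$, since the resulting element of order $rp$ in the subquotient lifts to an element of order divisible by $rp$ in $G$, a power of which has order exactly $rp$.

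Next I would invoke modular character theory. Because $p \nmid |H|$, every element of $H$ is $p$-regular and the irreducible Brauer characters of $H$ mod $p$ coincide with its ordinary irreducible characters. By \thref{MaslovaPaperLemma}, for each absolutely irreducible constituent of $V$ with ordinary character $\chi$,
\[
\dim C_V(g) \;=\; \frac{1}{r}\left(\chi(1) + \sum_{i=1}^{r-1}\chi(g^i)\right),
\]
so it suffices to produce one such $\chi$ for which this quantity is positive.

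Finally, the bounding argument (necessary because $|\Sz(32)| = 32537600$ is too large for brute-force computation of fixed-point dimensions) uses column orthogonality $\sum_\chi |\chi(g)|^2 = |C_H(g)|$ to obtain $|\chi(g^i)| \leq \sqrt{|C_H(g)|}$ for every irreducible $\chi$. The required inequality $\dim C_V(g) > 0$ then holds whenever $\chi(1) > (r-1)\sqrt{|C_H(g)|}$. With $|C_{\Sz(32)}(g)| \in \{32, 25, 31, 41\}$ for elements of orders $2, 5, 31, 41$ respectively, the right-hand side is small, and every nontrivial irreducible character of $\Sz(32)$ comfortably exceeds this threshold. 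The hard part will be handling the outer-automorphism extension $\Aut(\Sz(32)) = \Sz(32).5$: when $H = \Aut(\Sz(32))$ and $g$ lies outside $\Sz(32)$ (necessarily of order $5$), one must verify the bound against the character table of $\Aut(\Sz(32))$ and confirm directly that any borderline small-degree characters still admit fixed vectors.
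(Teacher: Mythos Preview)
Your overall strategy matches the paper's: both reduce the nonexistence of the edge $r-p$ to a fixed-point statement for elements of order $r$ acting on modules, then verify the fixed-point statement via character theory with a bounding argument. Your column-orthogonality bound $|\chi(g)|\le\sqrt{|C_H(g)|}$ is in fact a cleaner and more uniform alternative to the paper's case-by-case inspection of the character table (where, for instance, the characters of degree $1024$ are handled by observing that their values on order-$31$ elements are sums of two roots of unity, hence bounded by $2$). Both bounds give the same conclusion since the smallest nontrivial degree of $\Sz(32)$ is $775$.

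However, your structural reduction contains a genuine gap. The claim that ``the Sylow $p$-subgroups of $G$ lie in the solvable radical'' is false: take $G=\Sz(32)\wr C_p$ for any prime $p\notin\pi(\Aut(\Sz(32)))$; this is strictly $\Sz(32)$-solvable with trivial solvable radical, yet $p\mid|G|$. Consequently the passage to $V\rtimes H$ with $H\le\Aut(\Sz(32))$ is not justified. The paper avoids this entirely by citing two black-box results from \cite{2023REU}: for $r=2$ it invokes \cite[Proposition~2.2.2]{2023REU} (the Sylow $2$-subgroups of $\Sz(32)$ fail the Frobenius Criterion), and for $r\in\{5,31,41\}$ it invokes \cite[Corollary~2.2.6]{2023REU}, whose hypothesis (odd $r$, trivial Schur multiplier, fixed points in every irreducible complex representation of every perfect central extension of $T$) reduces the problem directly to the ordinary character table of $\Sz(32)$ itself---no $\Aut(\Sz(32))$ needed. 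Thus the ``hard part'' you anticipated, checking characters of $\Aut(\Sz(32))$ for outer order-$5$ elements, never arises. You should replace your ad hoc reduction by these citations (and note that \cite[Corollary~2.2.6]{2023REU} requires $r$ odd, so the $r=2$ case genuinely needs separate treatment).

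A minor numerical slip: the centralizer of an involution in $\Sz(32)$ has order $q^2=1024$, not $32$; your bound still holds since $775>\sqrt{1024}=32$.
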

\begin{proof}
    We first consider the case where $r=2$. Sylow 2-subgroups of $\Sz(32)$ do not satisfy the Frobenius Criterion \thref{frobcrit}. Then by \cite[Proposition 2.2.2]{2023REU}, for all $p\in \pi(G)\setminus\pi(\Sz(32))$, it is clear that $2-p \notin \overline{\Gamma}(G)$. 
    
    So now assume $r \in \{5, 31, 41 \}$. The Schur multiplier of $\Sz(32)$ is 1, so $\Sz(32)$ has no nontrivial perfect central extensions. By \cite[Corollary 2.2.6]{2023REU}, it suffices to show that for every irreducible representation $\varphi: \Sz(32) \to \GL(V)$ where $V$ is a vector space over $\C$, there exists some element $x \in \Sz(32)$ of order $r$ such that $\varphi_x$ fixes some point in $V \setminus \{0\}$. Let $\chi$ be the character corresponding to $\varphi$. By the formula in the explanation below Table 3 in \cite{2023REU}, for all $x\in \Sz(32)$ of order $r$ we have $\dim(C_V(x)) = \frac{1}{r}\mleft(\sum_{j=1}^r\chi(x^j)\mright)$. We will show our argument for $r = 31$.

    Our goal is to show that for all elements $x \in \Sz(32)$ of order 31 and all irreducible characters $\chi$, we have $\frac{1}{31}\mleft(\sum_{j=1}^{31}\chi(x^j)\mright) > 0$. We consider the character table of $\Sz(32)$, which can be found in GAP \cite{GAP} with the command \verb|Display(CharacterTable("Sz(32)"));|. 
    \begin{itemize}
        \item For the trivial character $\chi = \chi_1$, we have $\chi(x) = 1$ for all $x \in \Sz(32)$, so $\frac{1}{31}\mleft(\sum_{j=1}^{31}\chi(x^j)\mright) = \frac{1}{31}\mleft(\sum_{j=1}^{31}1\mright)= 1 > 0$.
        \item For $\chi \in \{\chi_2, \cdots, \chi_{13}\}$, we have that $\chi(1) = 775$ and $\chi(x) = 0$ for all $x \in \Sz(32)$ of order 31. So $\frac{1}{31}\mleft(\sum_{j=1}^{31}\chi(x^j)\mright) = \frac{1}{31}(775) > 0$.
        \item For $\chi \in \{\chi_{14}, \cdots, \chi_{29}\}$, we have that $\chi(1) = 1024$. For all $x \in \Sz(32)$ of order 31, $\chi(x)$ is a complex number equal to the sum of two roots of unity, which we'll call $u_x$ and $v_x$. So 
        \begin{align}
            \textstyle
            \mleft| \frac{1}{31}\mleft(\sum_{j=1}^{31}\chi(x^j)\mright) \mright| &=  \textstyle \frac{1}{31} \mleft| 1024 + \sum_{j = 1}^{30}(u_{x^j} + v_{x^j}) \mright| \\ 
            %&\geq \textstyle \frac{1}{31} \mleft( 1024 - \mleft| \sum_{j = 1}^{30}(u_{x^j} + v_{x^j} ) \mright| \mright) \\
            &\geq \textstyle \frac{1}{31} \mleft( 1024 - \sum_{j = 1}^{30}(|u_{x^j}| + |v_{x^j}|) \mright) \\
            &= \textstyle \frac{1}{31} \mleft( 1024 - 60 \mright) \\
            &> 0.
        \end{align}

        \item For $\chi \in \{\chi_{30}, \cdots, \chi_{35}\}$, we have that $\chi(1) = 1271$ and $\chi(x) = 0$ for all $x \in \Sz(32)$ of order 31. So $\frac{1}{31}\mleft(\sum_{j=1}^{31}\chi(x^j)\mright) = \frac{1}{31}(1271) > 0$.
    \end{itemize}
    The cases $r=5$ and $r=41$ follow similarly.
\end{proof}

The next theorem relates the fixed points of Brauer characters to prime graph complements of $T$-solvable groups for $T$ a finite simple group. We use it primarily to construct $T$-solvable groups with specific prime graphs that would be too computationally expensive to test.
\begin{theorem} \thlabel{graphExistenceFromBrauerTable}
        Let $T$ be a finite simple group, let $p \in \pi(T)$, and let $\chi$ be an irreducible Brauer character of $T$ mod $p$. Define $R \subseteq \pi(T)$ to be the set of primes $r$ for which there exists some $t\in T$ of order $r$ such that %$\frac{1}{\operatorname{o}(t)}\sum_{z = 1}^{\operatorname{o}(t)} \chi(t^z) > 0$
        $\frac{1}{\operatorname{o}(t)}\sum_{x \in \langle t \rangle}\chi(x) > 0$.
        %(given in \cite[Lemma 6.2]{BrauerTableFP})
        Then there exists a finite strictly $T$-solvable group $G$ such that $\overline{\Gamma}(G) = \overline{\Gamma}(T) \setminus\{p - r \mid r\in R\}$.
\end{theorem}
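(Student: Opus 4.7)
The plan is to realize the Brauer character $\chi$ as an honest representation of $T$ over a finite field of characteristic $p$, take the semidirect product of $T$ with the corresponding elementary abelian $p$-group, and then read off the prime graph complement via \thref{representationSemidirectProduct} combined with \thref{MaslovaPaperLemma}.

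First I would choose a finite field $F$ of characteristic $p$ large enough to be a splitting field for $T$, and let $V$ be the absolutely irreducible $FT$-module whose Brauer character is $\chi$. Viewing $V$ as an $\F_p$-vector space of some dimension $n$ yields a representation $\varphi: T \to \GL(n, \F_p)$. Set $G := T \ltimes_{\varphi} V^+$, where $V^+$ denotes the additive group of $V$. Since $V^+$ is an elementary abelian $p$-group and $G/V^+ \cong T$, every composition factor of $G$ is either cyclic of order $p$ or isomorphic to $T$, with $T$ itself appearing; hence $G$ is strictly $T$-solvable. Moreover $\pi(G) = \pi(T)$, since $p \in \pi(T)$.

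Next I would apply \thref{representationSemidirectProduct} with $A = V^+$ to determine which edges lie in $\pgc(G)$. For primes $q, r \in \pi(T) \setminus \{p\}$, condition (2)(b) of that corollary cannot hold because $A$ contains no element of order $q$, so $q - r \notin \pgc(G)$ if and only if $q - r \notin \pgc(T)$; thus $\pgc(G)$ and $\pgc(T)$ agree on edges not involving $p$. For an edge $p - r$ with $r \in \pi(T) \setminus \{p\}$, condition (2)(a) forces $p - r \notin \pgc(G)$ whenever $p - r \notin \pgc(T)$, while condition (2)(b) gives $p - r \notin \pgc(G)$ exactly when there exists $t \in T$ of order $r$ such that $\varphi_t$ fixes a nonzero vector of $V$, i.e., $C_V(t) \neq 0$.

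Finally I would invoke \thref{MaslovaPaperLemma}, which for $t \in T$ of prime order $r \neq p$ gives $\dim C_V(t) = \frac{1}{r}\sum_{x \in \langle t \rangle}\chi(x)$; note that $R \subseteq \pi(T) \setminus \{p\}$ automatically, since $\chi$ is only defined on $p$-regular elements. Hence $C_V(t) \neq 0$ for some $t$ of order $r$ precisely when $r \in R$, and combining the two cases above yields $p - r \in \pgc(G)$ if and only if $p - r \in \pgc(T)$ and $r \notin R$, which is exactly the desired identity $\pgc(G) = \pgc(T) \setminus \{p - r : r \in R\}$. The only mildly technical point is realizing $\chi$ as a representation over a finite field while retaining absolute irreducibility, but this is a standard fact from modular representation theory; the rest of the argument is bookkeeping on the two lemmas.
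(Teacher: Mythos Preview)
Your proof is correct and follows essentially the same approach as the paper: realize $\chi$ by an absolutely irreducible module over a finite field of characteristic $p$, form the semidirect product $G = T \ltimes V^+$, and read off the prime graph complement by combining \thref{representationSemidirectProduct} with \thref{MaslovaPaperLemma}. The only cosmetic difference is that the paper first works over the algebraic closure $\overline{\mathbb{F}_p}$ and then descends to a finite subfield via the finitely many matrix entries, whereas you begin directly over a finite splitting field; your route is slightly cleaner and your verification of the edge cases in \thref{representationSemidirectProduct} is in fact more explicit than the paper's.
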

\begin{proof}
    Let $\varphi: T\to \GL(n, \mathbb{F})$ be an irreducible representation corresponding to $\chi$, where $\mathbb{F}$ is the algebraic closure of $\mathbb{F}_p$ by \cite[Lemma 2.1]{Navarro}. 
    By \cite[Corollary 9.4]{IsaacsCharacters}, $\mathbb{F}$ is a splitting field for every group. Thus $\varphi$ is absolutely irreducible, and the vector space $V = \mathbb{F}^n$ on which $\varphi(T)$ acts is an absolutely irreducible $\mathbb{F}T$-module. By \thref{MaslovaPaperLemma}, for all $t \in T$ of prime order coprime to $p$ we have that $\frac{1}{o(t)}\sum_{x \in \langle t \rangle}\chi(x) = \operatorname{dim}C_V(t)$. Thus, $R$ is the set of primes $r$ for which there exists some $t \in T$ of order $r$ such that $\varphi(t)$ fixes a point in $V\setminus \{0\}$. For each $r \in R$, fix a representative $t_r \in T$ with this property.

    By \cite[Page 22]{Brawley} we have that 
    %$\mathbb{F} = \bigcup_{k = 1}^\infty \F_{p^{k!}}$ in the sense that 
    $\mathbb{F}$ is the union of the tower of fields 
    \begin{equation}
        \F_{p^{1!}} \subseteq \F_{p^{2!}} \subseteq \F_{p^{3!}} \subseteq \cdots.
    \end{equation}

    Let $U\subseteq \F$ be the set of all entries of every matrix in $\varphi(T)$. Since $U$ is finite, there exists some $m \in \Z^+$ such that $U \subseteq \F_{2^{m!}}$. So $\varphi$ induces an action of $T$ on $\widetilde{V} = (\F_{2^{m!}})^n$. Recall that a linear transformation has a nonzero fixed point if and only if 1 is an eigenvalue. For all $r\in R$ we see that $\varphi(t_r)$ still fixes points in $\widetilde{V}\setminus \{0\}$ since the characteristic polynomial of the matrix corresponding to $\varphi(t_r)$ has not changed, meaning 1 is still an eigenvalue. Letting $G = T \ltimes_\varphi \widetilde{V}$, we see that $G$ is finite, and by \thref{representationSemidirectProduct}, $\pgc(G) = \pgc(T)\setminus\{p-r \mid r\in R\}$.
\end{proof}
\begin{corollary}\thlabel{ElusiveGraphExistence}
        There is an $\Sz(32)$-solvable group which realizes the graph \SuzukiThirtyTwoElusiveGraph on four vertices consisting of a triangle plus one edge.
%     \begin{figure}[h!]
%     \centering
%     \begin{tikzpicture}
%          \node[shape=circle,draw=black] (q_4) at (12,1) {};
%         \node[shape=circle,draw=black] (p_4) at (12,3) {};
%         \node[shape=circle,draw=black] (s_4) at (14.5,1) {};
%         \node[shape=circle,draw=black] (r_4) at (14.5,3) {};

%         \path [-] (q_4) edge node[left] {$ $} (r_4);
%         \path [-](r_4) edge node[left] {$ $} (s_4);
%         \path [-](q_4) edge node[left] {$ $} (p_4);
%         \path [-](q_4) edge node[left] {$ $} (s_4);
%     \end{tikzpicture}
%         \caption{Elusive Graph}
%         \label{fig:elusive}
% \end{figure}
\end{corollary}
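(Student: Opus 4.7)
My plan is to apply Theorem \thref{graphExistenceFromBrauerTable} directly. Since $\pgc(\Sz(32))$ is the complete graph on $\{2, 5, 31, 41\}$, and \SuzukiThirtyTwoElusiveGraph is the paw graph (a triangle plus a pendant edge on four vertices), the target is obtained from $\pgc(\Sz(32))$ by removing exactly two edges meeting at a single vertex. Thus it suffices to produce a prime $p \in \pi(\Sz(32))$ and an irreducible Brauer character $\chi$ of $\Sz(32)$ mod $p$ for which the set $R$ from Theorem \thref{graphExistenceFromBrauerTable} satisfies $|R| = 2$, so that the pendant vertex is $p$ and the remaining three primes form the triangle.

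The natural first choice is $p = 2$, reducing the problem to finding an irreducible Brauer character $\chi$ of $\Sz(32)$ in characteristic $2$ such that, among the three primes in $\{5, 31, 41\}$, exactly two of them admit an element $t$ satisfying $\tfrac{1}{|t|}\sum_{x \in \langle t \rangle}\chi(x) > 0$. I would then use GAP's character table library, via \verb|BrauerTable(CharacterTable("Sz(32)"), 2)|, to loop over the irreducible Brauer characters mod $2$ and evaluate this quantity on each conjugacy class of prime-order elements, invoking Lemma \thref{MaslovaPaperLemma} to interpret the sums as fixed-point dimensions. Once a character with $|R| = 2$ is identified, Theorem \thref{graphExistenceFromBrauerTable} immediately yields a strictly $\Sz(32)$-solvable group $G$ with $\pgc(G)$ isomorphic to \SuzukiThirtyTwoElusiveGraph.

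The main obstacle is locating an appropriate irreducible Brauer character, since the obvious candidate fails. The natural $4$-dimensional module of $\Sz(32)$ over $\mathbb{F}_{32}$ is inadequate: elements of orders $31$ and $41$ lie in maximal tori that act regularly on the natural module, so their eigenvalues are nontrivial roots of unity, yielding no fixed vectors and hence $|R| \leq 1$. One must therefore examine higher-dimensional irreducible Brauer characters---which arise via Steinberg's tensor product theorem from Frobenius twists of the natural module---to find one whose fixed-point pattern on $\{5, 31, 41\}$ has weight exactly $2$. If the search in characteristic $2$ fails, I would repeat the analysis for $p \in \{5, 31, 41\}$ using the corresponding mod-$p$ Brauer tables; since the paw graph has a unique pendant vertex but the labels $\{2, 5, 31, 41\}$ may play any role under graph isomorphism, any choice of $p$ producing $|R| = 2$ will complete the proof.
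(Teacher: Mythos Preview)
Your approach is essentially the same as the paper's: the paper also takes $p=2$, inspects the mod-$2$ Brauer table of $\Sz(32)$ in GAP, and identifies the specific character $\chi_{17}$ as one for which $R=\{5,31\}$, then invokes \thref{graphExistenceFromBrauerTable}. Your discussion of why the natural $4$-dimensional module is insufficient and why one must look among the higher-dimensional Steinberg tensor products is additional helpful context that the paper omits, but the core argument is identical.
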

\begin{proof}
    % Consider a representation $\varphi: G \to \GL(64, \F)$ corresponding to the Brauer Character $\chi_{17}$ for $G = \Sz(32)$ mod 2. By \cite[Lemma 2.1]{Navarro} $\mathbb{F}$ is the algebraic closure of $\mathbb{F}_2$. By \cite[Page 22]{Brawley} we have that $\mathbb{F} = \bigcup_{k = 1}^\infty \F_{2^{k!}}$, such that $\mathbb{F}$ is the union of the tower of fields 
    % \begin{equation}
    %     \F_{2^{1!}} \subseteq \F_{2^{2!}} \subseteq \F_{2^{3!}} \subseteq \cdots.
    % \end{equation}
    % The Brauer character $\chi_{17}$ gives fixed point information for $G$ acting on $V = \F^{64}$. By \thref{BrauerTableStuff}, there is a $g_1 \in G$ of order 5 which fixes some point $v_1\in V\setminus\{0\}$ and a $g_2 \in G$ of order 31 which fixes some point $v_2 \in V\setminus \{0\}$. Let $A\subseteq \F$ be the set of all entries of the vector $v_1$, and similarly let $B\subseteq \F$ be the set of all entries of $v_2$. Let $U\subseteq \F$ be the set of all entries of every matrix in $\varphi(G)$. Since $A$, $B$, and $U$ are finite, there exists some $n \in \mathbb{N}$ such that $A \cup B \cup U\subseteq\F_{2^{n!}}$. Then $G$ acts on $V' = (\F_{2^{n!}})^{64}$ with $g_1$ and $g_2$ still fixing $v_1$ and $v_2$ in $V' \setminus \{0\}$ respectively, and elements of order 41 not fixing points in $V'\setminus \{0\}$. By \thref{representationSemidirectProduct}, we have that the group $G \ltimes_\varphi V'$ realizes the elusive graph.
    We consider the Brauer table of $\Sz(32)$ mod 2, which can be found in GAP \cite{GAP} with the command \verb|Display(BrauerTable("Sz(32)",2));|. Consider the Brauer character $\chi_{17}$. We observe that there exist elements $t \in \Sz(32)$ of orders 5 and 31 respectively such that $\frac{1}{o(t)}\sum_{x \in \langle t \rangle}\chi_{17}(x) > 0$, but there is no such $t$ of order 41. It follows from \thref{graphExistenceFromBrauerTable} that there exists a finite $\Sz(32)$-solvable group $G$ such that $\pgc(G) \cong \SuzukiThirtyTwoElusiveGraph$.
    % Let $g\in G$ such that $o(g) = 5$. By \cite{BrauerTableFP}, $\operatorname{dim}C_{V}(g) > 0$, so there exists a nontrivial fixed point of $g$. Similarly, for a $g\in G$ such that $o(g) = 31$, $\operatorname{dim}C_{V}(g) > 0$ by \cite{BrauerTableFP}. Now consider $g\in G$ such that $o(G) = 41$. By \cite{BrauerTableFP}, $C_{V}(g) = 0$, so there exist no nontrivial fixed points of $g$ in $V$. Thus, because the algebraic closure of $\mathbb{F}_2$ is an algebraically closed field, we have shown that there exists a module over an algebraically closed field together with a representation of $\Sz(32)$, an element of order 5 and an element of order 31 of $\Sz(32)$ have fixed points in $V \setminus \{0\}$ but no element of order 41 has fixed points in $V \setminus \{0\}$.
\end{proof}
%Now that we have constructed a Sz(32)-solvable group such that the prime graph complement of that group is \SuzukiThirtyTwoElusiveGraph exists, we have the following result:
\begin{lemma} \thlabel{4VertexGraphs}
    All graphs on 4 vertices can be realized as the prime graph complement of an $\Sz(32)$-solvable group.
\end{lemma}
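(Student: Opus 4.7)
My plan is to enumerate the eleven non-isomorphic simple graphs on 4 vertices and exhibit an $\Sz(32)$-solvable realization for each. The cases split cleanly based on whether the target graph contains a triangle, because by \thref{solvclass} no solvable group can produce a triangle in its prime graph complement, while $\Sz(32)$ itself already has $\pgc(\Sz(32)) = K_4$ and so provides plenty of triangles.

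\textbf{Triangle-free case.} Seven of the eleven graphs are triangle-free: the empty graph, a single edge, the matching $2K_2$, $P_3 \sqcup K_1$, the path $P_4$, the star $K_{1,3}$, and the cycle $C_4$. Each of these is bipartite, hence $2$-colorable (in particular $3$-colorable) and triangle-free. Thus \thref{solvclass} provides, for each, a solvable group $G$ with $\pgc(G)$ isomorphic to that graph; every such $G$ is trivially $\Sz(32)$-solvable, so this case requires no further argument.

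\textbf{Triangle-containing case.} The remaining four graphs are $K_3 \sqcup K_1$, the paw, $K_4 - e$, and $K_4$. I will exhibit strictly $\Sz(32)$-solvable realizations with $\pi(G) = \{2,5,31,41\}$: $K_4$ is $\pgc(\Sz(32))$ and $K_4 - e$ is $\pgc(\Aut(\Sz(32))) = \pgc(\Sz(32).C_5)$, both shown in the two figures opening this section; the paw is realized by \thref{ElusiveGraphExistence}. For $K_3 \sqcup K_1$, I take $G = \Sz(32) \times C_p$ for any $p \in \{2,5,31,41\}$: the generator of $C_p$ commutes with every element of $\Sz(32)$, so pairing it with an element of order $q \in \pi(\Sz(32)) \setminus \{p\}$ yields an element of order $pq$ in $G$ and isolates $p$ in $\pgc(G)$, while no element of order $qr$ for distinct $q, r \in \pi(\Sz(32)) \setminus \{p\}$ can exist in the direct product (since none exists in either factor and the $C_p$ component contributes no $q$ or $r$ to any element order), so the remaining three vertices retain their triangle from $\pgc(\Sz(32)) = K_4$.

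\textbf{Main obstacle.} The delicate step is confirming that $\Sz(32) \times C_p$ produces exactly $K_3 \sqcup K_1$ (rather than losing additional edges), but this reduces to the straightforward element-order calculation sketched above. The enumeration of $4$-vertex graphs (totals by edge count: $1,1,2,3,2,1,1$) and the dichotomy by triangle presence are routine, and no new Brauer-character analysis is needed beyond the work already carried out in \thref{ElusiveGraphExistence}.
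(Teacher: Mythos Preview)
Your proof is correct and takes essentially the same approach as the paper: dispatch the triangle-free graphs via \thref{solvclass}, then realize the four triangle-containing graphs by $\Sz(32)$, $\Aut(\Sz(32))$, \thref{ElusiveGraphExistence}, and $\Sz(32)\times C_p$ (the paper simply fixes $p=2$). Your explicit enumeration of all eleven graphs and the element-order check for the direct product are more detailed than the paper's version but add nothing substantively new.
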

\begin{proof}
    Let $\Xi$ be any graph on 4 vertices. If $\Xi$ is triangle-free and 3-colorable, then by \thref{solvclass} it is the prime graph complement of some solvable group. Up to isomorphism, there are four graphs on 4 vertices that are not both triangle-free and three-colorable:
    \begin{itemize}
        \item The complete graph \fourVertexCompleteGraph can be realized by $\Sz(32)$ itself.
        \item The complete graph minus one edge \hamSandwich can be realized by $\Aut(\Sz(32))$.
        \item The triangle plus one isolated vertex \trianglePlusIsolated can be realized by $\Sz(32) \times C_2$.
        \item The triangle plus one edge \SuzukiThirtyTwoElusiveGraph can be realized by \thref{ElusiveGraphExistence}.
    \end{itemize}
\end{proof}
We now are ready to state and prove the classification result for Sz(32)-solvable groups.
\begin{theorem}\thlabel{Sz32Classification}
Given a graph $\Xi$, we have that $\Xi$ is isomorphic to the prime graph complement of some $\Sz(32)$-solvable group if and only if one of the following is true: 
\begin{enumerate}
    \item $\Xi$ is triangle-free and 3-colorable.
    \item There exists a subset $X = \{a,b,c,d\} \subseteq V(\Xi)$ equal to its own closed neighborhood in $\Xi$ such that $X$ contains at least one triangle and $\Xi \setminus X$ is triangle-free and 3-colorable.
\end{enumerate}
\end{theorem}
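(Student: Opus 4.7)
The plan is to handle both directions by splitting on whether $\Sz(32)$ is a composition factor of $G$. If it is not, then $G$ is solvable, and both directions reduce immediately to \thref{solvclass}. If it is, I take $X := \pi(\Sz(32)) = \{2, 5, 31, 41\}$ as the witness subset for condition~(2).

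For the forward direction in the strict case, the key structural point is that every $\Sz(32)$-solvable group $G$ is $\pi$-separable, where $\pi := \pi(\Sz(32))$: every chief factor is either an elementary abelian $p$-group or a direct power $\Sz(32)^k$, and each of these is either a $\pi$-group or a $\pi'$-group. Hall theory for $\pi$-separable groups then provides a Hall $\pi'$-subgroup $H \leq G$, which must be solvable since a $\pi'$-group admits no $\Sz(32)$ composition factor. Because every $\pi'$-element of $G$ lies in some conjugate of $H$, I would deduce $\pgc(G)[\pi'] = \pgc(H)$, and this is triangle-free and 3-colorable by \thref{solvclass}. Combined with \thref{totaldisconnect}, which guarantees that $X$ equals its own closed neighborhood in $\pgc(G)$, this reduces the problem to a combinatorial dichotomy: if $\pgc(G)[X]$ contains a triangle, then condition~(2) is satisfied with this $X$; if it does not, then $\pgc(G)$ is globally triangle-free (any triangle would be confined to $X$ or to $\pi'$, both of which are triangle-free), and since every triangle-free graph on four vertices is bipartite, $\pgc(G)$ is 3-colorable, so condition~(1) holds.

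For the backward direction, condition~(1) reduces to \thref{solvclass} directly, as any solvable realization is automatically $\Sz(32)$-solvable. For~(2), identify $X$ with $\{2, 5, 31, 41\}$, apply \thref{4VertexGraphs} to obtain an $\Sz(32)$-solvable group $G_1$ with $\pi(G_1) = \{2, 5, 31, 41\}$ and $\pgc(G_1) \cong \Xi[X]$, and apply \thref{solvclass} to obtain a solvable $G_2$ realizing $\Xi \setminus X$ with prime labels disjoint from $\{2, 5, 31, 41\}$. The direct product $G := G_1 \times G_2$ is $\Sz(32)$-solvable, and the disjointness of the prime sets yields an element of every ``cross'' order $pq$, so $\pgc(G) \cong \pgc(G_1) \sqcup \pgc(G_2) \cong \Xi$.

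The main technical obstacle is the identification $\pgc(G)[\pi'] = \pgc(H)$: one direction is immediate from $H \leq G$, while the reverse requires the classical Hall-theoretic fact (valid for $\pi$-separable groups) that every $\pi'$-element of $G$ is conjugate into $H$, together with the verification of $\pi$-separability from the restricted chief-factor structure of $\Sz(32)$-solvable groups. A secondary check is that each of the four constructions in \thref{4VertexGraphs} witnessing a graph on four vertices that contains a triangle has prime set exactly $\{2, 5, 31, 41\}$; this is transparent from the three explicit groups and from the Brauer-character construction, where the adjoined module lives in characteristic~$2$.
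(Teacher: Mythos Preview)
Your proof is correct and follows essentially the same route as the paper: split on solvability, use \thref{totaldisconnect} to isolate $X = \pi(\Sz(32))$, control $\Xi \setminus X$ via a solvable Hall $\pi'$-subgroup (the paper cites \cite[Lemma~2.1.2]{2023REU} for this, while you invoke $\pi$-separability directly), and build the converse from \thref{4VertexGraphs} crossed with a solvable group on disjoint primes. Your explicit dichotomy on whether $\Xi[X]$ contains a triangle is actually more careful than the paper's own argument, which tacitly assumes condition~(2) in the strict case; the only place to tighten is the solvability of $H$, which needs one more line than ``a $\pi'$-group admits no $\Sz(32)$ composition factor'' --- intersect $H$ with a composition series of $G$ to see that each section of $H$ embeds in a cyclic group or in $\Sz(32)$, and a $\pi'$-subgroup of the latter is trivial.
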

\begin{proof}
Let $G$ be an $\Sz(32)$-solvable group with prime graph complement isomorphic to $\Xi$. If $G$ is solvable, then $\Xi$ is triangle-free and 3-colorable by \thref{solvclass}, so (1) holds. Otherwise, $G$ is strictly $\Sz(32)$-solvable, and we may use \thref{totaldisconnect} to find that there are no edges between $\pi(\Sz(32))$ and $V(\Xi) \setminus \pi(\Sz(32))$. Letting $X = \pi(\Sz(32))$, we see that $X$ equals its own closed neighborhood in $\Xi$. By \cite[Lemma 2.1.2]{2023REU}, we have that there is a solvable subgroup $N \leq G$ such that $\pi(N) \supseteq \pi(G) \setminus \pi(\Sz(32))$. Therefore, $\Xi\setminus X$ is triangle-free and 3-colorable.

%Conversely, if there exists a graph $\Lambda$ that is triangle-free and 3-colorable, then there exists a group $G$ such that $\overline{\Gamma}(G)\cong \Lambda$ by \cite{2015REU}.

Conversely, if $\Xi$ satisfies (1), then by \thref{solvclass} it can be realized by a solvable group. If $\Xi$ satisfies (2), then using \thref{4VertexGraphs} we can construct a group $G$ whose prime graph complement is isomorphic to the subgraph induced by $X$. Using the methods developed in \cite[Theorem 2.8]{2015REU}, we can construct a solvable group $N$ such that $(|N|, |G|)=1$ and $\overline{\Gamma}(N) \cong \Xi \setminus X$. Then $\pgc(G\times N) \cong \Xi$.
\end{proof}

%A consequence of this theorem is the four-colorablity of Sz(32)-solvable groups.
% \begin{corollary}
% Let $G$ be an $\Sz(32)$-solvable group. Then $\overline{\Gamma}(G)$ is 4-colorable. Furthermore, this result is best possible.
% \end{corollary}
% \begin{proof}
% If $\pgc(G)$ is 3-colorable then we are done. Otherwise, by \thref{Sz32Classification} we have a subset $X$ of 4 vertices equal to its own closed neighborhood in $\pgc(G)$ such that $\pgc(G) \setminus X$ is 3-colorable. Color the subgraph induced by $X$ using 4 colors, then color $\pgc(G) \setminus X$ using 3 colors. This result is best possible because $\pgc(\Sz(32))$ is the complete graph on 4 vertices, which has chromatic number 4.
% \end{proof}

\section{Reduction to Irreducible Representations}\label{sec:RIR}
The main goal of this section is to prove \thref{pGroupExtensionToGraphIntersection}, which states that given a finite group $T$ and a prime number $p\in \pi(T)$, the prime graph complements realizable by an extension of $T$ by a $p$-group are exactly those that can be written as a labeled graph product of prime graphs of groups arising from irreducible representations of $T$ over the field $\F_p$. Using \thref{pGroupExtensionToGraphIntersection} and \thref{graphExistenceFromBrauerTable} we prove \thref{GeneralizedModuleExtensions}, which allows us to extract these possible graphs from the Brauer table of $T$ mod $p$. %or irreducible representations of $T$ over the algebraic closure of $\F_p$.

\begin{lemma}\thlabel{Intersections}
        %Let $T$ be a finite group and let $P\subseteq \bigcup_{p\in\pi(T)} \operatorname{IBr}_p(T)$. Let $V_1, V_2, ...., V_n$ be the finite vector spaces constructed in \thref{graphExistenceFromBrauerTable} THIS REF IS NOT RIGHT? that correspond to a representation $\varphi_i$ which corresponds to $\chi_i\in P$. Let $R$ be the set of characteristics of the field each $V_i$ is over. Then there exists a finite strictly $T$-solvable group $G$ such that $\pgc(G) = \left(\bigcap_{1\leq i\leq n} \pgc(T\ltimes_{\varphi_i}V_i)\right)\setminus\{r-p|r, p\in R\}$.
        Let $N_1$, $N_2$, and $G$ be finite groups where $N_1$ and $N_2$ are abelian, and let $\varphi^1: G\to \Aut(N_1)$ and $\varphi^2: G \to \Aut(N_2)$ be homomorphisms. Let $\varphi^1 \times \varphi^2: G \to \Aut(N_1 \times N_2)$ be defined by $(\varphi^1\times \varphi^2)_g((n_1, n_2)) = (\varphi^1_g(n_1), \varphi^2_g(n_2))$. Then
        \begin{equation}
            \pgc(G \ltimes_{\varphi^1 \times \varphi^2} (N_1 \times N_2)) =
            \left(\pgc(G\ltimes_{\varphi^1}N_1) \sqcap \pgc(G\ltimes_{\varphi^2}N_2)\right) \setminus \{ r-p \mid r \in \pi(N_1), p \in \pi(N_2) \}.
        \end{equation}
\end{lemma}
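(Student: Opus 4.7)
The plan is to reduce the claimed identity of graph complements to a clean calculation about orders of elements in the three semidirect products. Set $H := G \ltimes_{\varphi^1\times\varphi^2}(N_1\times N_2)$, $H_1 := G\ltimes_{\varphi^1}N_1$, and $H_2 := G\ltimes_{\varphi^2}N_2$, so $\pi(H_1)\cup\pi(H_2) = \pi(G)\cup\pi(N_1)\cup\pi(N_2) = \pi(H)$ and the vertex sets on both sides agree.

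The key observation, which I would establish first, is that for $g\in G$, $n_1\in N_1$, and $n_2\in N_2$, the element $((n_1,n_2),g)\in H$ has order $\operatorname{lcm}(o_{H_1}((n_1,g)),\, o_{H_2}((n_2,g)))$. This follows by expanding the semidirect-product power $((n_1,n_2),g)^k$: its $G$-coordinate is $g^k$, and because the action of $G$ on $N_1\times N_2$ is componentwise, its $(N_1\times N_2)$-coordinate has $i$th entry depending only on $g$, $n_i$, and $\varphi^i$; thus $((n_1,n_2),g)^k=1$ iff $(n_i,g)^k=1$ in $H_i$ for each $i$.

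With this in hand, I would fix distinct primes $p,r\in\pi(H)$ and prove equivalence by a case analysis on the orders $m_i := o_{H_i}((n_i,g))$. An element of order $pr$ in $H$ corresponds to some triple $(g,n_1,n_2)$ with $\operatorname{lcm}(m_1,m_2)=pr$ and each $m_i\in\{1,p,r,pr\}$. If $m_1=pr$, then $(n_1,g)$ witnesses $p-r\notin\pgc(H_1)$; similarly if $m_2=pr$. Otherwise $\{m_1,m_2\}=\{p,r\}$, which forces $o(g)\mid\gcd(p,r)=1$, so $g=1$, and then $n_1,n_2$ are elements of prime orders $p,r$ (in some order) in $N_1,N_2$; this is precisely the ``removed-edge'' case of the right-hand side. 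Conversely, an order-$pr$ element of $H_i$ lifts to $H$ by padding the other factor with the identity, and if $p\in\pi(N_1)$, $r\in\pi(N_2)$, then an element $(n_1,n_2)\in N_1\times N_2$ with $o(n_1)=p$ and $o(n_2)=r$ has order $pr$ in $H$.

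The final step is to unpack the definition of the labeled graph product and the removed-edge set, and check that absence of $p-r$ from the right-hand side is exactly the disjunction of the three cases above. The only subtlety, which I expect to be the main obstacle, is to handle carefully the overlap $\pi(H_1)\cap\pi(H_2)\supseteq\pi(G)$: an edge $p-r$ is present in $\pgc(H_1)\sqcap\pgc(H_2)$ provided it is present in every factor whose vertex set contains both endpoints, so one must check that when, say, $p\in\pi(G)$ but $r\notin\pi(H_2)$, the edge is controlled by $\pgc(H_1)$ alone, and this matches the conclusion of the case analysis in $H$.
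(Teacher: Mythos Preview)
Your argument is correct, but it takes a genuinely different route from the paper's. The paper invokes its Corollary on prime graphs of abelian extensions (the fixed-point characterization, \thref{representationSemidirectProduct}) and then observes in one line that $(\varphi^1\times\varphi^2)_g$ has a nontrivial fixed point on $N_1\times N_2$ if and only if at least one of $\varphi^1_g$, $\varphi^2_g$ does on its factor; the graph identity then falls out of that corollary's case structure with no further work. You instead bypass the fixed-point machinery entirely and argue directly from element orders via the lcm identity $o_H((n_1,n_2),g)=\operatorname{lcm}(o_{H_1}(n_1,g),o_{H_2}(n_2,g))$, followed by a case split on the divisors of $pr$. Your approach is more elementary and self-contained (it would work even without the paper's earlier structural results on abelian extensions), at the cost of a longer case analysis and the bookkeeping you flag about when both endpoints lie in $\pi(H_i)$; the paper's approach is much shorter precisely because that bookkeeping has already been absorbed into \thref{representationSemidirectProduct}. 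Both are valid, and your lcm observation is a nice standalone fact.
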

\begin{proof}
    % Define a homomorphism $\varphi: T\to \operatorname{Aut}(V_1\times V_2 \times ... \times V_n)$ as $\varphi_t((a_1, a_2, ..., a_n)) = (\varphi_1(t)a_1, \varphi_2(t)a_2, ...., \varphi_n(t)a_n)$ for all $(a_1, a_2, ..., a_n)\in V_1\times V_2 \times ... \times V_n$. It is clear from \thref{graphExistenceFromBrauerTable} that $\pgc(T\ltimes_\varphi (V_1\times V_2\times...\times V_n)) = \left(\bigcap_{1\leq i\leq n} \pgc(T\ltimes_{\varphi_i}V_i)\right)\setminus\{r-p|r, p\in R\}$.
    By \thref{representationSemidirectProduct}, it suffices to show given any $g\in G$ that $(\varphi^1 \times \varphi^2)_g$ is fixed point free if and only if both $\varphi^1_g$ and $\varphi^2_g$ are fixed point free. If $(\varphi^1 \times \varphi^2)_g$ fixes $(n_1, n_2) \in N_1 \times N_2 \setminus \{(1,1)\}$, then assuming without loss of generality that $n_1 \neq 1$, we see that $\varphi^1_g$ fixes $n_1$. Conversely, if $\varphi^1_g$ fixed some $n_1 \in N_1\setminus \{1\}$, then $(\varphi^1 \times \varphi^2)_g$ would fix $(n_1, 1)$.
\end{proof}

\begin{corollary} \thlabel{directSumIntersection}
    Let $G$ be a finite group, let $V_1$ and $V_2$ be vector spaces over a finite field, and let $\varphi^1: G \to \GL(V_1)$ and $\varphi^2: G\to \GL(V_2)$ be representations of $G$. Let $\varphi^1 \oplus \varphi^2: G \to \GL(V_1 \oplus V_2)$ be defined by $(\varphi^1 \oplus \varphi^2)_g((v_1,v_2)) = (\varphi^1_g(v_1), \varphi^2_g(v_2))$. Then 
    \begin{equation}
        \pgc(G \ltimes_{\varphi^1 \oplus \varphi^2} (V_1 \oplus V_2)) = \pgc(G \ltimes_{\varphi^1} V_1) \sqcap \pgc(G \ltimes_{\varphi^2} V_2).
    \end{equation}
\end{corollary}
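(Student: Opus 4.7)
The plan is to deduce this directly from \thref{Intersections} by taking $N_i = V_i^+$, the additive group of $V_i$, and checking that the "extra" edges removed in \thref{Intersections} are vacuous in this setting.

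First, I would identify the underlying abelian groups: since $(V_1 \oplus V_2)^+ = V_1^+ \times V_2^+$ as abelian groups, and the homomorphism $\varphi^1 \oplus \varphi^2 \colon G \to \GL(V_1 \oplus V_2)$ induces the $G$-action on $(V_1 \oplus V_2)^+$ whose restriction to each factor is exactly $\varphi^i$ viewed as an action on $V_i^+$. Thus the action $\varphi^1 \oplus \varphi^2$ on $V_1^+ \times V_2^+$ agrees componentwise with the action $\varphi^1 \times \varphi^2$ appearing in \thref{Intersections}. This gives
\begin{equation}
    G \ltimes_{\varphi^1 \oplus \varphi^2} (V_1 \oplus V_2) \;=\; G \ltimes_{\varphi^1 \times \varphi^2} (V_1^+ \times V_2^+)
\end{equation}
as groups, and the individual semidirect products $G \ltimes_{\varphi^i} V_i$ coincide with $G \ltimes_{\varphi^i} V_i^+$.

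Next, I would apply \thref{Intersections} to obtain
\begin{equation}
    \pgc\bigl(G \ltimes_{\varphi^1 \oplus \varphi^2} (V_1 \oplus V_2)\bigr) \;=\; \bigl(\pgc(G \ltimes_{\varphi^1} V_1) \sqcap \pgc(G \ltimes_{\varphi^2} V_2)\bigr) \setminus \{\, r - q \mid r \in \pi(V_1^+),\ q \in \pi(V_2^+)\,\}.
\end{equation}
The only remaining step is to show the excluded edge set is empty. Since $V_1$ and $V_2$ are vector spaces over a finite field of a common characteristic $p$, each $V_i^+$ is an elementary abelian $p$-group, so $\pi(V_i^+) \subseteq \{p\}$. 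Therefore the set of excluded edges is contained in $\{p - p\}$, which is not an edge of any simple graph. Hence the removed set is empty and the claimed identity follows.

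There is no real obstacle here beyond the bookkeeping of matching the abelian-group structure on $V_1 \oplus V_2$ with the direct product $V_1^+ \times V_2^+$ and verifying that $\varphi^1 \oplus \varphi^2$ and $\varphi^1 \times \varphi^2$ describe the same action. Once that identification is made, \thref{Intersections} does all of the work, and the characteristic-$p$ condition on the vector spaces neutralizes the extra term in the statement of \thref{Intersections}.
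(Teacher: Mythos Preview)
Your proposal is correct and follows exactly the route the paper intends: the corollary is stated without proof because it is an immediate specialization of \thref{Intersections}, and you have carried out that specialization explicitly by identifying $(V_1\oplus V_2)^+$ with $V_1^+\times V_2^+$ and observing that $\pi(V_1^+)$ and $\pi(V_2^+)$ are both contained in $\{p\}$, so the set of excluded edges is empty.
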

% \begin{proof}
    % By \thref{representationSemidirectProduct}, it suffices to show given any $g\in G$ that $(\varphi^1 \oplus \varphi^2)_g$ is fixed point free if and only if both $\varphi^1_g$ and $\varphi^2_g$ are fixed point free. If $(\varphi^1 \oplus \varphi^2)_g$ fixes $(v_1, v_2) \in V_1 \oplus V_2 \setminus \{0\}$, then assuming without loss of generality that $v_1 \neq 0$, we see that $\varphi^1_g$ fixes $v_1$. Conversely, if $\varphi^1_g$ fixed some $v_1 \in V_1\setminus \{0\}$, then $(\varphi^1 \oplus \varphi^2)_g$ would fix $(v_1, 0)$.
% \end{proof}

\begin{lemma} \thlabel{splitRepresentation}
    Let $G$ be a finite group and $V$ be a vector space over a finite field $K$ of characteristic $q$. Given a reducible representation $\varphi: G \to \GL(V)$, there exists a $K$-vector space $\widetilde{V}$ of the same dimension as $V$ and a decomposable representation $\widetilde{\varphi}: G \to \GL(\widetilde{V})$ such that $\pgc(G \ltimes_\varphi V) = \pgc(G \ltimes_{\widetilde{\varphi}} \widetilde{V})$.
\end{lemma}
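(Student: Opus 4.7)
The plan is to exploit reducibility to obtain a proper nonzero $G$-invariant subspace $W \subseteq V$, and then set $\widetilde{V} := W \oplus (V/W)$ with $\widetilde{\varphi}$ the direct sum of the restriction $\varphi|_W$ and the representation $\overline{\varphi}$ induced on the quotient $V/W$. By construction $\widetilde{\varphi}$ is decomposable and $\dim \widetilde{V} = \dim W + \dim (V/W) = \dim V$, so the structural requirements of the statement are met for free; everything interesting lies in showing the prime graph complements agree.

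To match the two prime graph complements I would apply \thref{representationSemidirectProduct}. Since $V$ and $\widetilde{V}$ are $K$-vector spaces of characteristic $q$, every nonzero vector has order $q$, so $\pi(V^+) = \pi(\widetilde{V}^+) = \{q\}$. The corollary then forces all edges of $\pgc(G \ltimes_\varphi V)$ and $\pgc(G \ltimes_{\widetilde{\varphi}} \widetilde{V})$ not involving $q$ to be determined solely by $\pgc(G)$, so those edges automatically coincide. The whole problem therefore reduces to verifying that for each $g \in G$, the operator $\varphi_g$ has a nonzero fixed vector in $V$ if and only if $\widetilde{\varphi}_g$ has a nonzero fixed vector in $\widetilde{V}$.

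The key step is a block-triangular observation. Picking an ordered basis of $V$ that extends a basis of $W$, the matrix of $\varphi_g$ has the form $\bigl(\begin{smallmatrix} A & B \\ 0 & C \end{smallmatrix}\bigr)$, where $A$ and $C$ are the matrices of $\varphi_g|_W$ and $\overline{\varphi}_g$ respectively. Hence $\det(xI - \varphi_g) = \det(xI - A)\det(xI - C) = \det(xI - \widetilde{\varphi}_g)$, so $\varphi_g$ and $\widetilde{\varphi}_g$ have identical characteristic polynomials and, in particular, identical eigenvalue sets. Since having a nonzero fixed vector is equivalent to $1$ being an eigenvalue, the fixed-point criterion is preserved. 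Combining this with the reduction above yields $\pgc(G \ltimes_\varphi V) = \pgc(G \ltimes_{\widetilde{\varphi}} \widetilde{V})$.

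The only delicate direction is the backward implication: a nonzero fixed vector of $\widetilde{\varphi}_g$ living in the $V/W$ summand does not visibly lift to a nonzero fixed vector of $\varphi_g$ in $V$, because in positive characteristic one cannot simply invoke Maschke to split the extension $0 \to W \to V \to V/W \to 0$. The characteristic polynomial argument sidesteps this lifting entirely by detecting the eigenvalue $1$ at the level of determinants, and this is why I would use it rather than attempt an explicit construction of a lifted fixed vector.
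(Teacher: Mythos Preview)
Your proof is correct and takes a genuinely different route from the paper's. Both arguments set up the same $\widetilde{V}=W\oplus V/W$ and make the same reduction via \thref{representationSemidirectProduct} to a fixed-point equivalence, but they diverge on the backward implication. The paper restricts to $g$ of prime order $p\neq q$ and applies Maschke's theorem to the cyclic group $\langle g\rangle$: this produces a $\langle g\rangle$-invariant complement $U$ with $V=W\oplus U$, and a short computation then locates an explicit fixed vector inside $U$. Your characteristic-polynomial argument bypasses this entirely: the block-triangular shape gives $\det(xI-\varphi_g)=\det(xI-\widetilde{\varphi}_g)$, and over any field $1$ is a root of the characteristic polynomial iff $\ker(\varphi_g-I)\neq 0$. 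Your approach is shorter, works uniformly for every $g\in G$ (not only those of order coprime to $q$, though only those are needed), and avoids the case analysis; the paper's approach has the minor virtue of exhibiting the lifted fixed vector explicitly, but that information is never used downstream.
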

\begin{proof}
    Since $\varphi$ is reducible, $V$ has a $\varphi_G$-invariant nontrivial proper subspace $W$. We define $\widetilde{V} = W \oplus V/W$ and $\widetilde{\varphi}: G \to \GL(\widetilde{V})$ by $\widetilde{\varphi}_g((w, v+W)) = (\varphi_g(w), \varphi_g(v) + W)$ for $v\in V$ and $w\in W$. We must now show that $\pgc(G \ltimes_\varphi V) = \pgc(G \ltimes_{\widetilde{\varphi}} \widetilde{V})$. By \thref{representationSemidirectProduct}, it suffices to show given any $g\in G$ of prime order $p \neq q$ that $\varphi_g$ has fixed points on $V$ if and only if $\widetilde{\varphi}_g$ has fixed points on $\widetilde{V}$. 

    Forward direction: Given $\varphi_g(v) = v$ for some $v \in V\setminus \{0\}$, we split into two cases:
    \begin{itemize}
        \item If $v\in W$, then $\widetilde{\varphi}_g$ fixes $(v, 0+W)$. 
        \item If $v\notin W$, then $\widetilde{\varphi}_g$ fixes $(0, v+W)$.
    \end{itemize}
    
    Backward direction: Given $\widetilde{\varphi}_g((w, v+W)) = (w, v+W)$ for some $(w, v+W) \in \widetilde{V} \setminus \{0\}$, we split into two cases:
    \begin{itemize}
        \item If $w \neq 0$, then $\varphi_g$ fixes $w$.
        \item If $v+W\neq 0+W$, then $v\notin W$. We have that $\varphi_g(v) + W = v+W$, meaning $\varphi_g(v) = v+w_1$ for some $w_1 \in W$. Since o$(g) = p$ which is coprime to $q$, we can apply Maschke's theorem to get some $\varphi_g$-invariant subspace $U \leq V$ such that $V = W \oplus U$. Write $v = w_0 + u_0$ with $w_0 \in W$, $u_0 \in U$. Since $v \notin W$, we have $u_0 \neq 0$. Plugging this into our previous equation, we get $\varphi_g(w_0+u_0) = w_0 + u_0 + w_1$, which can be rearranged to become $\varphi_g(u_0) - u_0 = w_0 + w_1 - \varphi_g(w_0)$. The left side of the equation is in $U$ while the right side is in $W$, meaning both sides equal 0. From the left side we conclude $\varphi_g(u_0) = u_0$, meaning $\varphi_g$ fixes $u_0$.
    \end{itemize}   
\end{proof}

\begin{lemma} \thlabel{intersectionOfIrreducibles}
    Let $G$ be a finite group, let $\F$ be a finite field, let $V$ be a finite dimensional $\F$-vector space, and let $\varphi: G \to \GL(V)$ be a representation. Then there exists an $n\in \mathbb{Z}^+$ and $\F$-vector spaces $V_i$ and irreducible representation $\varphi_i: G \to \GL(V_i)$ for $i=1,\dots n$ such that $\pgc(G \ltimes_\varphi V) = \pgc(G \ltimes_{\varphi_1} V_1) \sqcap \cdots \sqcap \pgc(G \ltimes_{\varphi_n} V_n)$.
\end{lemma}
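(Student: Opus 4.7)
The plan is to proceed by strong induction on $\dim V$. The base case is when $\varphi$ is irreducible; this includes $\dim V = 1$, where irreducibility is automatic. In this case I would take $n = 1$, $V_1 = V$, and $\varphi_1 = \varphi$, noting that the labeled graph product with a single factor equals that factor.

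For the inductive step, assume $\varphi$ is reducible. I would first invoke \thref{splitRepresentation} to obtain an $\F$-vector space $\widetilde{V}$ of the same dimension as $V$ and a decomposable representation $\widetilde{\varphi}\colon G \to \GL(\widetilde{V})$ satisfying $\pgc(G \ltimes_\varphi V) = \pgc(G \ltimes_{\widetilde{\varphi}} \widetilde{V})$. The construction in that lemma produces $\widetilde{V} = W \oplus V/W$ for some nontrivial proper $\varphi_G$-invariant subspace $W$, with $\widetilde{\varphi}$ defined coordinatewise as the external direct sum of the restriction $\psi_1\colon G \to \GL(W)$ and the quotient $\psi_2\colon G \to \GL(V/W)$. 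Thus \thref{directSumIntersection} applies literally and gives
\begin{equation*}
\pgc(G \ltimes_{\widetilde{\varphi}} \widetilde{V}) = \pgc(G \ltimes_{\psi_1} W) \sqcap \pgc(G \ltimes_{\psi_2} V/W).
\end{equation*}

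Since $W$ is a nontrivial proper subspace of $V$, both $\dim W$ and $\dim(V/W)$ are strictly smaller than $\dim V$, so the inductive hypothesis applies to the representations $\psi_1$ and $\psi_2$. Each of $\pgc(G\ltimes_{\psi_1} W)$ and $\pgc(G\ltimes_{\psi_2} V/W)$ can therefore be written as a labeled graph product of prime graph complements of semidirect products coming from irreducible representations. Using the associativity (and commutativity) of $\sqcap$ noted in the notation list, I would then combine these two decompositions into a single such expression for $\pgc(G\ltimes_\varphi V)$.

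I do not anticipate any real obstacle in this argument: \thref{splitRepresentation} and \thref{directSumIntersection} do the structural work, and the induction terminates because the dimension strictly decreases at each recursive step. The only point that requires care is the verification that the representation $\widetilde{\varphi}$ produced inside \thref{splitRepresentation} coincides with the external direct sum $\psi_1 \oplus \psi_2$ required by \thref{directSumIntersection}; this is immediate from the explicit formula $\widetilde{\varphi}_g((w, v+W)) = (\varphi_g(w), \varphi_g(v)+W)$ given in that proof, with $\psi_1 = \varphi|_W$ and $\psi_2$ the induced representation on the quotient.
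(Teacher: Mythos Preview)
Your proposal is correct and takes essentially the same approach as the paper: both use \thref{splitRepresentation} followed by \thref{directSumIntersection} and terminate by a dimension argument. Your strong induction on $\dim V$ is a slightly cleaner packaging of the paper's ``somewhat informal'' iterative splitting, but the content is identical.
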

\begin{proof}
    We proceed, somewhat informally, by induction. Assume we are able to find $n\in \mathbb{Z}^+$ and representations $\varphi_1, \cdots, \varphi_n$ and vector spaces $V_1, \cdots, V_n$ such that $\pgc(G \ltimes_\varphi V) = \pgc(G \ltimes_{\varphi_1} V_1) \sqcap \cdots \sqcap \pgc(G \ltimes_{\varphi_n} V_n)$, with the property that $\sum_{i=1}^n \dim(V_i) = \dim(V)$. If all $\varphi_i$ are irreducible, then we are done. Otherwise, assume without loss of generality that $\varphi_1$ is reducible. By \thref{splitRepresentation}, there exists a vector space $\widetilde{V}_1$ with the same dimension as $V_1$ and a decomposable representation $\widetilde{\varphi}_1: G \to \GL(\widetilde{V}_1)$ such that $\pgc(G\ltimes_{\varphi_1} V_1) = \pgc(G\ltimes_{\widetilde{\varphi}_1} \widetilde{V}_1)$. We can then decompose $\widetilde{\varphi}_1: G \to \GL(\widetilde{V}_1)$ into $\widetilde{\varphi}_{11} \oplus \widetilde{\varphi}_{12}: G \to \GL(\widetilde{V}_{11} \oplus \widetilde{V}_{12})$. By \thref{directSumIntersection}, $\pgc(G\ltimes_{\widetilde{\varphi}_1} \widetilde{V}_1) = \pgc(G\ltimes_{\widetilde{\varphi}_{11}} \widetilde{V}_{11}) \sqcap \pgc(G\ltimes_{\widetilde{\varphi}_{12}} \widetilde{V}_{12})$. Thus we have 
    \begin{equation}
        \pgc(G \ltimes_\varphi V) = \pgc(G\ltimes_{\widetilde{\varphi}_{11}} \widetilde{V}_{11}) \sqcap \pgc(G\ltimes_{\widetilde{\varphi}_{12}} \widetilde{V}_{12}) \sqcap \cdots \sqcap \pgc(G \ltimes_{\varphi_n} V_n).
    \end{equation}
    It still holds that $\dim(V_{11}) + \dim(V_{12}) + \cdots + \dim(V_n) = \dim(V)$, so we can repeat our argument. This process must eventually terminate because $\dim(V)$ is finite.
    %\thref{splitRepresentation} allows us to find a direct sum of irreducible representations that produce the same prime graph as $\varphi$, and \thref{directSumIntersection} allows us to write $\pgc(G \ltimes_\varphi V)$ as an intersection of the prime graph complements of the direct summands.
\end{proof}

\begin{lemma} \thlabel{excludeEdge}
    Let $G$ be a finite group, let $p \in \pi(G)$, and let $N$ be a normal $p$-subgroup of $G$ such that $p \in \pi(G/N)$. Then given a prime $r \in \pi(G) \setminus \{p\}$ such that $p-r \notin \pgc(G)$, there exists an elementary abelian $p$-group $A$ and an action $\varphi: G/N \to \Aut(A)$ such that $\pgc(G) \subseteq \pgc(\frac{G}{N} \ltimes_\varphi A)$ and $p-r \notin \pgc(\frac{G}{N} \ltimes_\varphi A)$.
    %(Tentative): such that $A$ is a minimal normal subgroup of $\frac{G}{N} \ltimes_\varphi A$.
\end{lemma}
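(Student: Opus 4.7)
The plan is to construct $A$ as a carefully chosen $G$-chief factor inside $N$, equipped with the induced conjugation action. Start by picking $g \in G$ of order $pr$ (which exists since $p - r \notin \pgc(G)$) and setting $x := g^r$, $y := g^p$, so that $o(x) = p$, $o(y) = r$, and $[x, y] = 1$. A short calculation on $gN \in G/N$ shows that $o(gN) = r$ precisely when $x \in N$, and $o(gN) = pr$ otherwise; in the latter case $p - r$ is already an edge of $\Gamma(G/N)$, while in the former we need $\varphi$ to produce this edge.

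Treat the main case $x \in N$ first. Refine $\{1\} \lhd N$ to a chief series $\{1\} = N_0 \lhd N_1 \lhd \cdots \lhd N_m = N$ of $G$; each $N_j/N_{j-1}$ is elementary abelian since it is a $G$-chief factor inside a normal $p$-subgroup. Let $i$ be the smallest index with $x \in N_i$ and set $A := N_i/N_{i-1}$. A standard $p$-group argument in $\bar G := G/N_{i-1}$ shows that $[N, N_i] \leq N_{i-1}$: the minimal normal subgroup $\bar P := N_i/N_{i-1}$ of $\bar G$ meets the center of the nontrivial $p$-group $\bar N := N/N_{i-1}$ nontrivially, and by minimality $\bar P \cap Z(\bar N) = \bar P$. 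Hence the conjugation action of $G$ on $A$ descends to $\varphi : G/N \to \Aut(A)$, and since $[x, y] = 1$ and $xN_{i-1} \neq N_{i-1}$ by the choice of $i$, the automorphism $\varphi_{yN}$ fixes $xN_{i-1}$ nontrivially. Applying \thref{representationSemidirectProduct} with $H := (G/N) \ltimes_\varphi A$ then gives $p - r \notin \pgc(H)$.

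To verify $\pgc(G) \subseteq \pgc(H)$, observe that edges of $\Gamma(G/N) \subseteq \Gamma(G)$ transfer, so it suffices to show that each new edge $p - q$ (with $q$ a prime different from $p$) arising from some $\bar z \in G/N$ of order $q$ whose $\varphi_{\bar z}$ fixes a nonzero element of $A$ already lies in $\Gamma(G)$. Apply Schur-Zassenhaus to the preimage $N\langle \bar z \rangle$ of $\langle \bar z \rangle$ in $G$ (an extension of $N$ by $C_q$ with coprime orders) to lift $\bar z$ to an element $z \in G$ of order $q$. Then $\langle z \rangle$ acts coprimely on the $p$-group $N_i$ with $\langle z \rangle$-invariant subgroup $N_{i-1}$, so the standard coprime-action lemma yields that the natural map $C_{N_i}(z) \to C_{N_i/N_{i-1}}(z)$ is surjective. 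The assumed nonzero fixed vector then lifts to some $c \in C_{N_i}(z) \setminus N_{i-1}$; the abelian group $\langle c, z \rangle$ contains both a nontrivial $p$-element and $z$, hence an element of order $pq$, so $p - q \in \Gamma(G)$ as required.

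The remaining case $x \notin N$ is handled by the same construction: the edge $p - r$ is already realized in $\Gamma(G/N) \subseteq \Gamma(H)$, and taking any chief factor $A = N_j/N_{j-1}$ of $G$ inside $N$ (or $A$ trivial when $N = 1$) with the induced action reuses the new-edge analysis verbatim. The main obstacle is the coprime-action lifting step that converts a fixed point on the quotient $N_i/N_{i-1}$ into a genuine order-$pq$ element of $G$; this hinges on carefully applying the standard result that for a $p'$-group acting on a $p$-group modulo an invariant subgroup the centralizer surjects onto the centralizer of the quotient, together with the descent of the conjugation action through the $p$-group centrality argument in $\bar N$.
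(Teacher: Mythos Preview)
Your proof is correct, and it takes a genuinely different route from the paper's argument.

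The paper proceeds via the upper central series of $N$: it first passes to the largest quotient $G/N_i$ in which the $p$--$r$ edge of $\Gamma$ survives, then takes $A$ to be the maximal elementary abelian subgroup of $N_1 = Z(N)$ (after this reduction). The inclusion $\pgc(G) \subseteq \pgc((G/N)\ltimes_\varphi A)$ is then verified entirely within the paper's own framework: since $N_1$ is abelian, \thref{split} gives $\pgc(G) = \pgc((G/N_1)\ltimes N_1)$, and a fixed point of $gN$ on $A \leq N_1$ is automatically a fixed point of $gN_1$, so \thref{representationSemidirectProduct} finishes.

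You instead refine $N$ by a $G$-chief series, locate the specific chief factor containing the image of your chosen order-$p$ element $x = g^r$, and observe (via the standard minimal-normal-meets-center argument in $p$-groups) that any such chief factor is centralised by $N$, so the action descends to $G/N$. Your verification of the edge inclusion is also different: rather than relating back to an abelian normal subgroup of $G$ itself, you lift via Schur--Zassenhaus and invoke the coprime-action lemma $C_{P/P_0}(Q) = C_P(Q)P_0/P_0$ to pull a fixed point on $A$ back to a genuine centralised element in $N_i$.

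Your approach is more direct---no preliminary quotient reduction is needed, and the chief factor $A$ is elementary abelian from the start---at the cost of importing two external results. The paper's approach is more self-contained, using only the abelian-extension results of Section~\ref{sec:PGAE}, but requires the extra reduction step and the passage to the $p$-torsion subgroup of $N_1$.
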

\begin{proof}
    If $p-r \notin \pgc(G/N)$, then letting $A$ equal the trivial group works. So we may assume that $p-r \in \pgc(G/N)$. Since $N$ is a $p$-group, it is nilpotent. 
    %Applying \thref{nilpotentExtension}, 
    We thus get a normal series
    \begin{equation}
        G \trianglerighteq N = N_k \trianglerighteq N_{k-1} \trianglerighteq \cdots \trianglerighteq N_1 \trianglerighteq N_0 = \{1\}
    \end{equation}
    such that for all $i \in \{1, \cdots, k\}$ we have $\frac{N_i}{N_{i-1}} = Z(\frac{N_k}{N_{i-1}})$. In this normal series, take the largest $i$ such that $p-r \notin \pgc(\frac{G}{N_i})$. By quotienting every term in the series by $N_i$, we may assume without loss of generality that $i = 0$.

    Let $\widetilde{\varphi}: G/N_1 \to \Aut(N_1)$ be defined by $\widetilde{\varphi}_{gN_1}(n) = g^{-1}ng$. This is well-defined since $N_1$ is abelian. Since $p-r \notin \pgc(G)$ and $p-r \in \pgc(G/N_1)$, we have by \thref{representationSemidirectProduct} that there exists some $g_0N_1 \in G/N_1$ of order $r$ such that $\widetilde{\varphi}_{g_0N_1}$ fixes a nontrivial element $n_0 \in N_1$. 
    
    Let $\varphi: G/N_k \to \Aut(N_1)$ be defined by $\varphi_{gN_k}(n) = g^{-1}ng$. This is well-defined since $N_1 = Z(N_k)$. It is clear that $\varphi_{g_0N_k}$ fixes $n_0$. Additionally, $g_0N_k$ has order $r$ by \thref{orderNotDividing}. By taking a suitable power of $n_0$, we may assume without loss of generality that $o(n_0) = p$. Let $A = \{ n \in N_1 : n^p = 1 \}$ be the maximal elementary abelian subgroup of $N_1$. Since $A$ is characteristic, we have that $G/N_k$ acts on $A$ via $\varphi$. Since $n_0 \in A$ is still fixed by $g_0N_k$, we have by \thref{representationSemidirectProduct} that $p-r \notin \pgc(\frac{G}{N_k} \ltimes_\varphi A)$. 

        It remains to show that $\pgc(G) \subseteq \pgc(\frac{G}{N_k} \ltimes_\varphi A)$. Given distinct primes $q,s \in \pi(G)$ with $q-s \notin \pgc(\frac{G}{N_k} \ltimes_\varphi A)$, \thref{representationSemidirectProduct} gives us two cases:
    \begin{itemize}
        \item $q-s \notin \pgc(G/N_k)$. Then $q-s \notin \pgc(G)$, and we are done.
        \item There exists $gN_k \in G/N_k$ of order $s$ such that $\varphi_{gN_k}$ fixes an element $a \in A$ of order $q$. (Note that this implies $q=p$.) But then $gN_1 \in G/N_1$ is an element of order a multiple of $s$ such that $\widetilde{\varphi}_{gN_1}$ fixes $a$, so by taking a suitable power and applying \thref{representationSemidirectProduct}, we have $q-s \notin \pgc(\frac{G}{N_1} \ltimes_{\widetilde{\varphi}} N_1) = \pgc(G)$.
    \end{itemize}

\end{proof}
\begin{theorem}\thlabel{pGroupExtensionToGraphIntersection}
    Let $T$ be a finite group, let $p \in \pi(T)$, and let $S$ be the set of irreducible representations of $T$ over the field $\F_p$. For $\varphi \in S$, let $V_\varphi$ denote the vector space on which $\varphi$ acts. Then given some graph $\Lambda$, we have that $\Lambda$ is realizable as the prime graph complement of a group of the form $N.T$ where $N$ is a $p$-group if and only if there is some subset $R \subseteq S$ such that $\Lambda = \bigsqcap_{\varphi \in R} \pgc(T \ltimes_\varphi V_\varphi)$.
\end{theorem}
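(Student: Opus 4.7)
The plan is to prove both directions constructively, leveraging the reduction results established earlier in this section.

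For the backward direction, given $R \subseteq S$, I would form the direct-sum representation $\Phi = \bigoplus_{\varphi \in R} \varphi$ acting on $W = \bigoplus_{\varphi \in R} V_\varphi$, and set $G = T \ltimes_\Phi W$. Then $G$ is an extension of $T$ by the elementary abelian $p$-group $W^+$, so it has the form $N.T$ with $N$ a $p$-group. An inductive application of \thref{directSumIntersection} yields $\pgc(G) = \bigsqcap_{\varphi \in R} \pgc(T \ltimes_\varphi V_\varphi)$. For $R = \emptyset$ one obtains $G = T$; I would treat the empty labeled graph product as $\pgc(T)$ by convention.

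For the forward direction, suppose $G \cong N.T$ with $N$ a $p$-group, and set $R_{\text{bad}} = \{r \in \pi(T) \setminus \{p\} : p - r \notin \pgc(G)\}$. For each $r \in R_{\text{bad}}$, \thref{excludeEdge} (applied with $G/N \cong T$) produces an elementary abelian $p$-group $A_r$ and an action $\psi_r : T \to \Aut(A_r)$ such that $\pgc(G) \subseteq \pgc(T \ltimes_{\psi_r} A_r)$ and $p - r \notin \pgc(T \ltimes_{\psi_r} A_r)$. Viewing $A_r$ as an $\mathbb{F}_p$-vector space makes $\psi_r$ into an $\mathbb{F}_p$-representation, and \thref{intersectionOfIrreducibles} decomposes each $\pgc(T \ltimes_{\psi_r} A_r)$ as a labeled graph product of terms $\pgc(T \ltimes_\varphi V_\varphi)$ for irreducible $\varphi \in S$. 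Taking $R$ to be the union over $r \in R_{\text{bad}}$ of these irreducibles produces a candidate subset of $S$.

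The main obstacle is verifying the equality $\bigsqcap_{\varphi \in R} \pgc(T \ltimes_\varphi V_\varphi) = \pgc(G)$ on the nose. The forward containment is immediate from \thref{excludeEdge} and \thref{intersectionOfIrreducibles}. For the reverse containment I would analyze each edge type. For an edge $q - q'$ with $q, q' \neq p$, both $\pgc(G)$ and the intersection coincide with $\pgc(T)$: on the group side, Schur-Zassenhaus splits the preimage in $G$ of a cyclic subgroup of order $qq'$ in $T$ (since $\gcd(qq', |N|) = 1$), so $qq' \in \pg(T) \iff qq' \in \pg(G)$ because element orders coprime to $|N|$ are preserved by the quotient map; on the representation side, \thref{representationSemidirectProduct} forces agreement with $\pgc(T)$ on such edges because $\pi(V_\varphi) = \{p\}$. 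For an edge $p - r$ absent from $\pgc(G)$ we have $r \in R_{\text{bad}}$, so by construction $p - r$ is absent from $\pgc(T \ltimes_{\psi_r} A_r)$ and hence from at least one factor of the intersection; conversely, if $p - r \in \pgc(G)$, the inclusion $\pgc(G) \subseteq \pgc(T \ltimes_{\psi_s} A_s)$ guarantees $p - r$ appears in every factor. Assembling these cases completes the forward direction.
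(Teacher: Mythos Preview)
Your proposal is correct and follows essentially the same route as the paper: both directions rely on \thref{excludeEdge}, \thref{directSumIntersection}, and \thref{intersectionOfIrreducibles} in the same way, with only a cosmetic difference in order (the paper first forms the direct sum $\widetilde{\varphi}=\bigoplus_r \psi_r$ and then decomposes via \thref{intersectionOfIrreducibles}, while you decompose each $\psi_r$ first and take the union). Your edge-by-edge verification that $\bigsqcap_{\varphi\in R}\pgc(T\ltimes_\varphi V_\varphi)=\pgc(G)$ is in fact more explicit than the paper, which compresses this into the single assertion ``by \thref{directSumIntersection} we have $\pgc(T\ltimes_{\widetilde{\varphi}}\widetilde{V})=\pgc(G)$''.
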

\begin{proof}
    Forward direction: Let $G \cong N.T$ be a group. By \thref{excludeEdge}, for each $r \in \pi(G) \setminus \{p\}$ such that $p-r \notin \pgc(G)$, we get an elementary abelian $p$-group $A_{p-r}$ and an action $\varphi_{p-r}: T \to \Aut(A_{p-r})$ such that $\pgc(G) \subseteq \pgc(T \ltimes_\varphi A_{p-r})$ and $p-r \notin \pgc(T \ltimes_\varphi A_{p-r})$. By \thref{elementaryAbelianExtensionsToRepresentations}, we can view $A_{p-r}$ as the additive group of an $\F_p$-vector space $V_{p-r}$ and $\varphi_{p-r}$ as a representation $T \to \GL(V_{p-r})$. Let
    \begin{equation}
        \widetilde{\varphi} := \bigoplus_{p-r \notin \pgc(G)} \varphi_{p-r} \qquad \text{and} \qquad \widetilde{V} := \bigoplus_{p-r \notin \pgc(G)} V_{p-r}.
    \end{equation}
    By \thref{directSumIntersection} we have that $\pgc(T \ltimes_{\widetilde{\varphi}} \widetilde{V}) = \pgc(G)$, and by 
    \thref{intersectionOfIrreducibles} we have that $\pgc(T \ltimes_{\widetilde{\varphi}} \widetilde{V})$ is equal to the labeled graph product of prime graph complements arising from irreducible representations.

    Backward direction: \thref{directSumIntersection} guarantees that any labeled graph product of prime graph complements arising from irreducible representations can be realized by the direct sum of those representations.
\end{proof}

\begin{theorem}\thlabel{GeneralizedModuleExtensions}
        Let $T$ be a finite simple group, and let $p \in \pi(T)$. For each $\chi \in \operatorname{IBr}_p(T)$, let $A_\chi$ be the set of edges $\{p-q \mid \exists g\in T \textnormal{ s.t. } \operatorname{o}(g) = q \textnormal{ and } \frac{1}{\operatorname{o}(g)}\sum_{x \in \langle g \rangle}\chi(x) > 0\}$. Then given some graph $\Lambda$, we have that $\Lambda$ is realizable as the prime graph complement of a group of the form $N.T$ where $N$ is a $p$-group if and only if there is some subset $Y\subseteq \operatorname{IBr}_p(T)$ such that $\Lambda = \pgc(T)\setminus\mleft(\bigcup_{\chi\in Y} A_\chi\mright)$.
\end{theorem}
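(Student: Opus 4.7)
The plan is to use Theorem~\thref{pGroupExtensionToGraphIntersection} as the scaffolding and to translate between irreducible $\F_p T$-representations (which that theorem produces) and irreducible Brauer characters (which are defined via absolutely irreducible representations over $\overline{\F_p}$). The bridge between the two formalisms is Lemma~\thref{MaslovaPaperLemma} together with the elementary fact that whether a linear map fixes a nonzero vector depends only on its characteristic polynomial, a condition unaffected by extension of scalars.

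For the backward direction, suppose $Y \subseteq \operatorname{IBr}_p(T)$ and $\Lambda = \pgc(T) \setminus \bigcup_{\chi \in Y} A_\chi$. For each $\chi \in Y$, Theorem~\thref{graphExistenceFromBrauerTable} already provides a strictly $T$-solvable group $G_\chi = T \ltimes_{\varphi_\chi} \widetilde{V}_\chi$ in which $\widetilde{V}_\chi$ is an elementary abelian $p$-group and $\pgc(G_\chi) = \pgc(T) \setminus A_\chi$. I would then form the product action $\varphi = \prod_{\chi \in Y} \varphi_\chi$ on $N = \prod_{\chi \in Y} \widetilde{V}_\chi$ and set $G = T \ltimes_\varphi N$, so that $G$ is of the form $N.T$ with $N$ a $p$-group. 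Iterating Lemma~\thref{Intersections} (the extra edge-removal term $\{r-p \mid r\in\pi(N_1),\ p\in\pi(N_2)\}$ vanishes because each $\widetilde{V}_\chi$ is a $p$-group), and using that every factor shares vertex set $\pi(T)$ so that $\sqcap$ collapses to edge-set intersection, yields $\pgc(G) = \bigcap_{\chi \in Y}(\pgc(T) \setminus A_\chi) = \Lambda$. The trivial case $Y = \emptyset$ is handled by $G = T$ itself.

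For the forward direction, suppose $\Lambda = \pgc(N.T)$ with $N$ a $p$-group. Theorem~\thref{pGroupExtensionToGraphIntersection} produces a set $R$ of irreducible $\F_p T$-representations $\varphi : T \to \GL(V_\varphi)$ with $\Lambda = \bigsqcap_{\varphi \in R} \pgc(T \ltimes_\varphi V_\varphi)$. For each $\varphi \in R$, I would extend scalars to $\overline{V}_\varphi = V_\varphi \otimes_{\F_p} \overline{\F_p}$ and decompose into absolutely irreducible constituents, producing Brauer characters $\chi_{\varphi,1}, \ldots, \chi_{\varphi, k_\varphi} \in \operatorname{IBr}_p(T)$. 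Since $\varphi_g$ has a nonzero fixed vector on $V_\varphi$ if and only if $1$ is an eigenvalue of $\varphi_g$ if and only if the same holds on $\overline{V}_\varphi$, Lemma~\thref{MaslovaPaperLemma} shows this occurs precisely when $\frac{1}{o(g)}\sum_{x \in \langle g\rangle}\chi_{\varphi,i}(x) > 0$ for some $i$. Combined with Corollary~\thref{representationSemidirectProduct}, this yields $\pgc(T \ltimes_\varphi V_\varphi) = \pgc(T) \setminus \bigcup_{i} A_{\chi_{\varphi,i}}$, and intersecting over $\varphi \in R$ gives $\Lambda = \pgc(T) \setminus \bigcup_{\chi \in Y} A_\chi$ with $Y = \{\chi_{\varphi,i} : \varphi \in R,\ 1 \leq i \leq k_\varphi\}$.

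The main conceptual obstacle is the mismatch between $\F_p$-irreducibility (what Theorem~\thref{pGroupExtensionToGraphIntersection} supplies) and absolute irreducibility (what Brauer character theory requires): an $\F_p$-irreducible representation can split into several Galois-conjugate pieces after extension of scalars. The forward direction resolves this through the eigenvalue-invariance observation noted above, while the backward direction is unaffected because Theorem~\thref{graphExistenceFromBrauerTable} has already done the work of descending from an absolutely irreducible representation over $\overline{\F_p}$ down to a finite field before building its semidirect product.
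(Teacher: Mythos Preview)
Your proposal is correct and follows essentially the same route as the paper: both directions rest on \thref{pGroupExtensionToGraphIntersection}, \thref{graphExistenceFromBrauerTable}, the eigenvalue-invariance observation under scalar extension, and \thref{MaslovaPaperLemma}. The only cosmetic difference is that in the forward direction the paper passes to a finite splitting field $\F_{p^m}$ and invokes \thref{intersectionOfIrreducibles} to break the extended module into absolutely irreducible pieces, whereas you go all the way to $\overline{\F_p}$ and rely (implicitly) on the fact that the scalar extension of an $\F_p$-irreducible module along a separable extension is semisimple; both devices accomplish the same reduction.
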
 
\begin{proof}
        Backward direction: By \thref{graphExistenceFromBrauerTable}, for each $\chi \in Y$ we can find a group $T \ltimes_{\varphi_\chi} V_\chi$ with prime graph complement equal to $\pgc(T) \setminus A_\chi$. From the proof of \thref{graphExistenceFromBrauerTable}, we can choose these $V_\chi$ such that they are all vector spaces over the same field. Applying \thref{directSumIntersection} to $\{ \varphi_\chi \mid \chi \in Y \}$, we get a group with a prime graph complement of $\Lambda$.

    % Backward direction: By \thref{graphExistenceFromBrauerTable}, for each $\chi \in Y$ we can find a group $T \ltimes_{\varphi_\chi} V_\chi$ with prime graph complement equal to $\pgc(T) \setminus A_\chi$. Applying \thref{directSumIntersection} to $\{ \varphi_\chi \mid \chi \in Y \}$, we get a group that realizes $\Lambda$.

    Forward direction: By \thref{pGroupExtensionToGraphIntersection}, it suffices to show that given any irreducible representation $\varphi: T \to \GL(k, \F_p)$ where $k$ is a positive integer, there exists a $Y \subseteq \operatorname{IBr}_p(T)$ such that $\pgc(T \ltimes_\varphi (\F_p)^k) = \pgc(T) \setminus (\bigcup_{\chi \in Y} A_\chi)$. Viewing the codomain of $\varphi$ as matrices, we may view $\varphi$ as a function $\varphi:T \to \GL(k, \F_{p^m})$, where $m$ is large enough so that $\F_{p^m}$ is a splitting field for $T$ (guaranteed by \cite[Corollary 9.10]{IsaacsCharacters}). Note that $\varphi$ may no longer be irreducible. Since the eigenvalues of these matrices have not changed, for all $h\in T$, the matrix $\varphi_h$ fixes a nontrivial element of $(\F_{p^m})^k$ if and only if $\varphi_h$ fixes a nontrivial element of $(\F_p)^k$. Thus, by 
    \thref{representationSemidirectProduct} we have that $\pgc(T \ltimes_\varphi (\F_p)^k) = \pgc(T \ltimes_\varphi (\F_{p^m})^k)$. By \thref{intersectionOfIrreducibles}, this equals %$\pgc(T \ltimes_\varphi (\F_{p^m})^k) = 
    $\pgc(T \ltimes_{\varphi_1} (\F_{p^m})^{k_1}) \sqcap \cdots \sqcap \pgc(T \ltimes_{\varphi_n} (\F_{p^m})^{k_n})$ for some irreducible representations $\varphi_1, \cdots, \varphi_n$ and positive integers $k_1, \cdots, k_n$. Letting $\chi_i$ be the irreducible Brauer character corresponding to $\varphi_i$ for $i\in \{1, \cdots, n\}$, this equals $(\pgc(T) \setminus A_{\chi_1}) \sqcap \cdots \sqcap (\pgc(T) \setminus A_{\chi_n})$ by \thref{MaslovaPaperLemma} and \thref{representationSemidirectProduct}. Letting $Y = \{\chi_1, \cdots, \chi_n\}$, this equals $\pgc(T) \setminus (\bigcup_{\chi\in Y} A_\chi)$, completing the proof.
\end{proof}

\section{The Subgraphs Induced by $\pi(T)$ and $\pi(G) \setminus \pi(T)$}\label{sec:subgraphsInduced}
The goal of this section is to prove Theorems \ref{HallSubgroupInducedGraph} and \ref{SolvableHallSubgroupInducedSubgraph}. Given a nonabelian simple group $T$ and a $T$-solvable group $G$, \thref{HallSubgroupInducedGraph} tells us that the subgraph of $\pgc(G)$ induced by any set of primes containing $\pi(T)$ is realizable by a $T$-solvable group, and \thref{SolvableHallSubgroupInducedSubgraph} tells us that the subgraph of $\pgc(G)$ induced by $\pi(G) \setminus \pi(T)$ is realizable by a solvable group. 
% We begin by stating a well-known fact:
% \begin{lemma}\thlabel{subnormalSeriesOfSubgroup}
%     Let $G$ be a group, and let $H\leq G$ be a subgroup. Let $G = G_n \trianglerighteq G_{n-1} \trianglerighteq \cdots \trianglerighteq G_{1} \trianglerighteq G_0 = \{1\}$ be a subnormal series of $G$. Then $H$ has a subnormal series in which every factor is isomorphic to a subgroup of a composition factor of $G$.
% \end{lemma}

\begin{lemma}\thlabel{subgroupOfTSolvable}
    Let $T$ be a nonabelian simple group, and let $G$ be a $T$-solvable group. Let $H$ be a Hall subgroup of $G$ such that $\pi(H) \supseteq \pi(T)$. Then $H$ is $T$-solvable.
\end{lemma}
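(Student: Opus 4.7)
The plan is to combine a chief series of $G$ with the Hall condition on $H$ to control the composition factors.

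First, I would fix a chief series
\begin{equation*}
G = G_0 \triangleright G_1 \triangleright \cdots \triangleright G_n = \{1\}.
\end{equation*}
Since each chief factor $G_i/G_{i+1}$ is characteristically simple, and since the only nonabelian composition factors of $G$ are isomorphic to $T$, each $G_i/G_{i+1}$ is either an elementary abelian $p_i$-group (for some $p_i \in \pi(G)$) or a direct product $T^{k_i}$ of copies of $T$. Intersecting with $H$ yields a subnormal series
\begin{equation*}
H = H_0 \trianglerighteq H_1 \trianglerighteq \cdots \trianglerighteq H_n = \{1\}, \qquad H_i := H \cap G_i,
\end{equation*}
and each quotient $H_i/H_{i+1}$ embeds naturally into $G_i/G_{i+1}$. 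If $G_i/G_{i+1}$ is elementary abelian, then so is $H_i/H_{i+1}$, and all its composition factors are cyclic of prime order. So it suffices to handle the $T^{k_i}$ case.

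The key step is to show that whenever $G_i/G_{i+1} \cong T^{k_i}$, the inclusion $H_i/H_{i+1} \hookrightarrow T^{k_i}$ is actually an equality; then the composition factors of $H_i/H_{i+1}$ are all isomorphic to $T$. For this I would use the Hall hypothesis together with $\pi(H) \supseteq \pi(T)$. For each $q \in \pi(T)$ we have $q \in \pi(H)$, and since $H$ is Hall in $G$, $|H|_q = |G|_q$. Taking $q$-parts in the two chain-order equations
\begin{equation*}
\prod_{i=0}^{n-1} |H_i/H_{i+1}|_q = |H|_q, \qquad \prod_{i=0}^{n-1} |G_i/G_{i+1}|_q = |G|_q,
\end{equation*}
and using the term-by-term inequalities $|H_i/H_{i+1}|_q \leq |G_i/G_{i+1}|_q$ coming from the embeddings, equality of the products forces equality in each factor: $|H_i/H_{i+1}|_q = |G_i/G_{i+1}|_q$ for all $i$ and all $q \in \pi(T)$. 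Applying this at a chief factor $G_i/G_{i+1} \cong T^{k_i}$, whose order is supported entirely on $\pi(T)$, one obtains $|H_i/H_{i+1}| = |T|^{k_i} = |G_i/G_{i+1}|$, so $H_i/H_{i+1} = G_i/G_{i+1} \cong T^{k_i}$, as required.

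The main obstacle is exactly the $T^{k_i}$ factors: a priori a subgroup of $T^{k_i}$ could have composition factors other than $T$ or cyclic groups (e.g.\ proper nonabelian simple subgroups of $T$), which would break $T$-solvability. The argument above sidesteps this by using the Hall plus $\pi(H)\supseteq\pi(T)$ hypotheses to force $H$ to actually contain the full nonabelian chief factors rather than proper subgroups of them. Stringing together the two cases, every composition factor of $H$ is either cyclic or isomorphic to $T$, so $H$ is $T$-solvable.
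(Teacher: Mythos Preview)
Your argument is correct and follows the same underlying idea as the paper's proof: intersecting $H$ with a chief series of $G$ and showing that at each nonabelian chief factor $T^{k_i}$ the induced factor of $H$ is all of $T^{k_i}$. The paper simply invokes a cited result (the remark under \cite[Lemma 6.4]{Florez}) asserting that $H$ has a normal series whose factors are Hall $\pi(H)$-subgroups of the chief factors of $G$, and then notes that a Hall $\pi(H)$-subgroup of $T^{k_i}$ is $T^{k_i}$ itself since $\pi(T)\subseteq\pi(H)$; your $q$-part counting argument is exactly an elementary, self-contained proof of this fact in the case at hand. So the route is the same, but you supply the details where the paper appeals to the literature.
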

\begin{proof}
    By the remark under \cite[Lemma 6.4]{Florez}, $H$ has a normal series in which every factor is isomorphic to a Hall $\pi(H)$-subgroup of a chief factor of $G$. Since $\pi(H) \supseteq \pi(T)$, the factors of this series are either elementary abelian or isomorphic to a power of $T$, meaning $H$ is $T$-solvable.
    %it has a Chief series in which every factor is either elementary abelian or isomorphic to some power of $T$. By the remark below \cite[Lemma 6.4]{Florez} we have that $H$ has a normal series in which every factor is isomorphic to a Hall $\pi(H)$-subgroup of the corresponding factor in the Chief series of $G$. Since $\pi(H) \supseteq \pi(T)$, these subgroups are either elementary abelian or isomorphic to some power of $T$. Thus, $H$ is $T$-solvable.
\end{proof}
\begin{lemma}\thlabel{coprimeSubgroupOfTSolvable}
    Let $T$ be a nonabelian simple group, let $G$ be a $T$-solvable group, and let $H$ be a Hall subgroup of $G$ such that $|\pi(H) \cap \pi(T)|\leq 2$. Then $H$ is solvable.
\end{lemma}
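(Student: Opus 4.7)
The plan is to mimic the structure of the proof of Lemma~\thref{subgroupOfTSolvable} and then invoke Burnside's $p^a q^b$ theorem on the relevant factors. By the same appeal to the remark under \cite[Lemma 6.4]{Florez} that was used there, $H$ admits a normal series each of whose factors is isomorphic to a Hall $\pi(H)$-subgroup of some chief factor of $G$. Since $G$ is $T$-solvable, each chief factor of $G$ is either an elementary abelian $q$-group (for some prime $q$) or a direct power $T^n$ of the nonabelian simple group $T$.

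Next I would analyze the two kinds of factors separately. For an elementary abelian chief factor, its Hall $\pi(H)$-subgroup is either trivial or the whole factor, and in either case it is abelian, hence solvable. For a chief factor of the form $T^n$, any Hall $\pi(H)$-subgroup has order divisible only by primes in $\pi(H) \cap \pi(T^n) = \pi(H) \cap \pi(T)$, a set of size at most two by hypothesis. Thus such a Hall subgroup has order of the form $p^a q^b$ (or a prime power, or $1$), and Burnside's classical $p^a q^b$ theorem yields that it is solvable.

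Finally, since every factor in the constructed normal series for $H$ is solvable, $H$ itself is solvable, finishing the proof. The only real content beyond copying the Florez-style reduction is the observation that the ``$T^n$'' factors, which were the obstruction to solvability in \thref{subgroupOfTSolvable}, now contribute only $\{p,q\}$-groups and therefore fall under Burnside's theorem; I do not anticipate any essential obstacle, as the numerical hypothesis $|\pi(H)\cap\pi(T)|\le 2$ is exactly what is needed to trigger that classical result.
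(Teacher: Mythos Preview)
Your proposal is correct and follows essentially the same approach as the paper: both invoke the Florez reduction to obtain a normal series whose factors are Hall $\pi(H)$-subgroups of chief factors, then observe that the factors coming from $T^n$ involve at most two primes and are therefore solvable. You are slightly more explicit in naming Burnside's $p^aq^b$ theorem, which the paper uses implicitly.
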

\begin{proof}
    By the remark under \cite[Lemma 6.4]{Florez}, $H$ has a normal series in which every factor is isomorphic to a Hall $\pi(H)$-subgroup of a chief factor of $G$. Since $|H|$ shares at most two prime divisors with $|T|$, all factors which are subgroups of a non-solvable chief factor of $G$ have order divisible by at most two primes. Thus all factors are solvable, meaning $H$ is solvable.
\end{proof}

\begin{lemma}\thlabel{2.10Corollary}
    Let $\pi$ be a set of primes and let $G$ be a group such that one of the following holds:
    \begin{itemize}
        \item All nonabelian composition factors of $G$ are $\pi$-groups.
        \item All nonabelian composition factors of $G$ are $\pi'$-groups.
    \end{itemize}
    Then there exists a Hall $\pi$-subgroup $H\leq G$ such that $\pgc(H)$ is isomorphic to the subgraph of $\pgc(G)$ induced by $\pi(H)$.
\end{lemma}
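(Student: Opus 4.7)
The plan is to establish existence of a suitable Hall $\pi$-subgroup of $G$ via a general Hall-type theorem, and then to compare $\pgc(H)$ edge-by-edge with the induced subgraph $\pgc(G)[\pi(H)]$, using conjugacy of Hall subgroups to transport a witnessing element of order $pq$ from $G$ back into $H$.

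First I would observe that under either hypothesis every composition factor of $G$ is either a $\pi$-group or a $\pi'$-group: the abelian composition factors are cyclic of prime order and hence automatically belong to exactly one of these two classes, while the nonabelian composition factors are uniformly of one class by assumption. Consequently $G$ admits a normal (in fact composition) series in which every factor is a $\pi$- or $\pi'$-group. By \v{C}unihin's generalization of Hall's theorem, this implies that $G$ satisfies the property $D_\pi$: Hall $\pi$-subgroups of $G$ exist, any two are conjugate in $G$, and every $\pi$-subgroup of $G$ is contained in some Hall $\pi$-subgroup. Pick any such Hall $\pi$-subgroup $H$; maximality of the $\pi$-part of $|H|$ gives $\pi(H) = \pi \cap \pi(G)$.

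Next I would verify the edge correspondence between $\pgc(H)$ and $\pgc(G)[\pi(H)]$. Let $p, q \in \pi(H)$ be distinct. If $H$ has an element of order $pq$ then so does $G$ (since $H \leq G$), so $p - q \notin \pgc(G)$. Conversely, suppose $g \in G$ has order $pq$. Then $\langle g \rangle$ is a cyclic $\pi$-subgroup of $G$, so by the $D_\pi$ property it lies inside some Hall $\pi$-subgroup $H'$. Since $H$ and $H'$ are conjugate in $G$, $H$ also contains an element of order $pq$, so $p - q \notin \pgc(H)$. Combining the two directions yields the claimed isomorphism of induced subgraphs.

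The main obstacle will be the invocation of the $D_\pi$ property, since a bare statement of Hall's theorem only addresses solvable groups. If a direct citation of \v{C}unihin's theorem is unavailable or feels too heavy, an alternative would be to induct on $|G|$: a minimal normal subgroup $N$ is characteristically simple and hence a direct product of isomorphic simple groups, so by hypothesis $N$ is a $\pi$- or $\pi'$-group; applying the inductive hypothesis to $G/N$ together with standard Schur--Zassenhaus--style lifting (analogous to the arguments already used via \cite[Lemma 6.4]{Florez} in \thref{subgroupOfTSolvable} and \thref{coprimeSubgroupOfTSolvable}) produces both the Hall subgroup and the required conjugacy needed to complete the edge-comparison step.
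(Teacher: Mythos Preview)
Your proposal is correct and follows essentially the same approach as the paper: establish the $D_\pi$ property (existence, conjugacy, and containment of $\pi$-subgroups in Hall $\pi$-subgroups), then do the edge-by-edge comparison using conjugacy to pull an element of order $pq$ from $G$ back into $H$. The only difference is attribution---the paper cites Gross \cite[2.10]{HALLSUB} for the $D_\pi$ property, whereas you invoke \v{C}unihin's theorem (and sketch an inductive alternative); these are the same underlying result.
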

\begin{proof}
    $G$ satisfies the hypothesis of \cite[2.10]{HALLSUB}. Therefore, we can let $H$ be a Hall $\pi$-subgroup of $G$. We must show that given distinct primes $p, r \in \pi(H)$, there exists an element of order $pr$ in $H$ if and only if there exists an element of order $pr$ in $G$. The forward direction is obvious. For the backward direction, let $g \in G$ be an element of order $pr$. Since $\langle g\rangle$ is a $\pi$-subgroup, by \cite[2.10]{HALLSUB} it is contained in some Hall $\pi$-subgroup $H'\leq G$. Since (again by \cite[2.10]{HALLSUB}) all Hall $\pi$-subgroups in $G$ are conjugate, we have that $g$ is conjugate to an element of order $pr$ in $H$.
\end{proof}

\begin{theorem}\thlabel{HallSubgroupInducedGraph}
    Let $T$ be a nonabelian simple group, let $G$ be a $T$-solvable group, and let $\pi_0$ be a set of primes such that $\pi_0 \supseteq \pi(T)$.
    Then there exists a Hall $\pi_0$-subgroup $H \leq G$ such that $H$ is $T$-solvable and $\pgc(H)$ is isomorphic to the subgraph of $\pgc(G)$ induced by $\pi(H)$.
\end{theorem}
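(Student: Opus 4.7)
The plan is to obtain $H$ directly from Lemma \thref{2.10Corollary} and then apply Lemma \thref{subgroupOfTSolvable} to get the $T$-solvability. Since $G$ is $T$-solvable, every nonabelian composition factor of $G$ is isomorphic to $T$, and because $\pi(T) \subseteq \pi_0$, each such factor is a $\pi_0$-group. Hence $G$ satisfies the first bullet of the hypothesis of Lemma \thref{2.10Corollary} with $\pi = \pi_0$, which produces a Hall $\pi_0$-subgroup $H \leq G$ whose prime graph complement $\pgc(H)$ is isomorphic to the subgraph of $\pgc(G)$ induced by $\pi(H)$.

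To establish the $T$-solvability of $H$, I would split into two cases. If $G$ is strictly $T$-solvable, then $T$ appears as a composition factor of $G$, so $\pi(T) \subseteq \pi(G)$; since $H$ is a Hall $\pi_0$-subgroup and $\pi(T) \subseteq \pi_0$, we get $\pi(H) = \pi(G) \cap \pi_0 \supseteq \pi(T)$, and Lemma \thref{subgroupOfTSolvable} then yields that $H$ is $T$-solvable. If instead $G$ is itself solvable, then $H$ is solvable as well, and therefore trivially $T$-solvable by the definition of $T$-solvability (all composition factors are cyclic).

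I do not expect any serious obstacle: the two lemmas immediately preceding the theorem have essentially been engineered to make this proof a one-step combination. The only real content is the verification that the prime-set conditions in each lemma are met, which follows directly from the hypothesis $\pi_0 \supseteq \pi(T)$ together with the fact that every nonabelian composition factor of a $T$-solvable group is isomorphic to $T$.
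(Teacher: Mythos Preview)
Your proposal is correct and follows exactly the same approach as the paper: apply Lemma~\ref{2.10Corollary} (with $T$ a $\pi_0$-group) to obtain $H$, then invoke Lemma~\ref{subgroupOfTSolvable} for $T$-solvability. Your case split is in fact slightly more careful than the paper's proof, which applies Lemma~\ref{subgroupOfTSolvable} directly without explicitly verifying that $\pi(H)\supseteq\pi(T)$; your observation that in the solvable case $H$ is trivially $T$-solvable neatly covers the situation where some prime of $\pi(T)$ fails to divide $|G|$.
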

\begin{proof}
    % Let $\pi = \pi(\Aut(T))$. Since $T$ is a $\pi$-group, $G$ is $\pi'$-solvable. By \thref{2.10}, $G$ satisfies $D_\pi$. So let $H$ be a Hall $\pi$-subgroup of $G$. By \thref{subgroupOfTSolvable}, $H$ is $T$-solvable. It remains to show that given distinct primes $p, r \in \pi(H)$, there exists an element of order $pr$ in $H$ if and only if there exists an element of order $pr$ in $G$. The forward direction is obvious. For the backward direction, let $g \in G$ be an element of order $pr$. Since $\langle g\rangle$ is a $\pi$-subgroup, it is contained in some Hall $\pi$-subgroup $H'$. Since all Hall $\pi$-subgroups in $G$ are conjugate, we have that $H = a^{-1}H'a$ for some $a\in G$. Then $a^{-1}ga$ is an element of order $pr$ in $H$.
    Since $T$ is a $\pi_0$-group, $G$ satisfies the hypothesis of \thref{2.10Corollary}. So we get a Hall $\pi_0$-subgroup $H\leq G$ such that $\pgc(H) = \pgc(G)[\pi(H)]$. By \thref{subgroupOfTSolvable}, $H$ is $T$-solvable.
\end{proof}
\begin{theorem}\thlabel{SolvableHallSubgroupInducedSubgraph}
    Let $T$ be a nonabelian simple group, let $G$ be a $T$-solvable group, and let $\pi_0$ be a set of primes such that $\pi_0 \subseteq \pi(G) \setminus \pi(T)$. Then there exists a Hall $\pi_0$-subgroup $H \leq G$ such that $H$ is solvable and $\pgc(H)$ is isomorphic to the subgraph of $\pgc(G)$ induced by $\pi(H)$.
\end{theorem}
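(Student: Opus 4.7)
The plan is to mirror the proof of Theorem \ref{HallSubgroupInducedGraph}, but to apply the machinery in the complementary regime. The key observation is that since $\pi_0 \subseteq \pi(G) \setminus \pi(T)$, we have $\pi_0 \cap \pi(T) = \emptyset$, so $T$ is a $\pi_0'$-group. Because $G$ is $T$-solvable, its nonabelian composition factors are all isomorphic to $T$, hence all are $\pi_0'$-groups. This places us precisely in the second bullet of the hypothesis of \thref{2.10Corollary} (with $\pi = \pi_0$).

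First I would apply \thref{2.10Corollary} to extract a Hall $\pi_0$-subgroup $H \leq G$ such that $\pgc(H)$ is isomorphic to the subgraph of $\pgc(G)$ induced by $\pi(H)$. This immediately handles the prime graph portion of the conclusion. Next, I would verify that $H$ is solvable by appealing to \thref{coprimeSubgroupOfTSolvable}: since $\pi(H) \subseteq \pi_0$ and $\pi_0 \cap \pi(T) = \emptyset$, we have $|\pi(H) \cap \pi(T)| = 0 \leq 2$, so \thref{coprimeSubgroupOfTSolvable} yields that $H$ is solvable.

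I do not anticipate any substantial obstacle here; the statement is essentially a direct corollary of \thref{2.10Corollary} combined with \thref{coprimeSubgroupOfTSolvable}. The only subtle point worth flagging is the need to observe that the hypothesis $\pi_0 \subseteq \pi(G) \setminus \pi(T)$ places us in the $\pi'$-group case of \thref{2.10Corollary} rather than the $\pi$-group case that was invoked in the proof of Theorem \ref{HallSubgroupInducedGraph}.
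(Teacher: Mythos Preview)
Your proposal is correct and follows essentially the same approach as the paper: observe that $T$ is a $\pi_0'$-group so that \thref{2.10Corollary} applies (in its second case) to produce the Hall $\pi_0$-subgroup $H$ with the desired prime graph property, and then invoke \thref{coprimeSubgroupOfTSolvable} to conclude that $H$ is solvable. The paper's proof is nearly word-for-word the same.
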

\begin{proof}
    Since $T$ is a $\pi_0'$-group, $G$ satisfies the hypothesis of \thref{2.10Corollary}. So we get a Hall $\pi_0$-subgroup $H\leq G$ such that $\pgc(H) = \pgc(G)[\pi(H)]$. By \thref{coprimeSubgroupOfTSolvable}, $H$ is solvable.
\end{proof}

\section{General Lemmas}\label{sec:UsefulLemmas}
This section contains a set of lemmas that we use in the classifications of Sz(8)- and $\PSL(2, 2^5)$-solvable groups. Most are fairly general, and can be applied to $T$-solvable groups for any finite nonabelian simple $T$.

\begin{lemma} \thlabel{solvableSubgraph}
    Let $T$ be a nonabelian simple group, and let $G$ be a $T$-solvable group. Then the subgraph of $\pgc(G)$ induced by $\pi(G) \setminus \pi(T)$ is triangle-free and 3-colorable.
\end{lemma}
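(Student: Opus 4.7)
The plan is to combine Theorem \ref{SolvableHallSubgroupInducedSubgraph} with the classification of solvable prime graph complements in Theorem \ref{solvclass}. Setting $\pi_0 = \pi(G) \setminus \pi(T)$, I would first verify that $\pi_0 \subseteq \pi(G) \setminus \pi(T)$ (trivially) so that the hypotheses of Theorem \ref{SolvableHallSubgroupInducedSubgraph} are met. Applying that theorem yields a Hall $\pi_0$-subgroup $H \leq G$ which is solvable and such that $\pgc(H)$ is isomorphic to the subgraph of $\pgc(G)$ induced by $\pi(H)$.

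Next I would observe that since every prime $p \in \pi_0$ divides $|G|$, the Hall $\pi_0$-subgroup $H$ contains a Sylow $p$-subgroup of $G$ for each such $p$, so $\pi(H) = \pi_0 = \pi(G) \setminus \pi(T)$. Hence the subgraph of $\pgc(G)$ induced by $\pi(G) \setminus \pi(T)$ is isomorphic to $\pgc(H)$.

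Finally, since $H$ is a solvable group, Theorem \ref{solvclass} tells us that $\pgc(H)$ is triangle-free and 3-colorable, which gives the desired conclusion. There is no real obstacle here: the work has been done in the preceding structural results, and this lemma is essentially a corollary packaging them for repeated use in the $\Sz(8)$ and $\PSL(2,2^5)$ sections.
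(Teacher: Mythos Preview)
Your proposal is correct and follows essentially the same route as the paper: apply Theorem~\ref{SolvableHallSubgroupInducedSubgraph} with $\pi_0 = \pi(G)\setminus\pi(T)$ to obtain a solvable Hall subgroup $H$ whose prime graph complement coincides with the desired induced subgraph, then invoke Theorem~\ref{solvclass}. The only extra detail you spell out is why $\pi(H) = \pi_0$, which the paper takes for granted.
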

\begin{proof}
    We apply \thref{SolvableHallSubgroupInducedSubgraph} with $\pi_0 = \pi(G) \setminus \pi(T)$ to get a solvable subgroup $H \leq G$ such that $\pi(H) = \pi_0$ and $\pgc(H) = \pgc(G)[\pi_0]$. By \thref{solvclass}, $\pgc(H)$ is triangle-free and 3-colorable, so we conclude that $\pgc(G)[\pi_0]$ is triangle-free and 3-colorable.
\end{proof}
We generalize a result from \cite{2022REU}:
\begin{lemma}\thlabel{2.1.2}
    \cite[Corollary 2.4]{2022REU}
    Let $T$ be a finite nonabelian simple group and let $G$ be a strictly $T$-solvable group. Then there exists a subgroup $K\leq G$ such that $K \cong N.T$ where $N$ is solvable and $\pi(K) \supseteq \pi(G) \setminus (\pi(\Aut(T)) \setminus \pi(T))$.
\end{lemma}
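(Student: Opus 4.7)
I would prove this by induction on $|G|$, with the base case $G\cong T$ handled by $K=G$ and $N=1$. For the inductive step, I split on whether $G$ has a nontrivial normal solvable subgroup.

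If $G$ has a nontrivial normal solvable subgroup $M$, then $G/M$ remains strictly $T$-solvable, since $M$ cannot absorb any $T$-composition factor of $G$. By induction there is $\bar K\le G/M$ with $\bar K\cong \bar N.T$, $\bar N$ solvable, and $\pi(\bar K)\supseteq \pi(G/M)\setminus(\pi(\Aut T)\setminus\pi(T))$. Taking $K\le G$ to be the preimage of $\bar K$ under the quotient map, we get $K\cong (M.\bar N).T$ with $M.\bar N$ solvable (as an extension of a solvable group by a solvable one), and $\pi(K)=\pi(M)\cup\pi(\bar K)$ contains $\pi(G)\setminus(\pi(\Aut T)\setminus\pi(T))$ because $\pi(G)=\pi(M)\cup\pi(G/M)$.

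Otherwise $G$ has trivial solvable radical, hence trivial Fitting subgroup; by self-centralization of the generalized Fitting subgroup, $F^*(G)=E(G)=\operatorname{soc}(G)$ with $C_G(F^*(G))\le F^*(G)$, and since $G$ is $T$-solvable the socle is a direct product $T^k$ for some $k\ge 1$ and $C_G(T^k)\le Z(T^k)=1$. Thus $G$ embeds into $\Aut(T^k)=\Aut(T)\wr S_k$. When $k=1$, taking $K=T$ works: $\pi(G)\subseteq\pi(\Aut T)$ forces $\pi(G)\setminus(\pi(\Aut T)\setminus\pi(T))\subseteq\pi(T)=\pi(K)$.

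The main obstacle is the subcase $k\ge 2$. My plan is to first apply \thref{HallSubgroupInducedGraph} with $\pi_0=\pi(G)\setminus(\pi(\Aut T)\setminus\pi(T))$ to pass to a Hall $\pi_0$-subgroup $H\le G$ which is still strictly $T$-solvable with $\pi(H)=\pi_0$; this reduction kills the outer-automorphism layer for the simple groups $T$ considered in this paper, since $\pi(\operatorname{Out} T)\subseteq \pi(\Aut T)\setminus\pi(T)$ for both $\Sz(8)$ and $\PSL(2,2^5)$. Then pick an $H$-orbit $O$ on the direct factors of $\operatorname{soc}(H)$, let $D\le \prod_{i\in O}T_i$ be the full diagonal so that $D\cong T$, and set $K=\rho^{-1}(\operatorname{Inn}(D))$ where $\rho:N_H(D)\to \Aut(D)$ is the conjugation action, which gives $K\cong C_H(D).T$. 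The delicate verification is that $C_H(D)$ is solvable — which holds because it injects into $S_k$ via the permutation projection and, since no symmetric group has $\Sz(8)$, $\Sz(32)$, or $\PSL(2,2^5)$ as a composition factor, the image is solvable — and that $\pi(K)\supseteq \pi_0$, which uses the splitting of the socle extension $1\to T^k\to H\to Q\to 1$ (forced by $Z(T^k)=1$ and hence $H^2(Q,Z(T^k))=1$) to ensure that pure-permutation lifts of all elements of $Q$ exist in $C_H(D)$ and capture every $S_k$-prime of $H$.
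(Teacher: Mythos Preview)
Your inductive framework (Case~1 and the $k=1$ subcase of Case~2) is correct, but the $k\ge 2$ subcase has a real gap. You argue that $C_H(D)$ is solvable because it injects into $S_k$ and ``no symmetric group has $\Sz(8)$, $\Sz(32)$, or $\PSL(2,2^5)$ as a composition factor.'' The injection itself is fine (the kernel of the projection $\Aut(T)\wr S_k\to S_k$ meets $C_H(D)$ trivially, since the centralizer of the diagonal inside $\Aut(T)^k$ is trivial). But the conclusion does not follow: what matters is whether a \emph{subgroup} of $S_k$ can have $T$ as a composition factor, and it certainly can once $k$ reaches the minimal faithful permutation degree of $T$ (namely $33$ for $\PSL(2,2^5)$, $65$ for $\Sz(8)$, $1025$ for $\Sz(32)$). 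Nothing in your setup bounds $k$, so the solvability of $C_H(D)$ is unjustified. The subsequent claim that $\pi(K)\supseteq\pi_0$ is also only sketched, and after passing to $H$ you silently assume $H$ again has trivial solvable radical with socle a power of $T$, which requires argument.

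There is also a scope mismatch: the lemma is stated for an arbitrary nonabelian simple $T$, while your $k\ge 2$ treatment explicitly restricts to the three groups of this paper via the hypothesis $\pi(\operatorname{Out} T)\cap\pi(T)=\emptyset$. So even with the gap above repaired, you would only have established the lemma for those particular $T$. The paper, by contrast, gives no new argument at all: it simply observes that the proof of \cite[Corollary~2.4]{2022REU} carries over verbatim once $\pi(T)$ is replaced by $\pi(\Aut(T))$ at the appropriate step. Consulting that original argument is the intended route; the diagonal-subgroup construction you sketch is more delicate than necessary and, as written, incomplete.
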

\begin{proof}
    %This lemma is a slight generalization of \cite[Corollary 2.4]{2022REU}. 
    The argument is nearly identical to the original, with the only change being that $\pi(T)$ is replaced with $\pi(\Aut(T))$.
\end{proof}

\begin{lemma}\thlabel{Nilpotent}
    Let $G$ be a finite group, let $\pi_0$ be a set of primes in $\pi(G)$, and let $N$ be a solvable normal $\pi_0$-subgroup of $G$. Suppose that there is some $s \in \pi(G) \setminus \pi_0$ such that $s-p \in \pgc(G)$ for all $p\in \pi_0$. Then $N$ is nilpotent.
\end{lemma}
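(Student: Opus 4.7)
The plan is to exhibit a fixed-point-free automorphism of $N$ of prime order coprime to $|N|$ and then invoke the classical result of Thompson (1959) that any finite group admitting such an automorphism is nilpotent. Since $N$ is already assumed to be solvable, one could equivalently invoke the more elementary fact that a solvable group admitting a fixed-point-free coprime automorphism of prime order is nilpotent.

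First I would pick an element $g\in G$ of order $s$, which exists because $s\in\pi(G)$. Since $N\trianglelefteq G$, conjugation by $g$ induces an automorphism $\varphi_g\in\Aut(N)$ of order dividing $s$. If $N=1$ the statement is trivial, so assume $|N|>1$. Note that since $\pi(N)\subseteq\pi_0$ and $s\notin\pi_0$, we have $(s,|N|)=1$, which gives us the coprimality needed later.

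Next I would show that $\varphi_g$ is fixed-point-free on $N$. Suppose for contradiction that $\varphi_g(n)=n$ for some $n\in N\setminus\{1\}$. Then $g$ and $n$ commute, so $\langle g,n\rangle$ is abelian. Let $p$ be any prime divisor of $o(n)$; since $n\in N$ we have $p\in\pi(N)\subseteq\pi_0$. A suitable power $n'$ of $n$ has order $p$, and $n'$ still commutes with $g$. Since $\gcd(s,p)=1$, the element $gn'$ has order $sp$, so there is an element of order $sp$ in $G$, contradicting $s-p\in\pgc(G)$. Hence $\varphi_g$ has no nontrivial fixed points. In particular $\varphi_g\neq\operatorname{id}_N$, so $\varphi_g$ has order exactly $s$ (a prime).

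Finally, $\varphi_g$ is a fixed-point-free automorphism of the finite (solvable) group $N$ of prime order $s$ coprime to $|N|$, so by Thompson's theorem $N$ is nilpotent. The main conceptual step is the fixed-point-free verification, but this is essentially a one-line commutation argument once one observes that a commuting pair of elements of coprime prime orders produces an element whose order is the product. The rest is an invocation of a well-known theorem, so no real obstacle is expected.
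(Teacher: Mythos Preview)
Your proof is correct and follows essentially the same approach as the paper: both argue that conjugation by $s$-elements is fixed-point-free on $N$ (via the same commutation contradiction) and then invoke Thompson's theorem. The only cosmetic difference is that the paper phrases this as the full Sylow $s$-subgroup acting Frobeniusly on $N$ so that $N$ is a Frobenius kernel, whereas you work with a single element of order $s$ and cite Thompson's fixed-point-free automorphism theorem directly.
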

\begin{proof}
    Let $H$ be a Sylow $s$-subgroup of $G$. Then $H$ acts on $N$ by conjugation, and this action is a Frobenius action, otherwise an element of order $s$ in $H$ would commute with an element in $N$ of order some prime in $\pi_0$, a contradiction. Since $N$ is a Frobenius kernel, it is nilpotent. 
\end{proof}

% The next lemma is a bit technical. We state it here in more intuitive language. Let $T$ be a nonabelian finite simple group. Let $G$ be a $T$-solvable group such that there exists a solvable $N\unlhd G$ such that $G/N$ is isomorphic to a subgroup of $\Aut(T)$. Let $Y$ be the set of odd prime divisors of $|T|$ that do not divide the order of the Schur multiplier of $T$ and such that, in every irreducible representation of every perfect central extension of $T$, an element of order a prime in $Y$ has fixed points (this is \cite[Corollary 2.2.6]{2023REU}). Now consider the elements of $\pi(T)\setminus Y$. We assume that their corresponding Sylow subgroups of $T$ do not satisfy the Frobenius Criterion \cite[Definition 2.2.1]{2023REU}, which means that there will not be edges between these primes and the elements of $\pi(G)\setminus\pi(\Aut(T))$ (\cite[Proposition 2.2.2{2023REU}). Now take some $r\in \pi(\Aut(T))\setminus\pi(T)$. The lemma allows us to say that, if $r$ divides $|N|$, there will be no edges between $r$ and a prime in $\pi(T)$ in $\pgc(G)$. We will primarily use it for eliminating edges.

This next lemma provides information about which edges must be absent from the prime graph complement of a $T$-solvable group, given certain conditions on $T$. We show in \thref{disconnect} and \thref{disconnectpsl} that these conditions hold for $T = \Sz(8)$ and $T = \PSL(2,2^5)$, respectively. For the reader's convenience, we reproduce the hypothesis of the following corollary from \cite{2023REU}:

A prime $r \in \pi(T)$ satisfies \cite[Corollary 2.2.6]{2023REU} if $r$ is odd, $r$ does not divide the order of the Schur multiplier of $T$, and an element of order $r$ has nontrivial fixed points in every irreducible representation of every perfect central extension of $T$.
\begin{lemma}\thlabel{Noworrygeneral}
    %Let $T$ be a nonabelian finite simple group such that \cite[Corollary 2.2.6]{2023REU} is satisfied for a set of primes $Y\subseteq\pi(T)$ and such that, for all $t\in\pi(T)\setminus Y$, the Sylow $s$-subgroup of $T$ does not satisfy the Frobenius Criterion \cite[Definition 2.2.1]{2023REU}. Let $r\in\pi(\Aut(T))\setminus\pi(T)$. Then let $G$ be a $T$-solvable group where there exists a $K\leq G$ with $K\cong N.T$ for a solvable $N$ such that $r\in\pi(N)$. Then $p-r\not\in\pgc(G)$ for any $p\in\pi(T)$.
    Let $T$ be a nonabelian finite simple group, and let $Y \subseteq \pi(T)$ be the set of prime divisors of $|T|$ which satisfy the hypothesis of \cite[Corollary 2.2.6]{2023REU}. Furthermore, suppose that for all $s\in\pi(T)\setminus Y$, the Sylow $s$-subgroup of $T$ does not satisfy the Frobenius Criterion (\thref{frobcrit}).
    Let $G$ be a $T$-solvable group with a solvable normal subgroup $N$, and let $R$ be the set of primes $(\pi(G) \setminus \pi(\Aut(T)))\cup (\pi(N) \setminus \pi(T))$. Then $p-r \notin \pgc(G)$ for any $p\in \pi(T)$, $r \in R$.
\end{lemma}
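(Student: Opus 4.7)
The plan is to fix $p \in \pi(T)$ and $r \in R$, and to produce an element of order $pr$ in $G$, which is equivalent to $p - r \notin \pgc(G)$. I will split into cases based on whether $p \in Y$ or $p \in \pi(T) \setminus Y$, using one of the two cited results of \cite{2023REU} in each case.

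If $p \in \pi(T) \setminus Y$, then by hypothesis the Sylow $p$-subgroup of $T$ does not satisfy the Frobenius Criterion (\thref{frobcrit}), so I would apply \cite[Proposition 2.2.2]{2023REU}; this result exploits the failure of the Frobenius Criterion for $p$ to force an element of order $p$ in a relevant section of $G$ to have a nontrivial fixed point when acting on an $r$-group, thereby producing an element of order $pr$. If instead $p \in Y$, then $p$ satisfies the hypothesis of \cite[Corollary 2.2.6]{2023REU}, which asserts that an element of order $p$ fixes a nontrivial vector in every irreducible representation of every perfect central extension of $T$; the corollary then yields an element of order $pr$ via the same style of fixed-point argument, this time over the complex numbers rather than relying on the Frobenius Criterion.

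The subtlety is that $R$ is in general a proper subset of $\pi(G) \setminus \pi(T)$, so the cited results cannot be invoked blindly. Each of the two pieces of $R$ corresponds to a structural reason why an $r$-element of $G$ can be located inside a solvable normal subgroup of $G$: if $r \in \pi(G) \setminus \pi(\Aut(T))$, then $r$ cannot divide the order of any $T$-chief factor nor the order of any induced permutation action on such factors, and hence every $r$-element of $G$ must live in the solvable radical; if $r \in \pi(N) \setminus \pi(T)$, then $r$-elements already lie in $N$ by hypothesis. In either situation the cited propositions apply, since they only require the $r$-element to sit inside a solvable normal subgroup on which elements of order $p$ coming from the $T$-composition factors can act via conjugation.

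The main obstacle in executing this plan is carrying out the reduction so that \cite[Proposition 2.2.2]{2023REU} and \cite[Corollary 2.2.6]{2023REU} apply directly. Concretely, I would pass to the subgroup $K \cong M.T$ provided by \thref{2.1.2} when $r \in \pi(G) \setminus \pi(\Aut(T))$, ensuring that a Sylow $r$-subgroup of $K$ is contained entirely in the solvable radical $M$; and when $r \in \pi(N) \setminus \pi(T)$, I would instead pass to a $T$-solvable subgroup obtained from \thref{HallSubgroupInducedGraph} that contains a Sylow $r$-subgroup of $N$ along with a copy of $T$ from the composition series. Once inside such a subgroup, the cited results furnish an element of order $pr$, which lifts to the required edge in $G$ and completes the proof.
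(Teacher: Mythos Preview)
Your overall strategy matches the paper's: split on whether $p \in Y$ or $p \in \pi(T)\setminus Y$ and invoke \cite[Corollary 2.2.6]{2023REU} and \cite[Proposition 2.2.2]{2023REU} respectively, after reducing to a subgroup of the shape $(\text{solvable}).T$. The difference lies in how the reduction is carried out. The paper handles both pieces of $R$ simultaneously by applying \thref{2.1.2} to $G/N$ rather than to $G$: this yields $K/N \leq G/N$ with $K/N \cong M.T$, $M$ solvable, and $\pi(K/N) \supseteq \pi(G/N)\setminus(\pi(\Aut(T))\setminus\pi(T))$. Pulling back, $K$ itself has the form $(N.M).T$ with $N.M$ solvable, and one checks directly that $R \subseteq \pi(K)$, so every $r \in R$ lands in the solvable radical of $K$. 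Both cited results then apply to the single group $K$.

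Your plan to treat $r \in \pi(N)\setminus\pi(T)$ separately via \thref{HallSubgroupInducedGraph} is an unnecessary detour and, as stated, has a gap: the Hall $\pi_0$-subgroup you extract is $T$-solvable but need not be of the form $(\text{solvable}).T$; in particular, if $r \in \pi(\Aut(T))\setminus\pi(T)$ the $r$-part could sit above the $T$-factor rather than below it, and the cited results would not apply without a further reduction (essentially \thref{2.1.2} again). Applying \thref{2.1.2} once to the quotient $G/N$ sidesteps this entirely and is what makes the paper's proof three lines long.
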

\begin{proof}
    % If the Sylow $s$-subgroups of $T$ does not satisfy the Frobenius criterion, by \cite[Proposition 2.2.2]{2023REU}, $s-r\not\in\pgc(G)$ for any $p\in\pi(T)$. By \cite[Corollary 2.2.6]{2023REU}, we have that for all $p\in\pi(T)$ and $r\in\pi(N)\setminus\pi(T)$, $p-r\not\in\pgc(G)$.
    Let $K/N \leq G/N$ be the subgroup given by \thref{2.1.2}. Notice that $R \subseteq \pi(K)$. By applying \cite[Corollary 2.2.6]{2023REU} to $K$, we see that there are no edges between $Y$ and $R$. By applying \cite[Proposition 2.2.2]{2023REU} to $K$, we see that there are no edges between $\pi(T) \setminus Y$ and $R$.
    Thus we conclude that there are no edges between $\pi(T)$ and $R$.
\end{proof}

\begin{lemma} \thlabel{Msubgroup}
    Let $T$ be a nonabelian finite simple group,
    and let $M$ be a subgroup of $\Aut(T)$ containing $\operatorname{Inn}(T)$. Let $p\in \pi(T)$, and let $\pi_0$ be the set of primes $(\pi(M) \setminus \pi(T))\cup \{p\}$. Then $M$ has a solvable strict $\pi_0$-subgroup.
\end{lemma}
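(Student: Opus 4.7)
The plan is to construct the desired subgroup as a semidirect product $P \rtimes X_1 \leq M$, where $P$ is a Sylow $p$-subgroup of $N := \operatorname{Inn}(T) \cong T$ and $X_1$ is a solvable complement whose prime set is exactly $\pi_1 := \pi(M) \setminus \pi(T)$. Since $\pi(M) = \pi(N) \cup \pi(M/N) = \pi(T) \cup \pi(M/N)$, we have $\pi_1 = \pi(M/N) \setminus \pi(T)$, and the union $\pi_0 = \{p\} \cup \pi_1$ is disjoint.

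The first step is to isolate the ``$\pi_1$-part'' of $M/N$. Since $M/N$ embeds in $\operatorname{Out}(T)$, it is solvable by the Schreier conjecture, and so admits a Hall $\pi_1$-subgroup $\bar H$. Let $K \leq M$ be the preimage of $\bar H$ in $M$. Then $K \trianglerighteq N$ and $|K/N| = |\bar H|$ is a $\pi_1$-number, hence coprime to $|N|$, so by Schur--Zassenhaus one has $K = N \rtimes X$ with $X \cong \bar H$ -- a solvable $\pi_1$-group with $\pi(X) = \pi_1$.

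The second step is to arrange for some copy of $X$ to normalize a Sylow $p$-subgroup $P$ of $N$. By a Frattini argument inside $K$ one has $N_K(P)/N_N(P) \cong K/N \cong X$, and since $|N_N(P)|$ divides $|N|$ it is coprime to $|X|$. A second application of Schur--Zassenhaus therefore splits this extension, yielding $X_1 \leq N_K(P)$ with $X_1 \cong X$ and $X_1 \cap N = 1$. Because $X_1 \subseteq N_K(P)$, the set $H := P X_1$ is a subgroup of $M$ of order $|P|\,|X_1|$, and its prime set is exactly $\{p\} \cup \pi_1 = \pi_0$. Solvability of $H$ follows because $P \trianglelefteq H$ is a $p$-group and $H/P \cong X_1$ is solvable.

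I do not foresee any deep obstacle: the argument is an organized double application of Schur--Zassenhaus, with solvability of $M/N$ supplied by Schreier's conjecture. The most delicate point that will need care is the prime-set bookkeeping to confirm strictness -- namely that $\pi(X_1) = \pi_1$ \emph{in full}, so that $\pi(H)$ equals $\pi_0$ rather than merely being contained in it -- together with the order-coprimality checks justifying each Schur--Zassenhaus invocation.
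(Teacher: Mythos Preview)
Your proof is correct and takes a slightly different, more self-contained route than the paper. The paper first obtains a solvable Hall $(\pi(M)\setminus\pi(T))$-subgroup $H$ of $M$ directly, by quoting a Hall-subgroup existence result (via the paper's Theorem~\ref{SolvableHallSubgroupInducedSubgraph}, which rests on Gross's theorem), and then cites a coprime-action result of Flavell to produce an $H$-invariant Sylow $p$-subgroup $P$ of $\operatorname{Inn}(T)$; the desired subgroup is then $HP$. You instead work from the bottom up: pull back a Hall $\pi_1$-subgroup of the solvable quotient $M/N$, split via Schur--Zassenhaus to get a complement $X$, and then run Frattini plus a second Schur--Zassenhaus inside $N_K(P)$ to replace $X$ by a copy $X_1$ that normalizes a fixed Sylow $p$-subgroup $P$ of $N$. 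This is essentially the standard proof of the ``coprime action preserves a Sylow'' fact unwound by hand, so your argument avoids the external citations to Gross and Flavell at the cost of a few more lines; both arrive at the same subgroup $PX_1=HP$.
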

\begin{proof}
    By the Schreier conjecture, $M$ is $T$-solvable. We let $H$ be a solvable Hall $(\pi(M) \setminus \pi(T))$-subgroup of $M$ via \thref{SolvableHallSubgroupInducedSubgraph}. $H$ acts coprimely on $\operatorname{Inn}(T) \cong T$ by conjugation. By \cite[Theorem 2.1]{Flavell} there must exist an $H$-invariant Sylow $p$-subgroup of $T$, which we will call $P$. Then $HP$ is a solvable $\pi_0$-subgroup of $M$.
\end{proof}

\begin{corollary}\thlabel{HallTriangles}
    Let $T$ be a nonabelian finite simple group which satisfies the hypothesis of \thref{Noworrygeneral}. Let $G\cong N.M$ where $N$ is solvable and $M$ is a subgroup of $\Aut(T)$ containing $\operatorname{Inn}(T)$. Then given a triangle $A \subseteq \pgc(G)$ such that $A$ has at least one vertex not in $\pi(T)$,
    %and at least one vertex in $\pi(T)$
    we have that $V(A) \cap \pi(N) = \emptyset$.
\end{corollary}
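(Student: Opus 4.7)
The plan is to argue by contradiction: suppose there is some $p \in V(A) \cap \pi(N)$, and derive a contradiction. A first useful observation is that in our setup $G \cong N.M$ with $M \leq \Aut(T)$, the set $R$ from Lemma~\thref{Noworrygeneral} simplifies: because $\pi(M) \subseteq \pi(\Aut(T))$, every prime in $\pi(G) \setminus \pi(\Aut(T))$ must come from $N$ and hence lies in $\pi(N) \setminus \pi(T)$, so $R = \pi(N) \setminus \pi(T)$.

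If $p \notin \pi(T)$, then $p \in R$, and Lemma~\thref{Noworrygeneral} yields $p - q \notin \pgc(G)$ for every $q \in \pi(T)$. Since $A$ is a triangle containing $p$, its other two vertices cannot lie in $\pi(T)$, so $V(A) \subseteq \pi(G) \setminus \pi(T)$, contradicting Lemma~\thref{solvableSubgraph} (which says the induced subgraph on $\pi(G) \setminus \pi(T)$ is triangle-free). Hence $p \in \pi(T) \cap \pi(N)$. Let $c \in V(A) \setminus \pi(T)$, which exists by hypothesis. Since $p \in \pi(T)$ and $p - c \in \pgc(G)$, Lemma~\thref{Noworrygeneral} forces $c \notin R$, giving $c \in \pi(M) \setminus \pi(T)$ with $c \notin \pi(N)$.

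To derive a contradiction from this configuration, the plan is to produce an element of order $pc$ in $G$. Apply Lemma~\thref{Msubgroup} with the prime $p$ to obtain a solvable strict $\pi_0$-subgroup $L$ of $M$ with $\pi_0 = (\pi(M) \setminus \pi(T)) \cup \{p\}$, so both $p$ and $c$ lie in $\pi(L)$. I then examine the preimage of a suitable cyclic subgroup of $L$ in $G$, together with a Sylow $c$-subgroup $Q$ acting coprimely on a Sylow $p$-subgroup $\tilde P$ of this preimage. The key is that in well-behaved configurations the induced action of $Q$ on the quotient $\tilde P/(\tilde P \cap N)$ is trivial, so by a Fitting-type identity $C_{\tilde P}(Q) \cdot (\tilde P \cap N) = \tilde P$; this forces $C_{\tilde P}(Q)$ to contain an element of order $p$, which commutes with $Q$ and yields an element of order $pc$ in $G$, contradicting $p - c \in \pgc(G)$.

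The hardest step is completing this argument in the subcases where the induced action of $Q$ on $\tilde P/(\tilde P \cap N)$ is nontrivial, which can occur when a Sylow $c$ of $\Aut(T)$ acts fixed-point-freely on a Sylow $p$ of $\operatorname{Inn}(T)$. In these subcases, I expect one has to appeal to the full force of the hypothesis of Lemma~\thref{Noworrygeneral}: either Proposition~2.2.2 of \cite{2023REU} applies directly (when the Sylow $p$-subgroup of $T$ fails the Frobenius Criterion) to give $p - c \notin \pgc(G)$, or Corollary~2.2.6 of \cite{2023REU} must be invoked (for $p \in Y$) to constrain the fixed-point structure of $p$-elements in the relevant irreducible representations and produce the required commuting pair.
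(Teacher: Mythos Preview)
Your reduction to the case $p \in \pi(T) \cap \pi(N)$ and $c \in \pi(M) \setminus \pi(T)$ with $c \notin \pi(N)$ is fine and matches the paper. The gap is in what follows: you attempt to derive the contradiction by showing $p - c \notin \pgc(G)$, using only the pair $(p,c)$ and never the third vertex $b$ of the triangle. Without using $b$, the claim you are trying to prove is simply false. For instance, take $T = \Sz(8)$, $M = \Aut(\Sz(8))$, $N = C_7$, and $G = (\Sz(8) \times C_7) \rtimes C_3$ with $C_3$ acting Frobeniusly on $C_7$ (the group realizing the balloon graph in \thref{F4Realizable}). Here $p = 7 \in \pi(T) \cap \pi(N)$, $c = 3 \in \pi(M) \setminus \pi(T)$ with $3 \notin \pi(N)$, yet $3 - 7 \in \pgc(G)$. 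Your fallback to Proposition~2.2.2 and Corollary~2.2.6 of \cite{2023REU} cannot rescue this, since those results concern primes in $R = \pi(N) \setminus \pi(T)$ or outside $\pi(\Aut(T))$, whereas $c$ lies in neither set.

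The paper's argument avoids this entirely by using all three vertices at once. After reaching the same reduction, it applies \thref{Msubgroup} not with $p$ but with the remaining vertex $b$ (when $b \in \pi(T)$; otherwise $b$ already lies in $\pi(M)\setminus\pi(T)$ or in $\pi(N)$) to obtain a solvable subgroup $L \leq M$ whose preimage $H = N.L$ in $G$ is solvable and satisfies $\{p,b,c\} \subseteq \pi(H)$: indeed $p$ comes from $\pi(N)$, while $b,c \in \pi(L)$. Then $A$ is a triangle in $\pgc(H)$ for a solvable group $H$, contradicting \thref{solvclass}. Note that your application of \thref{Msubgroup} with the prime $p$ cannot play this role, since $p \in \pi(N)$ is already accounted for; the point is to capture $b$ in $\pi(L)$, which your choice need not do.
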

\begin{proof}
    By \thref{solvableSubgraph}, at least one of the vertices of $A$ must be in $\pi(T)$. Let $\{a,b,c\}$ be the vertices of $A$, and assume without loss of generality that $a \in \pi(T)$ and $c\notin \pi(T)$. 
    %If $a$ is the only vertex in $\pi(T)$, then we apply \thref{Msubgroup} to $G/N$ to get a solvable $((\pi(M) \setminus \pi(T))\cup \{a\})$-subgroup $H/N$. Then $H$ is solvable and $A \subseteq \pgc(H)$, a contradiction by \cite{2023REU}.
    %So it must be the case that $A$ has exactly two vertices in $\pi(T)$, which we assume without loss of generality to be $a$ and $b$. 
    If $c \in \pi(N)$, then we have a contradiction by \thref{Noworrygeneral}. Thus $c\in \pi(M) \setminus \pi(T)$. If $a\in \pi(N)$, then we apply \thref{Msubgroup} to $G/N$ to get a solvable $((\pi(M) \setminus \pi(T))\cup \{b\})$-subgroup $H/N$. Then $H$ is solvable and $A \subseteq \pgc(H)$, a contradiction by \thref{solvclass}.
    Hence $a\not\in \pi(N)$. Now if $b\in \pi(N)$, in the case that $b\notin \pi(T)$ we get a contradiction as for $c$ above, and in the case that
    $b\in \pi(T)$ we get a contradiction as for $a$ above.
\end{proof}

\begin{lemma}\thlabel{SylowTriangles}
    Let $G$ be a finite group, let $N$ be a solvable normal subgroup of $G$, and let $A \subseteq \pgc(G)$ be a triangle. Then, $|V(A)\cap\pi(N)| \leq 1$.
\end{lemma}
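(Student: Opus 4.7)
The plan is to argue by contradiction. Suppose $A$ is a triangle in $\pgc(G)$ with vertex set $\{p,q,r\}$ such that at least two of these primes lie in $\pi(N)$; say $p,q\in\pi(N)$. I will build a solvable subgroup of $G$ whose prime divisors include $p,q,r$, and on which the triangle survives, then invoke \thref{solvclass} to derive a contradiction.

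First I would pick an element $x\in G$ with $o(x)=r$ and consider the subgroup $H=N\langle x\rangle$. This is a subgroup because $N\trianglelefteq G$, and it is solvable since $N$ is solvable and $H/N\cong\langle x\rangle/(\langle x\rangle\cap N)$ is cyclic. Clearly $\pi(H)\supseteq\pi(N)\cup\{r\}\supseteq\{p,q,r\}$.

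Next, since containing an element of order $pq$ (or $pr$, or $qr$) is preserved under taking subgroups, any non-edge of $\pgc(G)$ whose endpoints both lie in $\pi(H)$ remains a non-edge of $\pgc(H)$; equivalently $\pgc(G)[\pi(H)]\subseteq\pgc(H)$. In particular the three edges $p-q$, $p-r$, $q-r$ of $\pgc(G)$ all appear in $\pgc(H)$, so $\{p,q,r\}$ spans a triangle in $\pgc(H)$.

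But $H$ is solvable, so by \thref{solvclass} the graph $\pgc(H)$ is triangle-free, a contradiction. Hence no triangle of $\pgc(G)$ can contain two primes from $\pi(N)$, which is the claim. There is no real obstacle here; the only subtlety worth flagging is the direction of the inclusion between $\pgc(G)$ and $\pgc$ of a subgroup, namely that removing elements can only destroy edges of the prime graph and hence only add edges to its complement.
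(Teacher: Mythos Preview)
Your proof is correct and follows essentially the same idea as the paper: construct a solvable subgroup of $G$ whose prime divisors contain all three vertices of the triangle, observe that edges of $\pgc(G)$ survive in the complement prime graph of a subgroup, and invoke \thref{solvclass}. The paper splits into the cases $r\in\pi(N)$ and $r\notin\pi(N)$, and in the latter case pulls back a Sylow $r$-subgroup of $G/N$ to obtain a solvable group of the form $N.H$; your use of $N\langle x\rangle$ with $o(x)=r$ handles both cases at once and is a slight streamlining of the same argument.
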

\begin{proof}
    Let $V(A) = \{a, b, c\}$. For contradiction, suppose that $a, b \in \pi(N)$. If $c\in \pi(N)$, then $\pgc(N)$ contains a triangle, which is a contradiction by \thref{solvclass}. If $c\notin \pi(N)$, then $c\in \pi(G/N)$. Let $H$ be a Sylow $c$-subgroup of $G/N$. Then the subgroup of $G$ isomorphic to $N.H$ is a solvable group whose prime graph complement contains a triangle, a contradiction by \thref{solvclass}.
    % Because $c\in\pi(T)$, there exists a nontrivial Sylow $c$-subgroup of $G/N$, call it $C$. Then, there exists a $K\leq G$ such that $K/N\cong C$. $K$ is solvable but $\pgc(K)$ contains a triangle, a contradiction to \cite{2015REU}.
\end{proof}

\begin{lemma}\thlabel{solvableExtension}
    Let $G$ be a finite group and let $N$ be a solvable normal subgroup of $G$. Then there exists a normal series $G \trianglerighteq N_k \trianglerighteq N_{k-1} \trianglerighteq \cdots \trianglerighteq N_1 \trianglerighteq N_0 = \{1\}$ such that $N_k = N$ and $\frac{N_i}{N_{i-1}}$ is elementary abelian for $i$ from 1 to $k$.
\end{lemma}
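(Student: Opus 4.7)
The plan is to build the series from the bottom up by repeatedly extracting a minimal normal subgroup of $G$ that lies inside $N$.

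First, I set $N_0 = \{1\}$ and proceed inductively. Suppose $N_{i-1} \trianglelefteq G$ has been constructed with $N_{i-1} \subseteq N$. If $N_{i-1} = N$, then the construction terminates with $k = i-1$. Otherwise, $N/N_{i-1}$ is a nontrivial solvable normal subgroup of $G/N_{i-1}$, so by finiteness it contains at least one minimal normal subgroup $M/N_{i-1}$ of $G/N_{i-1}$ (take any minimal element, under inclusion, of the nonempty collection of nontrivial normal subgroups of $G/N_{i-1}$ contained in $N/N_{i-1}$). Define $N_i = M$; by construction $N_{i-1} \trianglelefteq N_i \trianglelefteq G$ and $N_i \subseteq N$.

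Next, I verify that each factor $N_i/N_{i-1}$ is elementary abelian. By choice, $N_i/N_{i-1}$ is a minimal normal subgroup of $G/N_{i-1}$, hence characteristically simple. Moreover, since $N_i/N_{i-1}$ embeds into the solvable group $N/N_{i-1}$, it is solvable. It is a standard fact that a characteristically simple finite solvable group is elementary abelian: its (nontrivial) derived subgroup would be a proper characteristic subgroup, forcing the group to be abelian; and then the subgroup of elements of order dividing any fixed prime $p$ is characteristic, forcing the group to be an elementary abelian $p$-group.

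Finally, the process terminates after finitely many steps because $|N_i|$ strictly increases at each stage and $N$ is finite; at termination $N_k = N$, yielding the desired normal series. No step here is a real obstacle, as this is essentially the standard argument producing the abelian factors of a chief series through $N$; the only subtlety worth stating explicitly is the passage from "characteristically simple and solvable" to "elementary abelian," which is what forces the factors to be not merely abelian but of prime exponent.
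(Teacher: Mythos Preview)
Your argument is correct: you construct a chief series of $G$ through $N$ from below, and the standard fact that a finite solvable characteristically simple group is elementary abelian gives the required form of the factors. The paper itself does not supply a proof at all, simply declaring ``This is a well-known standard fact''; so your write-up is not so much a different approach as an actual proof where the paper offers none.
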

\begin{proof} This is a well-known standard fact.
%    We construct our series inductively. Suppose we have a normal series 
%    \begin{equation}
%        G \trianglerighteq N \trianglerighteq N_\ell \trianglerighteq N_{\ell - 1} \trianglerighteq \cdots \trianglerighteq N_0 = \{1\}
%    \end{equation} 
%    such that $\frac{N_i}{N_{i-1}}$ is elementary abelian for $i$ from 1 to $\ell$. Define $N_{\ell + 1} \leq N$ to be a normal subgroup of $G$ strictly containing $N_\ell$ which is minimal among all such subgroups. Since $\frac{N_{\ell + 1}}{N_\ell}$ is minimal normal in $\frac{G}{N_\ell}$, we have that $\frac{N_{\ell + 1}}{N_\ell}$ is a characteristically simple section of a solvable group, and is thus elementary abelian. If $N_{\ell + 1} = N$, then we are done. Otherwise, repeat the above argument on the series $G \trianglerighteq N \trianglerighteq N_{\ell+1} \trianglerighteq N_{\ell} \trianglerighteq \cdots \trianglerighteq N_0 = \{1\}$.
\end{proof}
%\begin{corollary}\thlabel{elementaryAbelianCharacteristicSubgroup}
%    Let $N$ be a solvable group. Then $N$ has a nontrivial elementary abelian characteristic subgroup.
%\end{corollary}
%\begin{proof}
%    Apply \thref{solvableExtension} to $G = \Aut(N) \ltimes N$. Then $N_1$ is elementary abelian and characteristic in $N$.
%\end{proof}
Finally, we recall the meaning of the Frobenius criterion.\\

\begin{lemma}\thlabel{twoWithEdgeGeneral}
    Let $T$ be a nonabelian simple group, let $p \in \pi(T)$, let $N$ be a solvable group, let $r \in \pi(N)$, and let $G \cong N.T$. Suppose that the Sylow $p$-subgroups of $T$ do not satisfy the Frobenius Criterion. Then $p-r \notin \pgc(G)$. 
    
    %Suppose $G$ is an $\Sz(8)$-solvable group such that $G \cong N.\Sz(8)$ where $|N|=2^ap^b$ where $p\in \pi(\Sz(8)) \setminus \{2\}$ and $a,b \in \mathbb{Z}^+$. Then $2-p \notin \pgc(G)$.
\end{lemma}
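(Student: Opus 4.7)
The plan is to exhibit an element of order $pr$ in $G$ directly, noting first that $p \neq r$ (since $p-r$ is only meaningful as an edge between distinct primes). Let $Q$ be a Sylow $p$-subgroup of $G$. Since $Q \cap N$ is a Sylow $p$-subgroup of $N$, the quotient $Q/(Q \cap N)$ is isomorphic to a Sylow $p$-subgroup $S$ of $G/N \cong T$. A routine check shows that the class of groups satisfying the Frobenius Criterion (cyclic, dihedral, Klein-4, generalized quaternion) is closed under taking quotients; by contrapositive, since $S$ fails the criterion, so does $Q$, and in particular $Q$ is neither cyclic nor generalized quaternion.

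Next, I would apply \thref{solvableExtension} to produce a $G$-normal series $\{1\} = N_0 \triangleleft N_1 \triangleleft \cdots \triangleleft N_k = N$ with elementary abelian factors, and let $i$ be the smallest index for which $r \mid |N_i/N_{i-1}|$. Then $A := N_i/N_{i-1}$ is a nontrivial elementary abelian $r$-group, and by minimality $r \nmid |N_{i-1}|$. Set $\bar G = G/N_{i-1}$ and $\bar Q = QN_{i-1}/N_{i-1}$; the latter is a Sylow $p$-subgroup of $\bar G$ (standard fact) and a quotient of $Q$, so it too fails the Frobenius Criterion and is neither cyclic nor generalized quaternion.

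Now $\bar Q$ acts by conjugation on the abelian $p'$-group $A$. If this action were fixed-point-free, then $\bar Q$ would be a Frobenius complement acting on $A$, forcing $\bar Q$ to be cyclic or generalized quaternion — a contradiction. Hence there exist nontrivial $a \in A$ and $\bar q \in \bar Q$ with $\bar q$ centralizing $a$; after replacing $\bar q$ by a suitable power we may assume $|\bar q| = p$, so that $\bar q \cdot a$ has order $pr$ in $\bar G$.

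Finally, I would lift: choose $g \in G$ with $gN_{i-1} = \bar q a$. By \thref{orderNotDividing}, $o(g)$ divides $pr \cdot |N_{i-1}|$, so $o(g) = pr \cdot m$ for some $m \mid |N_{i-1}|$. Since $r \nmid |N_{i-1}|$, we have $\gcd(m,r) = 1$; writing $m = p^{e} m'$ with $\gcd(m',p) = 1$, the element $g^{m' p^{e}}$ has order exactly $pr$, so $p-r \in \pg(G)$, i.e., $p-r \notin \pgc(G)$. I expect the most delicate step to be precisely this lifting: the order of $g$ may inherit extra $p$-power factors from $|N_{i-1}|$, but the minimality of $i$ ensures $r \nmid |N_{i-1}|$, which is exactly what allows us to strip off the unwanted prime powers and land on an element of order $pr$.
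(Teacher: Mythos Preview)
The paper does not actually prove this lemma; it simply cites \cite[Proposition 2.2.2]{2023REU}. Your argument supplies the details directly, and the overall strategy---pass to an elementary abelian $r$-chief factor of $N$, act on it with a Sylow $p$-subgroup, and invoke the fact that $p$-group Frobenius complements are cyclic or generalized quaternion---is the standard one underlying the cited result.

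There is, however, one logical slip. You correctly argue that $Q$ fails the Frobenius Criterion because its quotient $S \cong Q/(Q\cap N)$ fails it (contrapositive of closure under quotients). But you then write that $\bar Q$ ``is a Sylow $p$-subgroup of $\bar G$ \dots\ and a quotient of $Q$, so it too fails the Frobenius Criterion.'' This is the wrong direction: knowing that $Q$ fails the criterion says nothing about a \emph{quotient} of $Q$---indeed, every nontrivial $p$-group has $C_p$ as a quotient. The fix is immediate and uses the same idea you already applied to $Q$: since $\bar G/(N/N_{i-1}) \cong T$, the image of the Sylow $p$-subgroup $\bar Q$ in this quotient is again a Sylow $p$-subgroup of $T$, so $\bar Q$ has $S$ as a quotient and therefore fails the criterion. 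With this correction the proof goes through.

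As a minor simplification, the lifting step at the end is more elaborate than necessary. Once $\bar G = G/N_{i-1}$ contains an element of order $pr$, any preimage $g \in G$ has order divisible by $pr$, and a suitable power of $g$ has order exactly $pr$. Neither the minimality of $i$ nor the bound from \thref{orderNotDividing} is needed for this.
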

\begin{proof} This is \cite[Proposition 2.2.2]{2023REU}.
%    By taking a section of the normal series given by \thref{solvableExtension}, we may assume that $N \cong K_r.K$, where $K_r$ is an $r$-group and $K$ is solvable. Let $P$ be a Sylow $p$-subgroup of $G$. The quotient $P/(P\cap N)$ is isomorphic to $PN/N$ which is a Sylow $p$-subgroup of $G/N \cong T$, so it does not satisfy the Frobenius Criterion. Thus, $P$ is neither cyclic nor generalized quaternion, so it cannot act Frobeniusly on $K_r$. Therefore $p-r \notin \pgc(G)$.
\end{proof}

\section{The Suzuki Group Sz(8)}\label{Sz8}

\begin{figure}[H]
    \centering
    \begin{minipage}{.5\textwidth}
        \centering
        \begin{tikzpicture}
            % Define coordinates for the vertices
            \coordinate (A) at (0, 1);   % Vertex 2
            \coordinate (B) at (1, 1);   % Vertex 7
            \coordinate (C) at (1, 0);   % Vertex 13
            \coordinate (D) at (0, 0);   % Vertex 5
            % Draw the edges of the complete graph
            \draw (B) -- (C);
            \draw (B) -- (D);
            \draw (C) -- (D);
            \draw (A) -- (D);
            \draw (A) -- (B);
            \draw (A) -- (C);
            % Draw the vertices (circles only)
            \foreach \point in {A, B, C, D}
                \draw[draw, fill=white] (\point) circle [radius=0.2cm];
            % Draw the text on top of the circles
            \foreach \point/\name in {(A)/2, (B)/7, (C)/13, (D)/5}
                \node at \point {\scriptsize\name};
        \end{tikzpicture}
        \caption*{$\pgc(\Sz(8))$}
    \end{minipage}%
    \begin{minipage}{.5\textwidth}
        \centering
        \begin{tikzpicture}
            % Define coordinates for the vertices
            \coordinate (A) at (0, 1);   % Vertex 2
            \coordinate (B) at (1, 1);   % Vertex 7
            \coordinate (C) at (1, 0);   % Vertex 13
            \coordinate (D) at (0, 0);   % Vertex 5
            \coordinate (E) at (1.5, 0.5);
            % Draw the edges of the complete graph
            \draw (B) -- (C);
            \draw (B) -- (D);
            \draw (C) -- (D);
            \draw (A) -- (D);
            \draw (A) -- (B);
            \draw (A) -- (C);
            \draw (B) -- (E);
            \draw (C) -- (E);
            % Draw the vertices (circles only)
            \foreach \point in {A, B, C, D, E}
                \draw[draw, fill=white] (\point) circle [radius=0.2cm];
            % Draw the text on top of the circles
            \foreach \point/\name in {(A)/2, (B)/7, (C)/13, (D)/5, (E)/3}
                \node at \point {\scriptsize\name};
        \end{tikzpicture}
        \caption*{$\pgc(\Aut(\Sz(8)))$}
    \end{minipage}
\end{figure}

In this section, we classify prime graph complements of $\Sz(8)$-solvable groups. $\Sz(8)$ is the smallest member of the Suzuki simple groups with order $29120 = 2^6\cdot 5\cdot 7 \cdot 13$ \cite{SuzukiOriginal}. Its outer automorphism group, $\operatorname{Out}(\Sz(8))$, is isomorphic to the cyclic group of order 3. Note that this implies $\pi(\Aut(\Sz(8)))\neq\pi(\Sz(8))$.

\begin{fact} \thlabel{Sz8SolvableSubgroups}
    $\Sz(8)$ has solvable strict $\{2,5\}$, $\{2,7\}$, and $\{2,13\}$-subgroups. $\Aut(\Sz(8))$ has solvable strict $\{2,3,5\}$, $\{2,3,7\}$, and $\{2,3,13\}$-subgroups. %All proper subgroups of $\Sz(8)$ are solvable. 
    %This can be verified using GAP \cite{GAP}.
\end{fact}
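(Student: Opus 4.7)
The plan is to exhibit, for each of the six required prime sets, an explicit solvable subgroup realizing it. We leverage the well-known classification of the maximal subgroups of $\Sz(8)$ (from Suzuki's original paper or the ATLAS) together with an orbit-counting argument for the field automorphism of order $3$.

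First I would handle $\Sz(8)$ itself. Its maximal subgroups include a Borel subgroup $B = U \rtimes T_0$ of order $2^6 \cdot 7 = 448$, where $U$ is a Sylow $2$-subgroup and $T_0 \cong C_7$ is a maximal torus; this is solvable with $\pi(B) = \{2,7\}$, giving the required strict $\{2,7\}$-subgroup. The Suzuki structure also provides two Frobenius-type normalizers of non-trivial tori: $N_{5} \cong C_{5} \rtimes C_{4}$ of order $20$ and $N_{13} \cong C_{13} \rtimes C_{4}$ of order $52$, normalizing Sylow $5$- and Sylow $13$-subgroups respectively. Both are solvable, with prime sets exactly $\{2,5\}$ and $\{2,13\}$, which handles the other two cases.

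Next I would handle $\Aut(\Sz(8)) = \operatorname{Inn}(\Sz(8)) \rtimes \langle \sigma \rangle$, where $\sigma$ is the Frobenius automorphism of order $3$. For each $p \in \{5,7,13\}$, let $N_p$ denote the $\Sz(8)$-normalizer of a Sylow $p$-subgroup. The number of Sylow $p$-subgroups of $\Sz(8)$ equals $[\Sz(8):N_p]$, which is $1456$, $2080$, and $560$ for $p=5,7,13$ respectively --- all coprime to $3$. Since $\langle\sigma\rangle$ acts on the set of Sylow $p$-subgroups of $\Sz(8)$ by conjugation with orbit sizes dividing $3$, and the total count is coprime to $3$, at least one Sylow $p$-subgroup $P$ is $\sigma$-invariant. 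Its normalizer $N_{\Sz(8)}(P)$ is then also $\sigma$-invariant (being characteristic in $P$-related data), so $N_{\Sz(8)}(P) \rtimes \langle\sigma\rangle$ is a subgroup of $\Aut(\Sz(8))$. It is solvable (an extension of a solvable group by $C_3$), and its order $3 \cdot |N_p|$ involves exactly the primes $\{2,3,p\}$, providing the required strict $\{2,3,p\}$-subgroups.

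The argument is essentially bookkeeping once one has the list of maximal subgroups of $\Sz(8)$ and the orders of the relevant Sylow normalizers, so there is no real technical obstacle. The only mild subtlety is verifying that $\sigma$ fixes a Sylow $p$-subgroup of $\Sz(8)$ (rather than merely permuting them non-trivially), which the coprimality of the Sylow count to $|\sigma|=3$ makes immediate via orbit counting.
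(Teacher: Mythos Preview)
Your argument is correct. The paper states this result as a \emph{Fact} without any proof, presumably intending the reader to verify it by inspection of the ATLAS or a quick GAP computation; your write-up therefore supplies strictly more than the paper does. Your approach---exhibiting the Borel subgroup and the Sylow normalizers $C_5\rtimes C_4$, $C_{13}\rtimes C_4$ in $\Sz(8)$, and then lifting to $\Aut(\Sz(8))$ by an orbit-counting argument that produces a $\sigma$-invariant Sylow $p$-subgroup (and hence a $\sigma$-invariant normalizer)---is clean and self-contained. The only cosmetic point is that ``being characteristic in $P$-related data'' is imprecise phrasing; what you actually use (and state correctly in the same sentence) is simply that automorphisms carry normalizers to normalizers, so $\sigma(P)=P$ forces $\sigma(N_{\Sz(8)}(P))=N_{\Sz(8)}(P)$.
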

\subsection*{Graphs on Four Vertices}
The goal of this subsection is to prove \thref{Sz84VertexGraphs}, which states that every graph on 4 vertices except for \hamSandwich is realizable as the prime graph complement of an $\Sz(8)$-solvable group.

\begin{lemma}\thlabel{Sz8ModuleExtensions}\thlabel{BrauerTableInformation}
    Let $p \in \pi(\Sz(8))$, let $N$ be a nontrivial $p$-group, and let $G \cong N.\Sz(8)$. Then we have the following restrictions on $\pgc(G)$:
    \begin{itemize}
        \item If $p \in \{ 5, 7, 13 \}$ then $\pgc(G)$ is isomorphic to \trianglePlusIsolated.
        \item If $p = 2$ then $\pgc(G)$ is isomorphic to \trianglePlusIsolated, \SuzukiThirtyTwoElusiveGraph, or \fourVertexCompleteGraph. \\
        In \SuzukiThirtyTwoElusiveGraph, the vertex of degree three is 13.
    \end{itemize}
      Additionally, for each of the above graphs $\Xi$ there exists a strictly $\Sz(8)$-solvable group $G$ such that $\pgc(G) \cong \Xi$.
\end{lemma}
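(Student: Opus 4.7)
The plan is to invoke \thref{GeneralizedModuleExtensions}, which reduces the classification to computing, for each $\chi \in \operatorname{IBr}_p(\Sz(8))$, the set $A_\chi$ of edges at $p$ along which the representation corresponding to $\chi$ has nontrivial fixed points. Concretely, the possible prime graph complements of $G \cong N.\Sz(8)$ with $N$ a $p$-group are precisely the graphs $\pgc(\Sz(8)) \setminus \bigcup_{\chi \in Y} A_\chi$ as $Y$ ranges over subsets of $\operatorname{IBr}_p(\Sz(8))$. Since every irreducible Brauer character has positive degree, the hypothesis that $N$ is nontrivial forces $Y$ to be nonempty; the choice $Y = \emptyset$ corresponds to $N$ trivial and is therefore excluded.

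For each $p \in \{2,5,7,13\}$, I would access the Brauer table of $\Sz(8)$ mod $p$ from the GAP character table library and compute $A_\chi$ for every $\chi \in \operatorname{IBr}_p(\Sz(8))$ using \thref{MaslovaPaperLemma}. For $p \in \{5,7,13\}$, I expect the tabulation to show that every $\chi$ satisfies $A_\chi = \{p-q \mid q \in \pi(\Sz(8)) \setminus \{p\}\}$; that is, every such character has a nonzero fixed space under every prime-order element of order coprime to $p$. Under this verification every nonempty $Y$ yields $\bigcup_{\chi \in Y} A_\chi$ equal to the full three-edge set at $p$, so the only realizable graph is $\trianglePlusIsolated$, and the direct product $\Sz(8) \times C_p$ serves as a concrete witness.

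For $p = 2$ the analysis is more delicate. After tabulating $A_\chi$ for each $\chi \in \operatorname{IBr}_2(\Sz(8))$, I would enumerate $\bigcup_{\chi \in Y} A_\chi$ as $Y$ ranges over nonempty subsets and verify that the set of values attained is exactly $\{\emptyset, \{2-5, 2-7\}, \{2-5, 2-7, 2-13\}\}$, corresponding to the three graphs $\fourVertexCompleteGraph$, $\SuzukiThirtyTwoElusiveGraph$ (with $13$ at the degree-three vertex), and $\trianglePlusIsolated$. Realizing $\fourVertexCompleteGraph$ with nontrivial $N$ requires at least one $\chi \in \operatorname{IBr}_2(\Sz(8))$ with $A_\chi = \emptyset$, which I expect to be supplied by a large-degree character such as the degree-$64$ irreducible Brauer character. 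For each of the three realizable graphs, the construction of \thref{graphExistenceFromBrauerTable} applied to the identified $Y$ produces an explicit strictly $\Sz(8)$-solvable witness.

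The main obstacle is the case $p = 2$: I must check simultaneously that $\hamSandwich$ is unrealizable (no union of $A_\chi$'s is a one-edge set) and that the only realizable two-edge union is $\{2-5, 2-7\}$, which forces the degree-three vertex in $\SuzukiThirtyTwoElusiveGraph$ to be $13$ rather than $5$ or $7$. This requires a careful, element-by-element inspection of the Brauer character values of $\Sz(8)$ on elements of orders $5$, $7$, and $13$ together with a complete enumeration over subsets of $\operatorname{IBr}_2(\Sz(8))$.
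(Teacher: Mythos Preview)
Your proposal is correct and follows essentially the same route as the paper, which likewise tabulates the sets $A_\chi$ from the Brauer tables of $\Sz(8)$ mod each $p$ and then invokes \thref{GeneralizedModuleExtensions} for both the restriction and the existence claims. One small correction to your speculation: the characters in $\operatorname{IBr}_2(\Sz(8))$ with $A_\chi = \emptyset$ turn out to be the three degree-$4$ characters (the natural $4$-dimensional representation over $\mathbb{F}_8$ and its Frobenius twists), not the degree-$64$ one, which in fact has $A_\chi$ equal to the full three-edge set at $2$.
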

%     \begin{lemma}
%     Let $q \in \pi(\Sz(8))$, let $N$ be a nontrivial $q$-group, and let $G \cong N.\Aut(\Sz(8))$. Then we have the following restrictions on $\pgc(G)$:
%     \begin{itemize}
%         \item If $q = 2$ then $\pgc(G)$ is isomorphic to \house, \northStar, or \dart.
%         \item If $q = 5$ then $\pgc(G)$ is isomorphic to \northStar.
%         \item If $q \in \{7,13\}$ then $\pgc(G)$ is isomorphic to \balloon or \codart.
%     \end{itemize}
% \end{lemma}
\begin{proof}
    % Let $g\in G$ such that the order of $g$ is not $q$. Consider the module $K$ over the algebraic closure of $\mathbb{F}_q$. If an eigenvalue of $\varphi(g)$ is one, then $\varphi(g)$ fixes a point in $K$, so the characteristic polynomial of det$(\varphi(g) - \lambda I)$ has a solution of 1. 1 is in $\mathbb{F}$, so when restricted to acting on the vectors with entries from $\mathbb{F}$, $\varphi(g)$ still has fixed points. Thus, we may determine the existence fixed points of $\varphi$ acting on $V$ by determining the existence of fixed points of $\varphi$ acting on $K$. By \cite[Lemma 6.2]{BrauerTableFP}, dim$C_K(g) = \frac{1}{\operatorname{o}(g)}\sum_{x\in\langle g\rangle}\chi(x)$ for the irreducible Brauer character $\chi$ corresponding to the representation. If $\operatorname{dim}C_K(g) > 0$, $\varphi(g)$ fixes points in $K$, if $C_K(g) = 0$, $\varphi(g)$ does not fix points in $K$. Applying this lemma, we get for characteristic 2, 5, 7, and 13:

%For each $p\in\pi(T)$, in the table corresponding to $p$ each row corresponds to some $A_i$ in the statement of \thref{GeneralizedModuleExtensions}.
    By applying the methods for finding fixed points of \thref{graphExistenceFromBrauerTable} to the Brauer table of $\Sz(8)$ mod $p$, we gain the tables below. ``Yes'' appears in row $\chi_i$ column $q$ if and only if for $A_{\chi_i}$ as defined in \thref{GeneralizedModuleExtensions} we have $p-q\in A_i$. The restrictions and existence follow from \thref{GeneralizedModuleExtensions}.
    
    % Each row of the table corresponds to For any $p\in\pi(T)$, ``Yes'' appears in row $\chi_i$ column $q$ if and only if there is some $A_i$ from \thref{} such that $p-q\in A_i$. By [MODULE GENERALIZED], 
    
    % for each $\chi_i$ listed in the table, 
    
    % In the table corresponding to $\operatorname{IBr}_q(\Sz(8))$, ``Yes'' appears in row $\chi_i$ column $p$ if and only if 
    
    % for an irreducible representation $\varphi_i$ of degree $n$ corresponding to $\chi_i$ and a finite field $\F_{q^m}$ for $m$ large enough so that $\F_{q^m}$ is a splitting field for $\Sz(8)$ (guaranteed by \cite[Corollary 9.10]{IsaacsCharacters}), the edge $p-q$ is not in $\pgc(\Sz(8) \ltimes_{\varphi_i} (\F_{q^m})^n)$.
    \begin{center} 
    \mbox{
    \begin{tabular}{||c c c c||} 
     \hline
     $\chi_i \in \operatorname{IBr}_{2}(\Sz(8))$ & 5 & 7 & 13 \\ [0.5ex] 
     \hline\hline
     $\chi_1$ & Yes & Yes & Yes \\ 
     \hline
     $\chi_2$ through $\chi_4$ & No & No & No\\
     \hline
     $\chi_5$ through $\chi_7$ & Yes & Yes & No\\
     \hline
     $\chi_8$ & Yes & Yes & Yes \\[1ex] 
     \hline
    \end{tabular}
    \quad
    \begin{tabular}{||c c c c||} 
     \hline
     $\chi_i \in \operatorname{IBr}_{7}(\Sz(8))$ & 2 & 5 & 13\\ [0.5ex] 
     \hline\hline
     $\chi_1$ through $\chi_8$ & Yes & Yes & Yes\\[1ex]
     \hline
    \end{tabular}
    }
    \end{center}
    \begin{center}
    \mbox{
    \begin{tabular}{||c c c c||} 
     \hline
     $\chi_i \in \operatorname{IBr}_{5}(\Sz(8))$ & 2 & 7 & 13 \\ [0.5ex] 
     \hline\hline
     $\chi_1$ through $\chi_{10}$ & Yes & Yes & Yes \\ [1ex] 
     \hline
    \end{tabular}
    \quad
    \begin{tabular}{||c c c c||} 
     \hline
     $\chi_i \in \operatorname{IBr}_{13}(\Sz(8))$ & 2 & 5 & 7\\ [0.5ex] 
     \hline\hline
     $\chi_1$ through $\chi_8$ & Yes & Yes & Yes\\[1ex] 
     \hline
    \end{tabular}
    }\vspace{0.2cm}
    \end{center}

    % We will make our argument for $q = 5$. The other primes follow similarly. From the table corresponding to $\operatorname{IBr}_5(\Sz(8))$, we see that the only possible prime graph complement arising from an irreducible representation of $\Sz(8)$ over the field $\F_{5^m}$ is \trianglePlusIsolated. %Because each irreducible representation is absolutely irreducible over $\F_{5^m}$, 
    % By \thref{intersectionOfIrreducibles}, we conclude that all representations of $\Sz(8)$ over $\F_{5^m}$ produce \trianglePlusIsolated.

    % By \thref{pGroupExtensionToGraphIntersection}, to prove our original claim it suffices to show that given any irreducible representation $\varphi: \Sz(8) \to \GL(k, \F_5)$ where $k$ is a positive integer, we have that $\pgc(\Sz(8) \ltimes_\varphi (\F_5)^k)$ is isomorphic to \trianglePlusIsolated. Suppose for contradiction that there is a $\varphi$ which does not produce the above graph. Then, viewing the codomain of $\varphi$ as matrices over $\F_5 \subseteq \F_{5^m}$, we may view $\varphi$ as a function $\Sz(8) \to \GL(k, \F_{5^m})$. Since the eigenvalues of these matrices have not changed, for all $h\in \Sz(8)$, the matrix $\varphi_h$ fixes a nontrivial element of $(\F_{5^m})^k$ if and only if $\varphi_h$ fixes a nontrivial element of $(\F_5)^k$. By \thref{representationSemidirectProduct}, we have that $\pgc(\Sz(8) \ltimes_\varphi (\F_{5^m})^k) = \pgc(\Sz(8) \ltimes_\varphi (\F_5)^k)$. But then $\varphi$ is a representation of $\Sz(8)$ over $\F_{5^m}$ which does not produce the graph \trianglePlusIsolated, a contradiction.
\end{proof}

\begin{remark}
        An alternative method for proving \thref{Sz8ModuleExtensions} is to use GAP \cite{GAP} to directly compute the set of irreducible representations of $\Sz(8)$ over the field $\F_p$ with the command \verb|IrreducibleRepresentations(Sz(8),GF(p))|. On our computers, this took several hours to run.
\end{remark}

\begin{remark}
    It is interesting to note that there exists an extension of $\Sz(8)$ by a nontrivial 2-group which has prime graph complement isomorphic to the complete graph, meaning the extension removes no edges. The extension in question is $\Sz(8) \ltimes_\varphi (\F_8)^4$ where $\varphi: \Sz(8) \to \GL(4,8)$ is an irreducible representation listed as ``Matrix representation of dim 4a over GF(8)" in the online {\it Atlas of finite group representations} \cite{ATLAS}.
\end{remark}

\begin{lemma}\thlabel{hamSandwichImpossible}
    There is no $\Sz(8)$-solvable group whose prime graph complement is isomorphic to \hamSandwich.
\end{lemma}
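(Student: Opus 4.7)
The plan is to argue by contradiction: suppose $G$ is an $\Sz(8)$-solvable group with $\pgc(G) \cong \hamSandwich$. Since $\hamSandwich$ contains triangles, \thref{solvclass} rules out $G$ being solvable, so $G$ is strictly $\Sz(8)$-solvable; combined with $|V(\hamSandwich)| = 4$, this forces $\pi(G) = \pi(\Sz(8)) = \{2, 5, 7, 13\}$.

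Let $R$ denote the solvable radical of $G$. I would first show $G \cong R.\Sz(8)$. The socle $\operatorname{soc}(G/R)$ is isomorphic to $\Sz(8)^k$ for some $k \geq 1$. If $k \geq 2$, then $\Sz(8)^2$ already contains an element of order $pq$ for every pair of distinct primes $p, q \in \{2,5,7,13\}$ (take $(x, y)$ with $x$ of order $p$ in one factor and $y$ of order $q$ in the other); lifting through $G \twoheadrightarrow G/R$ via suitable powers yields elements of those orders in $G$, contradicting the five edges of $\hamSandwich$. Hence $k = 1$, so $G/R$ lies between $\operatorname{Inn}(\Sz(8))$ and $\Aut(\Sz(8))$, and since $3 \notin \pi(G)$ while $|\Aut(\Sz(8)) : \Sz(8)| = 3$, we obtain $G/R \cong \Sz(8)$.

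Next, because Sylow $2$-subgroups of $\Sz(8)$ fail the Frobenius Criterion, \thref{twoWithEdgeGeneral} gives $2 - r \notin \pgc(G)$ for every $r \in \pi(R)$. Since $\pgc(G) = \hamSandwich$ is missing exactly one edge $e^*$, this forces either $\pi(R) \subseteq \{2\}$ (when $e^*$ does not involve $2$) or $\pi(R) \subseteq \{2, p^*\}$ with $e^* = 2 - p^*$ for some $p^* \in \{5, 7, 13\}$. Whenever $R$ is a single-prime group (or trivial), \thref{Sz8ModuleExtensions} pins down $\pgc(G)$ to a subset of $\{\trianglePlusIsolated,\SuzukiThirtyTwoElusiveGraph,K_4\}$ (if $R$ is a $2$-group) or to $\{\trianglePlusIsolated\}$ (if $R$ is a $p^*$-group), with the trivial case giving $G = \Sz(8)$ and $\pgc(G) = K_4$; none of these equals $\hamSandwich$.

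The main obstacle is the remaining case $\pi(R) = \{2, p^*\}$, where $R$ is not a single-prime group and \thref{Sz8ModuleExtensions} does not apply to $G$ directly. Here I would pick any $s \in \{5, 7, 13\} \setminus \{p^*\}$: since both $s - 2$ and $s - p^*$ differ from $e^* = 2 - p^*$, both edges are present in $\pgc(G) = \hamSandwich$, so \thref{Nilpotent} gives that $R$ is nilpotent and hence $R = P_2 \times P_{p^*}$ with both Sylow subgroups characteristic in $R$ and hence normal in $G$. Then $G/P_2 \cong P_{p^*}.\Sz(8)$ is a nontrivial $p^*$-group extension of $\Sz(8)$, so \thref{Sz8ModuleExtensions} forces $\pgc(G/P_2) \cong \trianglePlusIsolated$, which has only $3$ edges. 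Since $P_2 \triangleleft G$ and $\pi(G/P_2) = \pi(G)$, element orders in $G/P_2$ lift to $G$, giving $\pgc(G) \subseteq \pgc(G/P_2)$; this contradicts $\hamSandwich$ having $5$ edges and completes the argument.
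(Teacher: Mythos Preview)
Your proof is correct and follows the same overall strategy as the paper: reduce to $G \cong N.\Sz(8)$ with $N$ solvable, show the relevant solvable piece is a nilpotent $\{2,p\}$-group via \thref{Nilpotent}, quotient out the Sylow $2$-subgroup, and obtain a contradiction from \thref{Sz8ModuleExtensions}.

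The route to that reduction differs. The paper cites an external structure lemma for $G \cong N.\Sz(8)$ and then runs a normal-series argument: it refines $N$ into elementary abelian factors, observes the top factor must be a $2$-group, locates the highest non-$2$ factor, and passes to a section $\widetilde{G}$ whose solvable part $\widetilde{N}$ is a $\{2,p\}$-group by construction. You instead argue the structure $G \cong R.\Sz(8)$ directly from the socle of $G/R$, and then use \thref{twoWithEdgeGeneral} to constrain $\pi(R) \subseteq \{2,p^*\}$ immediately from the single missing edge of $\hamSandwich$. Your path is shorter and avoids the normal-series bookkeeping entirely; the paper's version is more self-contained in that it does not rely on \thref{twoWithEdgeGeneral} (though that lemma is available in the paper anyway). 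From that point on the two arguments are identical.
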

\begin{proof}
    For the sake of contradiction, let $G$ be an $\Sz(8)$-solvable group such that $\pgc(G) \cong \hamSandwich$. 
    By \cite[Lemma 2.1.1]{2023REU} we have that $G\cong N.\Sz(8)$ where $N$ is a solvable group. Applying \thref{solvableExtension}, we get a normal series
    \begin{equation}
        G \trianglerighteq N = N_k \trianglerighteq N_{k-1} \trianglerighteq \cdots \trianglerighteq N_1 \trianglerighteq N_0 = \{1\}
    \end{equation}
    such that $N_i/N_{i-1}$ is elementary abelian for $i$ from 1 to $k$. By \thref{Sz8ModuleExtensions}, $N_k/N_{k-1}$ must be a 2-group. If $N_i/N_{i-1}$ is a 2-group for all $i$ from 1 to $k$, then $G$ is an extension of $\Sz(8)$ by a 2-group, which is a contradiction by \thref{Sz8ModuleExtensions}. Thus, we may assume that there is some $i$ such that $N_i/N_{i-1}$ is a 5, 7, or 13-group. Let $j$ be the largest such $i$. Let $\widetilde{G} = G/N_{j-1}$ and $\widetilde{N} = N/N_{j-1}$, and consider the normal series
    \begin{equation}
        \widetilde{G} \trianglerighteq \widetilde{N} \trianglerighteq \{1\}.
    \end{equation}
    We list the things we know about this series:
    \begin{itemize}
        \item Since $\widetilde{G}$ is a section of $G$, we have that $\pgc(\widetilde{G})$ is either \hamSandwich or \fourVertexCompleteGraph.
        \item $\widetilde{G}/\widetilde{N} \cong \Sz(8)$.
        \item $\pi(\widetilde{N}) = \{2,p\}$ for some $p \in \{5,7,13\}$.
    \end{itemize}
    Notice that $\pgc(\widetilde{G})$ has the property that any two vertices have a shared neighbor. So we are guaranteed some $r \in \{5,7,13\} \setminus \{p\}$ such that $r-p$ and $r-2$ are both in $\pgc(\widetilde{G})$. By \thref{Nilpotent}, $\widetilde{N}$ is nilpotent. Let $\widetilde{M} \trianglelefteq \widetilde{N}$ be the unique Sylow $2$-subgroup of $\widetilde{N}$. Then
    \begin{equation}
        \widetilde{G} \trianglerighteq \widetilde{N} \trianglerighteq \widetilde{M} \trianglerighteq \{1\}.
    \end{equation}
    is a normal series, but then $\widetilde{G}/\widetilde{M}$ is an extension of $\Sz(8)$ by a nontrivial $p$-group, which is a contradiction by \thref{Sz8ModuleExtensions}.
\end{proof}
\begin{lemma} \thlabel{Sz84VertexGraphs}
     Let $\Xi$ be a graph on 4 vertices. Then $\Xi$ is isomorphic to the prime graph complement of some $\Sz(8)$-solvable group if and only if $\Xi$ is not isomorphic to \hamSandwich.
\end{lemma}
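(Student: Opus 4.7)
\medskip

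\noindent\textbf{Proof proposal.} The plan is to split into the forward and backward directions. For the forward direction, since \thref{hamSandwichImpossible} already shows that \hamSandwich is not realizable as the prime graph complement of any $\Sz(8)$-solvable group, nothing more is required. For the backward direction, I would enumerate the graphs on four vertices up to isomorphism and realize each one except \hamSandwich.

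First, I would separate the eleven graphs on four vertices into those that are triangle-free and 3-colorable, and those that contain a triangle. Every triangle-free graph on four vertices is 3-colorable (since 3-colorability only fails at $K_4$, which contains triangles), so the triangle-free ones are exactly the seven graphs: the empty graph, a single edge, two disjoint edges, the path of length two, the path of length three, the star $K_{1,3}$, and the 4-cycle $C_4$. For each of these, \thref{solvclass} immediately produces a solvable group, and hence an $\Sz(8)$-solvable group, realizing it as its prime graph complement.

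The graphs containing a triangle are precisely four: \trianglePlusIsolated, \SuzukiThirtyTwoElusiveGraph, \hamSandwich, and \fourVertexCompleteGraph. The group $\Sz(8)$ itself realizes \fourVertexCompleteGraph, as shown in the figure at the start of the section. For \trianglePlusIsolated and \SuzukiThirtyTwoElusiveGraph, I would appeal directly to the existence clause of \thref{Sz8ModuleExtensions}, which explicitly guarantees a strictly $\Sz(8)$-solvable group of the form $N.\Sz(8)$ (with $N$ a nontrivial $p$-group for suitable $p \in \pi(\Sz(8))$) realizing each of these two graphs. Combining these constructions with the already-handled \hamSandwich impossibility completes both directions.

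The main conceptual work has already been absorbed into the preceding lemmas. The impossibility of \hamSandwich requires the normal-series argument in \thref{hamSandwichImpossible}, which in turn depends on \thref{Sz8ModuleExtensions} (the classification of $p$-group extensions of $\Sz(8)$ via the Brauer table and \thref{GeneralizedModuleExtensions}). Thus, the present lemma is essentially a bookkeeping step that amalgamates \thref{solvclass}, \thref{Sz8ModuleExtensions}, and \thref{hamSandwichImpossible}; no new obstacle needs to be overcome.
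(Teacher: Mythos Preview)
Your proposal is correct and follows essentially the same approach as the paper: handle the triangle-free graphs via \thref{solvclass}, realize \fourVertexCompleteGraph by $\Sz(8)$ itself, realize \SuzukiThirtyTwoElusiveGraph via \thref{Sz8ModuleExtensions}, and rule out \hamSandwich by \thref{hamSandwichImpossible}. The only cosmetic difference is that the paper realizes \trianglePlusIsolated by the explicit group $\Sz(8)\times C_2$ rather than by invoking the existence clause of \thref{Sz8ModuleExtensions}, but your appeal to that clause is equally valid.
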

\begin{proof}
    If $\Xi$ is triangle-free and 3-colorable, then by \thref{solvclass} it is the prime graph complement of some solvable group. Up to isomorphism, there are four graphs on 4 vertices that are not both triangle-free and three-colorable:
    \begin{itemize}
        \item The complete graph \fourVertexCompleteGraph can be realized by $\Sz(8)$ itself.
        \item The triangle plus one isolated vertex \trianglePlusIsolated can be realized by $\Sz(8) \times C_2$.
        \item The triangle plus one edge \SuzukiThirtyTwoElusiveGraph can be realized by \thref{Sz8ModuleExtensions}. %can be realized by $\Sz(8) \ltimes_\varphi (\F_{8})^{16}$, where $\varphi: \Sz(8) \to \GL(16,8)$ is the irreducible representation listed in the ATLAS \cite{ATLAS}. This can be verified using \thref{representationSemidirectProduct} and \cite{GAP}.
        \item The complete graph minus one edge \hamSandwich is impossible by \thref{hamSandwichImpossible}.
    \end{itemize}
\end{proof}

\subsection*{Graphs on Five Vertices}
We begin by stating a structural result:
\begin{theorem}\thlabel{StructureOfSz8Graphs}\thlabel{disconnect}\thlabel{noworry3}
    Let $G$ be a strictly $\Sz(8)$-solvable group. Then $\pgc(G)$ satisfies both of the following:
    \begin{enumerate}
        \item There are no edges $r-p$ for $r\in \pi(\Sz(8))$ and $p\in \pi(G) \setminus (\pi(\Sz(8)) \cup \{3\})$. Furthermore, if $G$ has a solvable normal subgroup whose order is divisible by 3, then $r-3 \notin \pgc(G)$.
        \item The subgraph induced by $\pi(G) \setminus \pi(\Sz(8))$ is triangle-free and 3-colorable.
    \end{enumerate}
\end{theorem}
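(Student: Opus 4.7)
The plan is to prove part (2) by invoking \thref{solvableSubgraph} directly, and to prove part (1) as a two-fold application of \thref{Noworrygeneral} with $T = \Sz(8)$. Since a strictly $\Sz(8)$-solvable group is in particular $\Sz(8)$-solvable, \thref{solvableSubgraph} gives that the subgraph of $\pgc(G)$ induced by $\pi(G)\setminus\pi(\Sz(8))$ is triangle-free and 3-colorable, which settles (2).

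For (1), the essential task is to verify the hypotheses of \thref{Noworrygeneral} for $T=\Sz(8)$. I would set $Y = \{5,7,13\}$ and check three things: first, each $r \in Y$ is odd and coprime to $|M(\Sz(8))| = 4$; second, every element of order $r\in Y$ has nontrivial fixed points in every irreducible ordinary representation of every perfect central extension of $\Sz(8)$; and third, the Sylow $2$-subgroup of $\Sz(8)$, being a Suzuki $2$-group of order $2^6$, is neither cyclic, dihedral, Klein-4, nor generalized quaternion, so it fails the Frobenius Criterion (\thref{frobcrit}). The perfect central extensions of $\Sz(8)$ are $\Sz(8)$, its double covers, and the full universal cover $2^2.\Sz(8)$; because each kernel is a $2$-group, elements of order $r \in \{5,7,13\}$ project bijectively onto elements of the same order in $\Sz(8)$. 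Verification of the fixed-point condition then amounts to computing
\begin{equation*}
\dim C_V(g) \;=\; \frac{1}{r}\sum_{j=0}^{r-1} \chi(g^j)
\end{equation*}
for every irreducible character $\chi$ of each relevant cover and every element $g$ of order $r$, and checking the quantity is strictly positive. This is a finite check that can be carried out from the Atlas or GAP character tables of $\Sz(8)$ and $2^2.\Sz(8)$.

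With the hypotheses of \thref{Noworrygeneral} established, the two clauses of (1) follow by choosing the solvable normal subgroup $N$ appropriately. For the first clause, taking $N = \{1\}$ yields $R = \pi(G) \setminus \pi(\Aut(\Sz(8))) = \pi(G) \setminus (\pi(\Sz(8)) \cup \{3\})$, so the lemma immediately gives the absence of edges $r-p$ for $r \in \pi(\Sz(8))$ and $p \in R$. For the \emph{furthermore} clause, choosing $N$ to be a solvable normal subgroup of $G$ with $3 \mid |N|$, we have $3 \in \pi(N) \setminus \pi(\Sz(8)) \subseteq R$, so the lemma yields $r - 3 \notin \pgc(G)$ for every $r \in \pi(\Sz(8))$. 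I expect the main obstacle to be the ordinary-character verification described in the second paragraph: the Brauer tables displayed in \thref{Sz8ModuleExtensions} provide analogous information at the modular level, but the hypothesis of \cite[Corollary 2.2.6]{2023REU} requires the ordinary-character check across all perfect central extensions, which is a separate (though mechanical) computation.
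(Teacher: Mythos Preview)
Your proposal is correct and follows essentially the same approach as the paper: both reduce (1) to verifying the hypotheses of \thref{Noworrygeneral} for $T=\Sz(8)$ with $Y=\{5,7,13\}$ (the paper simply writes ``Through GAP we find that the primes 5, 7, and 13 satisfy \cite[Corollary 2.2.6]{2023REU}, and the Sylow $2$-subgroups of $\Sz(8)$ do not satisfy the Frobenius Criterion''), and both obtain (2) from \thref{solvableSubgraph}. Your explicit choice of $N=\{1\}$ for the first clause and of a solvable normal $N$ with $3\mid|N|$ for the second clause makes transparent exactly how \thref{Noworrygeneral} is being applied, which the paper leaves implicit.
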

\begin{proof}
    To prove (1), it suffices to show that the hypothesis of \thref{Noworrygeneral} holds for $T = \Sz(8)$. Through GAP \cite{GAP}, we find that the primes 5, 7, and 13 satisfy \cite[Corollary 2.2.6]{2023REU}, and the Sylow $2$-subgroups of $\Sz(8)$ do not satisfy the Frobenius Criterion. (2) follows from \thref{solvableSubgraph}.
    %To prove (2), we apply \thref{SolvableHallSubgroupInducedSubgraph} with $\pi_0 = \pi(G) \setminus \pi(\Sz(8))$ to get a solvable subgroup $H \leq G$ such that $\pi(H) = \pi_0$ and $\pgc(H) = \pgc(G)[\pi_0]$. By \cite{2015REU}, $\pgc(H)$ is triangle-free and 3-colorable, so we conclude that $\pgc(G)[\pi_0]$ is triangle-free and 3-colorable.
\end{proof}
\begin{definition}
    A rooted graph is a graph in which one of the vertices is distinguished to be the root. Two rooted graphs $A$ and $B$ are isomorphic if there is a graph isomorphism which identifies the root of $A$ with the root of $B$.
\end{definition}
\thref{StructureOfSz8Graphs} tells us that in the prime graph complement of an $\Sz(8)$-solvable group, 3 is the only prime not in $\pi(\Sz(8))$ that can connect to vertices in $\pi(\Sz(8))$. It will be useful to study the possible structures of the subgraph induced by $\pi(\Sz(8)) \cup \{3\}$. To this end, we define $\mathcal{F}$ to be the set of all rooted graphs on 5 vertices that contain at least one triangle, up to isomorphism.
%where the root, which we will color white, is not an isolated vertex. 
The set $\mathcal{F}$ can be partitioned into 5 disjoint subsets $\mathcal{F}_1, \cdots, \mathcal{F}_5$:
\begin{itemize}
    \item $\mathcal{F}_1$ is the set of graphs in $\mathcal{F}$ in which the root has degree strictly greater than 2.
    \item $\mathcal{F}_2$ is the set of graphs in $\mathcal{F}\setminus \mathcal{F}_1$ for which the subgraph induced by the non-root vertices is isomorphic to \hamSandwich. $\mathcal{F}_2$ contains exactly the following:
    \begin{center}
        \hamSandwichPlusIsolated, \hamsandwichwitharm, \hamSandwichWithOtherArm, \housenobrim, \nobutt, \fan.
    \end{center}
    \item $\mathcal{F}_3$ is the set of graphs in $\mathcal{F}\setminus \mathcal{F}_2$ in which the root is isolated. In particular, note that $\mathcal{F}_3$ does not contain \smallHamSandwichPlusIsolated.
    % \item $S_4$ is the set of graphs in $S\setminus (S_1 \cup S_2)$ for which there exists a triangle containing the root, and the remaining two vertices have an edge between them. $S_4$ contains exactly the following:
    % \begin{center}
        
    % \end{center}
    \item $\mathcal{F}_4$ is the set containing the following 7 graphs: 
    \begin{center}
        \house, \northStar, \codart, \happyFace, \balloon, \dart, \triangleWithShortTail.
    \end{center}
    \item $\mathcal{F}_5$ is the set containing the following 13 graphs (arranged below in an order which we will see is convenient):
    \begin{center}
        \emptyhouse, \triangleWithTail, \trianglestick, \bowtiebruh, \\
        \completeGraphWithArm, \triangleIsolatedEdge, \zigZag, \\
        \cricket, \triangleWithTwoTails, \bullcaseone, \misshapenhouse, \badDart, \\
        \bull.
        % \completeGraphWithArm, \triangleIsolatedEdge, \emptyhouse, \triangleWithTail, \bull, \cricket, \trianglestick, \\ \zigZag, \triangleWithTwoTails, \bullcaseone, \bowtiebruh, \badDart, \misshapenhouse.
    \end{center}
\end{itemize}
To verify this list is exhaustive, see Appendix \ref{AppendixA}.
\begin{definition} \thlabel{rootedGraphIsomorphism}
    Given a group $G$ with $\pi(G) = \{2,3,5,7,13\}$ and a rooted graph $\Lambda \in \mathcal{F}$, it is understood that when we say $\pgc(G) \cong \Lambda$, we require the isomorphism to map the vertex 3 to the root.
\end{definition}
\begin{definition}\thlabel{realizableDef}
    Given a set of rooted graphs $\mathcal{H}\subseteq \mathcal{F}$, we say that $\mathcal{H}$ is \emph{realizable} if for each $\Lambda \in \mathcal{H}$, there exists an $\Sz(8)$-solvable $\{2,3,5,7,13\}$-group $G$ such that $\pgc(G) \cong \Lambda$ in the sense of \thref{rootedGraphIsomorphism}.
\end{definition}
% In this section, we show that
% \begin{itemize}
%     \item $\mathcal{F}_1$ is not realizable (\thref{degreeGreaterThan2}).
%     \item $\mathcal{F}_2$ is not realizable (\thref{noExtraHams}).
%     \item $\mathcal{F}_3$ is realizable (\thref{S3Possible}).
%     \item $\mathcal{F}_4$ is realizable (\thref{F4Realizable}).
%     \item $\mathcal{F}_5$ is not realizable (Propositions \ref{triangleWithThreeFamily}, \ref{triangleWithoutThreeFamily}, \ref{ThirdFamily}, \ref{Bull}). 
% \end{itemize}
The following theorem tells us that studying which graphs in $\mathcal{F}$ are realizable gives us information about the possible subgraphs induced by $\pi(\Sz(8)) \cup \{3\}$.
\begin{theorem} \thlabel{sufficientOn5}
    Let $\Lambda \in \mathcal{F}$ be a rooted graph on 5 vertices which is not realizable. Then there exists no $\Sz(8)$-solvable group $G$ such that $\pgc(G)[\{2,3,5,7,13\}] \cong \Lambda$.
\end{theorem}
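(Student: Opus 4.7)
The plan is to reduce to a Hall subgroup on the five primes of interest and invoke the definition of realizability directly. Suppose for contradiction that $G$ is an $\Sz(8)$-solvable group with $\pgc(G)[\{2,3,5,7,13\}] \cong \Lambda$ in the sense of \thref{rootedGraphIsomorphism} (that is, the isomorphism identifies $3$ with the root of $\Lambda$). Since $\Lambda$ has five vertices, so does this induced subgraph, which forces $\pi_0 := \{2,3,5,7,13\} \subseteq \pi(G)$.

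The key step is to apply \thref{HallSubgroupInducedGraph} with this $\pi_0$, which is legitimate because $\pi(\Sz(8)) = \{2,5,7,13\} \subseteq \pi_0$. This yields a Hall $\pi_0$-subgroup $H \leq G$ which is itself $\Sz(8)$-solvable and satisfies $\pgc(H) \cong \pgc(G)[\pi(H)]$ as labeled graphs (vertices labeled by primes). Because $H$ is a Hall $\pi_0$-subgroup of $G$ and every prime in $\pi_0$ divides $|G|$, we have $\pi(H) = \pi_0 \cap \pi(G) = \pi_0$.

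Consequently, $H$ is an $\Sz(8)$-solvable $\{2,3,5,7,13\}$-group, and its prime graph complement coincides with $\pgc(G)[\pi_0]$ as a labeled graph on vertex set $\pi_0$. In particular, the given isomorphism $\pgc(G)[\pi_0] \to \Lambda$ sending $3$ to the root is simultaneously an isomorphism $\pgc(H) \to \Lambda$ sending $3$ to the root, so $H$ witnesses realizability of $\Lambda$ in the sense of \thref{realizableDef}. This contradicts the hypothesis that $\Lambda$ is not realizable, completing the proof.

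I do not anticipate any real obstacle here: the statement is essentially a packaging of \thref{HallSubgroupInducedGraph} with the trivial observation that $\pi(\Sz(8)) \subseteq \{2,3,5,7,13\}$, plus a quick sanity check that the rooted-graph condition carries through the identification of vertex sets.
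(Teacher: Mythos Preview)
Your proof is correct and follows the same approach as the paper's own argument: both proceed by contradiction, apply \thref{HallSubgroupInducedGraph} with $\pi_0=\{2,3,5,7,13\}\supseteq\pi(\Sz(8))$ to extract an $\Sz(8)$-solvable Hall subgroup $H$ whose prime graph complement is the desired induced subgraph, and conclude that $H$ realizes $\Lambda$. Your version simply fills in details the paper leaves implicit, such as the verification that $\pi(H)=\pi_0$ and that the rooted-graph convention (sending $3$ to the root) is preserved under the passage to $H$.
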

\begin{proof}
    Suppose there did exist such a group $G$. Then by \thref{HallSubgroupInducedGraph}, we can take a Hall $\{2,3,5,7,13\}$-subgroup of $G$ which is $\Sz(8)$-solvable and has prime graph complement isomorphic to $\Lambda$, a contradiction.
\end{proof}

Our goal for the remainder of this subsection is to prove \thref{Realizable5VertexGraphs}, which states that the graphs in $\mathcal{F}_3 \cup \mathcal{F}_4$ are the only realizable graphs in $\mathcal{F}$.

\begin{lemma} \thlabel{AutSz8OnTop}
    If a rooted graph $\Lambda \in \mathcal{F} \setminus \mathcal{F}_3$ is realizable by a group $G$, then all of the following hold:
    \begin{itemize}
        \item $G \cong N.\Aut(\Sz(8))$ where $N$ is solvable.
        \item $3$ does not divide $|N|$.
        \item If a vertex in $\Lambda$ is adjacent to the root, then that vertex corresponds to either 7 or 13 in $\pgc(G)$.
    \end{itemize}
\end{lemma}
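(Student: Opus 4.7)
The plan is to handle the three conclusions sequentially, relying on a standard structural decomposition of $G$, our restriction lemma \thref{noworry3}, and an elementary observation about element orders in $\Aut(\Sz(8))$. The main preliminary I need is that the root of any realizable $\Lambda \in \mathcal{F} \setminus \mathcal{F}_3$ has at least one neighbor. For $\Lambda \in \mathcal{F}_1 \cup \mathcal{F}_4 \cup \mathcal{F}_5$ this is immediate from the definitions and the explicit listings. For $\Lambda \in \mathcal{F}_2$, the non-root subgraph is $\hamSandwich$, so applying \thref{HallSubgroupInducedGraph} with $\pi_0 = \pi(\Sz(8))$ produces an $\Sz(8)$-solvable Hall subgroup whose prime graph complement is $\hamSandwich$, contradicting \thref{hamSandwichImpossible}; hence this case cannot actually arise, and the preliminary holds vacuously there.

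Since $\Lambda$ contains a triangle, $G$ is strictly $\Sz(8)$-solvable, so by a structure theorem in the spirit of \cite[Lemma 2.1.1]{2023REU} (using the Schreier conjecture together with $\operatorname{Out}(\Sz(8)) \cong C_3$ to force the almost-simple quotient to be $\Sz(8)$ or $\Aut(\Sz(8))$) we may write $G \cong N.M$ with $N$ solvable and $M \in \{\Sz(8),\Aut(\Sz(8))\}$. If $M = \Sz(8)$, then $3 \in \pi(G) \setminus \pi(G/N)$ forces $3 \in \pi(N)$, and then \thref{noworry3}(1) kills every edge $r-3$ with $r \in \pi(\Sz(8))$, contradicting the presence of a neighbor of the root. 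Thus $M = \Aut(\Sz(8))$, giving the first conclusion. The same logic applied directly to $N$ gives the second: if $3 \mid |N|$, then \thref{noworry3}(1) again forces the root to be isolated, which is impossible.

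For the third conclusion I will exploit the fact that the displayed $\pgc(\Aut(\Sz(8)))$ is missing precisely the edges $2-3$ and $5-3$, i.e., $\Aut(\Sz(8)) \cong G/N$ contains elements of orders $6$ and $15$. For any $g \in G$ with $gN$ of order $6$, minimality of $6$ forces $g^k \notin N$ for $0 < k < 6$, so $6 \mid o(g)$; the cyclic group $\langle g \rangle$ therefore contains an element of order $6$ in $G$, witnessing $2-3 \notin \pgc(G)$. The identical argument applied to order $15$ gives $5-3 \notin \pgc(G)$. Therefore any neighbor of the root in $\Lambda$ must correspond to a prime in $\pi(\Sz(8)) \setminus \{2,5\} = \{7,13\}$. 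I expect the main subtlety will be rigorously establishing the decomposition $G \cong N.M$ with $M \leq \Aut(\Sz(8))$ when $3 \in \pi(G)$, which is a mild generalization of \cite[Lemma 2.1.1]{2023REU}; it should be either cited from an existing source or proved in a short separate lemma so that the rest of the argument proceeds cleanly.
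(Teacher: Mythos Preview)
Your proof is correct and follows essentially the same route as the paper: decompose $G \cong N.M$ via \cite[Lemma 2.1.1]{2023REU}, use \thref{noworry3} to force $3 \nmid |N|$ and hence $M = \Aut(\Sz(8))$, then read off the third conclusion from the fact that $\Aut(\Sz(8))$ is a section of $G$. Your worry about the structural decomposition is unnecessary---the paper invokes \cite[Lemma 2.1.1]{2023REU} as stated---while your extra care with the $\mathcal{F}_2$ case (where \hamSandwichPlusIsolated has an isolated root) actually patches a small implicit gap in the paper's phrasing ``since 3 is not isolated.''
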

\begin{proof}
    Let $\Lambda$ be a rooted graph in $\mathcal{F}\setminus \mathcal{F}_3$. Since $\Lambda$ has a triangle, $G$ must be strictly $\Sz(8)$-solvable. By \cite[Lemma 2.1.1]{2023REU} we have that $G \cong N.M$ for some solvable $N$ and some $M\leq \Aut(\Sz(8))$ with $M \geq \operatorname{Inn}(\Sz(8)) \cong \Sz(8)$. Since 3 is not isolated, we have by \thref{noworry3} that 3 does not divide $|N|$. Thus, 3 must divide $M$, meaning $M = \Aut(\Sz(8))$. Finally, since $\Aut(\Sz(8))$ is a section of $G$ and the edges $3-2$ and $3-5$ are not in $\pgc(\Aut(\Sz(8)))$, we have that only 7 and 13 can be adjacent to 3.
\end{proof}

\begin{proposition}\thlabel{degreeGreaterThan2}
    $\mathcal{F}_1$ is not realizable.
\end{proposition}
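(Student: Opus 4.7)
The plan is to derive the result immediately from \thref{AutSz8OnTop}. First observe that $\mathcal{F}_1$ and $\mathcal{F}_3$ are disjoint: in $\mathcal{F}_3$ the root is isolated (degree $0$), whereas in $\mathcal{F}_1$ the root has degree strictly greater than $2$. Hence every $\Lambda \in \mathcal{F}_1$ lies in $\mathcal{F}\setminus\mathcal{F}_3$, so \thref{AutSz8OnTop} is available.

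Now suppose for contradiction that some $\Lambda \in \mathcal{F}_1$ is realizable, i.e.\ that there is an $\Sz(8)$-solvable $\{2,3,5,7,13\}$-group $G$ with $\pgc(G) \cong \Lambda$ in the rooted sense (the vertex $3$ of $\pgc(G)$ is mapped to the root of $\Lambda$). Applying \thref{AutSz8OnTop}, every vertex of $\Lambda$ adjacent to the root corresponds under the isomorphism to either $7$ or $13$ in $\pgc(G)$. In particular, the root of $\Lambda$ can be adjacent to at most two vertices, so its degree is at most $2$.

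This contradicts the defining property of $\mathcal{F}_1$, namely that the root has degree strictly greater than $2$. Therefore no $\Lambda \in \mathcal{F}_1$ is realizable, proving the proposition. The argument is short because the heavy lifting—showing that the only primes outside $\pi(\Sz(8))$ which can be adjacent to a prime of $\pi(\Sz(8))$ in $\pgc(G)$ (when $3$ is not isolated) are restricted to connections with $7$ and $13$—has already been done in \thref{AutSz8OnTop} via \thref{noworry3} and the structure of $\pgc(\Aut(\Sz(8)))$. There is no main obstacle beyond verifying the disjointness $\mathcal{F}_1 \cap \mathcal{F}_3 = \emptyset$ and citing \thref{AutSz8OnTop}.
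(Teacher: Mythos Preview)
Your proof is correct and follows essentially the same approach as the paper's own proof: assume a graph in $\mathcal{F}_1$ is realizable, apply \thref{AutSz8OnTop} to conclude that only the vertices $7$ and $13$ can be adjacent to the root, and obtain a contradiction with the root having degree greater than $2$. Your explicit verification that $\mathcal{F}_1 \cap \mathcal{F}_3 = \emptyset$ is a nice touch that the paper leaves implicit.
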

\begin{proof}
    Let $\Lambda$ be a rooted graph in $\mathcal{F}_1$, and suppose for contradiction that $G$ realizes it. By \thref{AutSz8OnTop}, we have that the vertices adjacent to the root in $\Lambda$ correspond to either 7 or 13. In particular, this means there can be at most two vertices adjacent to the root in $\Lambda$, contradicting the assumption that $\Lambda \in \mathcal{F}_1$.
\end{proof}
% The following lemma allows us to rule out $\Lambda \in S$ for which $\Lambda \setminus \{3\} \cong \hamSandwich$:
\begin{proposition}\thlabel{noExtraHams}
    % There exists no $\Sz(8)$-solvable group G such that $\pgc(G)$ is isomorphic to one of these graphs: 
    % \begin{center}
    %     \hamSandwichPlusIsolated, \hamsandwichwitharm, \housenobrim, \nobutt, \fan.
    % \end{center}
    $\mathcal{F}_2$ is not realizable.
\end{proposition}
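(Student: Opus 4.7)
The plan is to reduce the problem to \thref{hamSandwichImpossible}, which already tells us that the Ham Sandwich graph \hamSandwich cannot be realized as the prime graph complement of any $\Sz(8)$-solvable group. The key observation is that every graph in $\mathcal{F}_2$ is defined so that the subgraph induced by its non-root vertices is isomorphic to \hamSandwich, and by \thref{rootedGraphIsomorphism} these non-root vertices correspond precisely to the primes $\{2,5,7,13\} = \pi(\Sz(8))$.

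Concretely, I will argue by contradiction: suppose some $\Lambda \in \mathcal{F}_2$ is realizable, and let $G$ be an $\Sz(8)$-solvable $\{2,3,5,7,13\}$-group with $\pgc(G) \cong \Lambda$, where the isomorphism sends the root to the vertex $3$. Then the subgraph of $\pgc(G)$ induced by $\{2,5,7,13\}$ is isomorphic to \hamSandwich. Now I apply \thref{HallSubgroupInducedGraph} with $\pi_0 = \{2,5,7,13\} = \pi(\Sz(8))$, which is a valid choice since $\pi_0 \supseteq \pi(\Sz(8))$. This produces a Hall $\pi_0$-subgroup $H \leq G$ which is itself $\Sz(8)$-solvable and satisfies $\pgc(H) \cong \pgc(G)[\{2,5,7,13\}] \cong \hamSandwich$.

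This immediately contradicts \thref{hamSandwichImpossible}, which states that no $\Sz(8)$-solvable group has prime graph complement isomorphic to \hamSandwich. Therefore no $\Lambda \in \mathcal{F}_2$ can be realized, and $\mathcal{F}_2$ is not realizable.

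There is essentially no main obstacle: the heavy lifting was done in \thref{hamSandwichImpossible}, whose proof used \thref{Sz8ModuleExtensions} together with the nilpotency argument via \thref{Nilpotent}. The current proposition is a direct corollary, obtained by passing to a Hall subgroup in order to remove the root vertex $3$ and reduce to the four-vertex case. The only small subtlety is confirming that \thref{HallSubgroupInducedGraph} applies with $\pi_0$ chosen exactly equal to $\pi(\Sz(8))$, which it does.
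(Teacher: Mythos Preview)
Your proposal is correct and matches the paper's proof essentially verbatim: both argue by contradiction, pass to a Hall $\{2,5,7,13\}$-subgroup via \thref{HallSubgroupInducedGraph}, and invoke \thref{hamSandwichImpossible} on the resulting $\Sz(8)$-solvable group whose prime graph complement is \hamSandwich.
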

\begin{proof}
    Suppose there exists some group $G$ that realizes a graph in $\mathcal{F}_2$.
    We can take a Hall $\{2, 5, 7, 13\}$-group of $G$ via \thref{HallSubgroupInducedGraph}. This group is an $\Sz(8)$-solvable group that realizes \hamSandwich which contradicts \thref{hamSandwichImpossible}.
\end{proof}

\begin{proposition} \thlabel{S3Possible}
    $\mathcal{F}_3$ is realizable.
\end{proposition}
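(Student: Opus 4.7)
The plan is to construct, for each rooted graph in $\mathcal{F}_3$, an explicit $\Sz(8)$-solvable group whose prime graph complement realizes it. First I would enumerate $\mathcal{F}_3$: since the root has degree $0$ and the induced subgraph on the non-root vertices is a $4$-vertex graph containing a triangle but not isomorphic to the ham sandwich \hamSandwich, and since there are only four $4$-vertex graphs containing a triangle ($K_4$, $K_4$ minus an edge, the triangle with a pendant edge, and the triangle plus an isolated vertex), $\mathcal{F}_3$ consists of exactly three graphs: the isolated root together with $K_4$, with \SuzukiThirtyTwoElusiveGraph, or with \trianglePlusIsolated.

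The central idea is that ``isolated root'' in the prime graph complement is exactly the condition that $3$ commutes with an element of every other prime order, which is trivially arranged by taking a direct product with $C_3$. So for each of the three graphs, I would take $H$ to be an $\Sz(8)$-solvable $\{2,5,7,13\}$-group realizing the corresponding $4$-vertex induced subgraph (which exists by \thref{Sz84VertexGraphs}, since the excluded graph \hamSandwich is precisely the one we are avoiding), and then set $G := H \times C_3$. Concretely, one may take $H = \Sz(8)$ for $K_4$, $H = \Sz(8) \times C_2$ for \trianglePlusIsolated, and $H$ the extension of $\Sz(8)$ by a nontrivial $2$-group supplied by \thref{Sz8ModuleExtensions} for \SuzukiThirtyTwoElusiveGraph.

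To verify that $\pgc(G) \cong \Lambda$, I would observe two things. First, $G$ is $\Sz(8)$-solvable because $H$ is and $C_3$ is solvable, and $\pi(G) = \{2,3,5,7,13\}$ since $\pi(H) \supseteq \pi(\Sz(8)) = \{2,5,7,13\}$. Second, since $H$ surjects onto $\Sz(8)$, every prime $p \in \{2,5,7,13\}$ divides $|H|$, so $H$ contains an element of order $p$; pairing it with a generator of $C_3$ produces an element of order $3p$ in $G$. Therefore $3$ is adjacent to every other prime in $\pg(G)$, i.e., $3$ is isolated in $\pgc(G)$. Finally, because $C_3$ is coprime to $|H|$, the subgraph of $\pgc(G)$ induced on $\pi(H)$ coincides with $\pgc(H)$, giving $\pgc(G) \cong \Lambda$.

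There is essentially no obstacle here: the content of the statement is packaged inside \thref{Sz84VertexGraphs} (availability of the three $4$-vertex realizations) and the routine behavior of direct products with a coprime cyclic factor. The only step that requires a brief sentence of care is confirming that $H$ has elements of each prime order in $\pi(\Sz(8))$, which follows immediately from the quotient map $H \twoheadrightarrow \Sz(8)$ and the fact that $\pi(H) \supseteq \pi(\Sz(8))$.
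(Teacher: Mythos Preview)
Your proposal is correct and takes essentially the same approach as the paper: the paper's proof simply says to take the groups from \thref{Sz84VertexGraphs} and form their direct product with $C_3$. Your version is more detailed in enumerating $\mathcal{F}_3$ and verifying that the direct product with $C_3$ isolates the vertex $3$, but the underlying construction is identical.
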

\begin{proof}
    Consider the groups from \thref{Sz84VertexGraphs} and take their direct product with $C_3$.
\end{proof}

\begin{proposition}\thlabel{F4Realizable}
    $\mathcal{F}_4$ is realizable.
\end{proposition}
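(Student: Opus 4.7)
The plan is to exhibit an explicit strictly $\Sz(8)$-solvable group realizing each of the seven graphs in $\mathcal{F}_4$. By \thref{AutSz8OnTop}, every such realization must have the form $N.\Aut(\Sz(8))$ with $N$ solvable and $3\nmid|N|$, so I would start every construction from $\Aut(\Sz(8))$, whose prime graph complement is $\house$ (see the figure at the beginning of Section \ref{Sz8}). This immediately handles $\house$ itself, and for the six remaining graphs the plan is to append carefully chosen normal factors that remove precisely the desired edges from $\house$ while preserving the edges $3$--$7$ and $3$--$13$ whenever the target graph requires it.

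For the graphs that amount to $\house$ with one or two vertices from $\{2,5,7,13\}$ isolated, I would use direct products with cyclic groups. Taking $G = \Aut(\Sz(8))\times C_p$ for a prime $p\in\pi(\Sz(8))$ introduces commuting pairs between the $C_p$-factor and every element of $\Aut(\Sz(8))$ of order coprime to $p$, so every edge at the vertex $p$ disappears from the complement while all other edges are preserved (no new $qr$-orders with $q,r\in\pi(\Sz(8))\setminus\{p\}$ are created since $C_p$ has no such elements). Concretely, I would realize $\northStar$ by $\Aut(\Sz(8))\times C_2$, $\codart$ by $\Aut(\Sz(8))\times C_7$, and $\happyFace$ by $\Aut(\Sz(8))\times C_{10}$; in the last case $C_{10}$ has no element of order $7$ or $13$, so the triangle $3$--$7$--$13$ survives.

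For the remaining graphs $\dart$, $\triangleWithShortTail$, and $\balloon$, the edge removal is asymmetric in a way that direct products cannot achieve, and I would turn to semidirect products $\Aut(\Sz(8))\ltimes V$ with $V$ an elementary abelian $p$-group. By \thref{representationSemidirectProduct} an edge $p$--$q$ vanishes from $\pgc(G)$ precisely when some element of order $q$ has a nontrivial fixed point in $V$. For $\dart$ the plan is to take $V = V_{\chi_5}\oplus V_{\chi_6}\oplus V_{\chi_7}$ over $\F_2$, where $\chi_5,\chi_6,\chi_7$ are the three Brauer characters of $\Sz(8)$ mod $2$ permuted by the outer $C_3$; by \thref{BrauerTableInformation} each has nontrivial fixed points for orders $5$ and $7$ but not $13$, so the extension removes exactly $2$--$5$ and $2$--$7$. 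For $\triangleWithShortTail$ I would take $(\Aut(\Sz(8))\ltimes V)\times C_5$ with the same $V$; the extra $C_5$ factor additionally isolates $5$ from $7$ and $13$, leaving the four edges $2$--$13$, $7$--$13$, $3$--$7$, $3$--$13$. For $\balloon$ the plan is $V=\F_7$ with $\Sz(8)$ acting trivially and the outer $C_3$ acting by multiplication by a primitive cube root of unity $\omega\in\F_7^\times$; the trivial $\Sz(8)$-action creates commuting pairs that remove $2$--$7$, $5$--$7$, and $7$--$13$, while a direct computation using $1+\omega+\omega^2=0$ shows $(v,a)^3=(0,1)$ for every $a$ of order $3$ and $v\in V$, so no element of order $21$ is produced and $3$--$7$ survives.

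The hard part will be the construction for $\balloon$. The Brauer character data for $\Sz(8)$ alone cannot separate the outer-$C_3$ fixed-point behavior from the $\Sz(8)$-fixed-point behavior on a $7$-group module: any $C_3$-invariant nonzero $\Sz(8)$-module over $\F_7$ admits a $C_3$-fixed vector (such as the diagonal in any permutation of summands), and this would create an element of order $21$ and destroy $3$--$7$. The twisted trivial module above sidesteps this by letting $\Sz(8)$ act trivially and the outer $C_3$ act through a nontrivial character of $C_3$, which is the one construction in $\mathcal{F}_4$ that falls outside the standard Brauer-module machinery used in the rest of the paper.
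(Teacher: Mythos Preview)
Your proof is correct and follows essentially the same approach as the paper: the seven groups you exhibit match the paper's constructions almost verbatim, with only cosmetic differences (you use $\Aut(\Sz(8))\times C_7$ for \codart where the paper uses $\Aut(\Sz(8))\times C_{13}$, and you describe the $2$-module for \dart as the induced module $V_{\chi_5}\oplus V_{\chi_6}\oplus V_{\chi_7}$ rather than citing the $48$-dimensional $\Aut(\Sz(8))$-representation from the ATLAS---these are the same module). Your \balloon construction is identical to the paper's $(\Sz(8)\times C_7)\rtimes C_3$. One small quibble: the side remark that ``any $C_3$-invariant nonzero $\Sz(8)$-module over $\F_7$ admits a $C_3$-fixed vector'' is literally contradicted by your own twisted trivial module, so you should reword that to say \emph{nontrivial} $\Sz(8)$-module, or simply drop the heuristic.
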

\begin{proof}
    \phantom{.}
    \begin{itemize}
    \item The graph \house can be realized by $\Aut(\Sz(8))$.
    \item The graph \northStar can be realized by $\Aut(\Sz(8))\times C_2$.
    \item The graph \codart can be realized by $\Aut(\Sz(8))\times C_{13}$.
    \item  The graph \happyFace can be realized by $\Aut(\Sz(8)) \times C_{10}$.
    \item The graph \balloon can be realized by $(\Sz(8) \times C_7) \rtimes C_3$ where $C_3$ acts on $\Sz(8)$ as in $\Aut(\Sz(8))$, and Frobeniusly on $C_7$.
    \item The graph \dart can be realized by $\Aut(\Sz(8)) \ltimes_\varphi (\F_2)^{48}$, where $\varphi: \Aut(\Sz(8)) \to \GL(48,2)$ is the irreducible representation of $\Aut(\Sz(8))$ in \cite{ATLAS}. 
    \item The graph \triangleWithShortTail can be realized by the direct product of the previous group with $C_5$. 
    \end{itemize}
\end{proof}

\begin{lemma}\thlabel{twoWithEdge}
    Let $N$ be solvable, let $r \in \pi(N)$, and let $G \cong N.\Sz(8)$. Then $2-r \notin \pgc(G)$.
    
    %Suppose $G$ is an $\Sz(8)$-solvable group such that $G \cong N.\Sz(8)$ where $|N|=2^ap^b$ where $p\in \pi(\Sz(8)) \setminus \{2\}$ and $a,b \in \mathbb{Z}^+$. Then $2-p \notin \pgc(G)$.
\end{lemma}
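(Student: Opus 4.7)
The plan is essentially immediate: this lemma is just the specialization of \thref{twoWithEdgeGeneral} to the case $T = \Sz(8)$ and $p = 2$. The only thing to check is that the hypotheses of \thref{twoWithEdgeGeneral} hold in this setting, namely that $\Sz(8)$ is a nonabelian finite simple group (well-known), that $2 \in \pi(\Sz(8))$ (immediate from $|\Sz(8)| = 2^6 \cdot 5 \cdot 7 \cdot 13$), and that the Sylow $2$-subgroup of $\Sz(8)$ does not satisfy the Frobenius Criterion of \thref{frobcrit}.

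The last point was already established in the proof of \thref{disconnect}, where it is observed (via GAP, or equivalently from the known structure of the Suzuki $2$-Sylow subgroup, which is non-abelian of order $64$ and is neither cyclic, dihedral, Klein-$4$, nor generalized quaternion) that the Sylow $2$-subgroups of $\Sz(8)$ do not satisfy the Frobenius Criterion. With the hypotheses verified, \thref{twoWithEdgeGeneral} applied with $T = \Sz(8)$ and $p = 2$ yields $2 - r \notin \pgc(G)$ for every $r \in \pi(N)$, which is exactly the conclusion. There is no real obstacle; this lemma is isolated simply as a convenient reference for the $\Sz(8)$-specific arguments that follow.
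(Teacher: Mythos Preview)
Your proposal is correct and matches the paper's proof essentially verbatim: the paper simply notes that the Sylow $2$-subgroups of $\Sz(8)$ do not satisfy the Frobenius Criterion and applies \thref{twoWithEdgeGeneral}. Your additional remarks verifying the remaining hypotheses and citing \thref{disconnect} for the Sylow $2$-subgroup fact are accurate elaborations of the same argument.
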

\begin{proof}
    The Sylow 2-subgroups of $\Sz(8)$ do not satisfy the Frobenius Criterion. Apply \thref{twoWithEdgeGeneral}.
\end{proof}

% \begin{lemma}\thlabel{HallTriangles}    
%     Let $G$ be an $\Sz(8)$-solvable group such that $\pi(G) = \{2,3,5,7,13\}$. Suppose there exists a solvable $N\trianglelefteq G$ such that $G/N\cong \Aut(\Sz(8))$, and a triangle $A\subseteq\pgc(G)$ with 3 as a vertex. Then, $V(A)\cap\pi(N) = \emptyset$.
% \end{lemma}
% \begin{proof}
%     For contradiction, suppose otherwise. If $3$ divides $|N|$, we reach a contradiction by \thref{noworry3}. If $a\in V(A)$ such that $a\neq 3$ divides $|N|$, let $b$ be the one vertex of $A$ that is not $a$ and is not 3. Because $3$ does not divide $|N|$, $G/N\cong\Aut(\Sz(8))$. By \thref{Sz(8)HallGroups}, there exists a Hall subgroup $H_{3, b}\leq G/N$. Then there exists a $K\leq G$ such that $K/N\cong H_{3, b}$. $K$ is solvable and $\pgc(K)$ contains a triangle, a contradiction by \cite{2015REU}.
% \end{proof}
\begin{proposition}\thlabel{triangleWithThreeFamily}
    Let $\Lambda \in \mathcal{F}_5$ be a rooted graph such that there exists a triangle containing the root. Additionally, suppose that the two vertices not part of this triangle are connected by an edge and each have degree strictly less than 3. The possibilities for $\Lambda$ are exactly the following:
    \begin{center}
        \emptyhouse, \triangleWithTail, \trianglestick, \bowtiebruh.
    \end{center}
    Then $\Lambda$ is not realizable.
\end{proposition}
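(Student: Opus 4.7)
The plan is to suppose, for contradiction, that some group $G$ realizes $\Lambda$ (one of the four listed graphs) and derive a contradiction. Since $\Lambda$ contains a triangle, $G$ must be strictly $\Sz(8)$-solvable, so by \thref{AutSz8OnTop} we may write $G \cong N.\Aut(\Sz(8))$ with $N$ solvable, $3 \nmid |N|$, and the two non-root triangle vertices labeled by $\{7,13\}$. Every one of the four graphs contains the edge $2-5 \in \pgc(G)$; applying \thref{twoWithEdge} inside the subgroup $H = N.\Sz(8) \leq G$ (the preimage of $\Sz(8)$ under the map $G \twoheadrightarrow \Aut(\Sz(8))$) rules out $5 \in \pi(N)$, so $\pi(N) \subseteq \{2,7,13\}$.

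Next, I would treat the four graphs case by case using a uniform template. By \thref{HallSubgroupInducedGraph}, $\pgc(H)$ is precisely the induced subgraph of $\Lambda$ on $\{2,5,7,13\}$. For every edge of the form $2-x$ with $x \in \{7,13\}$ appearing in $\pgc(H)$, \thref{twoWithEdge} analogously forces $x \notin \pi(N)$, tightening $\pi(N)$ further. Complementarily, for every edge in $\Gamma(H)$ whose associated product order (such as $35$, $65$, or $91$) does not exist in $\Sz(8)$, a preimage argument via \thref{orderNotDividing} forces at least one of its endpoints into $\pi(N)$. These combined constraints typically pin $\pi(N)$ down to one or two primes. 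Then I would search for a prime $s \in \pi(G) \setminus \pi(N)$ that is adjacent in $\pgc(G)$ to every prime of $\pi(N)$; when one exists, \thref{Nilpotent} implies $N$ is nilpotent and so $N$ decomposes as a direct product of its Sylow subgroups. Quotienting out one Sylow subgroup at a time and applying \thref{Sz8ModuleExtensions} to the resulting layer of the form $\Sz(8).P$ (with $P$ a $p$-group) leaves only the short explicit list of possible $\pgc$'s given by that theorem, and I would verify that none of these is compatible with the induced subgraph demanded on $\{2,5,7,13\}$.

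The main obstacle is the subcase where $N$ involves multiple primes but no auxiliary prime $s$ immediately supplies the hypotheses of \thref{Nilpotent} (for example, if $2 \in \pi(N)$ alongside $7$ or $13$, since in all four graphs the only prime adjacent to $2$ in $\pgc(G)$ is $5$, which is typically not adjacent to the remaining primes of $\pi(N)$). In those subcases I would fall back on \thref{solvableExtension} to refine $N$ into a normal series with elementary abelian factors, and then invoke \thref{GeneralizedModuleExtensions}, which reads the possible $\pgc$'s of extensions $N'.\Sz(8)$ with $N'$ a $p$-group directly off the Brauer table of $\Sz(8)$, to rule out each candidate layered configuration and close the final contradiction.
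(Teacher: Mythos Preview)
Your plan is workable in principle but misses the key shortcut. After \thref{AutSz8OnTop} identifies the triangle as $\{3,7,13\}$, the paper invokes \thref{HallTriangles}: since this triangle contains the vertex $3\notin\pi(\Sz(8))$, none of its vertices can lie in $\pi(N)$. This immediately gives $7,13\notin\pi(N)$, hence $\pi(N)\subseteq\{2,5\}$. Then \thref{divisors}, applied to the edges missing at $2$ and $5$ (each has degree $<3$ in $\Lambda$ but degree $3$ in $\pgc(\Aut(\Sz(8)))$, and we already know $7,13\notin\pi(N)$), forces both $2$ and $5$ into $\pi(N)$. Now \thref{twoWithEdge} with $r=5$ gives $2-5\notin\pgc(G)$, contradicting the hypothesis that the two non-triangle vertices are adjacent. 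No case analysis is needed.

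By contrast, your route concludes $5\notin\pi(N)$ first and then tries to whittle $\pi(N)$ down from $\{2,7,13\}$ graph by graph and labeling by labeling. This can be pushed through, but the subcases where $2\in\pi(N)$ together with $7$ or $13$ (your acknowledged obstacle) genuinely require the layered chief-series argument you only sketch---essentially the technique in the proof of \thref{hamSandwichImpossible}, locating the topmost non-$2$ factor and quotienting below it---and you would still need to track carefully which edges persist in each quotient to extract a contradiction from \thref{Sz8ModuleExtensions}. (A minor slip: the layer you obtain is $P.\Sz(8)$, not $\Sz(8).P$.) The single application of \thref{HallTriangles} replaces all of this.
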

\begin{proof}
    Suppose for contradiction that $\Lambda$ is realized by an $\Sz(8)$-solvable group $G$. By \thref{AutSz8OnTop}, we have that $G \cong N.\Aut(\Sz(8))$ where $N$ is solvable, with 3 not dividing $|N|$, and additionally that the vertices of the triangle are $\{3, 7, 13\}$. By \thref{HallTriangles}, 7 and 13 do not divide $|N|$. Thus, $N$ is a $\{2,5\}$-group. Since the vertices 2 and 5 both have missing edges, we have by \thref{divisors} that $N$ is a strict $\{2,5\}$-group. Thus, by \thref{twoWithEdge} we have that $2-5 \notin \Lambda$, a contradiction.
\end{proof}

\begin{proposition}\thlabel{triangleWithoutThreeFamily}
    Let $\Lambda \in \mathcal{F}_5$ be a rooted graph such that the root has degree 1. Additionally, suppose that there exists a triangle disjoint from the root and its neighbor. The possibilities for $\Lambda$ are exactly the following:
    \begin{center}
        \completeGraphWithArm, \triangleIsolatedEdge, \zigZag.
    \end{center}
    Then $\Lambda$ is not realizable.
\end{proposition}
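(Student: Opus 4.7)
The plan is a proof by contradiction that exploits two structural constraints on any realization of $\Lambda$: an automorphism-level constraint forcing the extension to involve the full $\Aut(\Sz(8))$, and the 2-edge constraint coming from the Frobenius criterion failing for the Sylow 2-subgroups of $\Sz(8)$.

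I would start by invoking \thref{AutSz8OnTop}. Since $\Lambda \in \mathcal{F}_5$ and in particular $\Lambda \notin \mathcal{F}_3$ (the root is not isolated), any realizing group has the form $G \cong N.\Aut(\Sz(8))$ with $N$ solvable, $3 \nmid |N|$, and the unique neighbor of the root 3 lying in $\{7, 13\}$. The vertices 7 and 13 play symmetric roles in $\pgc(\Aut(\Sz(8)))$ (both have degree 4 and are adjacent to exactly the same types of vertices), so without loss of generality I can assume the neighbor is 7. Consequently, in each of the three listed graphs the triangle postulated by the hypothesis (disjoint from the root and its neighbor) must be supported on $\{2, 5, 7, 13\} \setminus \{7\} = \{2, 5, 13\}$, since these are the only remaining non-root vertices.

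The heart of the plan is to force $13 \in \pi(N)$. The edge $3-13$ is present in $\pgc(\Aut(\Sz(8)))$ but absent in $\Lambda$, so the contrapositive of \thref{divisors} together with $3 \notin \pi(N)$ gives $13 \in \pi(N)$. I would then pass to the canonical index-3 subgroup $K \leq G$ with $K \cong N.\Sz(8)$ (coming from $\Aut(\Sz(8))/\Sz(8) \cong C_3$) and apply \thref{twoWithEdge}: since the Sylow 2-subgroups of $\Sz(8)$ fail the Frobenius Criterion and $13 \in \pi(N)$, the lemma produces an element of order $26$ in $K$, hence in $G$, so $2-13 \notin \pgc(G) = \Lambda$. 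But the postulated triangle on $\{2, 5, 13\}$ contains the edge $2-13$, giving the desired contradiction.

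The main obstacle is bookkeeping: one must check that in all three graphs the prescribed triangle really is supported on $\{2, 5, 7, 13\} \setminus \{\text{neighbor of } 3\}$, which reduces to the observation that each graph has exactly four non-root vertices and exactly one of them is the neighbor of the root, forcing the disjoint triangle onto the remaining three. The symmetric case in which the neighbor of 3 is 13 rather than 7 is handled identically by swapping the roles of 7 and 13 throughout.
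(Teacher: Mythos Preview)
Your proof is correct and shares its opening with the paper's: both invoke \thref{AutSz8OnTop} to get $G\cong N.\Aut(\Sz(8))$ with $3\nmid |N|$ and the root's neighbor in $\{7,13\}$, and both then apply the contrapositive of \thref{divisors} to the missing edge $3-v$ (where $v$ is the element of $\{7,13\}$ that is \emph{not} the root's neighbor) to force $v\in\pi(N)$.

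The divergence is in how the contradiction is extracted from $v\in\pi(N)$. You pass to the index-$3$ subgroup $K\cong N.\Sz(8)$ and use \thref{twoWithEdge} (the Frobenius-criterion lemma) to produce an element of order $2v$ in $K\leq G$, killing the edge $2-v$ that lies in the triangle $\{2,5,v\}$. The paper instead appeals to \thref{Sz8SolvableSubgroups}: it takes a solvable strict $\{2,5\}$-subgroup $H\leq\Aut(\Sz(8))$, observes that the preimage $N.H\leq G$ is solvable with $\{2,5,v\}\subseteq\pi(N.H)$, and obtains the triangle $\{2,5,v\}$ inside $\pgc(N.H)$, contradicting \thref{solvclass}. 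Your route avoids any appeal to the explicit subgroup structure of $\Aut(\Sz(8))$ and is arguably more self-contained; the paper's route has the minor advantage of not needing to pass between $G$ and $K$. Both are short and equally valid.
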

\begin{proof}
    Suppose for contradiction that $\Lambda$ is realized by an $\Sz(8)$-solvable group $G$. By \thref{AutSz8OnTop}, we have that $G \cong N.\Aut(\Sz(8))$ where $N$ is solvable, with 3 not dividing $|N|$. Let $r$ be the vertex adjacent to the root. $r$ must then correspond to 7 or 13. Let $v$ be the vertex corresponding to $\{7,13\} \setminus \{r\}$. Since $v-3 \notin \Lambda$, we have by \thref{divisors} that $v$ divides $|N|$. By \thref{Sz8SolvableSubgroups} we can let $H$ be a strict $\{2,5\}$-subgroup of $\Aut(\Sz(8))$. The subgroup of $G$ isomorphic to $N.H$ is solvable, yet its prime graph complement contains a triangle, a contradiction by \thref{solvclass}.
\end{proof}

\begin{proposition} \thlabel{ThirdFamily}
    Let $\Lambda \in \mathcal{F}_5$ be one of the following graphs:
    \begin{center}
        (1)\cricket, (2)\triangleWithTwoTails, (3)\bullcaseone, (4)\misshapenhouse, (5)\badDart.
    \end{center}
    Then $\Lambda$ is not realizable.
\end{proposition}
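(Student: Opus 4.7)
The strategy is to derive a contradiction for each of the five graphs in turn. Throughout, assume some strictly $\Sz(8)$-solvable group $G$ realizes $\Lambda$. By \thref{AutSz8OnTop} we may write $G \cong N.\Aut(\Sz(8))$ with $N$ solvable, $3 \nmid |N|$, and the neighbors of the root vertex $3$ in $\pgc(G)$ contained in $\{7, 13\}$. For each graph we label the four non-root vertices with primes in $\{2, 5, 7, 13\}$ according to this root adjacency and to the location of the triangle(s).

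For the first four graphs \cricket, \triangleWithTwoTails, \bullcaseone, and \misshapenhouse, we iteratively restrict $\pi(N)$. Triangle constraints rule out primes: \thref{HallTriangles} applies when the triangle contains the root (forcing all three of its vertices out of $\pi(N)$), and \thref{SylowTriangles} otherwise (allowing at most one vertex in $\pi(N)$). Simultaneously, \thref{EdgeAdd} applied to edges that are missing in $\Lambda$ but present in $\pgc(\Aut(\Sz(8)))$ forces certain primes into $\pi(N)$. A subsequent application of \thref{twoWithEdge} to the subgroup $K = N.\Sz(8)$ eliminates those primes whose membership in $\pi(N)$ would contradict triangle- or leaf-edges actually present in $\Lambda$, pinning down $\pi(N)$ to either $\{u'\}$ for some $u' \in \{7, 13\}$ or $\{u', 2\}$. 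In the latter case the hypothesis of \thref{Nilpotent} will be verified by taking $s$ to be the vertex of $\pgc(G)$ adjacent to both $u'$ and $2$; this forces $N$ to be nilpotent and to split as $N = P \times Q$ with $P = O_2(N)$ and $Q = O_{u'}(N)$. Passing to $G/P$, the subgroup $K/P = Q.\Sz(8)$ is a $u'$-group extension of $\Sz(8)$, so by \thref{Sz8ModuleExtensions} its prime graph complement is $\pgc(K/P) \cong$ \trianglePlusIsolated with $u'$ isolated. Since $\pgc(G)[\{2, 5, 7, 13\}] \subseteq \pgc(K/P)$, an edge of $\pgc(G)$ incident to $u'$ (for instance $u'-u$ where $u = \{7,13\}\setminus\{u'\}$, or $u'-5$, depending on the graph) contradicts the isolation of $u'$. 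In the single-prime case $\pi(N) = \{u'\}$, the same argument applies directly with $K$ in place of $K/P$.

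For the last graph \badDart, the same case analysis forces $\pi(N) = \{2\}$, so $N$ is a $2$-group and $K = N.\Sz(8)$ is a $2$-group extension of $\Sz(8)$. Here the quotient-by-Sylow-$2$ step is unavailable, but instead we exploit that $[G : K] = 3$ is coprime to every prime in $\{2, 5, 7, 13\}$: any element of $G$ whose order is a product of two distinct primes from this set must already lie in $K$, yielding the equality $\pgc(G)[\{2, 5, 7, 13\}] = \pgc(K)$ rather than mere containment. A direct inspection of \badDart shows that $\pgc(G)[\{2, 5, 7, 13\}]$ is isomorphic to \SuzukiThirtyTwoElusiveGraph with the prime $5$ occupying the central degree-$3$ vertex. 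But by \thref{Sz8ModuleExtensions} the only prime graph complements realizable by a $2$-group extension of $\Sz(8)$ are \trianglePlusIsolated, \SuzukiThirtyTwoElusiveGraph with $13$ at the degree-$3$ vertex, and the complete graph on four vertices---none of which matches the required graph, completing the contradiction.

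The main obstacle is the bookkeeping inside the first four graphs, particularly for \triangleWithTwoTails and \bullcaseone, where the prime $u' = \{7, 13\} \setminus \{u\}$ may be labeled onto either a triangle vertex or a pendant/leaf vertex; each placement must be eliminated separately via the combined use of \thref{twoWithEdge} and the quotient-by-Sylow-$2$ step.
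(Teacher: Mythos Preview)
Your overall skeleton is the same as the paper's: write $G\cong N.\Aut(\Sz(8))$, bound $\pi(N)$ using \thref{HallTriangles}/\thref{SylowTriangles} and \thref{EdgeAdd}, then use \thref{Nilpotent} and the quotient by the Sylow $2$-part of $N$ to reach a $p$-group extension of $\Sz(8)$ that contradicts \thref{Sz8ModuleExtensions}. The treatment of graph (5) is also essentially the paper's.

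There is, however, a concrete bookkeeping error in your handling of graphs (1)--(4). You assert that the analysis ``pins down $\pi(N)$ to either $\{u'\}$ for some $u'\in\{7,13\}$ or $\{u',2\}$''. For graph (1), the cricket, this is false: the unique triangle contains the root, so its two other vertices must be labeled $7$ and $13$ by \thref{AutSz8OnTop}, and your own invocation of \thref{HallTriangles} then forces $7,13\notin\pi(N)$. Hence $\pi(N)\subseteq\{2,5\}$, and the non-$2$ prime in $\pi(N)$ is $5$, not an element of $\{7,13\}$. The same phenomenon can occur in the other graphs depending on the labeling. Your final contradiction (``an edge incident to $u'$ contradicts its isolation'') therefore does not apply to the case that actually arises.

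The paper sidesteps all of this labeling by not attempting to identify the second prime $p\in\pi(N)$ at all. It simply argues that $|\pi(N)|=2$ with $2\in\pi(N)$, observes via \thref{twoWithEdge} that $2$ and $p$ are nonadjacent in $\Lambda$, and then uses the uniform graph-theoretic fact that in each of the five graphs \emph{any two nonadjacent non-root vertices share a common neighbor} to produce the vertex $s$ needed for \thref{Nilpotent}. Once $N$ is nilpotent, quotienting by its Sylow $2$-subgroup yields a $p$-group extension of $\Sz(8)$, and \thref{Sz8ModuleExtensions} gives the contradiction regardless of whether $p$ is $5$, $7$, or $13$. Replacing your ``$u'\in\{7,13\}$'' claim with this labeling-free argument fixes the gap.
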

\begin{proof}
    For contradiction, assume there exists some $\Sz(8)$-solvable group $G$ that realizes $\Lambda$. We can apply \thref{AutSz8OnTop} to see that $G \cong N.\Aut(\Sz(8))\cong N.\Sz(8).C_3$ with 3 not dividing $|N|$. Let $H=N.\Sz(8)$ and observe that $H\trianglelefteq G$. 
    
    We show that $|\pi(N)| \neq 1$: If $|N|$ were a power of 5, 7, or 13, we would get a contradiction by \thref{Sz8ModuleExtensions}. If $|N|$ were a power of 2, then for graphs (1) through (4) we would get a contradiction by \thref{EdgeAdd}, and for graph (5) we would get a contradiction by \thref{Sz8ModuleExtensions}.
    
    %we will consider two cases: The case where $\Lambda$ is not isomorphic to \badDart, and the case where $\Lambda$ is isomorphic to \badDart. We start with the first case.  If $|N|$ is a power of $2$, we get a contradiction by \thref{EdgeAdd}. We now turn to the second case. We have that $\pgc(G)[\pi(H)]\subseteq\pgc(H)$, so we must have that $\pgc(H)$ is the complete graph on 4 vertices (otherwise, it would have an isolated vertex, a contradiction, or an edge between 5 and 3, a contradiction) by \thref{Sz8ModuleExtensions}. Then, there exists an element $g$ of order $2\cdot13$ or $2\cdot7$ in $G$, so $g\in H$ because o$(gH) = 1$, a contradiction. 
    
    We show that $|\pi(N)| \leq 2$. For graphs (2) through (5) this follows from \thref{SylowTriangles}, and for graph (1) this follows from \thref{HallTriangles}. 
    
    So we can assume that $|\pi(N)| = 2$. By \thref{Sz8ModuleExtensions}, one of the prime divisors of $|N|$ must be 2. So let $p \neq 2$ be the other prime divisor of $|N|$. By \thref{twoWithEdge}, we have that $2-p \notin \pgc(H)$. Notice that all graphs listed above have the property that any two non-root nonadjacent vertices have a shared neighbor. Therefore, there exists a vertex $q$ such that $p-q$ and $2-q$ are both in $\pgc(G)$. By \thref{Nilpotent}, we find that $N$ is nilpotent. Let $N_0$ be the unique Sylow 2-subgroup of $N$. Then $H/N_0$ is an extension of $\Sz(8)$ by a $p$-group, and we obtain a contradiction by \thref{Sz8ModuleExtensions}.
\end{proof}

\begin{lemma}\thlabel{Bull}
    The graph \bull is not realizable.
\end{lemma}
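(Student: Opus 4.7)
The plan is to derive a contradiction by showing that if an $\Sz(8)$-solvable group $G$ realized \bull, then the subgroup $H = N.\Sz(8) \triangleleft G$ would contain elements of both orders $35$ and $65$, contradicting the fact that exactly one of $5-7, 5-13$ is an edge of $\pgc(G)$. By \thref{AutSz8OnTop}, $G \cong N.\Aut(\Sz(8))$ with $N$ solvable, $3 \nmid |N|$, and the triangle of \bull corresponding to $\{3, 7, 13\}$; by \thref{HallTriangles} we obtain $\pi(N) \cap \{7, 13\} = \emptyset$; and because the horn vertices $2$ and $5$ each lack one edge to $\{7, 13\}$ in $\pgc(G)$ while those edges are present in $\pgc(\Aut(\Sz(8)))$, \thref{divisors} forces both $2$ and $5$ to divide $|N|$. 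Thus $\pi(N) = \{2, 5\}$, and since $|G : H| = 3$ is coprime to each of $10, 14, 26, 35, 65, 91$, the restrictions of $\pgc(G)$ and $\pgc(H)$ to $\{2,5,7,13\}$ coincide as labelled graphs.

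Refine a chief series of $H$ through $N$ to $1 = M_0 \triangleleft M_1 \triangleleft \cdots \triangleleft M_r = N$ with each factor elementary abelian of prime order in $\{2, 5\}$. If $N/M_{r-1}$ is a $5$-group, then $H/M_{r-1}$ is a nontrivial $5$-group extension of $\Sz(8)$, so by \thref{Sz8ModuleExtensions} its prime graph complement is $\trianglePlusIsolated$ with $5$ isolated; since $|M_{r-1}|$ is coprime to $35$ and $65$, both edges $5-7$ and $5-13$ lift to $\pgc(H)$, giving the contradiction. Hence the topmost factor is a $2$-group, and I take the largest index $j$ with $M_j/M_{j-1}$ a $5$-group, which exists since $5 \in \pi(N)$. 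Setting $\bar H = H/M_{j-1}$, $A = M_j/M_{j-1}$, and $B = N/M_j$ (a $2$-group), the module $A$ is an irreducible $\mathbb{F}_5[B.\Sz(8)]$-module. In the subcase where $B$ acts trivially on $A$, the module $A$ is in fact an irreducible $\mathbb{F}_5\Sz(8)$-module, and \thref{BrauerTableInformation} (whose char-$5$ table records ``Yes'' at both $7$ and $13$ for every irreducible Brauer character of $\Sz(8)$) produces Sylow-$7$ and Sylow-$13$ fixed vectors on $A$; combining these with the respective Sylow generators gives elements of orders $35$ and $65$ in $\bar H$, which lift to $H$ via the usual coprime-order argument.

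The main obstacle is the remaining subcase, in which $B$ acts nontrivially on $A$; by Clifford's theorem applied to $B \triangleleft \bar H/A$ this forces $A^B = 0$. My plan here is a Clifford-theoretic analysis: a Sylow $7$-subgroup $R \leq \bar H$ (which lifts from a Sylow $7$ of $\Sz(8)$, since $B$ is a $2$-group) permutes the $B$-isotypic components of $A|_B$, and its orbits have size $1$ or $7$. Any orbit of size $7$ contributes a copy of the regular $\mathbb{F}_5[R]$-module to $A|_R$, and therefore a trivial $R$-summand, because the nontrivial irreducible $\mathbb{F}_5 C_7$-modules have dimension $6$. An orbit of size $1$ corresponds to an $R$-stable $B$-isotypic, on whose stabilizer in $\bar H/A$ the question reduces to a smaller extension of a proper subgroup of $\Sz(8)$ still containing $R$, where the relevant Brauer-character fixed-point data can again be read off from \thref{BrauerTableInformation}. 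A parallel analysis with a Sylow $13$-subgroup $R'$ (using that the nontrivial irreducible $\mathbb{F}_5 C_{13}$-modules have dimension $4$) produces the element of order $65$, and the two conclusions together give the contradiction.
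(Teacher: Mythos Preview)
Your setup through $\pi(N)=\{2,5\}$ and the reduction to $\bar H=A.B.\Sz(8)$ with $A$ an irreducible $\mathbb F_5[B.\Sz(8)]$-module is correct and parallels the paper. The case where the top chief factor of $N$ is a $5$-group and the subcase where $B$ acts trivially on $A$ are both fine (your remark that $|M_{r-1}|$ is coprime to $35$ and $65$ is inaccurate but also unnecessary: any preimage of an element of order $35$ in a quotient already has order divisible by $35$).

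The genuine gap is in the subcase where $B$ acts nontrivially on $A$. Your Clifford argument for $R$-orbits of length $7$ is valid, but consider what happens when \emph{every} $R$-orbit on the set of $B$-homogeneous components has length $1$. Since $\Sz(8)$ acts transitively on the components and is simple, the kernel of this permutation action is trivial whenever there is more than one component; so if $R$ fixes all components then $R$ lies in that kernel, a contradiction. Hence ``all $R$-orbits of size $1$'' forces exactly \emph{one} component, i.e.\ $A|_B$ is homogeneous with a nontrivial irreducible $U$. In this case your proposed ``reduction to a smaller extension of a proper subgroup of $\Sz(8)$'' does not apply---the stabilizer is all of $\Sz(8)$---and the appeal to \thref{BrauerTableInformation} is misplaced, since that lemma records Brauer characters of $\Sz(8)$ itself, not of $B.\Sz(8)$. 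There is no a~priori reason that a $B.\Sz(8)$-module $A$ with $A^B=0$ must carry nonzero $R$-fixed vectors, so your Clifford analysis does not close this case.

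The paper handles exactly this residual situation by a different device: after arranging that $K_2$ acts faithfully on $K_5$ and that a Sylow $p$-subgroup $K_p$ (for the unique $p\in\{7,13\}$ with $5\text{--}p\in\pgc(G)$) acts nontrivially on $K_2$, it invokes \cite[Lemma~2.1.6]{2023REU}, using that neither $7$ nor $13$ is a Fermat prime, to force $K_p$ to fix a nontrivial element of $K_5$, contradicting $5\text{--}p\in\pgc(G)$. That Fermat-prime lemma is the missing ingredient your Clifford approach would need to replace.
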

\begin{proof}
    Suppose for contradiction that the above graph is realized by an $\Sz(8)$-solvable group $G$. By \thref{AutSz8OnTop}, we have that $G \cong N.\Aut(\Sz(8))$ where $N$ is solvable, with 3 not dividing $|N|$. Let $p$ be the vertex such that $5-p\in\pgc(G)$.
    
    By \thref{HallTriangles}, $N$ is a $\{2,5\}$-group. By \thref{divisors}, $N$ is a strict $\{2,5\}$-group. By taking a section of the normal series given by \thref{solvableExtension}, we reduce down to two cases: 
    \begin{enumerate}
        \item $N \cong K_2.K_5$ where $K_2$ is an elementary abelian $2$-group and $K_5$ is a 5-group.
        \item $N \cong K_5.K_2$ where $K_2$ is a 2-group and $K_5$ is an elementary abelian 5-group.
    \end{enumerate}
    If (1) holds then we immediately come to a contradiction since we can take the section $K_5.\Sz(8)$ of $G$ and apply \thref{Sz8ModuleExtensions} to see that the vertex $5$ must be isolated. 
    
    So assume that (2) holds. Let $G_0$ be the section of $G$ isomorphic to $K_5.K_2.\Sz(8)$. By \thref{split}, we may assume that $G_0 \cong K_5\rtimes H$, where $H \cong K_2.\Sz(8)$. We notice that $C_H(K_5)\leq K_2$ or else $C_H(K_5)K_2/K_2$ is a nontrivial normal subgroup of $H/K_2 \cong \Sz(8)$, meaning it is isomorphic to $\Sz(8)$, implying that $C_H(K_5)$ has an element of order $p$, contradicting the fact that $5-p \in \pgc(G_0)$. Additionally, $C_H(K_5) \neq K_2$ or else $N \cong K_2 \times K_5$, so $G_0 \cong K_2.K_5.\Sz(8)$, which is a contradiction by \thref{Sz8ModuleExtensions}. 
    By modding out by $C_H(K_5)$, we may assume without loss of generality that $K_2$ acts faithfully on $K_5$. Let $K_p$ be a Sylow $p$-subgroup of $H$. Since $5-p \in \pgc(G)$, we have that the action of $K_p$ on $K_5$ is Frobenius and therefore faithful. We conclude that the action of $K_2K_p$ on $K_5$ is faithful. 

    Now we show that $K_p$ acts nontrivially on $K_2$ by conjugation. If the action were trivial, we would have that the action $\varphi: H/K_2 \to \Aut(Z(K_2))$ defined by $\varphi_{hK_2}(k) = h^{-1}kh$ has nontrivial kernel. But then the kernel is all of $\Sz(8)$, meaning the vertex $2$ is isolated in $\pgc(H)$, a contradiction.

    Since neither 7 nor 13 is a Fermat prime, by \cite[Lemma 2.1.6]{2023REU} we have that $K_p$ has a nontrivial fixed point on $K_5$, which contradicts the assumption that $5-p \in \pgc(G)$.
\end{proof}

\begin{proposition}\thlabel{Realizable5VertexGraphs}
    The graphs in $\mathcal{F}$ which are realizable are exactly $\mathcal{F}_3 \cup \mathcal{F}_4$.
\end{proposition}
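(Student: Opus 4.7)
The plan is to combine the eight preceding propositions and lemmas into a single classification; essentially all the work has already been done, and this proposition is their consolidation. First I would invoke Propositions \thref{degreeGreaterThan2} and \thref{noExtraHams} to conclude that no graph in $\mathcal{F}_1 \cup \mathcal{F}_2$ is realizable, and Propositions \thref{S3Possible} and \thref{F4Realizable} to conclude that every graph in $\mathcal{F}_3 \cup \mathcal{F}_4$ is realizable.

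The remaining task is to rule out every graph in $\mathcal{F}_5$. For this I would check that the 13 graphs listed in the definition of $\mathcal{F}_5$ are exactly partitioned among the four results dedicated to them: Proposition \thref{triangleWithThreeFamily} eliminates the first four (\emptyhouse, \triangleWithTail, \trianglestick, \bowtiebruh), in which the root lies in a triangle and the two remaining vertices form an edge with each having degree less than 3; Proposition \thref{triangleWithoutThreeFamily} eliminates the next three (\completeGraphWithArm, \triangleIsolatedEdge, \zigZag), in which the root has degree 1 and the remaining triangle is disjoint from its unique neighbor; Proposition \thref{ThirdFamily} eliminates the next five (\cricket, \triangleWithTwoTails, \bullcaseone, \misshapenhouse, \badDart); and Lemma \thref{Bull} handles the final graph \bull. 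Since $4+3+5+1=13$, the coverage is exhaustive, and no graph in $\mathcal{F}_5$ is realizable.

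The main, and essentially only, obstacle is a bookkeeping one: confirming that each of the enumerated graphs does in fact satisfy the combinatorial hypothesis required by the corresponding sub-result. This amounts to a direct inspection of the 13 drawings against the structural conditions stated in the sub-propositions (and against the exhaustive enumeration of rooted triangle-containing 5-vertex graphs in Appendix \ref{AppendixA}), after which the theorem follows immediately by combining the eight results.
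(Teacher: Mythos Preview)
Your proposal is correct and follows essentially the same approach as the paper's proof: invoke the eight preceding results to handle $\mathcal{F}_1$ through $\mathcal{F}_5$ in turn, with the only extra content being your explicit $4+3+5+1=13$ bookkeeping check that the sub-results for $\mathcal{F}_5$ are exhaustive.
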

\begin{proof}
    By \thref{degreeGreaterThan2}, $\mathcal{F}_1$ is not realizable. By \thref{noExtraHams}, $\mathcal{F}_2$ is not realizable. By Propositions \ref{triangleWithThreeFamily}, \ref{triangleWithoutThreeFamily}, \ref{ThirdFamily}, and \ref{Bull}, $\mathcal{F}_5$ is not realizable. By \thref{S3Possible}, $\mathcal{F}_3$ is realizable. By \thref{F4Realizable}, $\mathcal{F}_4$ is realizable.

\end{proof}

\subsection*{Classification}
\begin{theorem}\thlabel{Sz8Classification}
Given a graph $\Xi$, we have that $\Xi$ is isomorphic to the prime graph complement of some $\Sz(8)$-solvable group if and only if one of the following is true: 
\begin{enumerate}
    \item $\Xi$ is triangle-free and 3-colorable.
    \item There exists a subset $X = \{a,b,c,d\} \subseteq V(\Xi)$ equal to its own closed neighborhood in $\Xi$, where the subgraph induced by $X$ is not \hamSandwich, such that $X$ contains at least one triangle and $\Xi \setminus X$ is triangle-free and 3-colorable.
    \item There exists a vertex $e \in V(\Xi)$ and a subset $X = \{a,b,c,d\} \subseteq V(\Xi)$ whose closed neighborhood $N[X]$ equals $\{a,b,c,d,e\}$. Furthermore, the subgraph induced by $N[X]$ is isomorphic to one of the graphs in $\mathcal{F}_4$, listed below: 
    \begin{center}
        \house, \northStar, \codart, \happyFace, \balloon, \dart, \triangleWithShortTail.
    \end{center}
    Additionally, $e$ must correspond to the white vertex and $\Xi \setminus X$ must be triangle-free and 3-colorable.
\end{enumerate}
\end{theorem}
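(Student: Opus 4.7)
The plan is to prove both directions of the equivalence, with the forward direction being the main technical work and proceeding by a case split on the triangles of $\pgc(G)$. Throughout I would set $X := \pi(\Sz(8)) = \{2,5,7,13\}$ viewed as a subset of $V(\Xi)$.

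\emph{Forward direction.} Let $G$ be $\Sz(8)$-solvable with $\pgc(G) \cong \Xi$. If $G$ is solvable, condition (1) follows from Theorem~\ref{solvclass}. Otherwise $G$ is strictly $\Sz(8)$-solvable, so $X \subseteq V(\Xi)$. By Theorem~\ref{disconnect}(1) the closed neighborhood $N[X]$ in $\Xi$ lies in $X \cup \{3\}$, and Lemma~\ref{solvableSubgraph} tells us that $\Xi[V(\Xi) \setminus X]$ is triangle-free and 3-colorable. The proof would then split on whether $\pgc(G)$ contains a triangle. If it does not, the induced subgraph $\Xi[X \cup \{3\}]$ has at most five vertices and no triangle, hence is 3-colorable (a standard direct check: every triangle-free graph on at most five vertices is either bipartite or isomorphic to $C_5$). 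Since $\Xi[X \cup \{3\}]$ and $\Xi[V(\Xi) \setminus X]$ meet in at most the single vertex 3, and Theorem~\ref{disconnect}(1) prevents any edges of $\Xi$ from crossing between $X$ and $V(\Xi) \setminus (X \cup \{3\})$, the two colorings can be permuted to agree on the color of 3 and concatenated to produce a 3-coloring of $\Xi$, establishing condition (1).

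\emph{Forward direction, triangle case.} If $\pgc(G)$ has a triangle, then Theorem~\ref{disconnect}(1) and Lemma~\ref{solvableSubgraph} force the triangle to lie within $X \cup \{3\}$. When $N[X] = X$, Theorem~\ref{HallSubgroupInducedGraph} applied with $\pi_0 = X$ yields an $\Sz(8)$-solvable Hall subgroup whose prime graph complement equals $\Xi[X]$; by Lemma~\ref{Sz84VertexGraphs} this 4-vertex graph is not isomorphic to \hamSandwich, so condition (2) holds. When $N[X] = X \cup \{3\}$, Theorem~\ref{HallSubgroupInducedGraph} with $\pi_0 = X \cup \{3\}$ yields an $\Sz(8)$-solvable $\{2,3,5,7,13\}$-group $H$ with $\pgc(H) \cong \Xi[N[X]]$; viewed as a rooted graph with root 3 this lies in $\mathcal{F}$, so Proposition~\ref{Realizable5VertexGraphs} places it in $\mathcal{F}_3 \cup \mathcal{F}_4$. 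Since 3 has a neighbor in $X$ the graph cannot have isolated root, ruling out $\mathcal{F}_3$, and hence $\Xi[N[X]] \in \mathcal{F}_4$ with white vertex $e = 3$, giving condition (3).

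\emph{Backward direction.} Under condition (1), Theorem~\ref{solvclass} directly realizes $\Xi$. Under condition (2), Lemma~\ref{Sz84VertexGraphs} supplies an $\Sz(8)$-solvable group $H$ with $\pi(H) = \{2,5,7,13\}$ and $\pgc(H) \cong \Xi[X]$, while the construction in \cite[Theorem 2.8]{2015REU} supplies a solvable group $S$ with $\pi(S) = V(\Xi) \setminus X$ (necessarily coprime to $|H|$) and $\pgc(S) \cong \Xi \setminus X$; since $X$ equals its own closed neighborhood, no edges of $\Xi$ cross between $X$ and its complement, so the direct product $H \times S$ realizes $\Xi$. Under condition (3), Proposition~\ref{F4Realizable} supplies an $H$ with $\pgc(H) \cong \Xi[N[X]]$, which must be combined with a solvable group $S$ realizing $\Xi \setminus X$ by a construction in which the outer-automorphism action of 3 on $\Sz(8)$ inside $H$ is lifted through $S$, so that $e = 3$ carries both its prescribed neighborhoods inside and outside $X$.

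\emph{Main obstacle.} The hardest step will be the backward direction of condition (3): since the prime $e = 3$ lies in both $\pi(H)$ and the prime set of the solvable group realizing $\Xi \setminus X$, a naive direct product would render 3 isolated in $\pgc(H \times S)$, contradicting $\Xi$. One must instead exhibit a semidirect-product construction that uses a surjection from a larger solvable group onto $C_3$, transporting the outer-automorphism action on $\Sz(8)$ through this quotient while still realizing $\Xi \setminus X$, which requires careful exploitation of the explicit forms of the realizing groups exhibited in the proof of Proposition~\ref{F4Realizable}.
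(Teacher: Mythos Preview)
Your proposal is correct and follows essentially the same approach as the paper. The paper's forward direction is slightly terser---it jumps straight to ``$\pgc(G)[\{2,3,5,7,13\}] \in \mathcal{F}_3 \cup \mathcal{F}_4$ by \thref{sufficientOn5} and \thref{Realizable5VertexGraphs}'' without your explicit triangle-free coloring argument or your $N[X]=X$ versus $N[X]=X\cup\{3\}$ split---and for the backward direction of (3) the paper resolves your ``main obstacle'' exactly as you anticipate: each group in \thref{F4Realizable} has the form $H\rtimes C_3$ with $3\nmid|H|$, the solvable construction of \cite[Theorem 2.8]{2015REU} gives $N\rtimes C_3$ realizing $\Xi\setminus X$ with $(|N|,3|H|)=1$, and then $(N\times H)\rtimes C_3$ realizes $\Xi$, which is precisely the semidirect-product lift you describe.
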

\begin{proof}
    Forward direction: Let $G$ be an $\Sz(8)$-solvable group which realizes $\Xi$. If $G$ is solvable, then $\pgc(G)$ satisfies (1) by \thref{solvclass}. If $G$ is strictly $\Sz(8)$-solvable and does not satisfy (1), then $\pgc(G)[\{2,3,5,7,13\}] \in \mathcal{F}_3 \cup \mathcal{F}_4$ by \thref{sufficientOn5} and \thref{Realizable5VertexGraphs}.  By \thref{disconnect}, if $\pgc(G)[\{2,3,5,7,13\}] \in \mathcal{F}_3$ then $\pgc(G)$ satisfies (2), and if $\pgc(G)[\{2,3,5,7,13\}] \in \mathcal{F}_4$ then $\pgc(G)$ satisfies (3).

    Backward direction: We now turn to showing that given a graph $\Xi$ that satisfies (1), (2), or (3), there exists a group $G$ such that $\pgc(G)\cong\Xi$. We proceed by cases. If $\Xi$ satisfies (1), then there exists a solvable group $G$ such that $\pgc(G)\cong \Xi$ by \thref{solvclass}. If $\Xi$ satisfies (2), there exists an $\Sz(8)$-solvable group $H$ such that $\pgc(H)\cong \Xi[X]$ by \thref{Sz84VertexGraphs}. By the construction given in the proof of \cite[Theorem 2.8]{2015REU}, there exists a solvable group $N$ such that $(|N|, |H|) = 1$ and $\pgc(N)\cong\Xi \setminus X$. Then we have $\pgc(H\times N)\cong \Xi$.

    We now turn to the final case, the case where $\Xi$ satisfies (3). By \thref{F4Realizable}, there exists an $\Sz(8)$-solvable group of the form $H \rtimes C_3$ which realizes $\Xi[\{a,b,c,d,e\}]$ such that the vertex 3 is identified with $e$, and 3 does not divide $|H|$. By the construction given in \cite[Theorem 2.8]{2015REU}, there is a solvable group of the form $N \rtimes C_3$ which realizes $\Xi\setminus X$ such that 3 is identified with $e$, and $(|N|, 3|H|) = 1$. Then $(N \times H) \rtimes C_3$ realizes $\Xi$.
\end{proof}
% \begin{corollary}
% Let $G$ be an $\Sz(32)$ or $\Sz(8)$-solvable group. Then $\overline{\Gamma}(G)$ is 4-colorable. Furthermore, this result is best possible.
% \end{corollary}
% \begin{proof}
% If $\pgc(G)$ is 3-colorable then we are done. If $G$ has a subset $X$ of 4 vertices equal to its own closed neighborhood in $\pgc(G)$ such that $\pgc(G) \setminus X$ is 3-colorable. Color the subgraph induced by $X$ using 4 colors, then color $\pgc(G) \setminus X$ using 3 colors. This only leaves the case where $G$ is strictly $\Sz(8)$-solvable and there exists an edge between $3$ and a prime in $\pi(\Sz(8))$ by \thref{Sz32Classification} and \thref{Sz8Classification}. Color $\pi(G)\setminus\pi(\Sz(8))$ with 3 color $\{1, 2, 3\}$ by \thref{Sz8Classification}, such that 3 is colored with 3. Color 7 with 1, 13 with 2, 2 with 3, and 5 with 4. This result is best possible because $\pgc(\Sz(32))$ and $\pgc(\Sz(8))$ are the complete graph on 4 vertices, which has chromatic number 4.
% \end{proof}

\section{The Projective Special Linear Group PSL(2,$\mathbf{2^5}$)}\label{sec:PSL}
\begin{figure}[H]
    \centering
    \begin{minipage}{.5\textwidth}
        \centering
        \begin{tikzpicture}
            % Define coordinates for the vertices
            \coordinate (A) at (0, 1);   % Vertex 2
            \coordinate (B) at (1, 1);   % Vertex 7
            \coordinate (C) at (1, 0);   % Vertex 13
            \coordinate (D) at (0, 0);   % Vertex 5
            % Draw the edges of the complete graph
            \draw (B) -- (C);
            \draw (C) -- (D);
            \draw (A) -- (D);
            \draw (A) -- (B);
            \draw (A) -- (C);
            % Draw the vertices (circles only)
            \foreach \point in {A, B, C, D}
                \draw[draw, fill=white] (\point) circle [radius=0.2cm];
            % Draw the text on top of the circles
            \foreach \point/\name in {(A)/2, (B)/11, (C)/31, (D)/3}
                \node at \point {\scriptsize\name};
        \end{tikzpicture}
        \caption*{$\pgc(\PSL(2,2^5))$}
    \end{minipage}%
    \begin{minipage}{.5\textwidth}
        \centering
        \begin{tikzpicture}
            % Define coordinates for the vertices
            \coordinate (A) at (0, 1);   % Vertex 2
            \coordinate (B) at (1, 1);   % Vertex 7
            \coordinate (C) at (1, 0);   % Vertex 13
            \coordinate (D) at (0, 0);   % Vertex 5
            \coordinate (E) at (1.5, 0.5);
            % Draw the edges of the complete graph
            \draw (B) -- (C);
            \draw (C) -- (D);
            \draw (A) -- (D);
            \draw (A) -- (B);
            \draw (A) -- (C);
            \draw (B) -- (E);
            \draw (C) -- (E);
            % Draw the vertices (circles only)
            \foreach \point in {A, B, C, D, E}
                \draw[draw, fill=white] (\point) circle [radius=0.2cm];
            % Draw the text on top of the circles
            \foreach \point/\name in {(A)/2, (B)/11, (C)/31, (D)/3, (E)/5}
                \node at \point {\scriptsize\name};
        \end{tikzpicture}
        \caption*{$\pgc(\Aut(\PSL(2,2^5)))$}
    \end{minipage}
\end{figure}

We now turn to $\PSL(2, 2^5)$, the smallest projective special linear group with $\pi(\Aut(\PSL(2, 2^5)))\neq\pi(\PSL(2, 2^f))$ \cite{2023REU}. It has order $32736=2^5\cdot 3\cdot 11\cdot 31$ and its outer automorphism group is isomorphic to the cyclic group of order 5. Since this section is very similar in structure to the previous section, we omit some of the exposition.
%, such as the fact that some of the graphs in \thref{PSL232SolvableSubgroups} are triangle-free. 
%not all of the prime graph complements of extensions of $\PSL(2, 2^5)$ by a $p$-group for $p\in\pi\PSL(2, 2^5)$ contain triangles and the fact that the graph \hamSandwich can be realized by $\PSL(2, 2^5)$-solvable groups, so we include detailed proofs of our results. 
Each theorem is preceded by a reference to its analogue in Section \ref{Sz8} when applicable. Even for theorems that have such a reference, the arguments used to prove them sometimes follow different logic. These differences mainly arise from the fact that $\pgc(\PSL(2,2^5))$ is missing the $3-11$ edge. In particular, the way that the set of graphs $\mathcal{F}$ is partitioned is different enough from Section \ref{Sz8} that Propositions \ref{PSLimpossible3} and \ref{PSLimpossible4} have no direct analogue. 
\begin{fact}\thlabel{PSL232SolvableSubgroups}
    (\ref{Sz8SolvableSubgroups})
    %$|\operatorname{Out}(\operatorname{PSL}(2,2^5))|=5$ and $5$ does not divide $|\operatorname{PSL}(2,2^5)|$. 
    %(\ref{Sz8SolvableSubgroups}) Every Hall $\{5,p\}$-subgroup exists in $\Aut(\operatorname{PSL}(2,2^5))$ for $p \in \pi(\operatorname{PSL}(2,2^5))$. $\Aut(\operatorname{PSL}(2,2^5))$ contains a Hall $\{2, 31\}$-group. There exists a solvable subgroup of order 6 in $\operatorname{PSL}(2,2^5)$. All proper subgroups of $\PSL(2,2^5)$ are solvable.
    $\PSL(2,2^5)$ has solvable strict $\{2,3,11\}$ and $\{2,31\}$-subgroups.
    $\Aut(\PSL(2,2^5))$ has solvable strict $\{2,3,5,11\}$ and $\{2,5,31\}$-subgroups.
\end{fact}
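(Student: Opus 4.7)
The plan is to exhibit each required subgroup explicitly, leaning on Dickson's classification of subgroups of $\PSL(2,q)$. Setting $q=2^5=32$, we have $q-1=31$ and $q+1=33=3\cdot 11$, so up to conjugacy the maximal subgroups of $\PSL(2,32)$ are the Borel $B\cong C_2^{\,5}\rtimes C_{31}$, the dihedral groups $D_{62}$ and $D_{66}$, and the subfield copy $\PSL(2,2)\cong S_3$; note that no $A_4,\ S_4,\ A_5$ subgroups appear, as $5\nmid |\PSL(2,32)|$.

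The two claims for $\PSL(2,2^5)$ follow immediately from this list. The Borel $B$ is a Frobenius group with elementary abelian kernel of order $32$ and cyclic complement of order $31$, hence solvable and a strict $\{2,31\}$-group. The dihedral group $D_{66}$ has order $2\cdot 3\cdot 11$, is solvable, and is a strict $\{2,3,11\}$-group.

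For $\Aut(\PSL(2,2^5))$, the key observation is that $|\operatorname{Out}(\PSL(2,2^5))|=5$ is coprime to $|\PSL(2,2^5)|=2^5\cdot 3\cdot 11\cdot 31$, so $\Aut(\PSL(2,2^5))=\PSL(2,2^5)\rtimes\langle\sigma\rangle$ where $\sigma$ is the Frobenius field automorphism of order $5$ and the action is coprime. For the $\{2,5,31\}$-subgroup, observe that $\sigma$ fixes the point at infinity of $\mathbb{P}^1(\mathbb{F}_{32})$, hence normalizes the Borel $B$ stabilizing that point; then $B\rtimes\langle\sigma\rangle$ is solvable of order $2^5\cdot 5\cdot 31$. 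For the $\{2,3,5,11\}$-subgroup, I would use an orbit-counting argument: since $D_{66}$ is maximal and therefore self-normalizing in $\PSL(2,32)$, its conjugacy class has size $|\PSL(2,32)|/|D_{66}|=32736/66=496$. Because $\langle\sigma\rangle$ has order $5$, every orbit on this class has size $1$ or $5$, so the number of fixed conjugates is congruent to $496\equiv 1\pmod{5}$; in particular a $\sigma$-invariant copy $D$ of $D_{66}$ exists, and then $D\rtimes\langle\sigma\rangle$ is solvable with prime set $\{2,3,5,11\}$.

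The main obstacle is justifying the $\sigma$-invariance used in the $\{2,3,5,11\}$ case: the orbit-counting shortcut is clean, but it relies on the self-normalizing property of $D_{66}$ inside $\PSL(2,32)$, which in turn requires the full classification of maximal subgroups via Dickson. A purely computational alternative, less conceptual but mechanical, is to construct the four subgroups directly in GAP from the natural matrix representation of $\PSL(2,32)$ together with its field automorphism, and verify by inspection that each has the stated prime set and a subnormal series with cyclic factors.
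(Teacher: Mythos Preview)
The paper states this as a \emph{Fact} without proof; it is presumably either verified computationally in GAP or regarded as standard structure theory for $\PSL(2,q)$. Your argument is correct and supplies what the paper omits: you exhibit the Borel and the dihedral $D_{66}$ inside $\PSL(2,32)$ directly from Dickson's list, and for the $\Aut$ side you use that the Frobenius automorphism normalizes the standard Borel and, via a clean orbit-count modulo $5$ on the single conjugacy class of copies of $D_{66}$, normalizes some conjugate of $D_{66}$.

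One minor expository slip: the clause ``as $5\nmid|\PSL(2,32)|$'' only rules out $A_5$; the absence of $A_4$ and $S_4$ among the maximal subgroups in even characteristic is a feature of Dickson's list for $\PSL(2,2^f)$ itself, not a consequence of divisibility by $5$. This does not affect the proof, since all you actually use is that $D_{66}$ is maximal (hence self-normalizing, as $\PSL(2,32)$ is simple) and that all copies of $D_{66}$ form a single conjugacy class --- both standard for $\PSL(2,q)$ with $q$ even. Your route is more conceptual than a bare GAP check and would serve well as the proof the paper leaves implicit; the paper's own general-purpose tool in this direction, \thref{Msubgroup}, only yields solvable $\{5,p\}$-subgroups for a single $p\in\pi(T)$ and so does not by itself recover the full $\{2,3,5,11\}$- and $\{2,5,31\}$-subgroups claimed here.
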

\subsection*{Graphs on 4 Vertices}

\begin{lemma}\thlabel{psl232ModuleExtensions}
    (\ref{Sz8ModuleExtensions}) Let $p \in \pi(\operatorname{PSL}(2,2^5))$, let $N$ be a nontrivial $p$-group, and let $G \cong N.\operatorname{PSL}(2,2^5)$. Then we have the following restrictions on $\pgc(G)$:
    \begin{itemize}
        \item If $p \in \{ 3, 11\}$ then $\pgc(G)$ is isomorphic to \trianglePlusIsolated.
        \item If $p = 31$, then $\pgc(G)$ is isomorphic to \brokenFrame.
        \item If $p = 2$ then $\pgc(G)$ is isomorphic to \SuzukiThirtyTwoElusiveGraph, \hamSandwich, \fletching, or \brokenFrame. \\
        In \SuzukiThirtyTwoElusiveGraph, the vertex of degree three is 31 and the 
  vertex of degree one is 3.
    \end{itemize}
    Additionally, for each of the above graphs $\Xi$ there exists a strictly $\PSL(2, 2^5)$-solvable group $G$ such that $\pgc(G) \cong \Xi$.
\end{lemma}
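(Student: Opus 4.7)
The plan is to follow the proof of \thref{Sz8ModuleExtensions} almost verbatim, using \thref{GeneralizedModuleExtensions} as the structural engine. For each prime $p \in \pi(\PSL(2,2^5)) = \{2,3,11,31\}$, I would begin by displaying the Brauer table of $\PSL(2,2^5)$ modulo $p$ in GAP \cite{GAP} via \verb|Display(BrauerTable("L2(32)",p));|. Then for each $\chi \in \operatorname{IBr}_p(\PSL(2,2^5))$ and each prime $q \in \pi(\PSL(2,2^5))\setminus\{p\}$, I would compute the fixed-point quantity $\frac{1}{o(t)}\sum_{x\in\langle t\rangle}\chi(x)$ for a representative $t$ of order $q$, record whether it is strictly positive, and thereby determine the set $A_\chi$ defined in \thref{GeneralizedModuleExtensions}. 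Tabulating these yes/no entries in the same four-table format as the proof of \thref{Sz8ModuleExtensions} completes the computational input.

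By \thref{GeneralizedModuleExtensions}, the list of realizable $\pgc(G)$ for $G \cong N.\PSL(2,2^5)$ with $N$ a nontrivial $p$-group is exactly the collection of graphs of the form $\pgc(\PSL(2,2^5))\setminus\bigcup_{\chi\in Y} A_\chi$ as $Y$ ranges over subsets of $\operatorname{IBr}_p(\PSL(2,2^5))$ (with the trivial character contributing nothing, reflecting the nontriviality of $N$). I expect the tables to show that for $p\in\{3,11\}$ every nontrivial character's $A_\chi$ already contains all edges incident to $p$, forcing $p$ to become isolated and yielding only \trianglePlusIsolated. For $p=31$, enough characters should remove the edges $2$--$31$, $3$--$31$, $11$--$31$ simultaneously to force the vertex $31$ to become isolated, giving \brokenFrame. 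For $p=2$, the richest case, the characters should produce precisely the four graphs \SuzukiThirtyTwoElusiveGraph, \hamSandwich, \fletching, \brokenFrame after all relevant labeled graph products are taken. The existence half is then automatic: \thref{GeneralizedModuleExtensions} and \thref{graphExistenceFromBrauerTable} provide, for any realizable graph, an explicit strictly $\PSL(2,2^5)$-solvable group of the shape $\PSL(2,2^5)\ltimes_\varphi V$ built from an irreducible representation over an extension of $\F_p$ (or a direct sum of such, via \thref{directSumIntersection}).

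The main obstacle I anticipate is a labeling one rather than a conceptual one: since $\pgc(\PSL(2,2^5))$ already fails to contain the edge $3$--$11$, the starting graph is $K_4$ minus an edge rather than $K_4$, which shifts the combinatorics from the Sz(8) case and requires careful bookkeeping to confirm that every subset-union of $A_\chi$'s lands on one of the four listed graphs and not on some unlisted one. In particular, pinning down the assertion that in \SuzukiThirtyTwoElusiveGraph the degree-three vertex is $31$ and the degree-one vertex is $3$ will require identifying the specific irreducible Brauer character mod $2$ whose $A_\chi$ contains $2$--$3$ but neither of $2$--$11$ nor $2$--$31$, together with verifying that no other character removes an edge incident to $31$. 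A secondary computational issue is that the mod $2$ Brauer table is the largest of the four and must be checked character by character; this is routine but is where errors are most likely to creep in.
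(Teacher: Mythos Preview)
Your proposal is correct and follows essentially the same approach as the paper: compute the fixed-point data from the Brauer tables of $\PSL(2,2^5)$ modulo each $p\in\{2,3,11,31\}$, tabulate the resulting sets $A_\chi$, and invoke \thref{GeneralizedModuleExtensions} to obtain both the restrictions and the existence claims. The only minor slip is your parenthetical that the trivial character ``contributes nothing''; in fact $A_{\chi_1}$ contains every edge incident to $p$, so including it (or any character in the $p\in\{3,11,31\}$ cases, where all rows are ``Yes'') is precisely what forces $p$ to become isolated.
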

\begin{proof}
% For each $p\in\pi(T)$, in the table corresponding to $p$, each row corresponds to some $A_i$ in the statement of \thref{GeneralizedModuleExtensions}.
    We will denote $\PSL(2, 2^5)$ as $H$ for brevity. By applying the methods for finding fixed points of \thref{graphExistenceFromBrauerTable} to the Brauer table of $H$  mod $q$, we gain the tables below. ``Yes'' appears in row $\chi_i$ column $q$ if and only if there is some $A_i$ from \thref{GeneralizedModuleExtensions} such that $p-q\in A_i$. The restrictions and existence follow from \thref{GeneralizedModuleExtensions}.
 \begin{center} 
    \mbox{
    \begin{tabular}{||c c c c||} 
     \hline
     $\chi_i \in \operatorname{IBr}_{2}(H)$ & 3 & 11 & 31 \\ [0.5ex] 
     \hline\hline
     $\chi_1$ & Yes & Yes & Yes \\ 
     \hline
     $\chi_2$ through $\chi_6$ & No & No & No\\
     \hline
     $\chi_7$ through $\chi_{21}$ & Yes & No & No\\
     \hline
     $\chi_{22}$ through $\chi_{31}$ & Yes & Yes & No \\ 
     \hline
     $\chi_{22}$ through $\chi_{31}$ & Yes & Yes & Yes \\[1ex] 
     \hline
    \end{tabular}
    \quad
    \begin{tabular}{||c c c c||} 
     \hline
     $\chi_i \in \operatorname{IBr}_{11}(H)$ & 2 & 3 & 31\\ [0.5ex] 
     \hline\hline
     $\chi_1$ through $\chi_{18}$ & Yes & Yes & Yes\\[1ex]
     \hline
    \end{tabular}
    }
    \end{center}
    \begin{center}
    \mbox{
    \begin{tabular}{||c c c c||} 
     \hline
     $\chi_i \in \operatorname{IBr}_{3}(H)$ & 2 & 11 & 31 \\ [0.5ex] 
     \hline\hline
     $\chi_1$ through $\chi_{22}$ & Yes & Yes & Yes \\ [1ex] 
     \hline
    \end{tabular}
    \quad
    \begin{tabular}{||c c c c||} 
     \hline
     $\chi_i \in \operatorname{IBr}_{31}(H)$ & 2 & 3 & 11\\ [0.5ex] 
     \hline\hline
     $\chi_1$ through $\chi_{18}$ & Yes & Yes & Yes\\[1ex] 
     \hline
    \end{tabular}
    }\vspace{0.2cm}
    \end{center}
\end{proof}

\begin{lemma}\thlabel{completeGraphImpossible}
    (\ref{hamSandwichImpossible}) There is no $\PSL(2,2^5)$-solvable group whose prime graph complement is isomorphic to \fourVertexCompleteGraph.
\end{lemma}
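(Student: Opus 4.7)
The plan is to argue by contradiction, in close analogy with \thref{hamSandwichImpossible}, but exploiting the fact that \thref{psl232ModuleExtensions} rules out $\fourVertexCompleteGraph$ entirely from the list of prime graph complements arising from $p$-group extensions of $\PSL(2,2^5)$. Suppose for contradiction that some $\PSL(2,2^5)$-solvable group $G$ satisfies $\pgc(G) \cong \fourVertexCompleteGraph$. Since $\fourVertexCompleteGraph$ contains a triangle, \thref{solvclass} implies that $G$ is not solvable, and hence $G$ is strictly $\PSL(2,2^5)$-solvable. I will then apply \cite[Lemma 2.1.1]{2023REU} to write $G \cong N.\PSL(2,2^5)$ with $N \trianglelefteq G$ solvable. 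The four-vertex hypothesis on $\pgc(G)$ combined with $\pi(\PSL(2,2^5)) = \{2,3,11,31\} \subseteq \pi(G)$ forces $\pi(G) = \{2,3,11,31\}$ and hence $\pi(N) \subseteq \{2,3,11,31\}$; moreover $N \neq 1$, since $\pgc(\PSL(2,2^5)) \not\cong \fourVertexCompleteGraph$.

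Next, I will invoke \thref{solvableExtension} on $N$ to obtain a normal series $\{1\} = N_0 \trianglelefteq N_1 \trianglelefteq \cdots \trianglelefteq N_k = N$ of $G$ with elementary abelian factors, and after dropping trivial steps the top factor $N_k/N_{k-1}$ will be a nontrivial elementary abelian $p$-group for some prime $p \in \{2,3,11,31\}$. Setting $\tilde{G} := G/N_{k-1}$, I obtain $\tilde{G} \cong (N_k/N_{k-1}).\PSL(2,2^5)$, so by \thref{psl232ModuleExtensions} the prime graph complement $\pgc(\tilde{G})$ is isomorphic to one of $\trianglePlusIsolated$, $\brokenFrame$, $\SuzukiThirtyTwoElusiveGraph$, $\hamSandwich$, or $\fletching$ --- and crucially, none of these five graphs is $\fourVertexCompleteGraph$.

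To close the argument I will use the standard lifting observation. Since $\pi(\tilde{G}) = \{2,3,11,31\} = \pi(G)$, any element $\tilde{h} \in \tilde{G}$ of order $pq$ is the image of some preimage $h \in G$ whose order is a multiple of $pq$, and a suitable power of $h$ then has order exactly $pq$ in $G$. This yields $\Gamma(\tilde{G}) \subseteq \Gamma(G)$, or equivalently $\pgc(G) \subseteq \pgc(\tilde{G})$ as labeled graphs on $\{2,3,11,31\}$. Because $\pgc(G) \cong \fourVertexCompleteGraph$ is already the complete graph on four vertices, this forces $\pgc(\tilde{G}) = \fourVertexCompleteGraph$, contradicting the preceding paragraph.

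I do not expect any substantive obstacle. In fact, this proof will be strictly simpler than its $\Sz(8)$-analogue \thref{hamSandwichImpossible}: in the $\Sz(8)$ setting \thref{Sz8ModuleExtensions} permits $\fourVertexCompleteGraph$ as the prime graph complement of a $2$-group extension of $\Sz(8)$, which is why showing that $\hamSandwich$ cannot be realized requires the additional nilpotency/Sylow step in the tower. Here, by contrast, $\fourVertexCompleteGraph$ is already excluded from \emph{every} $p$-group extension of $\PSL(2,2^5)$ by \thref{psl232ModuleExtensions}, so passage to a single quotient $\tilde{G}$ suffices.
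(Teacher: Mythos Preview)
Your argument is correct, but it is considerably more elaborate than necessary, and the paper's proof is a single observation that you have overlooked. The crucial difference between $\Sz(8)$ and $\PSL(2,2^5)$ is that $\pgc(\Sz(8))$ \emph{is} the complete graph on four vertices, whereas $\pgc(\PSL(2,2^5))$ is not: the edge $3-11$ is already absent. Consequently, the paper simply notes that any strictly $\PSL(2,2^5)$-solvable group $G$ with $|\pi(G)|=4$ has $\PSL(2,2^5)$ as a section on the same vertex set, so $\pgc(G)\subseteq\pgc(\PSL(2,2^5))\cong\hamSandwich$, which immediately rules out $\fourVertexCompleteGraph$. No appeal to \cite[Lemma 2.1.1]{2023REU}, \thref{solvableExtension}, or \thref{psl232ModuleExtensions} is needed.

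Your route works because you manufacture a section $\tilde G$ whose prime graph complement is provably not $K_4$; the paper's point is that $\PSL(2,2^5)$ itself already serves as such a section. In effect you re-proved, via the Brauer-table machinery, a fact that is visible directly from the ordinary prime graph of $\PSL(2,2^5)$. Your last paragraph correctly identifies why \thref{hamSandwichImpossible} needed the longer tower argument, but you did not push that observation one step further: for $\PSL(2,2^5)$ the obstruction is present already at the level of the simple group, so no tower is needed at all.
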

\begin{proof}
    Any strictly $\PSL(2,2^5)$-solvable $K_4$ group will have $\PSL(2,2^5)$ as a section, and will therefore be a subgraph of $\pgc(\PSL(2,2^5)) \cong \hamSandwich$. No solvable group can realize \fourVertexCompleteGraph by \thref{solvclass}.
\end{proof}

\begin{corollary} \thlabel{PSL2324VertexGraphs}
    (\ref{Sz84VertexGraphs}) Let $\Xi$ be a graph on 4 vertices. Then $\Xi$ is isomorphic to the prime graph complement of some $\PSL(2,2^5)$-solvable group if and only if $\Xi$ does not equal \fourVertexCompleteGraph.
\end{corollary}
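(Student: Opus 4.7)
The plan is to prove both directions by a case analysis on the eleven non-isomorphic simple graphs on four vertices. The forward direction (``only if'') is immediate from \thref{completeGraphImpossible}: if $\Xi \cong \,$\fourVertexCompleteGraph, no $\PSL(2,2^5)$-solvable group realizes it. So the substance of the corollary is the backward direction.

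For the backward direction, I would enumerate the ten graphs on four vertices up to isomorphism that are not isomorphic to $K_4$ and split them into two groups. Seven of them are triangle-free and 3-colorable, namely the empty graph, a single edge (with two isolated vertices), two disjoint edges, the path $P_3$ plus an isolated vertex, the star $K_{1,3}$, the path $P_4$, and the cycle $C_4$; each is realizable by a solvable group via \thref{solvclass}, and every solvable group is $\PSL(2,2^5)$-solvable by definition. The remaining three graphs each contain a triangle and are: \trianglePlusIsolated (triangle plus isolated vertex), \SuzukiThirtyTwoElusiveGraph (the paw), and \hamSandwich ($K_4$ minus one edge).

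These three remaining cases I would dispatch by citing concrete constructions already established:
\hamSandwich is realized by $\PSL(2,2^5)$ itself, as witnessed by the figure at the start of Section \ref{sec:PSL};
\trianglePlusIsolated is realized by any extension $N.\PSL(2,2^5)$ where $N$ is a nontrivial $3$-group or $11$-group, whose existence is supplied by \thref{psl232ModuleExtensions};
and \SuzukiThirtyTwoElusiveGraph is realized by an appropriate extension $N.\PSL(2,2^5)$ where $N$ is a nontrivial $2$-group, again provided by \thref{psl232ModuleExtensions}. Together with the excluded case \fourVertexCompleteGraph from \thref{completeGraphImpossible}, this exhausts the classification.

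The real content has already been done elsewhere in the paper, so there is no major obstacle: the nonrealizability of $K_4$ rests on \thref{completeGraphImpossible} (which in turn uses that any strictly $\PSL(2,2^5)$-solvable group has $\PSL(2,2^5)$ as a section, forcing $\pgc(G)$ on $\pi(\PSL(2,2^5))$ to be a subgraph of \hamSandwich), while the realizability of the triangle-containing graphs rests on the Brauer character computations inside \thref{psl232ModuleExtensions}. The proof is thus a short packaging of those two results.
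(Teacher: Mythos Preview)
Your proposal is correct and follows essentially the same approach as the paper: split into the triangle-free 3-colorable graphs (handled by \thref{solvclass}) versus the four triangle-containing graphs, realize \hamSandwich by $\PSL(2,2^5)$, realize \SuzukiThirtyTwoElusiveGraph via \thref{psl232ModuleExtensions}, and rule out \fourVertexCompleteGraph via \thref{completeGraphImpossible}. The only cosmetic difference is that for \trianglePlusIsolated the paper cites the concrete direct product $\PSL(2,2^5)\times C_3$ rather than invoking \thref{psl232ModuleExtensions}, but your citation is equally valid since that lemma explicitly guarantees existence.
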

\begin{proof}
    If $\Xi$ is triangle-free and 3-colorable, then by \thref{solvclass} it is the prime graph complement of some solvable group. Up to isomorphism, there are four graphs on 4 vertices that are not both triangle-free and three-colorable:
    \begin{itemize}
        \item The complete graph minus one edge \hamSandwich can be realized by $\PSL(2,2^5)$.
        \item The triangle plus one isolated vertex \trianglePlusIsolated can be realized by $\PSL(2,2^5) \times C_3$.
        \item The triangle plus one edge \SuzukiThirtyTwoElusiveGraph can be realized by \thref{psl232ModuleExtensions}.
        \item The complete graph \fourVertexCompleteGraph is impossible by \thref{completeGraphImpossible}.
    \end{itemize}
\end{proof}

\subsection*{Graphs on Five Vertices}
\begin{theorem}\thlabel{StructureOfPSL232Graphs}\thlabel{disconnectpsl}\thlabel{noworry5}
    (\ref{StructureOfSz8Graphs}) Let $G$ be a strictly $\PSL(2,2^5)$-solvable group. Then $\pgc(G)$ satisfies both of the following:
    \begin{enumerate}
        \item There are no edges $r-p$ for $r\in \pi(\PSL(2,2^5))$ and $p\in \pi(G) \setminus (\pi(\PSL(2,2^5)) \cup \{5\})$. Furthermore, if $G$ has a solvable normal subgroup whose order is divisible by 5, then $r-5 \notin \pgc(G)$.
        \item The subgraph induced by $\pi(G) \setminus \pi(\PSL(2,2^5))$ is triangle-free and 3-colorable.
    \end{enumerate}
\end{theorem}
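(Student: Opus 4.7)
The plan is to mirror the argument used to prove Theorem \ref{StructureOfSz8Graphs}, replacing the Suzuki input data with the corresponding data for $T = \PSL(2,2^5)$. Part (2) is immediate: by \thref{solvableSubgraph} applied to any strictly $T$-solvable $G$, the subgraph of $\pgc(G)$ induced by $\pi(G)\setminus\pi(T)$ is triangle-free and $3$-colorable, and this is exactly what is required.

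For part (1) the goal is to verify the hypothesis of \thref{Noworrygeneral} for $T = \PSL(2,2^5)$. We have $\pi(T) = \{2,3,11,31\}$ and $\pi(\Aut(T)) = \pi(T)\cup\{5\}$. First I would check that $Y \supseteq \{3,11,31\}$, where $Y$ is the set of primes in $\pi(T)$ satisfying the hypothesis of \cite[Corollary 2.2.6]{2023REU}. Since $\PSL(2,2^f)$ with $f>1$ has trivial Schur multiplier, this reduces to examining ordinary irreducible complex characters of $T$ itself: for each odd prime $r\in\{3,11,31\}$, I would use the character table of $\PSL(2,2^5)$ (available in GAP) to check that for every irreducible character $\chi$ of $T$ and every element $x\in T$ of order $r$, the quantity $\tfrac{1}{r}\sum_{j=1}^{r}\chi(x^j)$ is positive. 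This is a finite table look-up analogous to the one performed for $\Sz(32)$ in \thref{totaldisconnect}.

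Next, I must handle the remaining prime $2 \in \pi(T)\setminus Y$ by showing that its Sylow subgroup fails the Frobenius Criterion. The Sylow $2$-subgroup of $\PSL(2,2^5)$ is elementary abelian of order $32$, which is not cyclic, dihedral, Klein-$4$, or generalized quaternion. Thus the hypothesis of \thref{Noworrygeneral} is verified for $T = \PSL(2,2^5)$.

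Finally, I apply \thref{Noworrygeneral} twice. Taking $N = \{1\}$ gives $R = \pi(G)\setminus\pi(\Aut(T)) = \pi(G)\setminus(\pi(T)\cup\{5\})$, which yields the first sentence of (1). For the second sentence, if $G$ has a solvable normal subgroup $N$ with $5\mid|N|$, then $5\in\pi(N)\setminus\pi(T)\subseteq R$, so \thref{Noworrygeneral} forces $r-5\notin\pgc(G)$ for every $r\in\pi(T)$. I do not anticipate any real obstacle; the only non-mechanical step is the character-table verification for the primes $3,11,31$, which is routine but is the part requiring actual computation rather than citation.
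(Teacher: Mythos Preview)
Your proposal is correct and follows essentially the same approach as the paper: verify the hypothesis of \thref{Noworrygeneral} for $T=\PSL(2,2^5)$ by checking (via the character table) that $3,11,31$ satisfy \cite[Corollary 2.2.6]{2023REU} and that the Sylow $2$-subgroup fails the Frobenius Criterion, then invoke \thref{solvableSubgraph} for part (2). Your write-up is in fact more explicit than the paper's, which simply cites GAP for the character check and does not spell out the two applications of \thref{Noworrygeneral}.
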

\begin{proof}
    To prove (1), it suffices to show that the hypothesis of \thref{Noworrygeneral} holds for $T = \PSL(2,2^5)$. Through GAP \cite{GAP}, we find that the primes 3, 11, and 31 satisfy \cite[Corollary 2.2.6]{2023REU}, and the Sylow $2$-subgroups of $\PSL(2,2^5)$ do not satisfy the Frobenius Criterion. (2) follows from \thref{solvableSubgraph}.
\end{proof}

Let $\mathcal{F}$ to be the set of all rooted graphs on 5 vertices that contain at least one triangle, up to isomorphism.
%where the root, which we will color white, is not an isolated vertex. 
The set $\mathcal{F}$ can be partitioned into 5 disjoint subsets $\mathcal{F}_1, \cdots, \mathcal{F}_5$:
\begin{itemize}
    \item $\mathcal{F}_1$ is the set of graphs in $\mathcal{F}$ in which the root has degree strictly greater than 2.
    \item $\mathcal{F}_2$ is the set of graphs in $\mathcal{F}\setminus \mathcal{F}_1$ for which the subgraph induced by the non-root vertices is isomorphic to \fourVertexCompleteGraph. $\mathcal{F}_2$ contains exactly the following:
    \begin{center}
        \completeGraphIsolatedVertex, \completeGraphWithArm, \house
    \end{center}
    \item $\mathcal{F}_3$ is the set of graphs in $\mathcal{F}\setminus \mathcal{F}_2$ in which the root is isolated. In particular, note that $\mathcal{F}_3$ does not contain \smallCompleteGraphIsolatedVertex.

    \item $\mathcal{F}_4$ is the set containing the following 8 graphs: 
    \begin{center}
        \fan, \northStar, \triangleWithShortTail, \codart, \happyFace, \balloon, \dart, \cricket.
    \end{center}
    \item $\mathcal{F}_5$ is the set containing the following 15 graphs (arranged below in an order which we will see is convenient):
    \begin{center}
        \emptyhouse, \triangleWithTail, \trianglestick, \bowtiebruh, \badDart, \\ \triangleIsolatedEdge, \zigZag, \hamsandwichwitharm, \\
        \triangleWithTwoTails, \hamSandwichWithOtherArm, \\
        \housenobrim, \nobutt, \bullcaseone, \misshapenhouse, \\
        \bull. \\

    \end{center}
\end{itemize}
\begin{definition} \thlabel{rootedGraphIsomorphismPSL}
    (\ref{rootedGraphIsomorphism}) Given a group $G$ with $\pi(G) = \{2,3,5,11,31\}$ and a rooted graph $\Lambda \in \mathcal{F}$, it is understood that when we say $\pgc(G) \cong \Lambda$, we require the isomorphism to map the vertex 5 to the root.
\end{definition}
\begin{definition}\thlabel{realizableDefPSL}
    (\ref{realizableDef}) Given a set of rooted graphs $\mathcal{H}\subseteq \mathcal{F}$, we say that $\mathcal{H}$ is \emph{realizable} if for each $\Lambda \in \mathcal{H}$, there exists a $\PSL(2,2^5)$-solvable $\{2,3,5,11,31\}$-group $G$ such that $\pgc(G) \cong \Lambda$ in the sense of \thref{rootedGraphIsomorphismPSL}.
\end{definition}

\begin{theorem} \thlabel{sufficientOn5PSL}
    (\ref{sufficientOn5}) Let $\Lambda \in \mathcal{F}$ be a rooted graph on 5 vertices which is not realizable. Then there exists no $\PSL(2,2^5)$-solvable group $G$ such that $\pgc(G)[\{2,3,5,11,31\}] \cong \Lambda$.
\end{theorem}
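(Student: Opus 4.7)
The plan is to mirror the proof of the analogous Theorem \ref{sufficientOn5} in the $\Sz(8)$ section, since the underlying mechanism is identical: pass to a suitable Hall subgroup using the induced subgraph result from Section \ref{sec:subgraphsInduced}.

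First, I would argue by contradiction, assuming there exists a $\PSL(2,2^5)$-solvable group $G$ with $\pgc(G)[\{2,3,5,11,31\}] \cong \Lambda$. Because $\Lambda$ has $5$ vertices and the vertex set of $\pgc(G)[\{2,3,5,11,31\}]$ is $\pi(G) \cap \{2,3,5,11,31\}$, we must have $\{2,3,5,11,31\} \subseteq \pi(G)$. Since $\pi(\PSL(2,2^5)) = \{2,3,11,31\} \subseteq \{2,3,5,11,31\}$, the hypothesis of Theorem \ref{HallSubgroupInducedGraph} is satisfied with $\pi_0 = \{2,3,5,11,31\}$ and $T = \PSL(2,2^5)$.

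Applying Theorem \ref{HallSubgroupInducedGraph} yields a Hall $\pi_0$-subgroup $H \leq G$ such that $H$ is $\PSL(2,2^5)$-solvable and $\pgc(H) \cong \pgc(G)[\pi(H)]$. Because $\{2,3,5,11,31\} \subseteq \pi(G)$, the definition of a Hall $\pi_0$-subgroup gives $\pi(H) = \pi(G) \cap \pi_0 = \{2,3,5,11,31\}$. Hence
\begin{equation}
\pgc(H) \cong \pgc(G)[\{2,3,5,11,31\}] \cong \Lambda,
\end{equation}
where the isomorphism respects the labeling of the vertex $5$ as the root (per Definition \ref{rootedGraphIsomorphismPSL}). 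But then $H$ is a $\PSL(2,2^5)$-solvable $\{2,3,5,11,31\}$-group realizing $\Lambda$, so $\Lambda$ is realizable in the sense of Definition \ref{realizableDefPSL}, contradicting the hypothesis.

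There is no real obstacle here; the work has been done in Theorem \ref{HallSubgroupInducedGraph}, which ensures both that the Hall subgroup inherits the induced prime graph complement and that $\PSL(2,2^5)$-solvability is preserved (via Lemma \ref{subgroupOfTSolvable}). The only technical point worth stating carefully in the write-up is the bookkeeping that the Hall subgroup actually has $\pi(H)$ equal to (not merely contained in) $\{2,3,5,11,31\}$, which follows immediately from $\{2,3,5,11,31\} \subseteq \pi(G)$.
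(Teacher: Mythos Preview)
Your proof is correct and takes essentially the same approach as the paper: both argue by contradiction and apply Theorem~\ref{HallSubgroupInducedGraph} to pass to a Hall $\{2,3,5,11,31\}$-subgroup that is $\PSL(2,2^5)$-solvable and realizes $\Lambda$. Your write-up is slightly more explicit about why $\pi(H) = \{2,3,5,11,31\}$, but this is just a detail the paper leaves implicit.
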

\begin{proof}
    Suppose there did exist such a group $G$. Then by \thref{HallSubgroupInducedGraph}, we can take a Hall $\{2,3,5,11,31\}$-subgroup of $G$ which is $\PSL(2,2^5)$-solvable and has prime graph complement isomorphic to $\Lambda$, a contradiction.
\end{proof}

\begin{lemma} \thlabel{AutPSL232OnTop}
    (\ref{AutSz8OnTop}) If a rooted graph $\Lambda \in \mathcal{F} \setminus \mathcal{F}_3$ is realizable by a group $G$, then all of the following hold:
    \begin{itemize}
        \item $G \cong N.\Aut(\PSL(2,2^5))$ where $N$ is solvable.
        \item Neither $5$ nor $31$ divide $|N|$.
        \item If a vertex $v \in \Lambda$ is adjacent to the root, then $v$ corresponds to either 11 or 31 in $\pgc(G)$. If $v$ is the only vertex adjacent to the root, then $v$ corresponds to 31.
    \end{itemize}
\end{lemma}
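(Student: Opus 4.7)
The plan would mirror the proof of \thref{AutSz8OnTop}, with one extra step needed to handle the prime 31. I would start from the fact that $\Lambda \in \mathcal{F} \setminus \mathcal{F}_3$ contains a triangle, so $G$ cannot be solvable by \thref{solvclass}, and \cite[Lemma 2.1.1]{2023REU} writes $G \cong N.M$ with $N$ solvable and $\operatorname{Inn}(\PSL(2,2^5)) \leq M \leq \Aut(\PSL(2,2^5))$. Because $\Lambda \notin \mathcal{F}_3$, the root (labeled 5) is not isolated in $\pgc(G)$; but if $5 \mid |N|$, then by \thref{noworry5} we would have $5 - r \notin \pgc(G)$ for every $r \in \pi(\PSL(2,2^5)) = \{2,3,11,31\}$, and since $\pi(G) \subseteq \{2,3,5,11,31\}$ this would isolate 5 in $\pgc(G)$, a contradiction. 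Hence $5 \nmid |N|$, and since $5 \in \pi(G)$ while $5 \nmid |\PSL(2,2^5)|$, we must have $M = \Aut(\PSL(2,2^5))$.

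For the third bullet I would argue that every edge $v-5 \in \pgc(G)$ descends to an edge of $\pgc(G/N) = \pgc(\Aut(\PSL(2,2^5)))$: if $gN$ has order $5v$ in $G/N$ with $o(g) = 5v \cdot m$ for some $m \mid |N|$, then $g^m$ has order exactly $5v$ in $G$, so $\pg(G/N) \subseteq \pg(G)$ and hence $\pgc(G) \subseteq \pgc(\Aut(\PSL(2,2^5)))$. Since the only neighbors of 5 in $\pgc(\Aut(\PSL(2,2^5)))$ are 11 and 31, this gives the third bullet immediately.

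The statement $31 \nmid |N|$ is the main new technical step. Assuming for contradiction that $31 \mid |N|$, I would apply \thref{solvableExtension} to $N$ and take the $G$-chief factor at the largest index whose factor is a 31-group; this yields a normal elementary abelian 31-section $A$ of $G$ that is a nontrivial $\mathbb{F}_{31}[\Aut(\PSL(2,2^5))]$-module. The Brauer character computations behind \thref{psl232ModuleExtensions} show that every irreducible $\mathbb{F}_{31}[\PSL(2,2^5)]$-module has a nontrivial fixed-point subspace under every element of order 2, 3, or 11; because every such element of $\Aut(\PSL(2,2^5))$ already lies in $\operatorname{Inn}(\PSL(2,2^5))$ (the quotient $\Aut/\operatorname{Inn}$ is $C_5$, coprime to 2, 3, 11), the same holds when $A$ is viewed as an $\Aut(\PSL(2,2^5))$-module. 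Standard coprime-action arguments then supply, for each $p \in \{2,3,11\}$, an element of order $31p$ in $G$, so the edges $31-2$, $31-3$, and $31-11$ are all absent from $\pgc(G)$. Combined with the third bullet, every edge of $\pgc(G)[\{2,3,5,11,31\}]$ must lie in $\{2-3,\; 2-11,\; 5-11,\; 5-31\}$, and a direct check over all ten triples confirms that this edge set contains no triangle, contradicting $\Lambda \in \mathcal{F}$.

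The fourth bullet will then follow quickly: if the unique neighbor $v$ of the root in $\Lambda$ were $11$ rather than $31$, then $5-31 \notin \pgc(G)$, so $G$ would contain an element $g$ of order $155$. Since $5 \nmid |N|$ and $\Aut(\PSL(2,2^5))$ has no element of order $155$, the image $gN$ must have order 5, forcing $g^5 \in N$ with $o(g^5) = 31$ and thus $31 \mid |N|$, which contradicts the preceding paragraph. The principal obstacle in the plan is the third paragraph: pinning down the elementary abelian 31-chief factor $A$ and converting the Brauer fixed-point information into honest elements of order $31p$ in $G$ via the appropriate Frattini and coprime-action bookkeeping requires care, but should go through by the same kind of module-theoretic reductions that underlie \thref{pGroupExtensionToGraphIntersection} and \thref{GeneralizedModuleExtensions}.
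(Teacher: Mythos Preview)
Your handling of the first bullet, the claim $5\nmid |N|$, the identification $M=\Aut(\PSL(2,2^5))$, the fact that neighbors of $5$ must lie in $\{11,31\}$, and the final ``unique neighbor is $31$'' argument all match the paper's proof essentially verbatim.

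The substantive divergence is in proving $31\nmid |N|$. The paper dispatches this in two lines using \thref{PSL232SolvableSubgroups}: take a solvable strict $\{2,3,5,11\}$-subgroup $H\le\Aut(\PSL(2,2^5))$; if $31\mid |N|$ then the preimage $N.H\le G$ is a solvable group with $\pi(N.H)=\{2,3,5,11,31\}=\pi(G)$, hence $\pgc(N.H)\supseteq\pgc(G)$ contains the triangle of $\Lambda$, contradicting \thref{solvclass}. No module theory is needed.

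Your Brauer-theoretic route is much heavier, and as written it has a gap you should be aware of. You assert that the chief factor $A=N_j/N_{j-1}$ is an $\mathbb{F}_{31}[\Aut(\PSL(2,2^5))]$-module, but in fact $A$ is only a $G/N_j$-module, and $G/N_j\cong (N/N_j).\Aut(\PSL(2,2^5))$ with $N/N_j$ a (possibly nontrivial) solvable $31'$-group. The fixed-point information from \thref{psl232ModuleExtensions} concerns irreducible $\PSL(2,2^5)$-modules, and transporting it to elements of order $2,3,11$ in $G/N_{j-1}$ requires you to locate those elements inside a copy of $\PSL(2,2^5)$ acting on $A$, or else to argue separately for elements lying over $N/N_{j-1}$. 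This is the ``Frattini and coprime-action bookkeeping'' you flag, and it is not automatic: for instance, an involution of $G/N_{j-1}$ lying entirely inside $N/N_{j-1}$ need not have fixed points on $A$ for any reason connected to the Brauer table of $\PSL(2,2^5)$. The argument is likely repairable (e.g.\ by passing to a subgroup of shape $A.\PSL(2,2^5)$ via a suitable section), but the paper's solvable-subgroup trick sidesteps all of this entirely.
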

\begin{proof}
    Let $\Lambda$ be a rooted graph in $\mathcal{F}\setminus \mathcal{F}_3$. Since $\Lambda$ has a triangle, $G$ must be strictly $\PSL(2,2^5)$-solvable. By \cite[Lemma 2.1.1]{2023REU} we have that $G \cong N.M$ for some solvable $N$ and some $M\leq \Aut(\PSL(2,2^5))$ with $M \geq \operatorname{Inn}(\PSL(2,2^5)) \cong \PSL(2,2^5)$. Since 5 is not isolated, we have by \thref{noworry5} that 5 does not divide $|N|$. Thus, 5 must divide $M$, meaning $M = \Aut(\PSL(2,2^5))$. If $31$ did divide $|N|$, then letting $H$ equal a solvable strict $\{2,3,5,11\}$-subgroup of $\Aut(\PSL(2,2^5))$ via \thref{PSL232SolvableSubgroups}, the subgroup of $G$ isomorphic to $N.H$ would be a solvable group whose prime graph complement contains a triangle, a contradiction by \thref{solvclass}.
    Finally, since $\Aut(\PSL(2,2^5))$ is a section of $G$ and the edges $5-2$ and $5-3$ are not in $\pgc(\Aut(\PSL(2,2^5)))$, we have that only 11 and 31 can be adjacent to 5. It cannot be the case that $5-11 \in \pgc(G)$ and $5-31 \notin \pgc(G)$, or else we would have that $31$ divides $|N|$ by \thref{divisors}, a contradiction.
\end{proof}

\begin{proposition}\thlabel{degreeGreaterThan2PSL}
    (\ref{degreeGreaterThan2}) $\mathcal{F}_1$ is not realizable.
\end{proposition}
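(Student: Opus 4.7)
The plan is to mirror directly the proof of \thref{degreeGreaterThan2}, using \thref{AutPSL232OnTop} in place of \thref{AutSz8OnTop}. First I would let $\Lambda \in \mathcal{F}_1$ be any rooted graph in this family, and suppose for contradiction that there exists a $\PSL(2,2^5)$-solvable $\{2,3,5,11,31\}$-group $G$ realizing $\Lambda$ in the sense of \thref{rootedGraphIsomorphismPSL}.

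Next I would observe that since the root of $\Lambda$ has degree strictly greater than $2$, the root is in particular not isolated, so $\Lambda \in \mathcal{F} \setminus \mathcal{F}_3$. This puts us in the hypothesis of \thref{AutPSL232OnTop}, which tells us that every vertex adjacent to the root in $\Lambda$ must correspond to either $11$ or $31$ in $\pgc(G)$. In particular, at most two vertices of $\Lambda$ can be adjacent to the root.

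This immediately contradicts the defining property of $\mathcal{F}_1$, namely that the root has degree at least $3$. Hence no such $G$ exists, and $\mathcal{F}_1$ is not realizable. The main (and only) obstacle is already encapsulated in \thref{AutPSL232OnTop}; once that structural fact is in hand, the proof is essentially a one-line counting argument. No further computations or cases are required.
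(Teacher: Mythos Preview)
Your proposal is correct and follows essentially the same argument as the paper's proof: assume $\Lambda\in\mathcal{F}_1$ is realized, invoke \thref{AutPSL232OnTop} to see that only the vertices corresponding to $11$ and $31$ can be adjacent to the root, and conclude the root has degree at most $2$, contradicting $\Lambda\in\mathcal{F}_1$. Your explicit remark that $\Lambda\notin\mathcal{F}_3$ is a harmless extra justification the paper leaves implicit.
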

\begin{proof}
    Let $\Lambda$ be a rooted graph in $\mathcal{F}_1$, and suppose for contradiction that $G$ realizes it. By \thref{AutPSL232OnTop}, we have that the vertices adjacent to the root in $\Lambda$ correspond to either 11 or 31. In particular, this means there can be at most two vertices adjacent to the root in $\Lambda$, contradicting the assumption that $\Lambda \in \mathcal{F}_1$.
\end{proof}

\begin{proposition}\thlabel{noExtraHamsPSL}
    (\ref{noExtraHams}) $\mathcal{F}_2$ is not realizable.
\end{proposition}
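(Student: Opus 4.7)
The plan is to mirror the argument of Proposition \ref{noExtraHams} from the $\Sz(8)$ case almost verbatim, using the classification of graphs on four vertices for $\PSL(2,2^5)$-solvable groups (\thref{PSL2324VertexGraphs}) in place of its $\Sz(8)$ analogue.

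First, suppose for contradiction that there is some $\PSL(2,2^5)$-solvable group $G$ realizing a graph $\Lambda \in \mathcal{F}_2$. By the definition of $\mathcal{F}_2$, the non-root vertices of $\Lambda$ induce a copy of \fourVertexCompleteGraph in $\pgc(G)$. Since the root corresponds to the prime $5$, the four non-root vertices correspond exactly to the set $\{2,3,11,31\} = \pi(\PSL(2,2^5))$.

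Next I would invoke \thref{HallSubgroupInducedGraph} with $\pi_0 = \{2,3,11,31\} \supseteq \pi(\PSL(2,2^5))$ to extract a Hall $\pi_0$-subgroup $H \leq G$ such that $H$ is $\PSL(2,2^5)$-solvable and $\pgc(H)$ is isomorphic to the subgraph of $\pgc(G)$ induced by $\pi(H) \subseteq \{2,3,11,31\}$. Since $\Lambda$ contains a triangle among its non-root vertices, $H$ cannot be solvable, so $H$ must be strictly $\PSL(2,2^5)$-solvable and hence $\pi(H) = \{2,3,11,31\}$. Thus $\pgc(H)$ is isomorphic to the subgraph of $\Lambda$ induced by the non-root vertices, which is \fourVertexCompleteGraph.

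This directly contradicts \thref{completeGraphImpossible}, which asserts that no $\PSL(2,2^5)$-solvable group has prime graph complement isomorphic to \fourVertexCompleteGraph. Therefore no such $G$ exists, and $\mathcal{F}_2$ is not realizable. There is no significant obstacle here; the entire content of the argument has been packaged into \thref{HallSubgroupInducedGraph} and \thref{completeGraphImpossible}, and the proof is a clean two-step application of those results.
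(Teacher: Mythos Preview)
Your proof is correct and follows the same approach as the paper's own argument: take a Hall $\{2,3,11,31\}$-subgroup via \thref{HallSubgroupInducedGraph} and contradict \thref{completeGraphImpossible}. The only minor remark is that your detour through ``$H$ cannot be solvable, so it is strictly $\PSL(2,2^5)$-solvable, hence $\pi(H)=\{2,3,11,31\}$'' is unnecessary---since $\pi(G)=\{2,3,5,11,31\}$, any Hall $\{2,3,11,31\}$-subgroup automatically has $\pi(H)=\{2,3,11,31\}$.
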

\begin{proof}
    Suppose there exists some group $G$ that realizes a graph in $\mathcal{F}_2$.
    We can take a Hall $\{2, 3,11,31\}$-group of $G$ via \thref{HallSubgroupInducedGraph}. This group is a $\PSL(2,2^5)$-solvable group that realizes \fourVertexCompleteGraph, thus contradicting \thref{completeGraphImpossible}.
\end{proof}

\begin{proposition} \thlabel{S3PossiblePSL}
    (\ref{S3Possible}) $\mathcal{F}_3$ is realizable.
\end{proposition}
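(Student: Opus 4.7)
My plan is to mimic the proof of Proposition \ref{S3Possible} almost verbatim, substituting $C_5$ for $C_3$. The key observation is that every $\Lambda \in \mathcal{F}_3$ decomposes as an isolated root vertex (corresponding to the prime $5$) together with a $4$-vertex subgraph $\Xi$ induced on the non-root vertices (corresponding to $\{2,3,11,31\}$). Because $\Lambda \in \mathcal{F}$ contains a triangle and $\Lambda \notin \mathcal{F}_2$, the $4$-vertex graph $\Xi$ contains a triangle but is not isomorphic to \fourVertexCompleteGraph.

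By Corollary \ref{PSL2324VertexGraphs}, every $4$-vertex graph other than \fourVertexCompleteGraph is realizable as the prime graph complement of some $\PSL(2,2^5)$-solvable group. In particular, for each of the three relevant $\Xi$ (namely \hamSandwich, \trianglePlusIsolated, and \SuzukiThirtyTwoElusiveGraph) we obtain a $\PSL(2,2^5)$-solvable group $H$ with $\pgc(H) \cong \Xi$; such an $H$ produced by \thref{PSL2324VertexGraphs} satisfies $\pi(H) \subseteq \{2,3,11,31\}$, so $5 \notin \pi(H)$. I would then set $G := H \times C_5$; this is still $\PSL(2,2^5)$-solvable, and $\pi(G) = \pi(H) \cup \{5\}$.

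It remains to check that $\pgc(G) \cong \Lambda$ in the sense of \thref{rootedGraphIsomorphismPSL}. First, the subgraph of $\pgc(G)$ induced on $\pi(H)$ is identical to $\pgc(H)$: any element of $G$ of order $pq$ with $p,q \in \pi(H) \subseteq \{2,3,11,31\}$ must, since $\gcd(5, pq) = 1$, have trivial $C_5$-coordinate and therefore come from $H$, so no new edges appear and none disappear. Second, the vertex $5$ is isolated in $\pgc(G)$: for any $p \in \pi(H)$, choose $h \in H$ of order $p$ and a generator $c$ of $C_5$; then $(h,c) \in G$ has order $5p$, giving $5 - p \in \pg(G)$, hence $5 - p \notin \pgc(G)$.

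No step is really hard; the only thing worth being careful about is that taking the direct product with $C_5$ does not accidentally introduce or destroy edges among $\pi(H)$, which is immediate from $\gcd(5,|H|) = 1$, and that $5$ itself becomes fully connected in the prime graph (hence isolated in its complement). Applying this construction to each of the three groups provided by Corollary \ref{PSL2324VertexGraphs} realizes every member of $\mathcal{F}_3$, completing the proof.
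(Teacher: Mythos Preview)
Your proof is correct and takes essentially the same approach as the paper's, which simply says to take the groups from \thref{PSL2324VertexGraphs} and form their direct product with $C_5$. You have fleshed out the details that the paper leaves implicit, in particular verifying that the groups furnished by \thref{PSL2324VertexGraphs} have order coprime to $5$ and that the direct product with $C_5$ neither creates nor destroys edges among the non-root primes.
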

\begin{proof}
    Consider the groups from \thref{PSL2324VertexGraphs} and take their direct product with $C_5$.
\end{proof}

\begin{proposition}\thlabel{F4RealizablePSL232}
    (\ref{F4Realizable}) $\mathcal{F}_4$ is realizable.
\end{proposition}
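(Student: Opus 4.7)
The plan is to realize each of the eight graphs in $\mathcal{F}_4$ by exhibiting an explicit $\PSL(2, 2^5)$-solvable group whose prime graph complement is isomorphic to it, closely paralleling \thref{F4Realizable}. Since the figure at the start of Section~\ref{sec:PSL} shows that $\pgc(\Aut(\PSL(2, 2^5))) \cong \fan$, the graph \fan is already realized by $\Aut(\PSL(2, 2^5))$ itself.

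For the next three graphs we take direct products of $\Aut(\PSL(2, 2^5))$ with a small cyclic group: \northStar by $\Aut(\PSL(2, 2^5)) \times C_3$, \happyFace by $\Aut(\PSL(2, 2^5)) \times C_6$, and \codart by $\Aut(\PSL(2, 2^5)) \times C_{11}$. Each product $G \times C_n$ introduces an element of order $np$ for every prime $p \in \pi(G) \setminus \{n\}$ at which $G$ already has an element of order $p$, which removes the edge $n-p$ from the complement whenever it was previously present. A short direct check using $\pgc(\Aut(\PSL(2, 2^5))) \cong \fan$ confirms that the three constructions isolate exactly the vertex $3$ (for \northStar), both $2$ and $3$ (for \happyFace), or just the vertex $11$ while reducing the degree of the root $5$ to one (for \codart).

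For \balloon we use the Frobenius-style construction $(\PSL(2, 2^5) \times C_{11}) \rtimes C_5$, with $C_5$ acting on $\PSL(2, 2^5)$ as in the outer automorphism and Frobeniusly on $C_{11}$. The crucial point, identical in spirit to the $\Sz(8)$ case, is that the commuting $C_{11}$ factor paired with an order-$31$ element of $\PSL(2, 2^5)$ produces an element of order $341$, thereby removing the $11-31$ edge from the complement and allowing the two root-neighbors of $5$ to be non-adjacent (as the graph \balloon requires). A standard norm-power argument, identical in structure to the one that shows $\Aut(\PSL(2, 2^5))$ contains no element of order $55$ or $155$, confirms that the Frobenius action prevents elements of orders $55$ and $155$ from appearing in the full group, so the edges $5-11$ and $5-31$ persist.

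The last three graphs \dart, \cricket, and \triangleWithShortTail are each obtained from $\pgc(\Aut(\PSL(2, 2^5)))$ by removing a specific set of edges of the form $2-p$, which cannot be achieved by direct products alone and requires module extensions in characteristic $2$ via \thref{graphExistenceFromBrauerTable} together with the Brauer-character data in \thref{psl232ModuleExtensions}. Concretely, \dart equals $\fan$ minus $\{2-3\}$, realized by extending $\Aut(\PSL(2, 2^5))$ by an irreducible $\F_2$-module whose underlying Brauer character (one of $\chi_7, \ldots, \chi_{21}$ in $\operatorname{IBr}_2(\PSL(2, 2^5))$) has nonzero fixed points for order-$3$ elements but not for orders $11$ or $31$; \cricket equals $\fan$ minus $\{2-3,\, 2-11\}$, realized analogously using a character from $\{\chi_{22}, \ldots, \chi_{31}\}$ with fixed points for orders $3$ and $11$ but not order $31$; and \triangleWithShortTail is then produced from the \cricket realization by a further direct product with $C_3$, which adds elements of order $93$ and hence removes the $3-31$ edge. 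The main obstacle in the proof is this last step: one must verify that the chosen Brauer character of $\PSL(2, 2^5)$ extends to a representation of $\Aut(\PSL(2, 2^5))$, so that the resulting semidirect product actually contains the vertex $5$ with the correct adjacencies. We expect this to be handled either by checking outer-automorphism invariance of the character and applying Clifford theory, or by working with $(\PSL(2, 2^5) \ltimes_\varphi V) \rtimes C_5$ directly and verifying the required extension explicitly in GAP.
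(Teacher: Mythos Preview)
Your constructions for \fan, \northStar, \happyFace, \codart, and \balloon coincide exactly with the paper's. The differences lie in the remaining three graphs.

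For \triangleWithShortTail you take the (as yet unconstructed) cricket group and multiply by $C_3$. This is correct in principle, but the paper uses the much simpler $\Aut(\PSL(2,2^5))\times C_2$: isolating the vertex $2$ in $\pgc(\Aut(\PSL(2,2^5)))\cong\fan$ leaves precisely the edges $\{3\text{--}31,\ 11\text{--}31,\ 5\text{--}11,\ 5\text{--}31\}$, which is already \triangleWithShortTail. You seem to have followed the $\Sz(8)$ template too closely here; in that case $\pgc(\Sz(8))$ is the complete graph on four vertices, so isolating $2$ still leaves a triangle disjoint from the root, and a module extension is genuinely needed. For $\PSL(2,2^5)$ the missing $3\text{--}11$ edge makes the direct-product trick available.

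For \dart and \cricket you correctly identify that one needs $\F_2$-representations of $\Aut(\PSL(2,2^5))$ with prescribed fixed-point behaviour, but you propose to obtain them by lifting Brauer characters of $\PSL(2,2^5)$ via Clifford theory or a GAP check, and you flag this extension step as the main obstacle. The paper sidesteps the issue entirely: it cites explicit irreducible $\F_2$-representations of $\Aut(\PSL(2,2^5))$ of dimensions $20$ and $80$ directly from the online ATLAS \cite{ATLAS}, so no lifting argument is required. Your route would work once the invariance and extension are verified, but as written it leaves a genuine (if routine) gap that the paper's approach does not have.
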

\begin{proof}
    \phantom{.}
    \begin{itemize}
        \item The graph \fan can be realized by $\Aut(\operatorname{PSL}(2,2^5))$.
        \item The graph \northStar can be realized by $\Aut(\operatorname{PSL}(2,2^5))\times C_3$.
        \item The graph \triangleWithShortTail can be realized by $\Aut(\operatorname{PSL}(2, 2^5)) \times C_2$.
        \item The graph \codart can be realized by $\Aut(\operatorname{PSL}(2,2^5))\times C_{11}$.
        \item The graph \happyFace can be realized by $\Aut(\PSL(2,2^5)) \times C_{6}$.
        \item The graph \balloon can be realized by $(\PSL(2,2^5) \times C_{11}) \rtimes C_5$ where $C_5$ acts on $\PSL(2,2^5)$ as in $\Aut(\PSL(2,2^5))$, and Frobeniusly on $C_{11}$.
        \item The graph \dart can be realized by $\Aut(\operatorname{PSL}(2,2^5)) \ltimes (\F_2)^{20}$, via the representation of $\Aut(\operatorname{PSL}(2,2^5))$ of matching characteristic and dimension ID $a$ in \cite{ATLAS}.
        \item  The graph \cricket can be realized by $\Aut(\operatorname{PSL}(2,2^5)) \ltimes (\F_2)^{80}$, via the representation of $\Aut(\operatorname{PSL}(2,2^5))$ of matching characteristic and dimension in \cite{ATLAS}.

    \end{itemize}
\end{proof}

\begin{lemma}\thlabel{twoWithEdgePSL}
    (\ref{twoWithEdge}) Let $N$ be solvable, let $r \in \pi(N)$, and let $G \cong N.\PSL(2,2^5)$. Then $2-r \notin \pgc(G)$.

\end{lemma}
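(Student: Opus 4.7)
The plan is to mirror the proof of \thref{twoWithEdge} exactly, since \thref{twoWithEdgeGeneral} (which is just \cite[Proposition 2.2.2]{2023REU}) has been stated in sufficient generality to cover any nonabelian simple group $T$ whose Sylow $p$-subgroups fail the Frobenius Criterion. So the entire proof reduces to verifying this structural condition for $p=2$ and $T = \PSL(2,2^5)$.

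First I would recall that the Sylow $2$-subgroup $P$ of $\PSL(2,q)$ when $q = 2^n$ is well-known to be elementary abelian of order $q$; indeed, $P$ is conjugate to the image of the upper unitriangular matrices, which form an additive copy of $\F_q$. In our case $q = 2^5 = 32$, so $P \cong (C_2)^5$, an elementary abelian group of order $32$.

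Next I would check, against \thref{frobcrit}, that $(C_2)^5$ is none of the allowed types. It is not cyclic (it has no element of order greater than $2$), it is not Klein-4 (the Klein-4 group has order $4$, not $32$), and it is not generalized quaternion (generalized quaternion groups are nonabelian, whereas $(C_2)^5$ is abelian). Finally, a dihedral group of order $32$ has an element of order $16$, whereas $(C_2)^5$ has exponent $2$, so $P$ is not dihedral either. Hence $P$ does not satisfy the Frobenius Criterion.

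With this verified, the conclusion is immediate: applying \thref{twoWithEdgeGeneral} with $T = \PSL(2,2^5)$ and $p = 2$ gives $2-r \notin \pgc(G)$ for every $r \in \pi(N)$, as required. No other step is genuinely in play; the only thing to be careful about is invoking the correct structural fact on Sylow $2$-subgroups of $\PSL(2,2^n)$, but since this is standard (and moreover was already used implicitly in the proof of \thref{StructureOfPSL232Graphs} above to verify the hypothesis of \thref{Noworrygeneral}), there is no real obstacle.
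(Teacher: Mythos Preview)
Your proposal is correct and follows exactly the same approach as the paper: assert that the Sylow $2$-subgroups of $\PSL(2,2^5)$ fail the Frobenius Criterion and then invoke \thref{twoWithEdgeGeneral}. The only difference is that you supply the explicit justification that the Sylow $2$-subgroup is elementary abelian of order $32$, whereas the paper simply states the fact without further comment.
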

\begin{proof}
    The Sylow 2-subgroups of $\PSL(2,2^5)$ do not satisfy the Frobenius Criterion. Apply \thref{twoWithEdgeGeneral}.
\end{proof}

\begin{proposition}\thlabel{PSLtriangleWithThreeFamily}
    (\ref{triangleWithThreeFamily}) Let $\Lambda \in \mathcal{F}_5$ be a rooted graph such that there exists a triangle containing the root. Additionally, suppose that the two vertices not part of this triangle are connected by an edge. The possibilities for $\Lambda$ are exactly the following:
    \begin{center}
        \emptyhouse, \triangleWithTail, \trianglestick, \bowtiebruh, \badDart.
    \end{center}
    Then $\Lambda$ is not realizable.
\end{proposition}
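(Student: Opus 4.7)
The plan is to imitate the strategy of \thref{triangleWithThreeFamily}, replacing its final appeal to \thref{twoWithEdge} with the stronger module-theoretic information supplied by \thref{psl232ModuleExtensions}. First I would suppose for contradiction that some $\PSL(2,2^5)$-solvable group $G$ realizes such a $\Lambda$. By \thref{AutPSL232OnTop} we have $G\cong N.\Aut(\PSL(2,2^5))$ for a solvable $N$, and the two vertices of $\Lambda$ adjacent to the root must be $11$ and $31$; hence the distinguished triangle containing the root is $\{5,11,31\}$, and the remaining two vertices, which form an edge in $\Lambda$ by hypothesis, are $\{2,3\}$. Applying \thref{HallTriangles} to this triangle (noting $5\notin\pi(\PSL(2,2^5))$) forces $\pi(N)\cap\{5,11,31\}=\emptyset$, so $N$ is a $\{2,3\}$-group.

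Next, let $M\trianglelefteq G$ be the preimage of $\operatorname{Inn}(\PSL(2,2^5))$ under the quotient map $G\twoheadrightarrow\Aut(\PSL(2,2^5))$, so that $M\cong N.\PSL(2,2^5)$ and $[G:M]=5$. I would rule out $3\mid|N|$: if $3$ did divide $|N|$, then \thref{twoWithEdgePSL} applied to $M$ with $r=3$ would give $2-3\notin\pgc(M)$, hence $2-3\notin\pgc(G)$ since $M\leq G$, contradicting the hypothesis that the two non-triangle vertices are joined by an edge. Therefore $N$ is a $2$-group, and $M$ is an extension of $\PSL(2,2^5)$ by a $2$-group.

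At this point the key observation is that \thref{psl232ModuleExtensions} restricts $\pgc(M)$ to lie in $\{\SuzukiThirtyTwoElusiveGraph,\hamSandwich,\fletching,\brokenFrame\}$, and in the case \SuzukiThirtyTwoElusiveGraph the degree-$3$ vertex must be labeled $31$ and the degree-$1$ vertex must be labeled $3$. Since $M$ is the normal Hall $\{2,3,11,31\}$-subgroup of $G$, \thref{HallSubgroupInducedGraph} supplies
\begin{equation}
\pgc(M)=\pgc(G)[\{2,3,11,31\}].
\end{equation}
For each of the five rooted graphs I would compute this induced subgraph and check it against the allowed labeled list. A routine inspection yields: \emptyhouse induces a $4$-cycle; \triangleWithTail induces a path on four vertices; \trianglestick induces a disjoint union of two edges; \bowtiebruh induces a ``triangle plus pendant'' in which the degree-$3$ vertex (the shared vertex of the two triangles) is forced to lie in $\{11,31\}$, with the pendant being the other of $\{11,31\}$; and \badDart induces a ``triangle plus pendant'' in which the degree-$3$ vertex lies in $\{2,3\}$. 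None of these matches an element of the allowed list under the labeling prescribed by \thref{psl232ModuleExtensions}, producing the required contradiction.

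The main obstacle lies in the final two cases, \bowtiebruh and \badDart: in both, the induced subgraph on $\{2,3,11,31\}$ is abstractly isomorphic to \SuzukiThirtyTwoElusiveGraph, so ruling them out requires the sharper labeling information in \thref{psl232ModuleExtensions} rather than just its classification up to isomorphism. The three remaining graphs are eliminated by the coarser observation that the induced subgraph is not even abstractly in the allowed list.
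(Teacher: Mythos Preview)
Your proof is correct and follows essentially the same route as the paper's: both reduce to $G\cong N.\Aut(\PSL(2,2^5))$ with triangle $\{5,11,31\}$, use \thref{HallTriangles} and \thref{twoWithEdgePSL} to force $N$ to be a $2$-group, identify $\pgc(N.\PSL(2,2^5))$ with the subgraph of $\Lambda$ on the non-root vertices, and then contradict \thref{psl232ModuleExtensions}. The only cosmetic differences are that the paper invokes \thref{reverseDivisors} (applied to $G\cong M.C_5$) rather than \thref{HallSubgroupInducedGraph} to obtain $\pgc(M)=\pgc(G)[\{2,3,11,31\}]$, and the paper leaves the final case-by-case labeling check implicit whereas you spell it out.
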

\begin{proof}
    Suppose for contradiction that $\Lambda$ is realized by a $\PSL(2,2^5)$-solvable group $G$. By \thref{AutPSL232OnTop}, we have that $G \cong N.\Aut(\PSL(2,2^5))$ where $N$ is solvable, with 5 not dividing $|N|$, and additionally that the vertices of the triangle are $\{5,11,31\}$. By \thref{HallTriangles}, 11 and 31 do not divide $|N|$. Thus, $N$ is a $\{2,3\}$-group. Since $2-3 \in \Lambda$, we have by \thref{twoWithEdgePSL} that $N$ is a $2$-group. By \thref{reverseDivisors}, the subgroup of $G$ isomorphic to $N.\PSL(2,2^5)$ has prime graph complement equal to the subgraph of $\Lambda$ induced by the non-root vertices. Thus we get a contradiction by \thref{psl232ModuleExtensions}.
\end{proof}

\begin{proposition}\thlabel{PSLtriangleWithoutThreeFamily}
    (\ref{triangleWithoutThreeFamily}) Let $\Lambda \in \mathcal{F}_5$ be a rooted graph such that the root has degree 1. Additionally, suppose that there exists a triangle disjoint from the root and its neighbor. The possibilities for $\Lambda$ are exactly the following:
    \begin{center}
        \triangleIsolatedEdge, \zigZag, \hamsandwichwitharm,
    \end{center}
    Then $\Lambda$ is not realizable.
\end{proposition}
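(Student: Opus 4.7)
The plan is to mirror the strategy of Proposition \ref{triangleWithoutThreeFamily}, exploiting the asymmetry in $\pgc(\Aut(\PSL(2,2^5)))$ which forces the unique neighbor of the root to correspond to the prime $31$ rather than $11$.

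Suppose for contradiction that some $\PSL(2,2^5)$-solvable group $G$ realizes $\Lambda$. Since $\Lambda \in \mathcal{F}_5 \subseteq \mathcal{F} \setminus \mathcal{F}_3$, \thref{AutPSL232OnTop} gives $G \cong N.\Aut(\PSL(2,2^5))$ with $N$ solvable and $\{5,31\} \cap \pi(N) = \emptyset$. Because the root has degree $1$, the third bullet of \thref{AutPSL232OnTop} forces its unique neighbor to correspond to $31$. Any triangle disjoint from the root and its neighbor must then be supported on the three remaining vertices, which are identified with the primes $\{2,3,11\}$.

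Next, I would apply \thref{divisors} to the edge $5-11 \in \pgc(\Aut(\PSL(2,2^5)))$: since $5-11 \notin \Lambda$ and $5 \nmid |N|$, contrapositively we must have $11 \in \pi(N)$. Pulling the normal subgroup $\PSL(2,2^5) \trianglelefteq \Aut(\PSL(2,2^5))$ back along the quotient $G \twoheadrightarrow G/N$ produces a normal subgroup $H \leq G$ with $H \cong N.\PSL(2,2^5)$. Applying \thref{twoWithEdgePSL} with $r = 11$ yields an element of order $22$ in $H$, which also lies in $G$, so $2-11 \notin \pgc(G) = \Lambda$. This contradicts the presence of the prescribed triangle on $\{2,3,11\}$, since such a triangle requires the edge $2-11$ to be in $\Lambda$.

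I do not anticipate a serious obstacle; the argument is a short structural deduction rather than a combinatorial case analysis, and it succeeds uniformly for all three graphs simultaneously. The only bookkeeping point is verifying, for each of \triangleIsolatedEdge, \zigZag, and \hamsandwichwitharm, that a triangle disjoint from the root and its neighbor does exist on the three non-root, non-neighbor vertices, but this is immediate from the figures since those two vertices together account for only two of the five, leaving exactly three vertices to host any such triangle.
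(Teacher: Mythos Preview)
Your argument is correct, but the paper's proof is considerably more direct. After invoking \thref{AutPSL232OnTop} to pin the root's neighbor to $31$, the paper simply observes that $\Aut(\PSL(2,2^5))$ is a section of $G$, so $\pgc(G)\subseteq\pgc(\Aut(\PSL(2,2^5)))$; but every triangle in $\pgc(\Aut(\PSL(2,2^5)))$ passes through $31$ (equivalently, the edge $3$--$11$ is already absent there), so no triangle on $\{2,3,11\}$ can survive in $\Lambda$. Your route instead shows $11\in\pi(N)$ via \thref{divisors} and then kills the edge $2$--$11$ with \thref{twoWithEdgePSL}, which is valid but costs an extra step of analyzing $\pi(N)$ and passing to the subgroup $N.\PSL(2,2^5)$. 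The paper's approach buys brevity and avoids touching $N$ altogether; your approach has the minor advantage of being a more faithful parallel to Proposition~\ref{triangleWithoutThreeFamily} in the $\Sz(8)$ section, where the analogous ``every triangle contains $31$'' shortcut is unavailable.
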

\begin{proof}
    Suppose that $\Lambda$ is realized by a $\PSL(2,2^5)$-solvable group $G$. By \thref{AutPSL232OnTop}, we have that $G \cong N.\Aut(\PSL(2,2^5))$ where $N$ is solvable, and that the vertex adjacent to the root is 31. But all triangles in $\pgc(\Aut(\PSL(2,2^5)))$ contain the vertex 31, contradicting the fact that $\pgc(G)$ has a triangle disjoint from 31.
\end{proof}

\begin{proposition}\thlabel{PSLimpossible3}
    Let $\Lambda \in \mathcal{F}_5$ be a rooted graph such that the root has degree 1. Additionally, suppose that the root's neighbor has degree 4. The possibilities for $\Lambda$ are exactly the following:
    \begin{center}
        \triangleWithTwoTails, \hamSandwichWithOtherArm.
    \end{center}
    Then $\Lambda$ is not realizable.
\end{proposition}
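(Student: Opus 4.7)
The plan is to suppose $\Lambda$ is realized by some $\PSL(2,2^5)$-solvable group $G$ and derive a contradiction. By \thref{AutPSL232OnTop}, $G\cong N.\Aut(\PSL(2,2^5))$ with $N$ solvable and $\pi(N)\subseteq\{2,3,11\}$; moreover, since the root has exactly one neighbor in $\Lambda$, that neighbor must be $31$. In both candidate graphs \triangleWithTwoTails and \hamSandwichWithOtherArm, the degree-$4$ vertex $31$ is joined to every other vertex, so the edge $11-31$ belongs to $\Lambda = \pgc(G)$, while the root $5$ is joined only to $31$, so $5-11\notin \Lambda$.

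First I would observe that the absence of $5-11$ from $\pgc(G)$ forces $11\in \pi(N)$. The argument: $G$ must contain an element $g$ of order $55$, but $G/N\cong \Aut(\PSL(2,2^5))$ has no element of order $55$ (since $5-11\in \pgc(\Aut(\PSL(2,2^5)))$) and $N$ has no elements of order divisible by $5$. A case analysis on $o(gN)\in\{1,5,11,55\}$ via \thref{orderNotDividing} then forces $o(gN)=5$ and $g^5\in N$ of order $11$, so $11\in \pi(N)$.

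Next, I would contradict $11-31\in \pgc(G)$ as follows. Let $M$ be the largest normal $\{2,3\}$-subgroup of $N$. Since $M$ is characteristic in $N$ and $N\trianglelefteq G$, we have $M\trianglelefteq G$, and $N/M$ is a nontrivial $11$-group. Setting $\overline{H}$ to be the preimage of $\operatorname{Inn}(\PSL(2,2^5))$ in $G/M$, I obtain $\overline{H}\cong (N/M).\PSL(2,2^5)$, an extension of $\PSL(2,2^5)$ by a nontrivial $11$-group. By \thref{psl232ModuleExtensions}, $\pgc(\overline{H})\cong \trianglePlusIsolated$ with $11$ as the isolated vertex (the labeling can be read off the $\operatorname{IBr}_{11}(\PSL(2,2^5))$ table used in the proof of \thref{psl232ModuleExtensions}, every row of which has a ``Yes'' in the columns for $2$, $3$, and $31$). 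So $\overline{H}$ contains an element of order $11\cdot 31$. Lifting this element to the preimage $H=N.\PSL(2,2^5)\leq G$ and using that $M$ is a $\{2,3\}$-group (hence coprime to $11\cdot 31$), a suitable power yields an element of order exactly $11\cdot 31$ in $G$, contradicting $11-31\in \pgc(G)$.

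The main conceptual step is recognizing that the absence of $5-11$ from $\Lambda$ compels $11\in \pi(N)$; once this is in hand, \thref{psl232ModuleExtensions} applied to the $11$-group quotient $N/M$ forces the isolation of $11$ from $31$ in the induced subgraph on $\pi(\PSL(2,2^5))$, yielding the contradiction. The lifting step at the end is routine because the kernel $M$ of the quotient is coprime to both $11$ and $31$, so no spurious $\{2,3\}$-factor obstructs extracting an element of the required order from a lift.
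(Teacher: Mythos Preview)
Your argument has a gap at the claim ``$N/M$ is a nontrivial $11$-group.'' Taking $M=O_{\{2,3\}}(N)$ only guarantees $O_{\{2,3\}}(N/M)=1$; it does not force $N/M$ to be an $11$-group. For instance, if $N$ were dihedral of order $22$ (i.e.\ $C_{11}\rtimes C_2$ with faithful action), then $O_{\{2,3\}}(N)=1$ and $N/M=N$ is not an $11$-group. So as written, you cannot apply \thref{psl232ModuleExtensions} to $(N/M).\PSL(2,2^5)$.

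The missing step is precisely what the paper supplies: since the root's neighbor $31$ has degree $4$ in $\Lambda$, the vertex $31$ is adjacent in $\pgc(G)$ to every prime in $\{2,3,11\}\supseteq\pi(N)$, so \thref{Nilpotent} applies and $N$ is nilpotent. Once you know this, $N$ is the direct product of its Sylow subgroups, your $M$ is exactly the Hall $\{2,3\}$-subgroup of $N$, and $N/M$ really is an $11$-group. With that one-line fix your argument is essentially the paper's: both pass to a section that is an extension of $\PSL(2,2^5)$ by a nontrivial $11$-group and contradict \thref{psl232ModuleExtensions} via the edge $11$--$31$. Your explicit lifting of an order-$11\cdot 31$ element through the $\{2,3\}$-group $M$ is a harmless stylistic variant of the paper's section argument.
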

\begin{proof}
    Suppose that $\Lambda$ is realized by a $\PSL(2,2^5)$-solvable group $G$. By \thref{AutPSL232OnTop}, we have that $G \cong N.\Aut(\PSL(2,2^5))$ where $N$ is solvable with order not divisible by 5 or 31, and that the vertex adjacent to the root is 31. Since $11-5 \notin \pgc(G)$, $11$ divides $|N|$ by \thref{divisors}. By \thref{Nilpotent}, $N$ is nilpotent. Letting $N_{11}$ be the unique Sylow $11$-subgroup of $N$, there is a section of $G$ isomorphic to $N_{11}.\Aut(\PSL(2,2^5))$. By \thref{reverseDivisors}, the subgroup of this isomorphic to $N_{11}.\PSL(2,2^5)$ has prime graph complement equal to the subgraph of $\Lambda$ induced by the non-root vertices. Thus, we have a contradiction by \thref{psl232ModuleExtensions}.
\end{proof}

\begin{proposition} \thlabel{PSLimpossible4}
    Let $\Lambda \in \mathcal{F}_5$ be one of the following graphs:
    \begin{center}
        (1) \housenobrim, (2) \nobutt, (3) \bullcaseone, (4) \misshapenhouse.
    \end{center}
    Then $\Lambda$ is not realizable.
\end{proposition}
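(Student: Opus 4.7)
The plan is a uniform case analysis over the four graphs $(1)$--$(4)$ of the statement. Suppose $\Lambda$ is realized by a $\PSL(2, 2^5)$-solvable group $G$. By \thref{AutPSL232OnTop}, $G \cong N.\Aut(\PSL(2, 2^5))$ with $N$ solvable, $5, 31 \notin \pi(N)$, and the neighbors of the root $5$ labeled by primes in $\{11, 31\}$. Since each graph has prescribed root degree (one for $(3)$, two for $(1), (2), (4)$) and contains a triangle disjoint from the root, the admissible identifications of the four non-root vertices with $\{2, 3, 11, 31\}$ form a short finite list, which I will enumerate case-by-case.

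For each identification, the first step pins down $\pi(N)$ as a small subset of $\{2, 3, 11\}$. Triangles in $\pgc(G)$ are controlled by \thref{SylowTriangles} for triangles disjoint from $5$, and by \thref{HallTriangles} for the single triangle $\{5, 11, 31\}$ that occurs only in $(2)$, each forcing its intersection with $\pi(N)$ to have size at most one. Each edge $r - s$ present in $\pgc(\Aut(\PSL(2, 2^5)))$ but absent from $\pgc(G)$ forces, via \thref{divisors}, one of $r$ and $s$ into $\pi(N)$. Several identifications---notably ones in $(4)$ where two missing edges force two vertices of a common triangle into $\pi(N)$---are eliminated immediately as incompatible with \thref{SylowTriangles}. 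The surviving identifications leave $\pi(N)$ either empty (forcing $G \cong \Aut(\PSL(2, 2^5))$, which contradicts the edge pattern of $\Lambda$ directly) or equal to $\{p\}$ or $\{p, q\}$ with $p, q \in \{2, 3, 11\}$.

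The second step derives a contradiction for each surviving $\pi(N)$ via \thref{psl232ModuleExtensions}. If $\pi(N) = \{p\}$ with $p \in \{3, 11\}$, apply the theorem to $H = N.\PSL(2, 2^5) \leq G$: then $\pgc(H)$ is $\trianglePlusIsolated$-like with $p$ isolated, contradicting the inclusion $\pgc(G)[\{2, 3, 11, 31\}] \subseteq \pgc(H)$ whenever $\Lambda$ supplies an edge of $\pgc(G)$ incident to $p$. If $\pi(N) = \{2\}$, each of the four possible $\pgc(H)$ listed in the theorem has $3 - 11 \notin \pgc(H)$ (the order-$33$ element of $\PSL(2, 2^5)$ passes into $H$ since $\gcd(33, |N|) = 1$), contradicting the edge $3 - 11 \in \pgc(G) \subseteq \pgc(H)$ demanded by $\Lambda$. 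If $|\pi(N)| = 2$, invoke \thref{solvableExtension} to produce a normal series in $G$ passing through $N$ with elementary abelian factors, so that the top factor $N/N_{k-1}$ is an elementary abelian $p$-group for some $p \in \pi(N)$; applying \thref{psl232ModuleExtensions} to the $p$-group extension $(N/N_{k-1}).\PSL(2, 2^5) \leq G/N_{k-1}$ produces an edge $p - r$ in the prime graph of that quotient, which, via \thref{orderNotDividing} and taking a suitable power, yields an element of order $pr$ in $G$ itself, contradicting the edge $p - r \in \pgc(G)$ supplied by $\Lambda$.

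The main bookkeeping difficulty is verifying, in each two-prime labeling, that both possible values of the top-factor prime $p \in \pi(N)$ lead to a contradiction via an appropriate choice of $r$. A short case-by-case check confirms that for every admissible labeling and every $p \in \pi(N)$, at least one of the three $\pg$-edges incident to $p$ that appear in the $p$-group quotient via \thref{psl232ModuleExtensions} lifts to an edge of $\pgc(G)[\{2, 3, 11, 31\}]$, yielding the required contradiction regardless of which prime in $\pi(N)$ ends up at the top of the chosen normal series.
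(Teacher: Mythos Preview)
Your outline is broadly sound for graphs $(1)$ and $(2)$: a labeling enumeration quickly shows that every identification of the non-root vertices with $\{2,3,11,31\}$ satisfying \thref{AutPSL232OnTop} forces all four cross-edges between $\{2,3\}$ and $\{11,31\}$ into $\Lambda$, in particular the edge $3$--$11$, which is absent from $\pgc(\Aut(\PSL(2,2^5)))$. So the admissible list is empty, matching the paper's one-line dispatch of these two cases.

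For graphs $(3)$ and $(4)$, however, your two-prime step has a genuine gap. Admissible labelings do exist (e.g.\ in $(4)$ one is forced into $(0,0)=3$, $(0,1)=2$, $(1,0)=31$, $(1,1)=11$), and in each of them the missing edge $5$--$11$ or $11$--$31$ forces $11\in\pi(N)$ via \thref{divisors}, while nothing you invoke excludes $2\in\pi(N)$; so $\pi(N)=\{2,11\}$ is a live case. If the top elementary abelian factor $N/N_{k-1}$ happens to be a $2$-group, \thref{psl232ModuleExtensions} permits the quotient $(N/N_{k-1}).\PSL(2,2^5)$ to have complement isomorphic to \hamSandwich\ (this occurs for the characters $\chi_2,\dots,\chi_6$ in the mod-$2$ Brauer table, which delete no edges). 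That complement has \emph{no} prime-graph edge incident to $2$, so your promised ``case-by-case check'' cannot produce an $r$ with $2$--$r$ in the prime graph of the quotient, and your contradiction mechanism stalls. You do not get to choose which prime sits at the top of the normal series.

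The paper closes this in one stroke with a lemma you never invoke: \thref{twoWithEdgePSL}. Every admissible labeling of $(3)$ and $(4)$ has both $2$--$11\in\Lambda$ and $11\in\pi(N)$; applying that lemma to the subgroup $N.\PSL(2,2^5)\leq G$ gives $2$--$11\notin\pgc(N.\PSL(2,2^5))\supseteq\pgc(G)[\{2,3,11,31\}]$, an immediate contradiction. This single observation replaces your entire normal-series and module-extension machinery for $(3)$ and $(4)$, and it is the missing ingredient in your plan.
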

\begin{proof}
    Suppose that $\Lambda$ is realized by a $\PSL(2,2^5)$-solvable group $G$. By \thref{AutPSL232OnTop}, we have that $G \cong N.\Aut(\PSL(2,2^5))$ where $N$ is solvable. If $\Lambda$ is isomorphic to (1) or (2), notice that there is no way to label the graph without contradicting \thref{AutPSL232OnTop} or making it so that $\Lambda$ is not a subgraph of $\pgc(\Aut(\PSL(2, 2^5)))$. So $\Lambda$ is not isomorphic to (1) or (2). If $\Lambda$ is isomorphic to (3) or (4) it can be checked that for any labeling of the vertices of $\Lambda$ with the primes $\{2,3,5,11,31\}$ such that $\Lambda$ is a subgraph of $\pgc(\Aut(\PSL(2,2^5)))$ and $\Lambda$ follows the restrictions set by \thref{AutPSL232OnTop}, we will have $11-2 \in \Lambda$ and at least one of $11-31$ and $11-5$ is not in $\Lambda$. Thus, by \thref{divisors}, $11$ divides $|N|$. But this contradicts \thref{twoWithEdgePSL}.
\end{proof}

\begin{lemma}\thlabel{PSLBull}
     (\ref{Bull}) The graph \bull is not realizable.
\end{lemma}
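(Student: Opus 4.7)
The plan is to mirror the analogous $\Sz(8)$ argument in \thref{Bull}, with adaptations specific to $\PSL(2,2^5)$. Suppose for contradiction that some $\PSL(2,2^5)$-solvable group $G$ realizes the bull. \thref{AutPSL232OnTop} gives $G \cong N.\Aut(\PSL(2,2^5))$ with $N$ solvable and $5, 31 \notin \pi(N)$, and it pins down the labeling: the triangle of the bull must be $\{5,11,31\}$, and checking against $\pgc(\Aut(\PSL(2,2^5)))$ (in which $3-11$ is not an edge) forces the two pendants $2$ and $3$ to attach to $11$ and $31$ respectively. Then \thref{HallTriangles} applied to this triangle yields $\pi(N) \subseteq \{2,3\}$, and \thref{divisors} applied to the non-edge $2-31$ (which is an edge of $\pgc(\Aut(\PSL(2,2^5)))$) forces $2 \in \pi(N)$.

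The proof then splits according to whether $3 \in \pi(N)$. If $\pi(N) = \{2\}$, set $K = N.\PSL(2,2^5)$; by \thref{psl232ModuleExtensions}, $\pgc(K)$ is one of \SuzukiThirtyTwoElusiveGraph, \hamSandwich, \fletching, or \brokenFrame, and requiring $\pgc(K)$ to contain the edges $11-31$, $2-11$, $3-31$ of $\pgc(G)[\pi(K)]$ rules out \fletching\ and \brokenFrame. Since $(|K|,5)=1$, Schur--Zassenhaus gives $G \cong K \rtimes C_5$, so every element of $G$ whose order is coprime to $5$ actually lies in $K$. The non-edges $2-3$ and $2-31$ of the bull demand elements of orders $6$ and $62$ in $G$, but \hamSandwich\ forbids order-$6$ elements in $K$ and \SuzukiThirtyTwoElusiveGraph\ forbids order-$62$ elements in $K$, so either possibility for $\pgc(K)$ yields a contradiction.

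Now assume $\pi(N) = \{2,3\}$. Using \thref{solvableExtension} together with a section argument (passing to an appropriate quotient of $G$ by characteristic subgroups of $N$), we reduce to one of two structural cases: $N \cong K_2.K_3$ with a normal elementary abelian 2-group $K_2$ at the bottom, or $N \cong K_3.K_2$ with a normal elementary abelian 3-group $K_3$ at the bottom. In the first case the section $K/K_2 \cong K_3.\PSL(2,2^5)$ is again covered by \thref{psl232ModuleExtensions}, whose conclusion leaves vertex $3$ isolated in $\pgc(K/K_2)$; this contradicts the edge $3-31$ of the bull, since sections of $G$ retain the edges of $\pgc(G)$. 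In the second case, \thref{split} identifies $K$ with $K_3 \rtimes H$ where $H \cong K_2.\PSL(2,2^5)$, and we analyze $C_H(K_3)$: normality of this centraliser in $H$ combined with simplicity of $\PSL(2,2^5)$ forces $C_H(K_3) \leq K_2$ (otherwise an order-$31$ element of $C_H(K_3)$ would commute with an order-$3$ element of $K_3$, producing an element of order $93$ and killing the edge $3-31$), and $C_H(K_3) \neq K_2$ (else $K_2$ centralises $K_3$, dropping us back into the previous case). After quotienting by $C_H(K_3)$ we may assume $K_2$ acts faithfully on $K_3$, and the edge $3-31 \in \pgc(G)$ forces a Sylow $31$-subgroup $K_{31} \leq H$ to act Frobeniusly on $K_3$.

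The main obstacle is the final step. One shows that $K_{31}$ must act non-trivially on $K_2$: otherwise the induced action of $H/K_2 \cong \PSL(2,2^5)$ on $Z(K_2)$ has non-trivial kernel and hence is trivial by simplicity, so an order-$2$ central element of $K_2$ commutes with an order-$11$ element of $H$, producing an element of order $22$ in $G$ that contradicts the edge $2-11$ of the bull. Finally, since $31$ is not a Fermat prime, \cite[Lemma 2.1.6]{2023REU} applied to $K_{31}$ (which acts non-trivially on $K_2$) produces a non-trivial fixed point of $K_{31}$ on $K_3$, contradicting the Frobenius action just established. The most technical points are the clean reduction to the two two-layer cases via \thref{solvableExtension} and the precise invocation of \cite[Lemma 2.1.6]{2023REU}; the rest is a transparent adaptation of \thref{Bull}.
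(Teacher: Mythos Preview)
Your proof is correct and follows essentially the same route as the paper's. The only noteworthy difference is expository: where the paper writes the single line ``By \thref{psl232ModuleExtensions}, $N$ is a strict $\{2,3\}$-group'' (implicitly using that $\pgc(N.\PSL(2,2^5))=\pgc(G)[\{2,3,11,31\}]$ via coprimality, and that the restricted bull is the path $2\text{--}11\text{--}31\text{--}3$, which matches none of the four graphs allowed for a $2$-group $N$), you spell out the case $\pi(N)=\{2\}$ explicitly with the Schur--Zassenhaus/coprimality argument and a case check against \hamSandwich\ and \SuzukiThirtyTwoElusiveGraph. From that point on the two arguments are identical.
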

\begin{proof}
    Suppose for contradiction that the above graph is realized by a $\PSL(2,2^5)$-solvable group $G$. By \thref{AutPSL232OnTop}, we have that $G \cong N.\Aut(\PSL(2,2^5))$ where $N$ is solvable, with neither 5 nor 31 dividing $|N|$. Notice that $3-31 \in \pgc(G)$ due to there being only one way to label the vertices of $\Lambda$ such that it is a rooted subgraph of $\pgc(\Aut(\PSL(2,2^5)))$.
    
    By \thref{HallTriangles}, $N$ is a $\{2,3\}$-group. By \thref{psl232ModuleExtensions}, $N$ is a strict $\{2,3\}$-group. By taking a section of the normal series given by \thref{solvableExtension}, we reduce down to two cases: 
    \begin{enumerate}
        \item $N \cong K_2.K_3$ where $K_2$ is an elementary abelian $2$-group and $K_3$ is a 3-group.
        \item $N \cong K_3.K_2$ where $K_2$ is a 2-group and $K_3$ is an elementary abelian 3-group.
    \end{enumerate}
    If (1) holds then we immediately come to a contradiction since we can take the section $K_3.\PSL(2,2^5)$ of $G$ and apply \thref{psl232ModuleExtensions} to see that the vertex $3$ must be isolated. 
    
    So assume that (2) holds. Let $G_0$ be the section of $G$ isomorphic to $K_3.K_2.\PSL(2,2^5)$. By \thref{split}, we may assume that $G_0 \cong K_3\rtimes H$, where $H \cong K_2.\PSL(2,2^5)$. We notice that $C_H(K_3)\leq K_2$ or else $C_H(K_3)K_2/K_2$ is a nontrivial normal subgroup of $H/K_2 \cong \PSL(2,2^5)$, meaning it is isomorphic to $\PSL(2,2^5)$, implying that $C_H(K_3)$ has an element of order $31$, contradicting the fact that $3-31 \in \pgc(G_0)$. Additionally, $C_H(K_3) \neq K_2$ or else $N \cong K_2 \times K_3$, so $G_0 \cong K_2.K_3.\PSL(2,2^5)$, which is a contradiction by \thref{psl232ModuleExtensions}. 
    By modding $K_2$ out by $C_H(K_3)$, we may assume without loss of generality that $K_2$ acts faithfully on $K_3$. Let $K_{31}$ be a Sylow $31$-subgroup of $H$. Since $3-31 \in \pgc(G)$, we have that the action of $K_{31}$ on $K_3$ is Frobenius and therefore faithful. We conclude that the action of $K_2K_{31}$ on $K_3$ is faithful. 

    Now we show that $K_{31}$ acts nontrivially on $K_2$ by conjugation. If the action were trivial, we would have that the action $\varphi: H/K_2 \to \Aut(Z(K_2))$ defined by $\varphi_{hK_2}(k) = h^{-1}kh$ has nontrivial kernel. But then the kernel is all of $\PSL(2,2^5)$, meaning the vertex $2$ is isolated in $\pgc(H)$, a contradiction.

    Since 31 is not a Fermat prime, by \cite[Lemma 2.1.6]{2023REU} we have that $K_{31}$ has a nontrivial fixed point in $K_3$, which contradicts the assumption that $3-31 \in \pgc(G)$.
\end{proof}

\begin{proposition}\thlabel{Realizable5VertexGraphsPSL232}
    (\ref{Realizable5VertexGraphs}) The graphs in $\mathcal{F}$ which are realizable are exactly $\mathcal{F}_3 \cup \mathcal{F}_4$.
\end{proposition}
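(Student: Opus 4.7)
The plan is to simply assemble the individual results already established for each $\mathcal{F}_i$. For the ``not realizable'' direction, I would first invoke Proposition \ref{degreeGreaterThan2PSL} to rule out $\mathcal{F}_1$, then Proposition \ref{noExtraHamsPSL} to rule out $\mathcal{F}_2$. For $\mathcal{F}_5$, I would apply the five dedicated non-realizability lemmas: Proposition \ref{PSLtriangleWithThreeFamily} (handling \emptyhouse, \triangleWithTail, \trianglestick, \bowtiebruh, \badDart), Proposition \ref{PSLtriangleWithoutThreeFamily} (handling \triangleIsolatedEdge, \zigZag, \hamsandwichwitharm), Proposition \ref{PSLimpossible3} (handling \triangleWithTwoTails, \hamSandwichWithOtherArm), Proposition \ref{PSLimpossible4} (handling \housenobrim, \nobutt, \bullcaseone, \misshapenhouse), and Proposition \ref{PSLBull} (handling \bull).

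For the ``realizable'' direction, I would invoke Proposition \ref{S3PossiblePSL} to realize every member of $\mathcal{F}_3$ and Proposition \ref{F4RealizablePSL232} to realize every member of $\mathcal{F}_4$.

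The only nontrivial bookkeeping step is to verify that the five non-realizability propositions for $\mathcal{F}_5$ indeed partition $\mathcal{F}_5$. Counting, Proposition \ref{PSLtriangleWithThreeFamily} covers $5$ graphs, \ref{PSLtriangleWithoutThreeFamily} covers $3$, \ref{PSLimpossible3} covers $2$, \ref{PSLimpossible4} covers $4$, and \ref{PSLBull} covers $1$, for a total of $15$, which agrees with the enumeration of $\mathcal{F}_5$ given earlier in the section. Since every graph in the partition $\mathcal{F}_1 \cup \mathcal{F}_2 \cup \mathcal{F}_5$ has been excluded and every graph in $\mathcal{F}_3 \cup \mathcal{F}_4$ has been exhibited, the proposition follows immediately. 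There is no real obstacle to overcome here; the substantive work was carried out in the preceding propositions, and the role of this proposition is purely to record the resulting dichotomy.
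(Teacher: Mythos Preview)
Your proposal is correct and follows essentially the same approach as the paper's own proof, which likewise cites Propositions \ref{degreeGreaterThan2PSL}, \ref{noExtraHamsPSL}, \ref{PSLtriangleWithThreeFamily}, \ref{PSLtriangleWithoutThreeFamily}, \ref{PSLimpossible3}, \ref{PSLimpossible4}, \ref{PSLBull} for the non-realizable direction and Propositions \ref{S3PossiblePSL}, \ref{F4RealizablePSL232} for the realizable direction. Your added counting check that the five $\mathcal{F}_5$ results cover all $15$ graphs is a helpful sanity check the paper omits, but otherwise the arguments are identical.
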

\begin{proof}
    By \thref{degreeGreaterThan2PSL}, $\mathcal{F}_1$ is not realizable. By \thref{noExtraHamsPSL}, $\mathcal{F}_2$ is not realizable. By Propositions \ref{PSLtriangleWithThreeFamily}, \ref{PSLtriangleWithoutThreeFamily}, \ref{PSLimpossible3}, \ref{PSLimpossible4}, and \ref{PSLBull}, $\mathcal{F}_5$ is not realizable. By \thref{S3PossiblePSL}, $\mathcal{F}_3$ is realizable. By \thref{F4RealizablePSL232}, $\mathcal{F}_4$ is realizable.
\end{proof}

\subsection*{Classification}

\begin{theorem}\thlabel{PSL232Classification}
(\ref{Sz8Classification}) Given a graph $\Xi$, we have that $\Xi$ is isomorphic to the prime graph complement of some $\PSL(2,2^5)$-solvable group if and only if one of the following is true: 
\begin{enumerate}
    \item $\Xi$ is triangle-free and 3-colorable.
    \item There exists a subset $X = \{a,b,c,d\} \subseteq V(\Xi)$ equal to its own closed neighborhood in $\Xi$, where the subgraph induced by $X$ is not \fourVertexCompleteGraph, such that $X$ contains at least one triangle and $\Xi \setminus X$ is triangle-free and 3-colorable.
    \item There exists a vertex $e \in V(\Xi)$ and a subset $X = \{a,b,c,d\} \subseteq V(\Xi)$ whose closed neighborhood $N[X]$ equals $\{a,b,c,d,e\}$. Furthermore, the subgraph induced by $N[X]$ is isomorphic to one of the graph in $\mathcal{F}_4$, listed below: 
    \begin{center}
        \fan, \northStar, \triangleWithShortTail, \codart, \happyFace, \balloon, \dart, \cricket.
    \end{center}
    Additionally, $e$ must correspond to the white vertex and $\Xi \setminus X$ must be triangle-free and 3-colorable.
\end{enumerate}
\end{theorem}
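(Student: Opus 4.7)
The plan is to mirror the proof of \thref{Sz8Classification}, with the prime 5 taking the role that 3 played for $\Sz(8)$, and $\pi(\PSL(2,2^5)) = \{2,3,11,31\}$ replacing $\pi(\Sz(8)) = \{2,5,7,13\}$. For the forward direction, I would let $G$ be a $\PSL(2,2^5)$-solvable group with $\pgc(G) \cong \Xi$. If $G$ is solvable, condition (1) is immediate from \thref{solvclass}. If $G$ is strictly $\PSL(2,2^5)$-solvable, then by \thref{solvableSubgraph} the subgraph of $\pgc(G)$ induced on $\pi(G) \setminus \{2,3,11,31\}$ is triangle-free and 3-colorable, and by \thref{StructureOfPSL232Graphs}(1) the only prime outside $\{2,3,11,31\}$ that can be joined to a vertex in $\{2,3,11,31\}$ is 5. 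Consequently every triangle of $\pgc(G)$ is contained in $\{2,3,5,11,31\}$, so if $\Xi$ is not triangle-free and 3-colorable then $\pgc(G)[\{2,3,5,11,31\}]$ lies in $\mathcal{F}$; \thref{sufficientOn5PSL} together with \thref{Realizable5VertexGraphsPSL232} then force it into $\mathcal{F}_3 \cup \mathcal{F}_4$. Taking $X = \{2,3,11,31\}$, if this induced subgraph lies in $\mathcal{F}_3$ then 5 is isolated from $X$, so $X$ equals its own closed neighborhood, and $\pgc(G)[X]$ is a subgraph of $\pgc(\PSL(2,2^5)) \cong \hamSandwich$ (so not \fourVertexCompleteGraph), giving condition (2). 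If the induced subgraph lies in $\mathcal{F}_4$, setting $e = 5$ yields $N[X] = \{2,3,5,11,31\}$ and condition (3) holds. In both subcases $\Xi \setminus X$ is triangle-free and 3-colorable by \thref{solvableSubgraph}.

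For the backward direction, condition (1) is immediate from \thref{solvclass}. In case (2), \thref{PSL2324VertexGraphs} supplies a $\PSL(2,2^5)$-solvable group $H$ with $\pgc(H) \cong \Xi[X]$, and the standard coprime-order construction from the proof of \cite[Theorem 2.8]{2015REU} yields a solvable group $N$ with $(|N|,|H|)=1$ and $\pgc(N) \cong \Xi \setminus X$, so that $\pgc(H \times N) \cong \Xi$. In case (3), \thref{F4RealizablePSL232} provides a group of the form $H \rtimes C_5$ realizing $\Xi[N[X]]$ with the vertex 5 identified with $e$ and $5 \nmid |H|$; applying the construction of \cite[Theorem 2.8]{2015REU} in a manner compatible with this $C_5$-action produces a solvable group $N \rtimes C_5$ realizing $\Xi \setminus X$ with 5 identified with $e$ and $(|N|, 5|H|) = 1$, and then $(N \times H) \rtimes C_5$ realizes $\Xi$.

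The main obstacle is the forward direction, specifically verifying that the structural restrictions from \thref{StructureOfPSL232Graphs}, \thref{solvableSubgraph}, and the realizability classification \thref{Realizable5VertexGraphsPSL232} combine to force the precise form of $X$ and $N[X]$ demanded by cases (2) and (3); this amounts to careful bookkeeping between edges incident to $\{2,3,11,31\}$, the prime 5, and primes outside $\pi(\Aut(\PSL(2,2^5)))$. Once this is in place, the backward direction is essentially the same amalgamation of direct-product and semidirect-product constructions used in \thref{Sz8Classification}, and so presents no fundamentally new difficulty.
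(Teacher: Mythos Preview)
Your proposal is correct and follows essentially the same approach as the paper's proof: the forward direction uses \thref{solvclass}, \thref{sufficientOn5PSL}, \thref{Realizable5VertexGraphsPSL232}, and \thref{disconnectpsl} to land in one of the three cases, and the backward direction uses \thref{PSL2324VertexGraphs}, \thref{F4RealizablePSL232}, and the coprime direct/semidirect product constructions from \cite[Theorem 2.8]{2015REU}. Your justification that $\Xi[X]$ cannot be \fourVertexCompleteGraph\ (because $\PSL(2,2^5)$ is a section of $G$, forcing $\pgc(G)[\{2,3,11,31\}]\subseteq\pgc(\PSL(2,2^5))$) is a valid and slightly more explicit argument than the paper gives at that point.
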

\begin{proof}
    Forward direction: Let $G$ be a $\PSL(2,2^5)$-solvable group which realizes $\Xi$. If $G$ is solvable, then $\pgc(G)$ satisfies (1) by \thref{solvclass}. If $G$ is strictly $\PSL(2,2^5)$-solvable and does not satisfy (1), then $\pgc(G)[\{2,3,5,11,31\}] \in \mathcal{F}_3 \cup \mathcal{F}_4$ by \thref{sufficientOn5PSL} and \thref{Realizable5VertexGraphsPSL232}.  By \thref{disconnectpsl}, if $\pgc(G)[\{2,3,5,11,31\}] \in \mathcal{F}_3$ then $\pgc(G)$ satisfies (2), and if $\pgc(G)[\{2,3,5,11,31\}] \in \mathcal{F}_4$ then $\pgc(G)$ satisfies (3).
    
    Backward direction: We now turn to showing that given a graph $\Xi$ that satisfies (1), (2), or (3), there exists a group $G$ such that $\pgc(G)\cong\Xi$. We proceed by cases. If $\Xi$ satisfies (1), then there exists a solvable group $G$ such that $\pgc(G)\cong \Xi$ by \cite[Theorem 2.8]{2015REU}. If $\Xi$ satisfies (2), there exists a $\PSL(2,2^5)$-solvable group $H$ such that $\pgc(H)\cong \Xi[X]$ by \thref{PSL2324VertexGraphs}. By the construction given in the proof of \cite[Theorem 2.8]{2015REU}, there exists a solvable group $N$ such that $(|N|, |H|) = 1$ and $\pgc(N)\cong\Xi \setminus X$. Then we have $\pgc(H\times N)\cong \Xi$.
    
    We now turn to the final case, the case where $\Xi$ satisfies (3). By \thref{F4RealizablePSL232}, there exists a $\PSL(2,2^5)$-solvable group of the form $H \rtimes C_5$ which realizes $\Xi[\{a,b,c,d,e\}]$ such that the vertex 5 is identified with $e$, and 5 does not divide $|H|$. By the construction given in \cite[Theorem 2.8]{2015REU}, there is a solvable group of the form $N \rtimes C_5$ which realizes $\Xi\setminus X$ such that 5 is identified with $e$, and $(|N|, 3|H|) = 1$. Then $(N \times H) \rtimes C_5$ realizes $\Xi$.
\end{proof}

\section{Outlook}\label{sec:Outlook}
% Within our work we came along quite a few interesting patterns. We are unsure of their validity, but believe they would provide interesting avenues of research.\\

% Areas for future work on $K_4$-solvable groups include 

There are many avenues for future work on characterizing the prime graphs of different groups. In this section, we will present a few that we feel would be most natural to pursue. One possible direction for future work would be characterizing the prime graph complements of strictly $T$-solvable groups. The requirement that $T$ appear at least once in the composition series may cause certain triangle-free and 3-colorable graphs to become impossible. For example, the requirement that graphs in $\mathcal{F}$ have at least one triangle would have to be dropped in a classification of strictly $\Sz(8)$- and $\PSL(2,2^5)$-solvable groups. %The lemma at the end of this section could be helpful in this endeavor. 

For the groups in this paper, we noticed that $\pgc(G)[\pi(T)]$ is always isomorphic to the labeled graph product of graphs given by \thref{graphExistenceFromBrauerTable}. This leads us to the following
\begin{conjecture}
Let $T$ be a nonabelian finite simple group and $k$ a prime number. For any $\chi\in\operatorname{IBr}_k(T)$ let $A_{\chi}$ be the set of edges 
$$\{k-q \mid \textnormal{there exists a } g\in T \textnormal{ such that } \operatorname{o}(g) = q \textnormal{ and } \frac{1}{\operatorname{o}(g)}\sum_{x \in \langle g \rangle}\chi(x) > 0\}.$$
Then for any strictly $T$-solvable group $G$, there exists a $Y\subseteq\bigcup_{p\in\pi(T)}\operatorname{IBr}_p(T)$ such that $$\pgc(G)[\pi(T)] = \pgc(T)\setminus\left(\bigcup_{\chi\in Y}A_\chi\right).$$ 
\end{conjecture}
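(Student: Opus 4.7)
The plan is to reduce the conjecture to the setting of Theorem \ref{GeneralizedModuleExtensions} by a structural argument followed by an induction. First, I would apply \thref{HallSubgroupInducedGraph} with $\pi_0 = \pi(T)$ to obtain a Hall $\pi(T)$-subgroup $H \leq G$ which is $T$-solvable and satisfies $\pgc(H) = \pgc(G)[\pi(T)]$. Since $T$ appears as a composition factor of $G$ and $\pi(H) = \pi(T)$, the subgroup $H$ is again strictly $T$-solvable. Hence I may assume without loss of generality that $\pi(G) = \pi(T)$, reducing the problem to showing that $\pgc(G)$ itself equals $\pgc(T) \setminus \bigcup_{\chi \in Y} A_\chi$ for a suitable $Y$.

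Next, I would induct on $|G|$, with the base case $G \cong T$ handled by choosing $Y = \emptyset$. For the inductive step, let $M$ be a minimal normal subgroup of $G$. Because $G$ is $T$-solvable, $M$ is either an elementary abelian $p$-group for some $p \in \pi(T)$ or else $M \cong T^n$ for some $n \geq 1$. When $M$ is elementary abelian, by the inductive hypothesis $\pgc(G/M)$ already has the required form for some $Y' \subseteq \bigcup_p \operatorname{IBr}_p(T)$. Any additional edge $p-q$ missing in $\pgc(G)$ but present in $\pgc(G/M)$ must come from an element $g \in G$ of order $pq$ whose image in $G/M$ has order $q$, producing a nontrivial fixed point of some element of order $q$ acting by conjugation on $M$. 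Passing to a suitable section of the form $M.T$ inside $G$ (guaranteed because $\pi(G) = \pi(T)$ and $G$ is strictly $T$-solvable), I would then invoke \thref{GeneralizedModuleExtensions} to exhibit Brauer characters in $\operatorname{IBr}_p(T)$ whose associated $A_\chi$-sets contain precisely the new missing edges incident to $p$. Combined with $Y'$, this yields the required $Y$ for $G$.

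The nonabelian case $M \cong T^n$ is more delicate but can be attacked by noting that $\pi(M) = \pi(T)$ and that $M$ itself contributes no new elements of order $pq$ beyond those present in $T$, so any new missing edges in $\pgc(G)$ must arise from mixed elements involving both $M$ and a complement or lift. These mixed elements can be analyzed by restricting to a diagonal copy of $T$ sitting inside $T^n$ and applying the abelian-case machinery to the resulting $T$-sections within $G/M$ by induction.

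The main obstacle I anticipate is ensuring, in the elementary abelian case, that the fixed-point data witnessing a new missing edge $p-q$ descends to an \emph{irreducible} $\F_p T$-module (so that it is picked up by an $A_\chi$ with $\chi \in \operatorname{IBr}_p(T)$). The proofs of Lemma \ref{splitRepresentation} and Lemma \ref{intersectionOfIrreducibles} together with \thref{MaslovaPaperLemma} suggest that one can always reduce to an irreducible constituent, but carrying out this reduction in the presence of the extra solvable radical between $M$ and the $T$-quotient, and ensuring that no \emph{spurious} edges are added to the $A_\chi$-sets beyond those actually missing in $\pgc(G)$, is the delicate step. A secondary obstacle is the nonabelian minimal normal case, where the conjecture insists on Brauer characters of $T$ rather than of $T^n$; here one must argue that the restriction of $\F_p T^n$-modules to a diagonal $T$ yields enough irreducible constituents to cover precisely the missing edges.
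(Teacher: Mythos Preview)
The statement you are attempting to prove is labeled as a \emph{Conjecture} in the paper's Outlook section; the authors explicitly present it as an open problem and do not give a proof. So there is no ``paper's own proof'' to compare your proposal against, and your proposal should be read as an attempted resolution of an open question.

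Your reduction to $\pi(G)=\pi(T)$ via \thref{HallSubgroupInducedGraph} is sound, and the observation that the resulting Hall subgroup remains strictly $T$-solvable is correct. However, the inductive step has two genuine gaps that you yourself flag but do not close. First, in the elementary abelian case, the passage ``to a suitable section of the form $M.T$ inside $G$'' is not justified: the action of $G$ on $M$ factors through $G/C_G(M)$, and there is no reason this quotient admits $T$ as a quotient acting on $M$ compatibly with the original action. Even granting such a section, the ``spurious edges'' problem is the heart of the matter: to kill a single new missing edge $p\text{--}q$ you must add some $\chi\in\operatorname{IBr}_p(T)$ to $Y$, but $A_\chi$ may contain other edges $p\text{--}r$ that are \emph{present} in $\pgc(G)$, destroying the equality. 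Nothing in your argument rules this out; \thref{GeneralizedModuleExtensions} gives exact control only when the entire group is already of the form $N.T$ with $N$ a $p$-group, and there the whole edge set incident to $p$ is determined simultaneously by the module structure, not edge-by-edge.

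Second, the nonabelian case $M\cong T^n$ is essentially unaddressed. Restricting to a diagonal copy of $T$ does not produce an $\F_pT$-module from which Brauer characters can be read off, since $M$ is not abelian; and ``mixed elements'' of order $pq$ in $G$ need not be detected by any $T$-section in the way you suggest. These obstacles are precisely why the statement remains a conjecture.
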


Another direction for future work would be characterizing the prime graphs of groups whose composition factors are either cyclic or isomorphic to any one of the nonabelian simple groups in a set $\mathcal{T}$, a property which the authors of \cite{2022REU} call $\mathcal{T}$-solvable. This problem was solved in \cite{2022REU} for the case where $\mathcal{T}$ is comprised of all $K_3$ groups, but work has yet to be done on subsets of the $K_4$ groups. 

One could also finish the classification of $T$-solvable groups where $T$ is a $K_4$ group by classifying the remaining $\PSL(2, q)$ groups, which would involve classifying possibly infinitely many groups. As such, it would require a more general approach than what has been explored in the literature thus far.

\begin{appendices}
\section{} \label{AppendixA}
Here we list all rooted graphs on 5 vertices, derived from the list at \cite{TableOfGraphs}, which contain at least one triangle and where the root has degree 1 or 2:
\begin{center}
    \house, \completeGraphWithArm, \\
    \fan, \nobutt, \housenobrim, \hamsandwichwitharm, \hamSandwichWithOtherArm, \\
    \dart, \badDart, \bowtiebruh, \misshapenhouse, \triangleWithTwoTails, \bullcaseone, \zigZag, \\
    \balloon, \northStar, \codart, \triangleIsolatedEdge, \\
    \triangleWithTail, \bull, \\
    \emptyhouse, \cricket, \triangleWithShortTail, \trianglestick, \happyFace.
\end{center}
\section{} \label{AppendixB}
Here we will describe the method by which we calculated the fixed points of representations via Brauer tables (and some of the more complicated ordinary character tables).\\
Whenever we evaluate the equation $$\operatorname{dim}C_V(g)=\frac{1}{o(g)}\sum_{i=1}^{o(g)}\chi(g^i),$$
for some $g\in G$ we only need to evaluate that equation for a single $g$ of order $p$ when the Sylow $p$-subgroups of $G$ are cyclic (\thref{MaslovaPaperLemma} for Brauer tables and \cite[Lemma 3.2.2]{Webb} for ordinary character tables). This is true because Sylow subgroups are conjuate, so any subgroup of order $p$ in $G$ is conjugate to all other subgroups of order $p$ in $G$. A matrix $\varphi_g$ in a representation $G\ltimes_\varphi V$ fixes a point in $V$ (and thus $\operatorname{dim}C_V(g)\neq 0$) if and only if an eigenvalue of $\varphi_g$ is 1. Because our representations (Brauer or otherwise) map $\langle g\rangle$ to a set of matrices and eigenvalues are preserved under conjugacy and the Sylow $p$-groups of $\varphi(G)$ are all conjugate, it suffices to check $\operatorname{dim}C_V(g)=\frac{1}{o(g)}\sum_{x\in\langle g\rangle}\chi(x) = \frac{1}{o(g)}\sum_{i=1}^{o(g)}\chi(g^i)$ for only one $g\in G$ of order $p$.\\
Our method for checking the dimension of the centralizer in $V$ of an element $g\in G$ of order $p$ in a representation corresponding to the character $\chi$ was to determine which conjugacy class each $x\in\langle g\rangle$ belonged to, and then add the values of $\chi$ evaluated at those conjugacy classes with the appropriate multiplicities together in GAP \cite{GAP}.

\section{} \label{AppendixC}
Any time we mention a group that can only be realized by a representation acting over some vector space we have put the matrices that the generators of the group are mapped to in our \href{https://github.com/gabriel-roca/2024-K4-Groups}{GitHub}. Many of the GAP functions we used are available on that page as well.
% up the 

% track how that element moved through the conjugacy classes of our group $G$, then add up each of the corresponding character values in GAP. 

% we only ever need to check the dimension of the centralizer of a single element of prime order $p$ as long as the Sylow $p$-subgroups of our group are cyclic. This applies whether we are going for regular irreducible representations or modular irreducible representations. This is because we are mapping elements to matrices and the if there are elements in the centralizer of a matrix this will imply that the matrix has an eigenvalue of 1. Furthermore, since the Sylow $p$-subgroup of our group is cyclic we know that all elements of order $p$ are conjugate to each other, and eigenvalues are preserved over conjugacy. Therefore the dimension will always stay constant over these primes.\\
% Our method for checking the dimension of the centralizer in $V$ of an element of order $p$ was simply to track how that element moved through the conjugacy classes of our group $G$, then add up each of the corresponding character values in GAP. I WANT TO MAKE THIS SECTION A BIT CLEANER LATER RN IT FEELS HARD TO REALLY DESCRIBE WHAT WE DID WITHOUT AN EXAMPLE, BUT I REALLY DONT WANT TO PUT A CHAR TABLE IN HERE.

\end{appendices}
\section{Acknowledgements}
This research was conducted under NSF-REU grant DMS-2150205 and NSA grant H98230-24-1-0042 under the mentorship of the first author. We would like to thank Gavin Pettigrew, Saskia Solotko, and Lixin Zheng from the 2023 Texas State REU Team for their help, and the fellow students at the Texas State University 2024 REU for their support. We are also indebted to Texas State University for access to its facilities, and to the NSF for funding this research. Finally, we would like to thank Alexander Hulpke for some help with GAP.
%, and Derek Holt for answering a question of ours on Mathematics Stack Exchange.

\end{document}